%% LyX 2.3.0 created this file.  For more info, see http://www.lyx.org/.
%% Do not edit unless you really know what you are doing.
\documentclass[oneside,english]{amsart}
\usepackage[T1]{fontenc}
\usepackage[latin9]{inputenc}
\usepackage{babel}
\usepackage{array}
\usepackage{float}
\usepackage{units}
\usepackage{multirow}
\usepackage{amstext}
\usepackage{amsthm}
\usepackage{amssymb}
\usepackage{graphicx}
\usepackage[all]{xy}
\usepackage[unicode=true,pdfusetitle,
 bookmarks=true,bookmarksnumbered=false,bookmarksopen=false,
 breaklinks=false,pdfborder={0 0 1},backref=false,colorlinks=false]
 {hyperref}

\makeatletter

%%%%%%%%%%%%%%%%%%%%%%%%%%%%%% LyX specific LaTeX commands.
%% Because html converters don't know tabularnewline
\providecommand{\tabularnewline}{\\}

%%%%%%%%%%%%%%%%%%%%%%%%%%%%%% Textclass specific LaTeX commands.
\numberwithin{equation}{section}
\numberwithin{figure}{section}
\theoremstyle{plain}
\newtheorem*{thm*}{\protect\theoremname}
\theoremstyle{plain}
\newtheorem{thm}{\protect\theoremname}
\theoremstyle{plain}
\newtheorem{lem}[thm]{\protect\lemmaname}
\theoremstyle{plain}
\newtheorem{prop}[thm]{\protect\propositionname}
\theoremstyle{plain}
\newtheorem{cor}[thm]{\protect\corollaryname}
\theoremstyle{definition}
\newtheorem{defn}[thm]{\protect\definitionname}
\theoremstyle{remark}
\newtheorem{rem}[thm]{\protect\remarkname}
\theoremstyle{remark}
\newtheorem*{rem*}{\protect\remarkname}
\theoremstyle{definition}
\newtheorem{example}[thm]{\protect\examplename}
\theoremstyle{definition}
\newtheorem{problem}[thm]{\protect\problemname}

%%%%%%%%%%%%%%%%%%%%%%%%%%%%%% User specified LaTeX commands.
%\interfootnotelinepenalty=10000

\makeatother

\providecommand{\corollaryname}{Corollary}
\providecommand{\definitionname}{Definition}
\providecommand{\examplename}{Example}
\providecommand{\lemmaname}{Lemma}
\providecommand{\problemname}{Problem}
\providecommand{\propositionname}{Proposition}
\providecommand{\remarkname}{Remark}
\providecommand{\theoremname}{Theorem}

\begin{document}

\title[countability conditions]{Comparison of countability conditions within three fundamental classifications
of convergences}

\author{Frédéric Mynard}
\begin{abstract}
The interrelations between various classes of convergence spaces defined
by countability conditions are studied. Remarkably, they all find
characterizations in the usual space of ultrafilters in terms of classical
topological properties. This is exploited to produce relevant examples
in the realm of convergence spaces from known topological examples.
\end{abstract}

\maketitle
\global\long\def\G{\mathcal{G}}
 \global\long\def\F{\mathcal{F}}
 \global\long\def\H{\mathcal{H}}
 \global\long\def\L{\operatorname{L}}
\global\long\def\U{\mathcal{U}}
\global\long\def\W{\mathcal{W}}
 \global\long\def\P{\mathcal{P}}
 \global\long\def\B{\mathcal{B}}
 \global\long\def\A{\mathcal{A}}
\global\long\def\D{\mathcal{D}}
\global\long\def\O{\mathcal{O}}
 \global\long\def\N{\mathcal{N}}
 \global\long\def\X{\mathcal{X}}
 \global\long\def\lm{\mathrm{lim}}
 \global\long\def\then{\Longrightarrow}

\global\long\def\V{\mathcal{V}}
\global\long\def\C{\mathcal{C}}
\global\long\def\adh{\mathrm{\mathrm{adh}}}
\global\long\def\Seq{\mathrm{Seq\,}}
\global\long\def\intr{\mathrm{int}}
\global\long\def\cl{\mathrm{cl}}
\global\long\def\inh{\mathrm{inh}}
\global\long\def\diam{\mathrm{diam}}
\global\long\def\card{\mathrm{card\,}}
\global\long\def\S{\operatorname{S}}
\global\long\def\T{\operatorname{T}}
\global\long\def\I{\operatorname{I}}
\global\long\def\BaseD{\operatorname{B}_{\mathbb{D}}}
\global\long\def\AdhD{\operatorname{A}_{\mathbb{D}}}
\global\long\def\K{\operatorname{K}}

\global\long\def\fix{\mathrm{fix\,}}
\global\long\def\Epi{\mathrm{Epi}}
\global\long\def\De{\operatorname{P}_{1}}

\section{Introduction}

Classically, topologies can be described in a variety of ways, including
by prescribing the convergence of filters. Continuity from this point
of view is simply preservation of limits. Relaxing the resulting axioms
of convergence yields the larger category of \emph{convergence spaces}
( \footnote{It should be emphasized that convergence spaces were not introduced
out of an unmotivated quest for generality, but rather because natural
phenomenon of convergence failed to be topological. In fact, they
were introduced by G. Choquet \cite{cho} because topologies turned
out to be inadequate in the study of so called upper and lower limits
of Kuratowski in spaces of closed subsets of a topological space. } ) and continuous maps, which behaves better from the categorical
viewpoint (\footnote{In particular, this category is Cartesian-closed, that is, has canonical
function space structures yielding an exponential law, and is extensional,
that is, has quotients that are hereditary. In other words, it is
a quasitopos.}) and thus allows for constructions and methods unavailable in the
realm of topologies. As a result, even if one is only interested in
the topological case, there is often much to gain in embedding a topological
problem in the larger context of convergence spaces. The method can
be compared to using complex numbers to solve problems formulated
in the reals. An extensive account of this approach can be found in
\cite{DM.book}.

In order to introduce convergence structures, let us start with convergence
of filters in a topological space, which is then given by 
\begin{equation}
x\in\lim\F\iff\F\supset\N(x),\label{eq:topconv}
\end{equation}
where $\N(x)$ is the neighborhood filter of $x$. Clearly, $\N(x)$
is then the smallest filter converging to $x$, and the map $\N(\cdot)$
associating with each $x\in X$ a filter additionally has a filter
base composed of open sets around $x$. 

More generally, such a map $\V(\cdot)$ associating with each $x\in X$
a filter $\V(x)$ (with $x\in\bigcap\V(x)$) defines a convergence
given by 
\begin{equation}
x\in\lim\F\iff\F\supset\V(x).\label{eq:pretopconv}
\end{equation}
The filter $\V(x)$, called \emph{vicinity filter, }is the smallest
filter convergent to $x$. A convergence defined this way is a \emph{pretopology.
}Note that while every topology defines a pretopology, there are non-topological
pretopologies (\footnote{For instance sequential adherence in a sequential non Fréchet-Urysohn
space defines a non-topological pretopology. 

While directed graphs cannot be represented as topological spaces
unless the relation is transitive, all directed graph can be seen
as a pretopology in which $\V(x)$ is the principal filter of $\{x\}\cup\{y:y\to x\}$.

An easy example on the plane is given by the Féron cross pretopology,
in which $\V((x_{0},y_{0}))$ is the filter generated by sets of the
form $\left\{ (x_{0},y):|y-y_{0}|<\epsilon\right\} \cup\left\{ (x,y_{0}):|x-x_{0}|<\epsilon\right\} $
for $\epsilon>0$.}). 

More generally, a convergence $\xi$ on a set $X$ is a relation between
points of $X$ and filters on $X$, denoted $x\in\lim_{\xi}\F$ (or
$x\in\lim\F$ if no confusion can occur) whenever $x$ and $\F$ are
related, subject to the following two axioms: for every point $x\in X$
and pair of filters $\F$ and $\G$ on $X$,
\begin{eqnarray*}
x & \in & \lim\{x\}^{\uparrow}\\
\F\subset\G & \then & \lim\F\subset\lim\G,
\end{eqnarray*}
where $\{x\}^{\uparrow}$ is the principal ultrafilter of $x$. Evidently,
pretopologies, \emph{a fortiori} topologies, satisfy these axioms.

Convergences have been classified according to various criteria. We
focus here on three directions of classification, all introduced by
S. Dolecki (See in particular \cite{dolecki1996convergence}, \cite{D.comp}
and \cite{D.covers}):
\begin{enumerate}
\item According to how many filters need to be given at each point to describe
the convergence. This leads to the notion of \emph{paving number }$\mathsf{p}$
introduced in \cite{D.covers}, and \emph{pseudopaving numbers} $\mathsf{pp}$
introduced in \cite{myn.completeness}. Pretopologies are exactly
the convergences of (pseudo) paving number 1. We will focus here on
convergences of countable (pseudo) paving number.
\item According to \emph{depth}, that is, the cardinality of sets of filters
for which $\lim$ and $\bigcap$ commute. Namely, in a pretopology,
\begin{equation}
\lim\bigcap_{\F\in\mathbb{D}}\F=\bigcap_{\F\in\mathbb{D}}\lim\F\label{eq:depthintro}
\end{equation}
 for \emph{any} family $\mathbb{D}$ of filters. A convergence is
$\kappa$-\emph{deep} if (\ref{eq:depthintro}) holds for every set
$\mathbb{D}$ of filters of cardinality at most $\kappa$. We focus
here on \emph{countably deep} convergences, and those \emph{countably
deep for ultrafilters}, that is, when $\mathbb{D}$ is restricted
to a countable set of ultrafilters. 
\item According to the class of filters whose adherences determine the convergence.
\emph{Adherence }of a filter $\F$ is given by $\adh\,\F=\bigcup_{\F\subset\H}\lim\H$.
In the case of a principal filter $\{A\}^{\uparrow}$, we write $\adh\,A$
for $\adh\{A\}^{\uparrow}$. It turns out \cite{dolecki1996convergence}
that the formula 
\begin{equation}
\lim\F=\bigcap\left\{ \adh\,\D:\D\in\mathbb{D},\D\#\F\right\} ,\label{eq:adhdeterminedintro}
\end{equation}
where $\D\#\F$ means that $D\cap F\neq\emptyset$ whenever $D\in\D$
and $F\in\F$, and $\mathbb{D}$ denotes a class of filters, characterizes
several important classes of convergences. Namely, if $\mathbb{D}$
is the class $\mathbb{F}_{0}$ of principal filters, convergences
satisfying (\ref{eq:adhdeterminedintro}) are exactly pretopologies.
If $\mathbb{D}$ is the class $\mathbb{F}$ of all filters, they are
exactly the \emph{pseudotopologies }introduced by G. Choquet \cite{cho}.
Considering the intermediate classes $\mathbb{F}_{1}$ of countably
based filters and $\mathbb{F}_{\wedge1}$ of filters closed under
countable intersections define the classes of \emph{paratopologies
}\cite{dolecki1996convergence} and\emph{ hypotopologies }\cite{D.comp}
respectively. Though there are other possibilities as well, we focus
here on the latter two classes. 
\end{enumerate}
The main goal of this paper is to study the interrelations between
the classes of convergences introduced along these three branches
of classification. Pretopologies are particular instances of each
notion, but they turn out to form exactly the intersection of several
pairs of the corresponding classes of convergences. For instance, 
\begin{thm*}[\emph{Corollary \ref{cor:cdeeppara}}]
 A convergence is a pretopology if and only if it is a countably
deep paratopology.
\end{thm*}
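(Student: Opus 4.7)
The plan is to verify each direction via the adherence-based characterization (\ref{eq:adhdeterminedintro}), using $\mathbb{D}=\mathbb{F}_{0}$ for pretopologies and $\mathbb{D}=\mathbb{F}_{1}$ for paratopologies.

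For the forward direction, a pretopology is $\kappa$-deep for every cardinal $\kappa$ by formula (\ref{eq:depthintro}), hence in particular countably deep. It is also a paratopology: since $\mathbb{F}_{0}\subset\mathbb{F}_{1}$, one has
\[
\bigcap\{\adh\,\D:\D\in\mathbb{F}_{1},\,\D\#\F\}\subset\bigcap\{\adh\,A:A\#\F\}=\lim\F,
\]
while the reverse inclusion $\lim\F\subset\bigcap\{\adh\,\D:\D\#\F\}$ holds in every convergence, because $x\in\lim\F$ and $\D\#\F$ make $\F\vee\D$ a filter finer than $\D$ with $x$ as a limit by isotoneness.

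For the converse, I would take a countably deep paratopology $\xi$ and assume $x\notin\lim_{\xi}\F$. The paratopology formula supplies some $\D\in\mathbb{F}_{1}$ with $\D\#\F$ and $x\notin\adh\,\D$. Fixing a decreasing base $(D_{n})_{n\in\omega}$ of $\D$, every $D_{n}$ meshes $\F$, so to recover the pretopology characterization it suffices to exhibit one index $n$ with $x\notin\adh\,D_{n}$. Arguing by contradiction, I would assume $x\in\adh\,D_{n}$ for every $n$, pick ultrafilters $\U_{n}\supset\{D_{n}\}^{\uparrow}$ with $x\in\lim_{\xi}\U_{n}$, and invoke countable depth to obtain $x\in\lim_{\xi}\bigcap_{n}\U_{n}$. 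The filter $\bigcap_{n}\U_{n}$ meshes $\D$: if $A$ lies in the infimum and $k\in\omega$, then $A,D_{k}\in\U_{k}$, so $A\cap D_{k}\ne\emptyset$. Consequently $(\bigcap_{n}\U_{n})\vee\D$ is a filter finer than $\D$ having $x$ as a limit, contradicting $x\notin\adh\,\D$.

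The main obstacle is the temptation to refine each witness $\U_{n}$ into a filter already containing $\D$: any such refinement would immediately place $x$ in $\adh\,\D$, violating the hypothesis. The detour through the countable infimum $\bigcap_{n}\U_{n}$ sidesteps this, because each meshing verification only consults a single $\U_{k}$ at a time, and countable depth is precisely what is needed to transport $x$ into the limit of that infimum.
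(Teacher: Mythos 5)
Your proof is correct and follows essentially the paper's own argument: the key step---picking ultrafilters $\U_{n}$ on the elements of a countable base of $\D$ that converge to $x$ and applying countable depth to $\bigwedge_{n\in\omega}\U_{n}$, which meshes $\D$ and so forces $x\in\adh_{\xi}\D$---is exactly the computation the paper carries out at the reflector level in Theorem \ref{prop:S1D1} ($\S_{1}\De^{\mathbb{U}}\xi=\S_{0}\xi$), from which Corollary \ref{cor:cdeeppara} is deduced. The only difference is packaging: you inline the argument for a fixed countably deep paratopology, whereas the paper's functorial version also records that countable depth for ultrafilters already suffices.
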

This applies to hypotopologies because:

\begin{thm*}[\textit{\emph{Theorem \ref{thm:hypocountablydeep}}}]
 Every hypotopology is countably deep.
\end{thm*}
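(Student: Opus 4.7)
The plan is to use the characterization of hypotopologies via formula (\ref{eq:adhdeterminedintro}) with $\mathbb{D}=\mathbb{F}_{\wedge 1}$, the class of filters closed under countable intersections. One direction of countable depth is free: isotonicity gives $\lim\bigcap_{n}\F_{n}\subset\bigcap_{n}\lim\F_{n}$ for any convergence, since $\bigcap_{n}\F_{n}\subset\F_{n}$ for each $n$. So the work is to prove the reverse inclusion under the hypotopological assumption.

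Let $\{\F_{n}\}_{n\in\omega}$ be a countable family of filters and let $x\in\bigcap_{n}\lim\F_{n}$. To show $x\in\lim\bigcap_{n}\F_{n}$ in a hypotopology, it suffices, by (\ref{eq:adhdeterminedintro}), to take an arbitrary $\D\in\mathbb{F}_{\wedge 1}$ with $\D\#\bigcap_{n}\F_{n}$ and check that $x\in\adh\,\D$. I would first establish the key combinatorial step: such a $\D$ must mesh with at least one $\F_{n}$. Once this is in hand, $\D\vee\F_{n}$ exists and refines $\F_{n}$, so isotonicity gives $x\in\lim(\D\vee\F_{n})$, and since $\D\vee\F_{n}\supset\D$, this yields $x\in\adh\,\D$ by definition of adherence.

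The heart of the argument is therefore the meshing claim, and this is where the countable-intersection closure of $\D$ is used in an essential way. Arguing by contradiction, if $\D\not\#\F_{n}$ for every $n$, then for each $n$ there are $D_{n}\in\D$ and $F_{n}\in\F_{n}$ with $D_{n}\cap F_{n}=\emptyset$. Since $\D\in\mathbb{F}_{\wedge 1}$, the set $D=\bigcap_{n}D_{n}$ belongs to $\D$. On the other hand, $F=\bigcup_{n}F_{n}$ contains each $F_{n}$, so $F\in\F_{n}$ for every $n$, hence $F\in\bigcap_{n}\F_{n}$. But $D\cap F=\bigcup_{n}(D\cap F_{n})\subset\bigcup_{n}(D_{n}\cap F_{n})=\emptyset$, contradicting $\D\#\bigcap_{n}\F_{n}$.

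The main (and only) obstacle is recognizing that the countable intersection property of $\mathbb{F}_{\wedge 1}$ exactly matches the countability in the hypothesis of countable depth; the rest is bookkeeping with the adherence-determined characterization. It is worth noting that the same strategy would fail for paratopologies (where $\mathbb{D}=\mathbb{F}_{1}$, countably based filters), since a countably based filter need not be closed under countable intersections, which is consistent with Corollary \ref{cor:cdeeppara} collapsing countably deep paratopologies to pretopologies.
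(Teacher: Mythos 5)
Your proposal is correct and takes essentially the same route as the paper: the paper phrases the result as the reflector inequality $\De\geq\S_{\wedge1}$, but its core step is exactly your meshing claim, namely that an $\H\in\mathbb{F}_{\wedge1}$ meshing with $\bigwedge_{n\in\omega}\F_{n}$ must mesh with some $\F_{n}$, proved by the same complementation argument using closure under countable intersections. The only cosmetic difference is that you pass through $\D\vee\F_{n}$ where the paper concludes $x\in\adh_{\xi}\H$ directly from $\H\#\F_{n}$ and $x\in\lm_{\xi}\F_{n}$.
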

On the other hand,
\begin{thm*}[\textit{\emph{Theorem \ref{thm:SS1}}}]
 Every countably pseudopaved pseudotopology is a paratopology.
\end{thm*}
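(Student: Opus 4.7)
The plan is to verify the defining identity of paratopologies,
\[
\lim \F = \bigcap\bigl\{\adh \D : \D \in \mathbb{F}_1,\ \D\#\F\bigr\},
\]
for a countably pseudopaved pseudotopology $\xi$. The inclusion $\subseteq$ holds in every convergence, so the real work is the reverse inclusion, which I will establish in contrapositive form: given $x \notin \lim \F$, produce a countably based filter $\D$ with $\D\#\F$ and $x \notin \adh \D$.

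I use the two hypotheses in sequence. First, because $\xi$ is a pseudotopology, the failure $x \notin \lim \F$ is witnessed by a single ultrafilter: pick $\U\supset\F$ with $x\notin\lim\U$. Second, the countable pseudopaving at $x$ supplies a countable family $\{\P_n:n\in\mathbb{N}\}$ of filters such that an ultrafilter converges to $x$ if and only if it refines some $\P_n$. Since $\U$ refines no $\P_n$, for each $n$ I select $P_n\in\P_n$ with $P_n\notin\U$, equivalently $X\setminus P_n\in\U$.

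The candidate is the filter $\D$ generated by the countable family $\{X\setminus P_n:n\in\mathbb{N}\}$. It is countably based by construction, and it is a proper filter because all the generators lie in the single filter $\U$, so they enjoy the finite intersection property; in particular $\D\subset\U$. Meshing $\D\#\F$ is then immediate, because $\U$ refines both $\D$ and $\F$. Finally, to see $x\notin\adh\D$, suppose some ultrafilter $\W\supset\D$ satisfies $x\in\lim\W$; pseudopaving yields $\P_n\subset\W$ for some $n$, so $P_n\in\W$, while $X\setminus P_n\in\D\subset\W$ contradicts the ultrafilter axiom.

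The step carrying the essential idea is the choice of $\D$: one must take the \emph{complements} $X\setminus P_n$, not the $P_n$ themselves, since this is what simultaneously places $\D$ below $\U$ (so that $\D\#\F$) and blocks every ultrafilter refining $\D$ from refining any $\P_n$ (so that no such ultrafilter converges to $x$). Past this construction, the proof is just a clean interlocking of the three definitions, and I anticipate no further obstacle.
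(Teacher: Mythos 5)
Your proof is correct, but it takes a different route from the paper's. You give the direct filter-theoretic argument that the paper explicitly waves at ("Though we could easily give a direct proof\dots") and then does not carry out: from $x\notin\lim_{\xi}\F$ in a pseudotopology you extract a non-convergent $\U\supseteq\F$, use a countable pseudopavement $\{\P_{n}:n\in\omega\}$ at $x$ to pick $P_{n}\in\P_{n}\setminus\U$, and let $\D\in\mathbb{F}_{1}$ be generated by the complements $X\setminus P_{n}$; since $\D\subseteq\U$ you get $\D\#\F$, and no ultrafilter finer than $\D$ can refine any $\P_{n}$, so $x\notin\adh_{\xi}\D$ -- all steps check out, including properness of $\D$ and the use of the pseudopavement only through the implication ``converges to $x$ $\Rightarrow$ refines some $\P_{n}$''. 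The paper instead proves the stronger statement of Theorem \ref{thm:SS1}: it translates the hypotheses into the Stone space, using that $\mathsf{pp}(\xi,x)\leq\omega$ is equivalent to $\sigma$-compactness of $\mathbb{U}_{\xi}(x)$ (Corollary \ref{cor:pseudopavingnumber}), hence $\mathbb{U}_{\xi}(x)$ is Lindel\"of (equivalently, singletons are cover-Lindel\"of, Corollary \ref{cor:Lindelofptwise}), hence $\delta$-closed, which by Corollary \ref{cor:SisS1} gives $\S\xi=\S_{1}\xi$. Your approach buys an elementary, self-contained proof of exactly the stated theorem; the paper's route buys the sharper hypothesis (cover-Lindel\"of singletons suffice, a strictly weaker condition than countable pseudopaving number by Example \ref{exa:ppuncountableLindelofpt}) and fits the paper's systematic program of reading convergence-theoretic properties off topological properties of $\mathbb{U}_{\xi}(x)$.
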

Below is a diagram showing the main classes of convergences considered,
with pretopologies as their intersection and examples distinguishing
these classes.

\begin{figure}[H]
\begin{centering}
\includegraphics[scale=0.5]{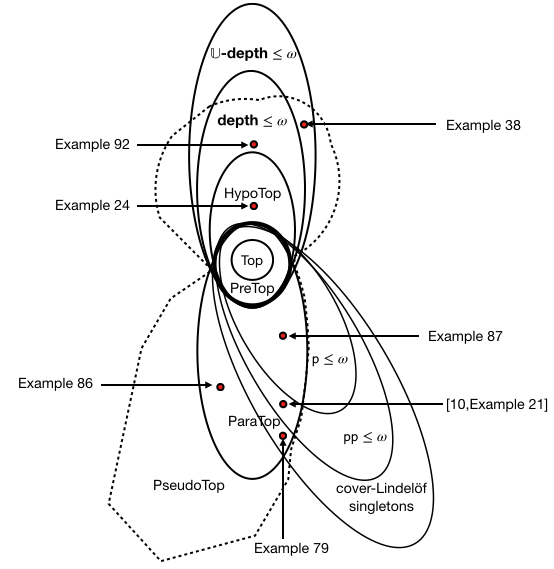}
\par\end{centering}
\caption{\label{fig:examples}Examples distinguishing the main classes}
\end{figure}

\medskip{}

It turns out that most of the classes considered are \emph{reflective},
that is, for each convergence $\xi$ there is the finest convergence
$R\xi$ among the convergences coarser than $\xi$ that are in the
class, and the corresponding operator $R$ is a functor, that is,
preserves continuity. Here are the notations we use for the corresponding
reflectors:\medskip{}

\begin{center}
\begin{tabular}{|c|c|}
\hline 
class & reflector\tabularnewline
\hline 
\hline 
pseudotopology & $\S$\tabularnewline
\hline 
paratopology & $\S_{1}$\tabularnewline
\hline 
pretopology & $\S_{0}$\tabularnewline
\hline 
hypotopology & $\S_{\wedge1}$\tabularnewline
\hline 
countable depth & $\De$\tabularnewline
\hline 
countable depth for ultrafilters & $\De^{\mathbb{U}}$\tabularnewline
\hline 
pseudotopology of countable depth & $\De\triangle\S$\tabularnewline
\hline 
pseudotopology of countable depth for ultrafilters & $\De^{\mathbb{U}}\triangle\S=\S\De^{\mathbb{U}}$\tabularnewline
\hline 
\end{tabular}\medskip{}
\par\end{center}

The understanding emerging from Figure \ref{fig:examples} is significantly
refined by analyzing the reflectors involved. Namely, we establish
(Theorem \ref{prop:S1D1}) that

\[
\S_{1}\De^{\mathbb{\mathbb{U}}}=\S_{0},
\]
so that in particular $\S_{1}\De=\S_{0}$, but $\De\S_{1}\neq\S_{0}$
(Example \ref{exa:D1S1donotcommute}).

In contrast (Theorem \ref{thm:hypocommute}) 
\[
\S_{\wedge1}\S_{1}=\S_{1}\S_{\wedge1}=\S_{0}.
\]

We also analyze when $\De=\S_{0}$. This is the case in particular
when the pseudopaving number is countable (Proposition \ref{prop:D1isS0}),
but this condition is not necessary (Example \ref{exa:D1isS0butnotcpp}).
Here is a diagram summarizing the main relations:

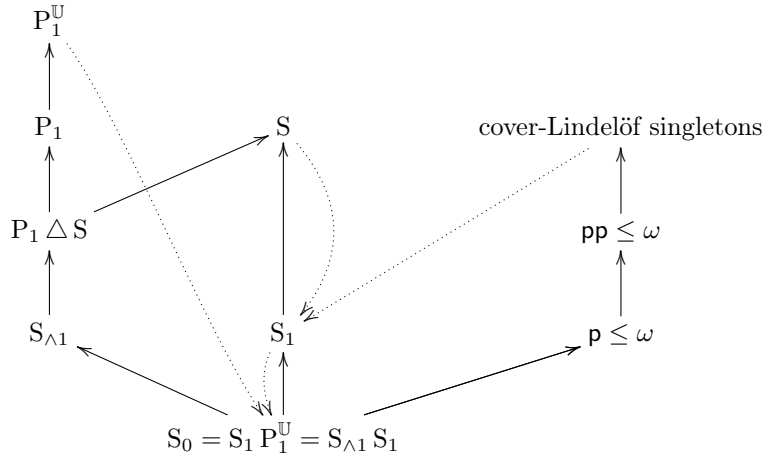
\begin{figure}[H]
\[
\xymatrix{\De^{\mathbb{U}}\ar@{.>}@(dr,ul)[ddddr]\\
\De\ar[u] & \S\ar@{.>}@(dr,ur)[dd] & \ar@{.>}[ddl]\text{cover-Lindelöf singletons}\\
\De\triangle\S\ar[u]\ar[ur] &  & \ar[u]\mathsf{pp}\leq\omega\\
\S_{\wedge1}\ar[u] & \S_{1}\ar[uu]\ar@{.>}@(dl,dr)[d] & \ar[u]\mathsf{p}\leq\omega\\
 & \S_{0}=\S_{1}\De^{\mathbb{U}}=\S_{\wedge1}\S_{1}\ar[ul]\ar[u]\ar[ur]\ar[ur]
}
\]

\medskip{}
\label{figure:reflectors}\caption{A diagram of the main relations between the classes studied.Vertical
arrows correspond to an inclusion of classes. For instance the right-bound
arrow from $\protect\S_{0}$ to $\mathsf{p}\protect\leq\omega$ indicates
that every pretopology has countable paving number. The dotted arrows
correspond to combinations of properties: A pseudotopology whose singletons
are cover-Lindelöf is a paratopology, and a paratopology that is countably
deep for ultrafilters is a pretopology.}
\end{figure}

One may note in the diagram above the condition ``cover-Lindelöf
singletons''. While definitions of compactness and its variants (countable
compactness, Lindelöf, etc) in terms of covers or in terms of filters
coincide in the topological setting, they become different notions
in the general setting of convergences. Remarkably, even singletons
do not need to be cover-compact. It turns out that the type of cover-compactness
enjoyed by singletons is relevant to our quest, and is thus analyzed
in details too. 

Finally, a very important aspect of this article is that many results
and examples are obtained using \emph{topological }characterizations
of the non-topological notions at hand in spaces of ultrafilters (with
their usual topology). It is a striking and somewhat unexpected feature
that convergence notions that only make sense for non-topological
convergence spaces turn out to find characterizations in the space
of ultrafilters in terms of classical topological properties.

More specifically, in \cite{myn.completeness} countable paving and
pseudopaving numbers of a convergence space $(X,\xi)$ were characterized
in terms of the topological properties of the subspace 
\[
\mathbb{U}_{\xi}(x)=\{\U\in\mathbb{U}X:x\in\lm_{\xi}\U\}
\]
of the space $\mathbb{U}X$ of all ultrafilters on $X$, endowed with
its usual Stone topology. It turned out \cite{myn.completeness} that
the same topological properties of the subspace $\mathbb{U}X\setminus\mathbb{U}_{\xi}X$
of non-convergent ultrafilters characterizes completeness and ultracompleteness
of $(X,\xi)$, so that known examples of complete non-ultracomplete
topological spaces easily yield examples of spaces with countable
pseudopaving number, but uncountable paving number. 

This approach is systematized in the present paper. As a result, counter-examples
to distinguish various classes of convergence spaces are produced
thanks to characterizations in terms of topological properties of
$\mathbb{U}_{\xi}(x)$, and by characterizing the same topological
property of $\mathbb{U}X\setminus\mathbb{U}_{\xi}X$. In particular,
the implications below 
\[
\text{compact}\underset{\underset{\text{Corollary \ref{cor:hemicompactnotclosed}}}{\nleftarrow}}{\then}\text{hemicompact}\underset{\underset{\mathbb{R}\setminus\mathbb{Q}}{\nleftarrow}}{\then}\sigma\text{-compact}\underset{\underset{\text{Corollary \ref{cor:Lindelofnotsigmacompact}}}{\nleftarrow}}{\then}\text{Lindelöf}\underset{\underset{\text{Corollary \ref{cor:deltaclosednotLindelof}}}{\nleftarrow}}{\then}\delta\text{-closed},
\]
cannot be reversed, even for a subspace of the form $\mathbb{U}X\setminus\mathbb{U}_{\xi}X$
of a space of ultrafilters $(\mathbb{U}X,\beta)$. Each such example
provides an $S\subset\mathbb{U}^{\circ}X$ witnessing properties that
allow for the ``semi-automatic'' production of the corresponding
examples for the non-topological local properties of a convergence
space studied here. Table \ref{table:summaryUX} at the end of Section
\ref{sec:inStone} gathers the main results in this direction. 

\section{\label{sec:Preliminaries} Preliminaries}

We use notations and terminology consistent with the recent book \cite{DM.book}.
We refer the reader to \cite{DM.book} for a comprehensive treatment
of convergence spaces.

\subsection{Set-theoretic conventions and spaces of ultrafilters}

If $X$ is a set, we denote by $\mathbb{P}X$ its powerset, by $[X]^{<\infty}$
the set of finite subsets of $X$ and by $[X]^{\omega}$ the set of
countable subsets of $X$. If $\A\subset\mathbb{P}X$, we write 
\begin{eqnarray*}
\A^{\uparrow} & := & \left\{ B\subset X:\exists A\in\A,A\subset B\right\} \\
\A^{\cap} & := & \left\{ \bigcap_{S\in\F}S:\F\in[\A]^{<\infty}\right\} \\
\A^{\cup} & := & \left\{ \bigcup_{S\in\F}S:\F\in[\A]^{<\infty}\right\} \\
\A^{\#} & := & \left\{ B\subset X:\forall A\in\A,A\cap B\neq\emptyset\right\} .
\end{eqnarray*}

We say that two subsets $\A$ and $\B$ of $\mathbb{P}X$ (henceforth
two \emph{families of subsets of }$X$) \emph{mesh}, in symbols $\A\#\B$,
if $\A\subset\B^{\#}$, equivalently, $\B\subset\A^{\#}$. 

A family $\F$ of \emph{non-empty }subsets of $X$ is called a \emph{filter
}if $\F=\F^{\cap}=\F^{\uparrow}$ and a \emph{filter-base }if $\F^{\uparrow}$
is a filter. Dually, a family $\P$ of \emph{proper }subsets of $X$
is an \emph{ideal }if $\P=\P^{\cup}=\P^{\downarrow}$ and an \emph{ideal-base
}if $\P^{\downarrow}$ is an ideal. Of course, $\P$ is an ideal if
and only if 
\[
\P_{c}:=\left\{ X\setminus P:P\in\P\right\} 
\]
 is a filter. We denote by $\mathbb{F}X$ the set of filters on $X$.
Note that $\mathbb{P}X$ is the only family $\A$ satisfying $\A=\A^{\uparrow}=\A^{\cap}$
that has an empty element and the only family satisfying $\A=\A^{\downarrow}=\A^{\cup}$
that contains $X$. Thus we sometimes call $\{\emptyset\}^{\uparrow}=\mathbb{P}X=\{X\}^{\downarrow}$
the \emph{degenerate filter on }$X$ or \emph{the degenerate ideal
on $X$}. The set $\mathbb{F}X$ is ordered by 
\begin{equation}
\F\leq\G\iff\forall F\in\F\;\exists G\in\G\;\;G\subset F.\label{eq:filterorder}
\end{equation}
Infima for this order always exists: if $\mathbb{D}\subset\mathbb{F}X$
then $\bigwedge_{\D\in\mathbb{D}}\D=\bigcap_{\D\in\mathbb{D}}\D.$
In contrast, the supremum of even a pair of filters may fail to exist.
In fact, $\F\vee\G$ exists if and only if $\F\#\G$ and is then generated
by sets of the form $F\cap G$ for $F\in\F$ and $G\in\G$. Maximal
elements of $\mathbb{F}X$ are called \emph{ultrafilters }and $\F\in\mathbb{F}X$
is an ultrafilter if and only if $\F=\F^{\#}$. 

Dually to (\ref{eq:filterorder}), we say that a family $\mathcal{R}$
of subsets of $X$ is a \emph{refinement of }another family $\P\subset\mathbb{P}X$,
in symbols, $\mathcal{R}\vartriangleleft\P$, if for every $R\in\mathcal{R}$
there is $P\in\P$ with $R\subset P$, that is,
\begin{eqnarray}
\mathcal{R}\vartriangleleft\P & \iff & \forall R\in\mathcal{R\;\exists P\in\P}\;\;R\subset P\label{eq:refine}\\
 & \iff & \P_{c}\geq\mathcal{R}_{c}.\nonumber 
\end{eqnarray}

Given a map $o:\mathbb{P}X\to\mathbb{P}X$ and $\A\subset\mathbb{P}X$,
we write 
\[
o^{\natural}\A:=\left\{ o(A):A\in\A\right\} .
\]
Note that when $o$ is \emph{expansive (i.e., $A\subset o(A)$ }for
all $A\in\mathbb{P}X$) and $\F$ is a filter-base, then $o^{\natural}\F$
is also a filter-base. Hence in the context of filters and expansive
operators $o$, we often do not distinguish between $(o^{\natural}\F)^{\uparrow}$
and $o^{\natural}\F$.

We denote by $\mathbb{U}X$ the set of ultrafilters on $X$. We call
\emph{kernel of $\F\in\mathbb{F}X$ }the set $\ker\F:=\bigcap_{F\in\F}F.$
\begin{lem}
\label{lem:kernels} If $\mathbb{D}\subset\mathbb{F}X$ then 
\[
\ker\bigwedge_{\D\in\mathbb{D}}\D=\bigcup_{\D\in\mathbb{D}}\ker\D.
\]
\end{lem}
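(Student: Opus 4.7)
The plan is to use the explicit description of infima in $\mathbb{F}X$ mentioned just above the lemma, namely $\bigwedge_{\D\in\mathbb{D}}\D=\bigcap_{\D\in\mathbb{D}}\D$ (intersection as families of subsets of $X$). Thus a set $F\subset X$ belongs to $\bigwedge_{\D\in\mathbb{D}}\D$ if and only if $F\in\D$ for every $\D\in\mathbb{D}$. With this identification, the statement reduces to a straightforward double inclusion, and I would simply verify each direction.

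For the inclusion $\supseteq$, I would fix $x\in\ker\D_{0}$ for some $\D_{0}\in\mathbb{D}$, and then observe that any $F\in\bigwedge_{\D\in\mathbb{D}}\D$ satisfies $F\in\D_{0}$, so $x\in F$ by definition of the kernel. Since this holds for every such $F$, one concludes $x\in\ker\bigwedge_{\D\in\mathbb{D}}\D$.

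For the inclusion $\subseteq$, I would argue contrapositively: suppose $x\notin\bigcup_{\D\in\mathbb{D}}\ker\D$. Then for each $\D\in\mathbb{D}$ I can choose $D_{\D}\in\D$ with $x\notin D_{\D}$. The key construction is to form
\[
F:=\bigcup_{\D\in\mathbb{D}}D_{\D}.
\]
Since $D_{\D_{0}}\subset F$ and $D_{\D_{0}}\in\D_{0}$, we have $F\in\D_{0}$ for every $\D_{0}\in\mathbb{D}$, so $F\in\bigwedge_{\D\in\mathbb{D}}\D$; yet $x\notin F$ by construction, which yields $x\notin\ker\bigwedge_{\D\in\mathbb{D}}\D$, as required.

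There is really no serious obstacle: the only subtle point is recognizing that arbitrary unions are used (so no finiteness assumption on $\mathbb{D}$ intervenes), which is exactly what makes the dual formula cleaner than the corresponding statement for suprema. Everything else is a direct unpacking of the definitions of $\ker$ and of the infimum in $\mathbb{F}X$.
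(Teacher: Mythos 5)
Your proof is correct and follows essentially the same route as the paper: the inclusion $\bigcup_{\D\in\mathbb{D}}\ker\D\subset\ker\bigwedge_{\D\in\mathbb{D}}\D$ by unpacking $\bigwedge_{\D\in\mathbb{D}}\D=\bigcap_{\D\in\mathbb{D}}\D$, and the reverse inclusion contrapositively via the union $\bigcup_{\D\in\mathbb{D}}D_{\D}$ of chosen witnesses, which lies in the infimum by upward closedness. The only difference is cosmetic: the paper phrases the first inclusion contrapositively as well, while you also spell out explicitly why the union belongs to each filter.
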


\begin{proof}
Suppose $x\notin\bigcup_{\D\in\mathbb{D}}\ker\D$, that is, $x\notin\ker\D$
for every $\D\in\mathbb{D}$. In other words, for every $\D\in\mathbb{D}$
there is $D_{\D}\in\D$ with $x\notin D_{\D}$. Then $x\notin\bigcup_{\D\in\mathbb{D}}D_{\D}$
so that $x\notin\ker\bigwedge_{\D\in\mathbb{D}}\D$. 

Conversely, if $x\notin\ker\bigwedge_{\D\in\mathbb{D}}\D$, there
is $A\in\bigwedge_{\D\in\mathbb{D}}\D$ with $x\notin A$ so that
$x\notin\ker\D$ for all $\D\in\mathbb{D}$, that is, $x\notin\bigcup_{\D\in\mathbb{D}}\ker\D$. 
\end{proof}
We use the convention that $\mathbb{F}$ is the class of all filters,
while $\mathbb{F}X$ is the set of all filters on $X$, and similarly,
$\mathbb{U}$ is the class of ultrafilters. Thus we write $\mathbb{U}\subset\mathbb{F}$
to indicate that $\mathbb{U}X\subset\mathbb{F}X$ for every set $X$.
We will consider other classes of filters with the same conventions:
for instance, given a cardinal $\kappa$, $\mathbb{F}_{\kappa}$ denotes
the class of filters with a filter base of cardinality less than $\kappa$.
We use $\mathbb{F}_{0}$ as a shorthand for $\mathbb{F}_{\aleph_{0}}$,
that is, for the class of \emph{principal filters} (a filter $\F$
is principal if $\ker\F\in\F$). Similarly, $\mathbb{F}_{1}$ is a
shorthand for $\mathbb{F}_{\aleph_{1}}$ and denotes the class of
filters with a countable filter-base, or \emph{countably based filters}.
We will also consider the class $\mathbb{F}_{\wedge1}$ of filters
closed under countable intersections, and for each class of filter,
the exponent $^{\circ}$ denote the subclass of \emph{free filters
}(a filter $\F$ is\emph{ free} if $\ker\F=\emptyset$). Hence $\mathbb{F^{\circ}}$
is the class of all free filters, $\mathbb{U}^{\circ}$ the class
of free ultrafilters, $\mathbb{F}_{1}^{\circ}$ the class of free
countably based filters, and so on. Additionally, if $\mathbb{D}$
is a class of filters, $\mathbb{D}_{*}=\left\{ \D_{c}:\D\in\mathbb{D}\right\} $
is the dual class of ideals. 

An ultrafilter $\U$ on $X$ is either principal (and then of the
form $\{x\}^{\uparrow}$ for some $x\in X$) or free. The map $j:X\to\mathbb{U}X$
defined by $j(x)=\{x\}^{\uparrow}$ is one-to-one, and we often identify
$X$ with $j[X]$, hence writing, $\mathbb{U}^{\circ}X=\mathbb{U}X\setminus X$. 

Given a map $f:X\to Y$ and a filter $\F\in\mathbb{F}X$, we define
the \emph{image filter }(\footnote{Alternatively, this filter can be described as $f[\F]=\{B\subset Y:f^{-1}[B]\in\F\}$,
which is a filter, while $\{f(F):F\in\F\}$ is only a filter-base.})\emph{ }
\[
f[\F]=\left\{ f(F):F\in\F\right\} ^{\uparrow}\in\mathbb{F}Y.
\]

If $\F\in\mathbb{U}X$ then $f[\F]\in\mathbb{U}Y$, so that a map
$f:X\to Y$ induces a map $\mathbb{U}f:\mathbb{U}X\to\mathbb{U}Y$
defined by $\mathbb{U}f(\U)=f[\U]$. 

If $R\subset X\times Y$ is a relation, and $x\in X$, we denote by
$R(x)=\{y\in Y:(x,y)\in R\}$, and if $F\subset X$, we write $R[F]=\bigcup_{x\in F}R(x)$.
If $\F\in\mathbb{F}X$ then
\[
R[\F]=\{R[F]:F\in\F\}^{\uparrow_{Y}}
\]
 is a (possibly degenerate) filter on $Y$. We say that a class $\mathbb{D}$
of filters is $\mathbb{F}_{0}$-composable if for every $X,Y$ and
$R\subset X\times Y$,
\[
\D\in\mathbb{D}X\then R[\D]\in\mathbb{D}Y,
\]
with the convention that the degenerate filter of a set $Z$ is always
in $\mathbb{D}Z$.

If $R\subset X\times Y$ then $R^{-}\subset Y\times X$ denotes the
inverse relation. If $\F$ is a family of subsets of $X$ and $\G$
is a family of subsets of $Y$ then
\begin{equation}
R\in(\F\times\G)^{\#}\iff R[\F]\#\G\iff R^{-}[\G]\#\F.\label{eq:meshprod}
\end{equation}
 If $\F\subset\mathbb{P}A$ and $\G:A\to\mathbb{P}(\mathbb{P}X)$,
we define the\emph{ contour of $\G$ along} $\F$ to be the family
of subsets of $X$ given by
\begin{equation}
\G(\F)=\bigcup_{a\in F}\bigcap_{F\in\F}\G(a).\label{eq:contour}
\end{equation}
Note that if $\F\in\mathbb{F}A$ and $\G:A\to\mathbb{F}X$, then $\G(\F)\in\mathbb{F}X$,
and if $\F\in\mathbb{U}A$ and $\G:A\to\mathbb{U}X$, then $\G(\F)\in\mathbb{U}X$.

We endow $\mathbb{U}X$ with its usual Stone topology, here denoted
$\beta$, given by the base $\left\{ \beta A:A\subset X\right\} $,
where 
\[
\beta A=\left\{ \U\in\mathbb{U}X:A\in\U\right\} .
\]
This topology is zero-dimensional (because the sets $\beta A$ are
clopen) and compact Hausdorff. In particular, every ultrafilter $\mathfrak{U}\in\mathbb{U}(\mathbb{U}X)$
converges for $\beta$ to a unique element of $\mathbb{U}X$:
\[
\lm_{\beta}\mathfrak{U}=\{\mathfrak{U}^{\star}\},
\]
where 
\[
\mathfrak{U}^{\star}:=\bigcup_{\mathbb{A}\in\mathfrak{U}}\bigcap_{\W\in\mathbb{A}}\W=\mathcal{I}(\mathfrak{U})
\]
 is the contour\emph{ }of the identity map $\mathcal{I}$ of $\mathbb{U}X$
along $\mathfrak{U}$. 

If $\H\in\mathbb{F}X$, let 
\[
\beta\H=\{\U\in\mathbb{U}X:\H\leq\U\}.
\]
With this convention, $\beta A$ is a shorthand for $\beta\{A\}^{\uparrow}$.
A non-empty subset $H$ of $\mathbb{U}X$ is closed for $\beta$ if
and only if there is a filter $\H$ on $X$ with $H=\beta\H$. If
$H\subset\mathbb{U}X$, let $\cl_{\beta}H=\beta\left(\bigwedge_{\U\in H}\U\right)$
denote the closure in the Stone topology of $\mathbb{U}X$ and let
$\cl_{\delta}H$ denote the closure in the associated $G_{\delta}$-topology.
Observe that:
\begin{lem}
\label{lem:GdeltaBase} The family
\[
\left\{ \beta\H:\H\in\mathbb{F}_{1}\right\} 
\]
form a basis for the $G_{\delta}$-topology of $(\mathbb{U}X,\beta)$.
\end{lem}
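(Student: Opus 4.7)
The plan is to show the two directions that together establish the basis property: every $\beta\H$ with $\H\in\mathbb{F}_{1}$ is open in the $G_{\delta}$-topology, and conversely every $G_{\delta}$-neighborhood of an ultrafilter contains some $\beta\H$ of this form.

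For the first direction, I would fix a countably based filter $\H$ with countable base $\{H_{n}:n\in\omega\}$, and use the identity
\[
\beta\H=\bigcap_{n\in\omega}\beta H_{n},
\]
which follows immediately from the definitions (since $\H\leq\U$ iff every $H_{n}\in\U$, by $\H$ having this set as a base and $\U$ being closed under supersets). Since each $\beta H_{n}$ is clopen in $(\mathbb{U}X,\beta)$, in particular open, this expresses $\beta\H$ as a $G_{\delta}$-set of $\beta$, hence as an open set in the $G_{\delta}$-topology.

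For the converse direction, let $G$ be a $G_{\delta}$-set of $\beta$ containing $\U\in\mathbb{U}X$, say $G=\bigcap_{n\in\omega}U_{n}$ where each $U_{n}$ is $\beta$-open. For each $n$, since the sets $\beta A$ (with $A\subset X$) form a base of $\beta$ and $\U\in U_{n}$, there exists $A_{n}\subset X$ with $\U\in\beta A_{n}\subseteq U_{n}$; equivalently, $A_{n}\in\U$. The key observation is that any finite intersection of the $A_{n}$ lies in $\U$, hence is nonempty, so the family $\{A_{n}:n\in\omega\}$ is a filter-subbase generating a countably based filter $\H:=\{A_{n}:n\in\omega\}^{\cap\uparrow}\in\mathbb{F}_{1}$. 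By construction $\H\leq\U$, so $\U\in\beta\H$, and
\[
\beta\H=\bigcap_{n\in\omega}\beta A_{n}\subseteq\bigcap_{n\in\omega}U_{n}=G.
\]
This exhibits $\beta\H$ as a member of the proposed family sandwiched between $\U$ and the given $G_{\delta}$-neighborhood, establishing the basis property.

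The argument is essentially bookkeeping; the only non-automatic point is verifying that the countable family chosen generates a (non-degenerate) countably based filter, which is the step that uses the fact that each $A_{n}$ lies in the single ultrafilter $\U$ so that all finite intersections are automatically nonempty. Assuming the definitions of $\beta\H$ and of the Stone base, no further machinery from the paper is needed.
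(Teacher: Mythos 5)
Your proof is correct and follows essentially the same route as the paper: the identity $\beta\H=\bigcap_{n\in\omega}\beta H_{n}$ shows each $\beta\H$ with $\H\in\mathbb{F}_{1}$ is a $G_{\delta}$-set, and conversely, for an ultrafilter $\U$ in a $G_{\delta}$-set one picks basic sets $\beta A_{n}$ with $A_{n}\in\U$ inside each open layer, so that $\{A_{n}:n\in\omega\}$ has the finite intersection property and generates $\H\in\mathbb{F}_{1}$ with $\U\in\beta\H$ contained in the given set. The paper's proof is the same argument, phrased as writing each $G_{\delta}$-set (and hence each $\delta$-open set) as a union of sets $\beta\H$.
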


\begin{proof}
If $\H\in\mathbb{F}_{1}$ and $(H_{n})_{n\in\omega}$ is a filter-base
for $\H$ then $\beta\H=\bigcap_{n\in\omega}\beta H_{n}$ so that
$\beta\H$ is $G_{\delta}$ (and closed). Let $\mathbb{A}$ be a $G_{\delta}$-subset
of $\mathbb{U}X$. Then there are open subsets $\mathbb{O}_{i}$ of
$\mathbb{U}X$ with $\mathbb{A}=\bigcap_{i\in\omega}\mathbb{O}_{i}$
and for each $i$, there is $\A_{i}\subset\mathbb{P}X$ with $\mathbb{O}_{i}=\bigcup_{A\in\A_{i}}\beta A$.
Let $\U\in\mathbb{A}$. Then for each $i\in\omega$, there is $A_{i}\in\A_{i}$
with $\U\in\beta A_{i}$ so that 
\[
\U\in\bigcap_{i\in\omega}\beta A_{i}\subset\bigcap_{i\in\omega}\mathbb{O}_{i}\subset\mathbb{A}.
\]
Moreover, $\bigcap_{i\in\omega}\beta A_{i}$ is a $G_{\delta}$-subset
of $\mathbb{U}X$ and $\bigcap_{i\in\omega}\beta A_{i}\neq\emptyset$
ensures that $\{A_{i}:\in\omega\}$ has the finite intersection property
and generates a filter $\H\in\mathbb{F}_{1}$ with 
\[
\beta\H=\bigcap_{i\in\omega}\beta A_{i}.
\]

Hence every $G_{\delta}$-set is a union of $G_{\delta}$-sets of
the form $\beta\H$ for $\H\in\mathbb{F}_{1}$ and thus every $\delta$-open
set is also a union of such sets.
\end{proof}
Note also that if $\mathbb{H}\subset\mathbb{F}X$ is a family of filters
on $X$, then
\begin{equation}
\beta\Big(\bigwedge_{\H\in\mathbb{H}}\H\Big)=\cl_{\beta}\Big(\bigcup_{\H\in\mathbb{H}}\beta\H\Big),\label{eq:betainf}
\end{equation}
because 
\[
\beta\Big(\bigwedge_{\H\in\mathbb{H}}\H\Big)=\beta\Big(\bigwedge_{\H\in\mathbb{H}}\bigwedge_{\U\in\beta\H}\U\Big)=\beta\Big(\bigwedge_{\U\in\bigcup_{\H\in\mathbb{H}}\beta\H}\U\Big)=\cl_{\beta}\Big(\bigcup_{\H\in\mathbb{H}}\beta\H\Big).
\]

Since for $f:X\to Y$, $B\subset Y$, and $\F\in\mathbb{F}X$, 
\[
(\mathbb{U}f)^{-1}\left(\beta B\right)=\beta\left(f^{-1}[B]\right)
\]
and 
\[
\mathbb{U}f[\beta\F]=\beta\left(f[\F]\right),
\]
by \cite[Corollary II.6.7]{DM.book}, we conclude that
\begin{lem}
\label{lem:Uf} If $f:X\to Y$ then $\mathbb{U}f:\mathbb{U}X\to\mathbb{U}Y$
is a continuous perfect map.
\end{lem}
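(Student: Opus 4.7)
The plan is to derive continuity, closedness, and compactness of fibers separately, each directly from one of the two identities stated just above the lemma together with the description of closed sets in the Stone topology.

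First I would handle continuity. Since $\{\beta B : B\subset Y\}$ is by definition a base for the Stone topology on $\mathbb{U}Y$, it suffices to check that the preimage of each basic open set under $\mathbb{U}f$ is open in $\mathbb{U}X$. The identity $(\mathbb{U}f)^{-1}(\beta B)=\beta(f^{-1}[B])$ provides exactly this: the preimage of a basic clopen set is itself a basic clopen set, so $\mathbb{U}f$ is continuous (and in fact continuous with respect to the finer $G_\delta$-topology as well, though that is not needed here).

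Next I would show $\mathbb{U}f$ is a closed map. The non-empty closed subsets of $(\mathbb{U}X,\beta)$ are exactly the sets of the form $\beta\mathcal{F}$ for some filter $\mathcal{F}\in\mathbb{F}X$, as recalled in the paragraph introducing $\beta\mathcal{H}$. Applying the second identity, $\mathbb{U}f[\beta\mathcal{F}]=\beta(f[\mathcal{F}])$, which is again of the form $\beta\mathcal{G}$ for a filter $\mathcal{G}\in\mathbb{F}Y$ (namely $\mathcal{G}=f[\mathcal{F}]$), so it is closed in $(\mathbb{U}Y,\beta)$. The image of the empty set is empty, so $\mathbb{U}f$ sends closed sets to closed sets.

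Finally, for compactness of fibers: for each $\mathcal{U}\in\mathbb{U}Y$, the fiber $(\mathbb{U}f)^{-1}(\{\mathcal{U}\})$ is the intersection $\bigcap_{U\in\mathcal{U}}\beta(f^{-1}[U])$ by the first identity (and the fact that singletons in the Hausdorff space $\mathbb{U}Y$ are intersections of basic clopen neighborhoods). This is closed in $\mathbb{U}X$, and $(\mathbb{U}X,\beta)$ is compact, so the fiber is compact. Combined with the previous two steps, $\mathbb{U}f$ is a continuous, closed map with compact fibers, hence perfect. No step is a real obstacle here; the only mild subtlety worth flagging is that the convention on perfect maps in the paper includes closedness (this is standard and agrees with the reference to \cite[Corollary II.6.7]{DM.book}), so no separation axiom on the codomain needs to be invoked beyond what the Stone topology already provides.
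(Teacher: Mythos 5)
Your proposal is correct and follows essentially the same route as the paper: the paper derives the lemma from exactly the two displayed identities $(\mathbb{U}f)^{-1}(\beta B)=\beta(f^{-1}[B])$ and $\mathbb{U}f[\beta\F]=\beta(f[\F])$, merely delegating the conclusion to the cited corollary of \cite{DM.book}, whereas you spell out the verification (basic preimages clopen, images of closed sets $\beta\F$ closed via the characterization of closed sets, fibers closed in the compact space $\mathbb{U}X$). The only remark worth adding is that once continuity is established, closedness and compactness of fibers also follow at once from the general fact that a continuous map from a compact space to a Hausdorff space is perfect.
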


\subsection{Convergences}

Recall that a convergence $\xi$ on a set $X$ is a relation between
$X$ and the set $\mathbb{F}X$ of filters on $X$ (that is, $\xi\subset X\times\mathbb{F}X$),
satisfying $(x,\{x\}^{\uparrow})\in\xi$ for all $x\in X$ and 
\[
\F\leq\G\then\xi^{-}(\F)\subset\xi^{-}(\G)
\]
for every filters $\F,\G\in\mathbb{F}X$. We interpret $(x,\F)\in\xi$
as $\F$ \emph{converges to $x$ }for $\xi$ and hence we use the
alternative notations
\[
x\in\lm_{\xi}\F\iff\F\in\lm_{\xi}^{-}(x)
\]
for $(x,\F)\in\xi$. In those terms, the two requirements on $\xi\subset X\times\mathbb{F}X$
to be a convergence become
\begin{gather}
x\in\lm_{\xi}\{x\}^{\uparrow}\label{eq:centered}\\
\F\leq\G\then\lm_{\xi}\F\subset\lm_{\xi}\G,
\end{gather}
for every $x\in X$ and filters $\F,\G\in\mathbb{F}X$.

A map $f$ between two convergence spaces $(X,\xi)$ and $(Y,\sigma)$
is \emph{continuous} if for every $\F\in\mathbb{F}X$ and $x\in X$,
\[
x\in\lm_{\xi}\F\then f(x)\in\lm_{\sigma}f[\F].
\]

Consistently with \cite{DM.book}, we denote by $|\xi|$ the underlying
set of a convergence $\xi$, and, if $(X,\xi)$ and $(Y,\sigma)$
are two convergence spaces we often write $f:|\xi|\to|\sigma|$ instead
of $f:X\to Y$ even though one may see it as improper since many different
convergences have the same underlying set. This allows to talk about
the continuity of $f:|\xi|\to|\sigma|$ without having to repeat for
what structure. 

Every topology can be seen as a convergence. Indeed, if $\tau$ is
a topology and $\N_{\tau}(x)$ denotes the neighborhood filter of
$x$ for $\tau$, then 
\[
x\in\lim\F\iff\F\geq\N_{\tau}(x)
\]
defines a convergence that completely characterizes $\tau$. Moreover,
a function between two topological space is continuous in the topological
sense if and only if it is continuous in the sense of convergences.
Hence, we do not distinguish between a topology $\tau$ and the convergence
it induces, thus embedding the category $\mathbf{Top}$ of topological
spaces and continuous maps as a full subcategory of the category $\mathbf{Conv}$
of convergence spaces and continuous maps.

A point $x$ of a convergence space $(X,\xi)$ is \emph{isolated }if
$\lim_{\xi}^{-}(x)=\{\{x\}^{\uparrow}\}$. A \emph{prime }convergence
is a convergence with at most one non-isolated point.

Given two convergences $\xi$ and $\theta$ on the same set $X$,
we say that $\xi$ is \emph{finer than $\theta$ }or that $\theta$
is \emph{coarser than }$\xi$, in symbols $\xi\geq\theta$, if the
identity map from $(X,\xi)$ to $(X,\theta)$ is continuous, that
is, if $\lim_{\xi}\F\subset\lim_{\theta}\F$ for every $\F\in\mathbb{F}X$.
With this order, the set $\C(X)$ of convergences on $X$ is a complete
lattice whose greatest element is the discrete topology and least
element is the antidiscrete topology, and for which, given $\Xi\subset\C(X)$,
\[
\lm_{\bigvee\Xi}\F=\bigcap_{\xi\in\Xi}\lm_{\xi}\F\text{ and }\lm_{\bigwedge\Xi}\F=\bigcup_{\xi\in\Xi}\lm_{\xi}\F.
\]
In fact, $\mathbf{Top}$ is a reflective subcategory of $\mathbf{Conv}$
and the corresponding reflector $\T$, called \emph{topologizer},
associates to each convergence $\xi$ on $X$ its \emph{topological
modification $\T\xi$, }which is the finest topology on $X$ among
those coarser than $\xi$ (in $\mathbf{Conv}$). Concretely, $\T\xi$
is the topology whose closed sets are the subsets of $|\xi|$ that
are $\xi$-\emph{closed, }that is, subsets $C$ satisfying
\[
C\in\F^{\#}\then\lm_{\xi}\F\subset C.
\]
 A subset $O$ of $|\xi|$ is $\xi$-\emph{open} if its complement
is closed, equivalently if 
\[
\lm_{\xi}\F\cap O\neq\emptyset\then O\in\F.
\]

Given a map $f:|\xi|\to Y$, there is the finest convergence $f\xi$
on $Y$ making $f$ continuous (from $\xi$), and given $f:X\to|\sigma|$,
there is the coarsest convergence $f^{-}\sigma$ on $X$ making $f$
continuous (to $\sigma$). The convergences $f\xi$ and $f^{-}\sigma$
are called \emph{final convergence for $f$ and $\xi$ }and \emph{initial
convergence for $f$ and $\sigma$ }respectively. Note that 
\begin{equation}
f:|\xi|\to|\sigma|\text{ is continuous }\iff\xi\geq f^{-}\sigma\iff f\xi\geq\sigma.\label{eq:continuity}
\end{equation}

If $A\subset|\xi|$, the \emph{induced convergence by $\xi$ on $A$,
}or \emph{subspace convergence},\emph{ }is $i^{-}\xi$, where $i:A\to|\xi|$
is the inclusion map. If $\xi$ and $\tau$ are two convergences,
the \emph{product convergence $\xi\times\tau$ }on $|\xi|\times|\tau|$
is the coarsest convergence on $|\xi|\times|\tau|$ making both projections
continuous, that is,
\[
\xi\times\tau:=p_{\xi}^{-}\xi\vee p_{\tau}^{-}\tau,
\]
where $p_{\xi}:|\xi|\times|\tau|\to|\xi|$ and $p_{\tau}:|\xi|\times|\tau|\to|\tau|$
are the projections defined by $p_{\xi}(x,y)=x$ and $p_{\tau}(x,y)=y$
respectively.

The \emph{adherence }of a filter $\H$ on a convergence space $(X,\xi)$
is 
\begin{equation}
\adh_{\xi}\H=\bigcup_{\F\#\H}\lm_{\xi}\F=\bigcup_{\U\in\beta\H}\lm_{\xi}\U,\label{eq:adherence}
\end{equation}
so that $\adh_{\xi}\U=\lim_{\xi}\U$ for every ultrafilter $\U$.

Given a class $\mathbb{D}$ of filters, a convergence $\xi$ on $X$
is \emph{determined by adherences of }$\mathbb{D}$-\emph{filters}
if 
\[
\bigcap_{\mathbb{D}X\ni\D\#\F}\adh_{\xi}\D\subset\lm_{\xi}\F,
\]
where the reverse inclusion is always true. 
\begin{prop}
\cite[Corollary XIV.3.8]{DM.book} If $\mathbb{D}$ is an $\mathbb{F}_{0}$-composable
class of filters, then the class of convergence spaces determined
by adherences of $\mathbb{D}$-filters is concretely reflective and
the reflector $\AdhD$ is characterized by 
\[
\lm_{\AdhD\xi}\F=\bigcap_{\mathbb{D}\ni\D\#\F}\adh_{\xi}\D,
\]
and satisfies
\[
\AdhD(f^{-}\tau)=f^{-}\left(\AdhD\tau\right)
\]
for every map $f$ and convergence $\tau$ on its codomain.
\end{prop}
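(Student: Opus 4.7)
The plan is to verify in turn that the displayed formula defines a convergence coarser than $\xi$, that this convergence lies in the class, that it is the finest such, and that the construction commutes with initial lifts.

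First, I would take the displayed formula as the definition of a relation $\AdhD\xi$ between points and filters and check the two convergence axioms. Centeredness at $x$ holds because any $\D\in\mathbb{D}$ meshing with $\{x\}^{\uparrow}$ has $x$ in its grill, so the principal ultrafilter $\{x\}^{\uparrow}$ refines $\D$ and, converging to $x$, witnesses $x\in\adh_{\xi}\D$. Monotonicity uses that $\F\leq\G$ implies $\G^{\#}\subset\F^{\#}$, so the intersection for $\F$ ranges over at least the family of $\D$'s used for $\G$. To see $\AdhD\xi\leq\xi$, if $x\in\lm_{\xi}\F$ and $\mathbb{D}\ni\D\#\F$, any ultrafilter $\U$ refining $\F\vee\D$ refines $\F$ (giving $x\in\lm_{\xi}\U$) and refines $\D$ (giving $x\in\adh_{\xi}\D$).

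Second, I would check that $\AdhD\xi$ is determined by adherences of $\mathbb{D}$-filters by showing $\adh_{\AdhD\xi}\D=\adh_{\xi}\D$ for every $\D\in\mathbb{D}$. The inclusion $\supseteq$ follows from $\AdhD\xi\leq\xi$. For $\subseteq$, if $x\in\lm_{\AdhD\xi}\U$ for some $\U\geq\D$, then $\D$ meshes with $\U$ and thus appears in the intersection defining $\lm_{\AdhD\xi}\U$, so $x\in\adh_{\xi}\D$. Substituting back yields $\lm_{\AdhD\xi}\F=\bigcap_{\mathbb{D}\ni\D\#\F}\adh_{\AdhD\xi}\D$ as required. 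For the universal property, if $\theta\leq\xi$ lies in the class then $\adh_{\theta}\D\supseteq\adh_{\xi}\D$ for every $\D$, hence
\[
\lm_{\theta}\F=\bigcap_{\mathbb{D}\ni\D\#\F}\adh_{\theta}\D\supseteq\bigcap_{\mathbb{D}\ni\D\#\F}\adh_{\xi}\D=\lm_{\AdhD\xi}\F,
\]
so $\theta\leq\AdhD\xi$. Concrete reflectivity is then formal: continuity of $f:|\xi|\to|\sigma|$, i.e.\ $\sigma\leq f\xi$, combined with $\AdhD\sigma\leq\sigma$ and the universal property of $\AdhD(f\xi)$, forces continuity of $f:|\AdhD\xi|\to|\AdhD\sigma|$.

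Finally, for the commutation $\AdhD(f^{-}\tau)=f^{-}(\AdhD\tau)$: the inequality $\AdhD(f^{-}\tau)\geq f^{-}(\AdhD\tau)$ comes from concrete reflectivity applied to the continuous map $f:|f^{-}\tau|\to|\tau|$ together with $\AdhD\tau\leq\tau$. For the reverse, the strategy is to verify that $f^{-}(\AdhD\tau)$ is itself determined by adherences of $\mathbb{D}$-filters on the domain and then invoke the universal property of step three. Concretely, I would express $\adh_{f^{-}\tau}\D$ by transporting $\D$ along the graph relation of $f$ (equivalently, via $\mathbb{U}f$) and relate it to $\adh_{\tau}f[\D]$; this is exactly where the $\mathbb{F}_{0}$-composability hypothesis is indispensable, since it guarantees that the transported filters remain in $\mathbb{D}$, keeping the intersection in the right class. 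This fourth step is the main obstacle, since the other verifications are formal manipulations of the defining formula, whereas interacting with initial lifts requires $\mathbb{F}_{0}$-composability in an essential way.
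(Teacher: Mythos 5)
The paper itself states this proposition without proof, citing \cite[Corollary XIV.3.8]{DM.book}, so your argument has to stand on its own. Your first two steps do: the displayed formula defines a convergence coarser than $\xi$, it satisfies $\adh_{\AdhD\xi}\D=\adh_{\xi}\D$ for every $\D\in\mathbb{D}$ and is therefore determined by adherences of $\mathbb{D}$-filters, and the universal property (finest member of the class coarser than $\xi$) follows as you say.

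The gap lies in how you dispose of the remaining two assertions. First, concrete reflectivity is not ``formal'': from $\sigma\leq f\xi$ and isotonicity you only get $\AdhD\sigma\leq\AdhD(f\xi)$, and to pass from there to continuity of $f:|\AdhD\xi|\to|\AdhD\sigma|$ you would need $f(\AdhD\xi)\geq\AdhD(f\xi)$, which is itself an instance of the functoriality being claimed; the universal property of step two does not supply it. The honest proof is a direct computation in which $\mathbb{F}_{0}$-composability already appears: if $x\in\lim_{\AdhD\xi}\F$ and $\D\in\mathbb{D}Y$ with $\D\#f[\F]$, then $f^{-}[\D]\in\mathbb{D}X$ by composability applied to the inverse relation, $f^{-}[\D]\#\F$ by (\ref{eq:meshprod}), hence $x\in\adh_{\xi}f^{-}[\D]$, and pushing a witnessing filter forward along $f$ gives $f(x)\in\adh_{\sigma}\D$. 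Second, your strategy in step four for the ``reverse'' inequality is misdirected: showing that $f^{-}(\AdhD\tau)$ is determined by adherences of $\mathbb{D}$-filters and invoking the universal property of $\AdhD(f^{-}\tau)$ only yields $f^{-}(\AdhD\tau)\leq\AdhD(f^{-}\tau)$, i.e., the inequality you already obtained from reflectivity, not its reverse. The inequality $\AdhD(f^{-}\tau)\leq f^{-}(\AdhD\tau)$ must instead be proved directly: if $f(x)\in\lim_{\AdhD\tau}f[\F]$ and $\D\in\mathbb{D}X$ with $\D\#\F$, then $f[\D]\in\mathbb{D}Y$ by composability, $f[\D]\#f[\F]$, and $f^{-1}\left[\adh_{\tau}f[\D]\right]\subset\adh_{f^{-}\tau}\D$ (a filter on $Y$ meshing with $f[\D]$ and witnessing $f(x)\in\adh_{\tau}f[\D]$ pulls back to a filter on $X$ meshing with $\D$ whose image it refines), whence $x\in\adh_{f^{-}\tau}\D$. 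So the adherence-transport identity you name is indeed the key ingredient, but it has to carry both of these verifications itself; the universal-property detour cannot deliver either of them.
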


The classes $\mathbb{F}$, $\mathbb{F}_{1}$, $\mathbb{F}_{\wedge1}$
and $\mathbb{F}_{0}$ are all $\mathbb{F}_{0}$-composable, and we
will use the following terminology and notations:\bigskip{}

\begin{center}
\begin{tabular}{|c|c|c|}
\hline 
$\mathbb{D}$ & reflector $\AdhD$ & a convergence $\xi=\AdhD\xi$ is called a\tabularnewline
\hline 
\hline 
$\mathbb{F}$ & $\S$ & pseudotopology\tabularnewline
\hline 
$\mathbb{F}_{1}$ & $\S_{1}$ & paratopology\tabularnewline
\hline 
$\mathbb{F}_{0}$ & $\S_{0}$ & pretopology\tabularnewline
\hline 
$\mathbb{F}_{\wedge1}$ & $\S_{\wedge1}$ & hypotopology\tabularnewline
\hline 
\end{tabular}\bigskip{}
\par\end{center}

Of course, every pretopology is a paratopology and a hypotopology,
while every paratopology and every hypotopology is a pseudotopology.
Note that a convergence $\xi$ is a topology if and only if it is
a pretopology and the adherence operator restricted to principal filters
is idempotent (i.e., $\adh_{\xi}(\adh_{\xi}A)=\adh_{\xi}A$ for all
$A\subset|\xi|$), in which case it coincides with the topological
closure, denoted $\cl_{\xi}$.

Accordingly
\[
\T\leq\S_{0}\leq\S_{1}\leq\S\text{ and }\T\leq\S_{0}\leq\S_{\wedge1}\leq\S.
\]

Given a convergence $\xi$ and $x\in|\xi|$, the filter 
\begin{equation}
\V_{\xi}(x):=\bigwedge\left\{ \F:x\in\lm_{\xi}\F\right\} \label{eq:vicinity}
\end{equation}
is called the \emph{vicinity filter of} $x$ for $\xi$. In general,
$\V_{\xi}(x)$ does not need to converge to $x$ for $\xi$. It is
a simple exercise to check that $\xi$ is a pretopology if and only
if $x\in\lim_{\xi}\V_{\xi}(x)$ for each $x\in|\xi|$, and that 
\[
\V_{\xi}(x)=\V_{\S_{0}\xi}(x)
\]
 for all $x\in|\xi|$. 

Note that $\V_{\xi}(x):X\to\mathbb{F}X$ so that given $\A\subset\mathbb{P}X$
we may consider the contour 
\[
\V_{\xi}(\A)=\bigcup_{A\in\A}\bigcap_{x\in A}\V_{\xi}(x)
\]
of $\V_{\xi}(x)$ along $\A$, which satisfies
\begin{equation}
\B\#\V_{\xi}(\A)\iff\adh_{\xi}^{\natural}\B\#\A.\label{eq:Vtoadh}
\end{equation}

Given a class $\mathbb{D}$ of filters, we call a convergence $\mathbb{D}$-\emph{based}
if whenever $x\in\lim_{\xi}\F$, there is $\D\in\mathbb{D}|\xi|$
with $\D\leq\F$ and $x\in\lm_{\xi}\D$.
\begin{prop}
\cite[Corollary XIV.4.4]{DM.book} If $\mathbb{D}$ is an $\mathbb{F}_{0}$-composable
class of filters then the class of $\mathbb{D}$-based convergences
is (concretely) coreflective and the coreflector is given by
\[
\lm_{\BaseD\xi}\F=\bigcup_{\mathbb{D}\ni\D\leq\F}\lm_{\xi}\D.
\]
\end{prop}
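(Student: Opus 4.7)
The plan is to verify directly that the formula $\lm_{\BaseD\xi}\F:=\bigcup_{\mathbb{D}\ni\D\leq\F}\lm_{\xi}\D$ defines a convergence on $|\xi|$ which is $\mathbb{D}$-based, satisfies $\BaseD\xi\geq\xi$, enjoys the universal property of the coreflection onto $\mathbb{D}$-based convergences, and is functorial in $\xi$. Every step is a direct unpacking of definitions; the only substantive hypothesis used is $\mathbb{F}_{0}$-composability of $\mathbb{D}$, which gets invoked in two distinct places.

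First I would check that $\BaseD\xi$ is a convergence. For the centering axiom, the witness $\D=\{x\}^{\uparrow}$ works, since principal ultrafilters lie in any $\mathbb{F}_{0}$-composable class under the standing conventions, $\{x\}^{\uparrow}\leq\{x\}^{\uparrow}$, and $x\in\lm_{\xi}\{x\}^{\uparrow}$. Isotonicity is immediate: a witness $\D\leq\F$ remains a witness for any $\G\geq\F$. That $\BaseD\xi\geq\xi$ then follows from isotonicity of $\xi$, since $\D\leq\F$ gives $\lm_{\xi}\D\subseteq\lm_{\xi}\F$, so the union defining $\lm_{\BaseD\xi}\F$ sits inside $\lm_{\xi}\F$. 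The $\mathbb{D}$-basedness of $\BaseD\xi$ is tautological: if $\D$ witnesses $x\in\lm_{\BaseD\xi}\F$, then the same $\D$ witnesses $x\in\lm_{\BaseD\xi}\D$, with $\D\leq\F$ and $\D\in\mathbb{D}$.

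Next I would establish the universal property. Let $\sigma$ be a $\mathbb{D}$-based convergence with $\sigma\geq\xi$ and take $x\in\lm_{\sigma}\F$; $\mathbb{D}$-basedness gives $\D\in\mathbb{D}$ with $\D\leq\F$ and $x\in\lm_{\sigma}\D$, and $\sigma\geq\xi$ upgrades this to $x\in\lm_{\xi}\D$, whence $x\in\lm_{\BaseD\xi}\F$. Thus $\sigma\geq\BaseD\xi$, which combined with the previous items is the required universal property. Finally, for functoriality I would check that a continuous $f:|\xi|\to|\tau|$ remains continuous as a map $|\BaseD\xi|\to|\BaseD\tau|$: given $x\in\lm_{\BaseD\xi}\F$ witnessed by $\D\in\mathbb{D}$, $\mathbb{F}_{0}$-composability (applied to the graph of $f$) yields $f[\D]\in\mathbb{D}$ with $f[\D]\leq f[\F]$, and continuity of $f$ gives $f(x)\in\lm_{\tau}f[\D]$, so $f[\D]$ witnesses $f(x)\in\lm_{\BaseD\tau}f[\F]$. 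The only point warranting real attention in this whole argument is the consistent use of $\mathbb{F}_{0}$-composability---first to place principal filters in $\mathbb{D}$ for the centering axiom, and second to push $\mathbb{D}$ through images for functoriality---so there is no genuine obstacle beyond this bookkeeping.
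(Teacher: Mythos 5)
Your proposal is correct. The paper does not prove this proposition at all; it is quoted from \cite[Corollary XIV.4.4]{DM.book}, and your direct verification (convergence axioms, $\BaseD\xi\geq\xi$, tautological $\mathbb{D}$-basedness, the universal property against any $\mathbb{D}$-based $\sigma\geq\xi$, and functoriality via the graph relation and $\mathbb{F}_{0}$-composability) is exactly the standard argument that the citation stands for. The one point worth making explicit is your appeal to ``standing conventions'' for $\{x\}^{\uparrow}\in\mathbb{D}$: $\mathbb{F}_{0}$-composability together with the degenerate-filter convention alone does not literally yield this, but as soon as $\mathbb{D}$ contains a single proper filter on some nonempty set it does (apply the relation $R=X\times\{y\}$, so $R[\D]=\{y\}^{\uparrow}$), and this is indeed the situation for all classes considered here, so the gap is only one of bookkeeping, not substance.
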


We will be particularly interested in the case of $\mathbb{F}_{1}$-based
convergences, or \emph{first-countable }convergences, in which case
we use the notation $\I_{1}$ for $\operatorname{B}_{\mathbb{F}_{1}}$.

It turns out (see \cite{dolecki1996convergence} or \cite[Section XIV.6]{DM.book}
for details) that for a topology $\xi$,
\begin{eqnarray*}
\xi\text{ sequential} & \iff & \xi\geq\T\I_{1}\xi\\
\xi\text{ Fréchet-Urysohn} & \iff & \xi\geq\S_{0}\I_{1}\xi\\
\xi\text{ strongly Fréchet} & \iff & \xi\geq\S_{1}\I_{1}\xi\\
\xi\text{ bisequential} & \iff & \xi\geq\S\I_{1}\xi\\
\xi\text{ first-countable} & \iff & \xi\geq\I_{1}\xi,
\end{eqnarray*}
and we take these functorial inequalities as the definitions of these
notions for general convergences.

A convergence is called \emph{Hausdorff }if the cardinality of $\lim\F$
is at most one, for every filter $\F$. Of course, a topology is Hausdorff
in the usual topological sense if and only if it is in the convergence
sense. 

Given two convergences $\xi$ and $\theta$ on the same set, we say
that $\xi$ is $\theta$-\emph{regular} if 
\[
\lm_{\xi}\F\subset\lm_{\xi}\adh_{\theta}^{\natural}\F
\]
for every filter $\F$. If $\theta=\xi$, the convergence is simply
called \emph{regular }and if $\theta=\T\xi$, then we say that $\xi$
is $\T$-\emph{regular}. A topology is regular in the usual sense
if and only if it is regular (or of course $\T$-regular) in the convergence
sense.

\subsection{Compactness}

A primary source for complements on compactness and its variants in
the context of convergence spaces is \cite{D.comp}. 

\subsubsection{Filter compactness}

A subset $K$ of a convergence space $(X,\xi)$ is \emph{compact }if
\[
\forall\F\in\mathbb{F}X\;K\in\F^{\#}\then\adh_{\xi}\F\cap K\neq\emptyset.
\]

More generally, given a class $\mathbb{D}$ of filters, a family $\A$
of subsets of $|\xi|$ is $\mathbb{D}$-\emph{compact} at another
family $\B$ if
\[
\forall\D\in\mathbb{D}X\;\D\#\A\then\adh_{\xi}\D\in\B^{\#}.
\]
A family of subsets of $(X,\xi)$ that is $\mathbb{D}$-compact at
itself is simply called $\mathbb{D}$-\emph{compact}. It is called
\emph{$\mathbb{D}$-compactoid }if it is $\mathbb{D}$-compact at
$\{X\}$\emph{. }In the case $\mathbb{D}=\mathbb{F}$, we omit the
class of filters, so that a subset $K$ of a convergence space is
compact(oid) if and only if its principal filter $\{K\}^{\uparrow}$
is a compact(oid) filter. Let $\mathbb{K}_{\mathbb{D}}$ denote the
class of $\mathbb{D}$-compactoid filters.

A convergence $\xi$ is \emph{locally compact(oid) }if every convergent
filter has a compact(oid) element. 

Let $\mathcal{K}(\xi)$ or simply $\mathcal{K}$ denote the set of
compactoid subsets of $|\xi|.$ Note that unless $\xi$ is compact,
the family $\mathcal{K}_{c}$ of complements of compactoid sets is
a filter-base, generating the \emph{cocompactoid filter}.
\begin{lem}
\label{lem:consonantcovers} Let $\D\in\text{\ensuremath{\mathbb{D=\cl^{\natural}\mathbb{D}}\subset}}\mathbb{F}X$
and let $\xi$ be a non-compact convergence. Then 
\[
\adh_{\T\xi}\D=\emptyset\iff\cl^{\natural}\D\geq(\mathcal{K}_{\mathbb{D}})_{c}.
\]
\end{lem}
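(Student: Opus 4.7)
The plan is to reduce both sides of the equivalence to a single identity about $\ker\cl^{\natural}\D$. First I would observe that because $\T\xi$ has precisely the $\xi$-closed sets as closed sets, its Kuratowski closure coincides with $\cl_{\xi}$, and so the standard topological identity $\adh_{\tau}\F=\bigcap_{F\in\F}\cl_{\tau}F$ applied with $\tau=\T\xi$ gives
\[
\adh_{\T\xi}\D=\ker\cl^{\natural}\D.
\]
On the other side, $\cl^{\natural}\D\ge(\mathcal{K}_{\mathbb{D}})_{c}$ just means that no $\mathbb{D}$-compactoid set $K$ meshes with $\cl^{\natural}\D$; equivalently, for every such $K$ some $D\in\D$ satisfies $\cl_{\xi}D\cap K=\emptyset$. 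The non-compactness hypothesis is used only to guarantee that $(\mathcal{K}_{\mathbb{D}})_{c}$ is a genuine filter-base.

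For the direction $(\Leftarrow)$, I would use that every singleton $\{x\}$ is $\mathbb{D}$-compactoid: the only filter meshing with $\{x\}$ is $\{x\}^{\uparrow}$, which converges to $x$, so its adherence is nonempty. Applying the hypothesis to $K=\{x\}$ produces, for every $x\in X$, some $D\in\D$ with $x\notin\cl_{\xi}D$, whence $\ker\cl^{\natural}\D=\emptyset$.

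For the converse I would proceed by contradiction. Assume $\adh_{\T\xi}\D=\emptyset$ and that some $K\in\mathcal{K}_{\mathbb{D}}$ meshes with $\cl^{\natural}\D$. This is where the hypothesis $\mathbb{D}=\cl^{\natural}\mathbb{D}$ enters decisively: it ensures $\cl^{\natural}\D\in\mathbb{D}$, so $\mathbb{D}$-compactoidness of $K$ applies to it and yields an ultrafilter $\U\ge\cl^{\natural}\D$ with some $x\in\lim_{\xi}\U$. Since $\xi\ge\T\xi$, also $x\in\lim_{\T\xi}\U$; as $\cl_{\xi}D\in\U$ for each $D\in\D$ and the closure of the topology $\T\xi$ is idempotent, one obtains $x\in\cl_{\T\xi}(\cl_{\T\xi}D)=\cl_{\xi}D$ for every $D\in\D$, i.e.\ $x\in\ker\cl^{\natural}\D=\adh_{\T\xi}\D$, a contradiction.

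The main obstacle is this last step: transferring a $\xi$-convergent ultrafilter into a point of $\ker\cl^{\natural}\D$. The two pieces that make it work are (i) that $\cl_{\xi}$ \emph{is} the Kuratowski closure of the topology $\T\xi$, so idempotency is available once we have passed to $\T\xi$-convergence, and (ii) the stability assumption $\cl^{\natural}\mathbb{D}\subset\mathbb{D}$, which is exactly what allows $\mathbb{D}$-compactoidness of $K$ to be invoked against the closure filter $\cl^{\natural}\D$ rather than merely against $\D$.
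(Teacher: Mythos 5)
Your proof is correct and follows essentially the same route as the paper's: both directions hinge on the stability hypothesis giving $\cl^{\natural}\D\in\mathbb{D}$ so that $\mathbb{D}$-compactoidness of $K$ can be applied to the closure filter, on the idempotency of the $\T\xi$-closure (so $\adh_{\T\xi}\D=\ker\cl^{\natural}\D$), and on singletons being compactoid; your forward direction is just the paper's direct argument run contrapositively through a convergent ultrafilter finer than $\cl^{\natural}\D$. One small slip: $\{x\}^{\uparrow}$ is not the \emph{only} filter meshing $\{x\}$ — rather, every filter meshing $\{x\}$ meshes $\{x\}^{\uparrow}$ and hence has $x$ in its adherence — but this does not affect the conclusion that singletons are $\mathbb{D}$-compactoid.
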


\begin{proof}
If 
\[
\adh_{\xi}\D\subset\adh_{\T\xi}\D=\emptyset=\bigcap_{D\in\D}\cl_{\xi}D,
\]
then $\adh_{\xi}\cl^{\natural}\D=\emptyset$. Thus $K\notin(\cl^{\natural}\D)^{\#}$
for every $K\in\mathcal{K}_{\mathbb{D}}(\xi)$ because $\cl^{\natural}\D\in\mathbb{D}$;
in other words, $K^{c}\in\cl^{\natural}\D$, that is, $\cl^{\natural}\D\geq(\mathcal{K}_{\mathbb{D}})_{c}$.
Assume conversely that $\cl^{\natural}\D\geq(\mathcal{K}_{\mathbb{D}})_{c}$.
For every $x\in|\xi|$, $\{x\}^{c}\in\cl^{\natural}\D$ because $\{x\}\in\mathcal{K}$.
Hence $\{x\}\notin(\cl^{\natural}\D)^{\#}$, that is, there is $U\in\O(x)$
with $U\notin\D^{\#}$. As a result, $x\notin\adh_{\T\xi}\D$.
\end{proof}
Recall from \cite{DM.book} that a filter $\F$ on a convergence space
$(X,\xi)$ is called \emph{closed }if $\adh_{\xi}\F=\ker\F$ and that
a filter on a Hausdorff convergence space is compact if and only if
it is closed and compactoid \cite[Corollary IX.7.15]{DM.book}. 
\begin{prop}
\label{prop:compactoidtocompact} If $\xi$ is $\theta$-regular and
$\F$ is a filter on $|\xi|$ then $\F$ is compactoid if and only
if $\adh_{\theta}^{\natural}\F$ is compactoid. 
\end{prop}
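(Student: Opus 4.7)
The plan is to handle the two directions separately. The implication ``$\adh_\theta^\natural\F$ compactoid $\Rightarrow$ $\F$ compactoid'' is immediate and requires no regularity at all: since $\adh_\theta$ is expansive, $\adh_\theta^\natural\F\leq\F$, hence $\beta\F\subset\beta(\adh_\theta^\natural\F)$, so every ultrafilter refining $\F$ is one of the ultrafilters whose $\xi$-limit is nonempty by assumption.

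For the converse, assume $\F$ is compactoid and fix an arbitrary $\U\in\beta(\adh_\theta^\natural\F)$. The heart of the proof is the construction of an ultrafilter $\mathcal{V}\geq\F$ with $\adh_\theta^\natural\mathcal{V}\leq\U$. Once this is done, the conclusion is easy: compactoidness of $\F$ yields some $x\in\lim_\xi\mathcal{V}$, the $\theta$-regularity of $\xi$ gives $x\in\lim_\xi\adh_\theta^\natural\mathcal{V}$, and finally the monotonicity axiom of convergence combined with $\U\geq\adh_\theta^\natural\mathcal{V}$ produces $x\in\lim_\xi\U$, showing that every ultrafilter refining $\adh_\theta^\natural\F$ converges.

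To construct $\mathcal{V}$, I consider the family $\mathcal{C}:=\{A\subset|\xi|:\adh_\theta A\in\U\}$. The hypothesis $\U\geq\adh_\theta^\natural\F$ is exactly the statement $\F\subset\mathcal{C}$, and monotonicity of $\adh_\theta$ makes $\mathcal{C}$ upward closed. Zorn's lemma applied to the poset of filters that refine $\F$ and are contained in $\mathcal{C}$ (ordered by inclusion; chains admit unions as upper bounds) produces a maximal such filter $\mathcal{V}$. The main obstacle is showing that $\mathcal{V}$ is an ultrafilter, and this is delicate because $\mathcal{C}$ is \emph{not} in general stable under finite intersections.

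Suppose, for contradiction, $\mathcal{V}$ is not an ultrafilter and pick $A$ with $A,A^{c}\in\mathcal{V}^{\#}$. If neither $\mathcal{V}\vee\{A\}^{\uparrow}$ nor $\mathcal{V}\vee\{A^{c}\}^{\uparrow}$ is contained in $\mathcal{C}$, there exist $V_{1},V_{2}\in\mathcal{V}$ with $\adh_\theta(A\cap V_{1})\notin\U$ and $\adh_\theta(A^{c}\cap V_{2})\notin\U$. Set $V=V_{1}\cap V_{2}\in\mathcal{V}\subset\mathcal{C}$. The identity
\[
\adh_\theta V=\adh_\theta(A\cap V)\cup\adh_\theta(A^{c}\cap V),
\]
which follows from $\adh_\theta H=\bigcup_{\mathcal{W}\in\beta H}\lim_\theta\mathcal{W}$ together with the fact that any ultrafilter through $V$ picks exactly one side of the partition $V=(A\cap V)\sqcup(A^{c}\cap V)$, combined with $\adh_\theta V\in\U$ and the ultrafilter property of $\U$, forces one of the two terms on the right into $\U$, contradicting the choice of $V_{1},V_{2}$. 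Hence one of the two extensions of $\mathcal{V}$ lies in $\mathcal{C}$, contradicting maximality. Therefore $\mathcal{V}$ is an ultrafilter, and the argument sketched in the second paragraph completes the proof.
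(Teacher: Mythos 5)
Your proof is correct, but it follows a genuinely different route from the paper's. The paper settles the nontrivial direction in two lines by invoking the contour machinery of \cite{DM.book}: for an arbitrary filter $\H$, the mesh relation $\H\#\adh_{\theta}^{\natural}\F$ is equivalent to $\V_{\theta}(\H)\#\F$ (Prop.\ VI.5.2 there), so compactoidness of $\F$ yields $\adh_{\xi}\V_{\theta}(\H)\neq\emptyset$, and a cited regularity result (Prop.\ VIII.4.1) converts this into $\adh_{\xi}\H\neq\emptyset$; no reduction to ultrafilters is needed. You instead reduce compactoidness to the convergence of all finer ultrafilters (legitimate, since $\adh_{\xi}\H=\bigcup_{\U\in\beta\H}\lm_{\xi}\U$), and for a fixed $\U\geq\adh_{\theta}^{\natural}\F$ you build by Zorn's lemma a maximal filter $\mathcal{V}$ with $\F\subset\mathcal{V}\subset\mathcal{C}=\{A:\adh_{\theta}A\in\U\}$; finite additivity of $\adh_{\theta}$ on sets together with the ultrafilter property of $\U$ (plus monotonicity of $\adh_{\theta}$, which is what turns ``one of $\adh_{\theta}(A\cap V)$, $\adh_{\theta}(A^{c}\cap V)$ lies in $\U$'' into a contradiction with the choice of $V_{1},V_{2}$) shows $\mathcal{V}$ is an ultrafilter, and $\theta$-regularity applied to $\mathcal{V}$ finishes. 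In effect you re-prove from scratch the standard lemma that every ultrafilter finer than $\adh_{\theta}^{\natural}\F$ is finer than $\adh_{\theta}^{\natural}\mathcal{V}$ for some $\mathcal{V}\in\beta\F$, which is the ultrafilter shadow of the duality the paper quotes. Your version buys self-containedness (only Zorn and elementary properties of adherence, no contours or external propositions); the paper's version buys brevity and works with arbitrary meshing filters directly, the regularity hypothesis entering through a single cited result rather than an explicit maximality construction.
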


\begin{proof}
If $\adh_{\theta}^{\natural}\F$ is compactoid so is any finer filter,
in particular $\F$. Assume conversely that $\F$ is compactoid and
let $\H\#\adh_{\theta}^{\natural}\F$, equivalently(\footnote{Recall the definition (\ref{eq:contour}) of contours.}),
$\V_{\theta}(\H)\#\F$ by \cite[Prop. VI.5.2]{DM.book}. Thus $\adh_{\xi}\V_{\theta}(\H)\neq\emptyset$.
In view of \cite[Prop. VIII.4.1]{DM.book}, $\adh_{\xi}\H\neq\emptyset$.
Thus $\adh_{\theta}^{\natural}\F$ is compactoid.
\end{proof}
\begin{cor}
\label{cor:compactadherence} If $\xi$ is a $\T$-regular pseudotopology
and $\F$ is a filter on $|\xi|$, the following are equivalent:
\begin{enumerate}
\item $\F$ is compactoid;
\item $\cl_{\xi}^{\natural}\F$ is compactoid;
\item $\cl_{\xi}^{\natural}\F$ is compact.
\end{enumerate}
In this case, $\adh_{\xi}(\cl_{\xi}^{\natural}\F)=\ker\cl_{\xi}^{\natural}\F$
is a non-empty compact set.
\end{cor}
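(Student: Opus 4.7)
The plan is to derive (1)$\Leftrightarrow$(2) from Proposition~\ref{prop:compactoidtocompact} with $\theta=\T\xi$, observe that (3)$\Rightarrow$(2) is immediate, and prove (2)$\Rightarrow$(3) together with the closing assertion by a direct ultrafilter computation exploiting that each $\cl_{\xi}F$ is $\xi$-closed.

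For (1)$\Leftrightarrow$(2): since $\xi$ is $\T$-regular and $\adh_{\T\xi}A=\cl_{\xi}A$ for every set $A$ (the closed sets of $\T\xi$ being exactly the $\xi$-closed sets), Proposition~\ref{prop:compactoidtocompact} applied with $\theta=\T\xi$ yields this equivalence directly, since $\adh_{\T\xi}^{\natural}\F=\cl_{\xi}^{\natural}\F$.

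For (2)$\Rightarrow$(3), let $\H$ be a filter with $\H\#\cl_{\xi}^{\natural}\F$ and fix $F\in\F$. The join $\H\vee\cl_{\xi}^{\natural}\F$ is a filter; pick any ultrafilter $\U$ refining it. Since $\U\geq\cl_{\xi}^{\natural}\F$ and $\cl_{\xi}^{\natural}\F$ is compactoid, $\lm_{\xi}\U\neq\emptyset$. Since $\cl_{\xi}F\in\U$ and $\cl_{\xi}F$ is $\xi$-closed, $\lm_{\xi}\U\subset\cl_{\xi}F$. Since $\U\geq\H$, $\lm_{\xi}\U\subset\adh_{\xi}\H$. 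Hence $\adh_{\xi}\H\cap\cl_{\xi}F\neq\emptyset$, and this holding for every $F\in\F$, together with $\F$ being closed under finite intersections, gives $\adh_{\xi}\H\in(\cl_{\xi}^{\natural}\F)^{\#}$, i.e.\ the compactness of $\cl_{\xi}^{\natural}\F$.

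For the final statement, set $K:=\bigcap_{F\in\F}\cl_{\xi}F=\ker\cl_{\xi}^{\natural}\F$. The equality $\adh_{\xi}\cl_{\xi}^{\natural}\F=K$ splits into: $\subset$ from the same $\xi$-closedness argument applied to any $\U\geq\cl_{\xi}^{\natural}\F$, whose limit must lie in every $\cl_{\xi}F$; and $\supset$ because for $x\in K$, the principal ultrafilter $\{x\}^{\uparrow}$ refines $\cl_{\xi}^{\natural}\F$ and converges to $x$. Non-emptiness of $K$ is immediate from compactoidness. For compactness of $K$ as a set, note first that $K$ is $\xi$-closed as an intersection of $\xi$-closed sets, then repeat the template of (2)$\Rightarrow$(3): given $\H$ with $K\in\H^{\#}$, the filter $\H\vee\{K\}^{\uparrow}$ refines $\cl_{\xi}^{\natural}\F$ (as $K\subset\cl_{\xi}F$ for every $F$), so any ultrafilter extending it has non-empty $\xi$-limit contained in $\adh_{\xi}\H\cap K$. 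The only subtle point worth flagging is that, despite the Hausdorff hypothesis required for the general ``closed plus compactoid equals compact'' characterization, Hausdorffness is avoidable here because the specific closed generators $\cl_{\xi}F$ pin ultrafilter limits down automatically.
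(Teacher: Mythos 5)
Your proposal is correct. It follows the paper's skeleton exactly for the equivalences — (1)$\Leftrightarrow$(2) is Proposition \ref{prop:compactoidtocompact} with $\theta=\T\xi$, and (3)$\Rightarrow$(2) is trivial — but it handles (2)$\Rightarrow$(3) and the closing assertion by a different, self-contained mechanism. The paper locates the limit of an ultrafilter $\U\geq\cl_{\xi}^{\natural}\F$ inside $\ker\cl_{\xi}^{\natural}\F$ by passing to the topological modification: a $\T\xi$-limit point $x$ of $\U$ satisfies $\N_{\T\xi}(x)\#\F$, hence $x\in\cl_{\xi}F$ for every $F\in\F$. You instead use that each $\cl_{\xi}F$ is $\xi$-closed and belongs to $\U$, which pins $\lm_{\xi}\U\subset\cl_{\xi}F$ directly; the two devices are interchangeable, and both use only compactoidness of $\cl_{\xi}^{\natural}\F$ to get $\lm_{\xi}\U\neq\emptyset$. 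The genuine divergence is the final statement: the paper obtains $\adh_{\xi}(\cl_{\xi}^{\natural}\F)=\ker\cl_{\xi}^{\natural}\F$ and its compactness by citing \cite[Cor. IX.7.16]{DM.book}, whereas you reprove it from scratch (both inclusions, non-emptiness, and compactness of $K$ via the same ultrafilter template). What your route buys is independence from the book's ``closed plus compactoid equals compact'' machinery, and it makes explicit — as you rightly flag — that no Hausdorff hypothesis is needed here; the cost is a somewhat longer write-up. Two small remarks: your appeal to ``$\F$ being closed under finite intersections'' in (2)$\Rightarrow$(3) is unnecessary, since the single ultrafilter $\U\geq\H\vee\cl_{\xi}^{\natural}\F$ already contains $\cl_{\xi}F$ for every $F\in\F$ simultaneously, so its limit lies in the kernel at once; and the observation that $K$ is $\xi$-closed is likewise not needed for your compactness argument, since the generators $\cl_{\xi}F$ already confine the limits.
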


\begin{proof}
The equivalence of (1) and (2) is Proposition \ref{prop:compactoidtocompact}
for $\theta=\T\xi$. By definition, (3) implies (2), so we only need
to prove that (2) implies (3).

To this end, let $\U\in\beta(\cl_{\xi}^{\natural}\F).$ By compactoidness,
$\lim_{\xi}\U\neq\emptyset$. Moreover, if $x\in\lim_{\xi}\U\subset\lim_{\T\xi}\U$,
we have $x\in\lim_{\T\xi}\N_{\xi}(\U)$. Additionally $\N_{\xi}(\U)\#\F$
because $\U\#\cl_{\xi}^{\natural}\F$, so that 
\[
x\in\bigcap_{F\in\F}\cl_{\xi}F=\ker(\cl_{\xi}^{\natural}\F).
\]
Thus $\lim_{\xi}\U\cap\ker(\cl_{\xi}^{\natural}\F)\neq\emptyset$
and $\cl_{\xi}^{\natural}\F$ is a compact filter. By \cite[Cor. IX.7.16]{DM.book},
$\adh_{\xi}(\cl_{\xi}^{\natural}\F)=\ker\cl_{\xi}^{\natural}\F$ is
a non-empty compact set.
\end{proof}
Note also that
\begin{prop}
\label{prop:compactfiltertoset} If $\xi$ is a topology then $\D$
is compact(oid) if and only if $\O_{\xi}(\D)$ is.
\end{prop}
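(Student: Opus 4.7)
The plan is to reduce both equivalences to a single meshing identity. In a topology $\xi$, open neighborhoods form a base for the neighborhood filter, so $\O_{\xi}(x)=\V_{\xi}(x)$ for each $x$, and consequently $\O_{\xi}(\D)=\V_{\xi}(\D)$ as filters. Moreover $\adh_{\xi}A=\cl_{\xi}A$ for every set $A$, so $\adh_{\xi}^{\natural}=\cl_{\xi}^{\natural}$ on principal filters. Equation~(\ref{eq:Vtoadh}) thus specializes to the crucial identity
\[
\F\#\O_{\xi}(\D)\iff\cl_{\xi}^{\natural}\F\#\D,
\]
valid for every filter $\F$ on $|\xi|$.

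For the compactoid statement, note that every open superset of $D\in\D$ belongs to $\D$, so $\O_{\xi}(\D)\leq\D$; this yields the direction ``$\O_{\xi}(\D)$ compactoid $\then$ $\D$ compactoid'' for free, since any filter meshing $\D$ already meshes $\O_{\xi}(\D)$. Conversely, assume $\D$ is compactoid and let $\F\#\O_{\xi}(\D)$. The key identity gives $\cl_{\xi}^{\natural}\F\#\D$, so compactoidness of $\D$ yields $\adh_{\xi}(\cl_{\xi}^{\natural}\F)^{\uparrow}\neq\emptyset$. In a topology $\adh_{\xi}\G=\bigcap_{G\in\G}\cl_{\xi}G$, whence
\[
\adh_{\xi}(\cl_{\xi}^{\natural}\F)^{\uparrow}=\bigcap_{F\in\F}\cl_{\xi}\cl_{\xi}F=\bigcap_{F\in\F}\cl_{\xi}F=\adh_{\xi}\F,
\]
so $\adh_{\xi}\F\neq\emptyset$, proving $\O_{\xi}(\D)$ is compactoid.

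The compact case runs along the same lines, with the additional observation that $\adh_{\xi}\F$ is closed in a topology. Applying the key identity to the principal filter generated by $\adh_{\xi}\F$ gives $\adh_{\xi}\F\#\D\iff\adh_{\xi}\F\#\O_{\xi}(\D)$. The implications then chase through: for ``$\O_{\xi}(\D)$ compact $\then$ $\D$ compact'', any $\F\#\D$ satisfies $\F\#\O_{\xi}(\D)$, hence $\adh_{\xi}\F\#\O_{\xi}(\D)$, hence $\adh_{\xi}\F\#\D$; conversely, if $\D$ is compact and $\F\#\O_{\xi}(\D)$, then $(\cl_{\xi}^{\natural}\F)^{\uparrow}\#\D$ yields $\adh_{\xi}\F=\adh_{\xi}(\cl_{\xi}^{\natural}\F)^{\uparrow}$ meshing $\D$, hence meshing $\O_{\xi}(\D)$. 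No real obstacle arises: once the meshing equivalence is in hand the argument is purely formal; the one place where being a topology (not just a pretopology) is used is the identity $\adh_{\xi}\G=\bigcap_{G\in\G}\cl_{\xi}G$, which lets us replace $\F$ by $(\cl_{\xi}^{\natural}\F)^{\uparrow}$ without altering the adherence.
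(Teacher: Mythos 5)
Your proof is correct and follows essentially the same route as the paper's: both hinge on the meshing identity $\H\#\O_{\xi}(\D)\iff\cl_{\xi}^{\natural}\H\#\D$ together with the topological fact that $\adh_{\xi}(\cl_{\xi}^{\natural}\H)=\adh_{\xi}\H$ (and, for the compact case, that $\adh_{\xi}\H$ is closed). Your write-up is merely a more explicit version, deriving the key identity from (\ref{eq:Vtoadh}) and noting $\O_{\xi}(\D)\leq\D$ for the easy compactoid direction.
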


\begin{proof}
Assume that $\D$ is compact(oid). Let $\H\#\O(\D)$ equivalently,
$\cl^{\natural}\H\#\D$. By compactness of $\D$ (resp. compactoidness)
we have 
\[
\adh_{\xi}\cl_{\xi}^{\natural}\H\underset{\xi=\T\xi}{=}\adh_{\xi}\H
\]
 meshes with $\D$, hence with $\O(\D)$ (resp. is non-empty).

Conversely assume $\O(\D)$ is compact. Since $\D\geq\O(\D)$ if $\H\#\D$
then $\H\#\O(\D)$ and thus $\adh\H\#\O(\D)$, equivalently, $\cl(\adh\H)\#\D$
but $\cl(\adh\H)=\adh\H$.
\end{proof}
Moreover, \cite[Theorem IX.7.17]{DM.book} states that for a Hausdorff
convergence $\xi$ if a filter $\D=\O_{\xi}(\D)^{\uparrow}$ is compact
then $\D=\O_{\xi}(K)^{\uparrow}$ where $K=\adh\D$ is compact. 
\begin{cor}
\label{cor:O(compact)} If $\xi$ is a regular Hausdorff topology
and $\D$ is a compact filter then $\adh\D$ is compact and
\[
\O(\D)^{\uparrow}=\O(\adh\D)^{\uparrow}.
\]
\end{cor}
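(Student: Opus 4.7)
The plan is to apply Theorem IX.7.17 of \cite{DM.book} to the ``open kernel'' filter $\O_{\xi}(\D)^{\uparrow}$ and then to identify the compact set that comes out of that theorem with $\adh\D$, using regularity of $\xi$.

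First, since $\xi$ is a topology and $\D$ is compact, Proposition \ref{prop:compactfiltertoset} gives that $\O_{\xi}(\D)^{\uparrow}$ is also compact. This filter is clearly generated by its open elements, i.e.\ $\O_{\xi}(\O_{\xi}(\D)^{\uparrow})^{\uparrow}=\O_{\xi}(\D)^{\uparrow}$, so \cite[Theorem IX.7.17]{DM.book} applies and produces a compact set $K$ with
\[
\O_{\xi}(\D)^{\uparrow}=\O_{\xi}(K)^{\uparrow}\quad\text{and}\quad K=\adh(\O_{\xi}(\D)^{\uparrow}).
\]

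The core of the proof is then to show that $K=\adh\D$. The inclusion $\adh\D\subset K$ is immediate because $\O_{\xi}(\D)^{\uparrow}\leq\D$ implies $\adh\D\subset\adh(\O_{\xi}(\D)^{\uparrow})=K$. For the reverse inclusion I would argue by contradiction using regularity: suppose $x\in K$ and $D\in\D$ with $x\notin\cl_{\xi}D$. Since $\xi$ is regular Hausdorff and $\cl_{\xi}D$ is closed, there exist disjoint open sets $U\ni x$ and $V\supset\cl_{\xi}D$. Then $V\in\O_{\xi}(\D)$, hence $V\in\O_{\xi}(K)^{\uparrow}$, and since $V$ is open this forces $K\subset V$; but $x\in K\cap U$ and $U\cap V=\emptyset$, a contradiction. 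Hence $x\in\cl_{\xi}D$ for every $D\in\D$, that is, $x\in\adh\D$.

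Once $\adh\D=K$ is established, the first conclusion (compactness of $\adh\D$) is immediate, and the identity $\O_{\xi}(\D)^{\uparrow}=\O_{\xi}(\adh\D)^{\uparrow}$ is obtained by substituting $K=\adh\D$ in $\O_{\xi}(\D)^{\uparrow}=\O_{\xi}(K)^{\uparrow}$. The one step that actually uses regularity (and where the main difficulty lies) is precisely the reverse inclusion $K\subset\adh\D$: without regularity, knowing that every open set containing $K$ contains some $D\in\D$ is not enough to conclude that every (closed) neighborhood of a point of $K$ meets $D$, which is what is needed to place $x$ in the adherence of $\D$.
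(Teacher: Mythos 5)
Your proof is correct and follows essentially the same route as the paper: Proposition \ref{prop:compactfiltertoset} to transfer compactness to $\O_{\xi}(\D)^{\uparrow}$, then \cite[Theorem IX.7.17]{DM.book} applied to that open-generated filter, and finally an identification of the resulting compact set with $\adh\D$ using regularity. The only difference is cosmetic and lies in that last step: the paper obtains $\adh(\O_{\xi}(\D)^{\uparrow})=\adh_{\xi}\D$ at the level of filters, via the meshing equivalence $\H\#\O_{\xi}(\D)\iff\cl_{\xi}^{\natural}\H\#\D$ together with the regularity identity $\lim_{\xi}\H=\lim_{\xi}\cl_{\xi}^{\natural}\H$, whereas you identify $K=\adh_{\xi}\D$ pointwise, using the open-set separation form of regularity and the identity $\O_{\xi}(\D)^{\uparrow}=\O_{\xi}(K)^{\uparrow}$ furnished by the theorem; both arguments are valid.
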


\begin{proof}
In view of Proposition \ref{prop:compactfiltertoset}, $\O(\D)^{\uparrow}$
is a compact filter and thus by \cite[Theorem IX.7.17]{DM.book},
$\O(\D)^{\uparrow}=\O(\adh\O(\D)^{\uparrow})^{\uparrow}$ where $\adh\O(\D)^{\uparrow}$
is compact. Moreover, as $\xi$ is a regular topology, $\H\#\O(\D)$
if and only if $\cl^{\natural}\H\#\D$ and $\lim_{\xi}\H=\lim_{\xi}\cl^{\natural}\H$
so that $\adh\O(\D)=\adh\D$, and the result follows.
\end{proof}
Recall \cite{quest} that a topology is \emph{bi-k }if every convergent
ultrafilter there is a compact set $K$ and $\H\in\mathbb{F}_{1}$
with $\ker\H=K$ and
\[
\U\geq\H\geq\O(K)^{\uparrow}.
\]
A topology is of \emph{pointwise countable type }if each point belongs
to a compact set of countable character, that is, a compact set $K$
where $\O(K)^{\uparrow}\in\mathbb{F}_{1}$.

Corollary \ref{cor:PCTandbik} below is due to \cite{dolecki1996convergence},
but we include the proof because of partially conflicting terminology
and for thoroughness, as \cite{dolecki1996convergence} only includes
a sketch of proof.
\begin{cor}
\label{cor:PCTandbik}\cite{dolecki1996convergence} A regular Hausdorff
topology is bi-k (resp. of pointwise countable type) if and only if
every convergent ultrafilter (resp. filter) contains a countably based
compactoid filter.
\end{cor}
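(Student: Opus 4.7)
The plan is to prove both biconditionals in parallel, starting with the forward directions (which are immediate) and then giving a common construction for the reverse ones. For the forward directions: in the bi-k case, given a convergent ultrafilter $\U \geq \H \geq \O(K)^{\uparrow}$ with $K$ compact and $\H \in \mathbb{F}_{1}$, the filter $\H$ itself is countably based and compactoid, since $\O(K)^{\uparrow}$ is compact by Proposition \ref{prop:compactfiltertoset} (applied to $\D = \{K\}^{\uparrow}$) and every refinement of a compactoid filter is compactoid. In the pointwise-countable-type case, given a convergent filter $\F$ with $x \in \lim_{\xi}\F$ and a compact $K \ni x$ with $\O(K)^{\uparrow} \in \mathbb{F}_{1}$, one has $\O(K)^{\uparrow} \leq \N_{\xi}(x) \leq \F$, and $\O(K)^{\uparrow}$ is countably based and compactoid by the same reasoning.

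For the reverse directions, suppose $\D$ is a countably based compactoid filter coarser than $\U$ in the bi-k case (respectively coarser than $\N_{\xi}(x)$, obtained by applying the hypothesis to the convergent filter $\N_{\xi}(x)$ for a fixed point $x$, in the pointwise-countable-type case). Set $\H := \cl_{\xi}^{\natural}\D$. Then $\H$ admits a countable base of closed sets, and $\H \leq \U$ (resp. $\H \leq \N_{\xi}(x)$) because $D \subset \cl_{\xi}D$ forces $\cl_{\xi}D$ into $\U$ (resp. $\N_{\xi}(x)$) for every $D \in \D$. By Corollary \ref{cor:compactadherence}, $\H$ is a compact filter and $K := \ker\H = \adh_{\xi}\D$ is a non-empty compact set. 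Corollary \ref{cor:O(compact)} then yields $\O(\H)^{\uparrow} = \O(K)^{\uparrow}$, while the definition of the contour $\O(\H)$ gives $\O(\H)^{\uparrow} \leq \H$ directly; hence $\U \geq \H \geq \O(K)^{\uparrow}$ with $\H \in \mathbb{F}_{1}$ and $\ker\H = K$. This establishes the bi-k equivalence.

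For the pointwise-countable-type direction, I would additionally verify that $x \in K$ and that $K$ has countable character. The first is immediate from $\D \leq \N_{\xi}(x)$: every $D \in \D$ is a neighborhood of $x$, so $x \in \ker\D \subset \ker\cl_{\xi}^{\natural}\D = K$. For the second, fix a decreasing countable closed base $(H_{n})_{n}$ of $\H$, so $K = \bigcap_{n}H_{n}$. Since $\xi$ is regular Hausdorff, the compact set $K$ and the closed set $X \setminus H_{n}$ can be separated by disjoint open sets, producing for each $n$ an open $U_{n}$ with $K \subset U_{n} \subset H_{n}$. Given any open $W \supset K$, one has $W \in \O(K)^{\uparrow} \leq \H$, so $H_{m} \subset W$ for some $m$, whence $U_{m} \subset W$; thus $(U_{n})_{n}$ is a countable base of $\O(K)^{\uparrow}$ and $K$ has countable character. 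I expect this last step, converting a countable closed base of $\H$ into a countable open base of neighborhoods of $K$ via regularity, to be the main (though standard) technical point; everything else is bookkeeping assembled from the two corollaries cited above.
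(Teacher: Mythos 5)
Your treatment of the bi-$k$ biconditional and of both ``only if'' directions is correct and follows essentially the paper's route: passing to $\H=\cl_{\xi}^{\natural}\D$, invoking Corollary \ref{cor:compactadherence} to get a compact countably based filter with compact kernel $K$, and Corollary \ref{cor:O(compact)} to get $\H\geq\O(\H)^{\uparrow}=\O(K)^{\uparrow}$; that part is fine (indeed slightly more careful than the paper about the requirement $\ker\H=K$ in the definition of bi-$k$).

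The gap is in the pointwise-countable-type direction, at the step producing the open sets $U_{n}$ with $K\subset U_{n}\subset H_{n}=\cl_{\xi}D_{n}$. First, the separation you invoke is not available: $X\setminus H_{n}$ is \emph{open}, not closed, and regularity (even normality plus compactness of $K$) only separates $K$ from disjoint \emph{closed} sets; separating $K$ from the open set $X\setminus H_{n}$ is literally the assertion that $H_{n}$ is a neighborhood of $K$, which is what you would need to prove. Second, and more seriously, that assertion is simply false under your hypotheses, so no repair of the separation argument can work: take $X=\mathbb{R}$, $x=0$, and $D_{n}=(-\frac{1}{n},\frac{1}{n})\cup(2,2+\frac{1}{n})$. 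This is a decreasing base of open neighborhoods of $x$ generating a countably based compactoid filter $\D\leq\N(x)$, with $K=\adh\D=\ker\cl^{\natural}\D=\{0,2\}$, yet no $\cl D_{n}=[-\frac{1}{n},\frac{1}{n}]\cup[2,2+\frac{1}{n}]$ is a neighborhood of $2$, hence none is a neighborhood of $K$ and no open $U_{n}$ with $K\subset U_{n}\subset\cl D_{n}$ exists. (Of course $K$ does have countable character here, but not via your sets.) The missing idea is to use regularity \emph{at the point} $x$ before taking adherences: replace $\D$ by a finer filter with open decreasing base $(E_{n})$ satisfying $\cl E_{n+1}\subset E_{n}$, e.g. $E_{1}=\intr D_{1}$ and $E_{n+1}=\intr D_{n+1}\cap G_{n}$ where $G_{n}$ is an open neighborhood of $x$ with $\cl G_{n}\subset E_{n}$. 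This filter is still countably based, compactoid (being finer than $\D$) and coarser than $\N(x)$; then $C=\bigcap_{n}\cl E_{n}=\bigcap_{n}E_{n}\ni x$ is compact, $C\subset E_{n}$ for every $n$, and the compactoidness argument you already use in the bi-$k$ half shows every open $W\supset C$ contains some $\cl E_{n}$, hence some $E_{n}$, so $(E_{n})$ is a countable open base at $C$. Note that this nesting trick is exactly where the hypothesis $\D\leq\N(x)$ (as opposed to $\D\leq\U$ in the bi-$k$ case) is used; the paper's own proof compresses this point into the sentence producing the sets $O_{n}\in\O(x)$, but some such construction is what makes the countable-character claim go through.
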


\begin{proof}
If $(X,\xi)$ is bi-$k$ then $\H$ witnessing the definition is a
countably based and compactoid filter. Conversely, if $\U$ is a convergent
filter, then there is a compactoid filter $\H\in\mathbb{F}_{1}$ with
$\U\geq\H$. Then $\cl^{\natural}\H$ is countably based and compact
by Corollary \ref{cor:compactadherence}, and by Corollary \ref{cor:O(compact)},
$K=\adh\H=\ker\cl^{\natural}\H$ is compact and $\O(\cl^{\natural}\H)=\O(K)$
so that $\U\geq\H\geq$ $\O(\cl^{\natural}\H)=\O(K)$ and $\xi$ is
bi-$k$.

Suppose that every convergent filter contains a countably based compactoid
filter. In particular for every $x\in X$ we have $\O(x)\geq\D$ for
some $\D\in\mathbb{K}_{\mathbb{F}}\cap\mathbb{F}_{1}$. Moreover,
in view of Corollary \ref{cor:compactadherence} and Corollary \ref{cor:O(compact)},
$K=\adh\D$ is compact and $\O(\cl^{\natural}\D)=\O(K)$. Hence
\[
\O(x)\geq\D\geq\cl^{\natural}\D\geq\O(\cl^{\natural}\D)=\O(K),
\]
so that, letting $\{D_{n}:n\in\omega\}$ be a decreasing countable
filter base of $\D$, for every $U\in\O(K)$, there is $n\in\omega$
with $K\subset\cl D_{n}\subset U$ and thus there is $O_{n}\in\O(x)$
with $K\subset O_{n}\subset U$ and $K$ is of countable character.

Conversely, assume that $X$ is a regular Hausdorff topological space
of pointwise countable type and let $\F$ be convergent, that is,
$\F\geq\O(x)$ for some $x$. Then there is $K$ of countable character
with $x\in K$ so that $\F\geq\O(x)\geq\O(K)$ and $\O(K)\in\mathbb{K}_{\mathbb{F}}\cap\mathbb{F}_{1}$. 
\end{proof}

\subsubsection{Cover-compactness}

A family $\P$ of subsets of $\xi$ is a \emph{$\xi$-cover of} $A\subset|\xi|$
if $\P\cap\F\neq\emptyset$ for every filter $\F$ with $\lim_{\xi}\F\cap A\neq\emptyset$.
Note that if $\xi$ is a topology then $\P\subset\O_{\xi}$ is a cover
if and only if $A\subset\bigcup_{P\in\P}P$, and moreover, every cover
has a refinement that is an open cover.

Dolecki noted \cite{D.comp} that the notion of cover is dual to that
of family of empty adherence. More specifically, $\P$ is a $\xi$-cover
of $A$ if and only if 
\[
\adh_{\xi}\P_{c}\cap A=\emptyset,
\]
extending the notion of adherence from filters to general families
via
\[
\adh_{\xi}\A=\bigcup_{\H\#\A}\lm_{\xi}\H.
\]
The notion can be extended to families of subsets. We say that $\P$
is a $\xi$-cover of $\A\subset2^{|\xi|}$ if $\P$ is a $\xi$-cover
of some $A\in\A$. With this in mind, 
\begin{prop}
\cite[Theorem 3.1]{D.comp} Let $\xi$ be a convergence and $\A,\P\subset2^{|\xi|}$.
Then $\P$ is a $\xi$-cover of $\A$ if and only if $\adh_{\xi}\P_{c}\notin\A^{\#}$.
\end{prop}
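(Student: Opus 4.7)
The plan is to reduce the statement for a family $\A$ to the single-set case, which is already stated in the excerpt as the definition-equivalence
\[
\P \text{ is a } \xi\text{-cover of } A \iff \adh_\xi \P_c \cap A = \emptyset.
\]
Since $\P$ is a $\xi$-cover of $\A$ by definition whenever $\P$ is a $\xi$-cover of some $A \in \A$, and $\adh_\xi \P_c \notin \A^\#$ means exactly that there is some $A \in \A$ with $A \cap \adh_\xi \P_c = \emptyset$, the biconditional collapses to the already-known single-set equivalence. So the only real work is to verify that single-set equivalence cleanly from the definitions.

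To do that verification, first unfold $\adh_\xi \P_c = \bigcup_{\H \# \P_c} \lim_\xi \H$. The key computation is: a filter $\H$ meshes with $\P_c = \{|\xi| \setminus P : P \in \P\}$ iff for every $P \in \P$ and every $H \in \H$ one has $H \not\subset P$, which (since $\H$ is upward closed) is equivalent to $P \notin \H$ for all $P \in \P$, i.e., $\P \cap \H = \emptyset$. Thus
\[
\adh_\xi \P_c \cap A \neq \emptyset \iff \exists\,\H \text{ with } \P \cap \H = \emptyset \text{ and } \lim_\xi \H \cap A \neq \emptyset,
\]
whose contrapositive is precisely the definition of $\P$ being a $\xi$-cover of $A$.

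Combining, $\adh_\xi \P_c \notin \A^\#$ iff there is $A \in \A$ with $A \cap \adh_\xi \P_c = \emptyset$, iff there is $A \in \A$ with $\P$ a $\xi$-cover of $A$, iff $\P$ is a $\xi$-cover of $\A$. There is no real obstacle here; the only subtle point is recognizing that the meshing condition $\H \# \P_c$ dualizes cleanly to ``$\P$ avoids $\H$'', which is what makes the definition of cover and the formula $\adh_\xi \P_c$ two sides of the same coin. Everything else is bookkeeping with $\A^\#$.
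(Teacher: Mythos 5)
Your proof is correct: the dualization $\H\#\P_{c}\iff\P\cap\H=\emptyset$ (for a filter $\H$) is exactly right, and the reduction via $\adh_{\xi}\P_{c}\notin\A^{\#}\iff\exists A\in\A\;A\cap\adh_{\xi}\P_{c}=\emptyset$ matches how the paper handles the family case. The paper itself gives no proof (it cites Dolecki and states the single-set duality $\adh_{\xi}\P_{c}\cap A=\emptyset$ just beforehand), and your argument is precisely the intended unfolding of the definitions, so this is essentially the same approach.
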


\begin{defn}
\label{def:covercompactness}\cite{D.comp} Let $\mathbb{D}$ and
$\mathbb{J}$ be two classes of filters and $\mathbb{D}_{*}$ and
$\mathbb{J}_{*}$ the corresponding families of dual ideals. We say
that $\A$ is \emph{cover-$\nicefrac{\mathbb{J}_{*}}{\mathbb{D}_{*}}$-compact
at} $\B$ (for a convergence $\xi$) if for every $\xi$-cover $\C\in\mathbb{J}_{*}$
of $\B$, there is $\mathcal{S}\subset\C$ with $\S\in\mathbb{D}_{*}$
such that $\mathcal{S}$ is a $\xi$-cover of $\A$, equivalently,
\[
\forall\G\in\mathbb{J}\;\adh_{\xi}\G\neg\#\B\then\exists\D\in\mathbb{D},\D\leq\G:\adh_{\xi}\D\neg\#\A,
\]
equivalently
\[
\forall\G\in\mathbb{J}\left(\forall\D\in\mathbb{D},\D\leq\G\;\adh_{\xi}\D\in\A^{\#}\then\adh_{\xi}\G\in\B^{\#}\right).
\]
\end{defn}

\begin{rem}
\label{rem:refinement} In view of (\ref{eq:refine}) $\A$ is cover-$\nicefrac{\mathbb{J}_{*}}{\mathbb{D}_{*}}$-compact
at $\B$ if and only if every cover in $\mathbb{J}_{*}$ of $\B$
has a refinement in $\mathbb{D}_{*}$ that is a cover of $\A$.
\end{rem}

\begin{prop}
\cite[Proposition IX.11.13]{DM.book}\label{prop:covercompact} Let
$\xi$ be a convergence, and let $\A$ and $\B$ be two families of
subsets of $|\xi|$. $\A$ is $\xi$-cover-$\nicefrac{\mathbb{F}_{*}}{\mathbb{F}_{0*}}$-compact
at $\B$ if and only if $\V_{\xi}(\A)$ is $\xi$-compact at $\B$. 
\end{prop}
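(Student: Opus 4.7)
The plan is to show that once both sides are unpacked using the equivalent filter formulations from Definition \ref{def:covercompactness} and the contour identity (\ref{eq:Vtoadh}), they become the very same condition on filters $\G$ on $|\xi|$.

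First, I will rewrite cover-$\mathbb{F}_{*}/\mathbb{F}_{0*}$-compactness in its filter form. Using the third equivalent formulation of Definition \ref{def:covercompactness} with $\mathbb{J}=\mathbb{F}$ and $\mathbb{D}=\mathbb{F}_{0}$, the statement becomes: for every filter $\G$ on $|\xi|$, if $\adh_{\xi}\G\not\#\B$, then there is a principal filter $\D\leq\G$ with $\adh_{\xi}\D\not\#\A$. A principal filter $\{D\}^{\uparrow}$ satisfies $\{D\}^{\uparrow}\leq\G$ exactly when $D\in\G$, and then $\adh_{\xi}\{D\}^{\uparrow}=\adh_{\xi}D$. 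Therefore the condition simplifies to: whenever there exists $B\in\B$ with $\adh_{\xi}\G\cap B=\emptyset$, there exist $G\in\G$ and $A\in\A$ with $\adh_{\xi}G\cap A=\emptyset$.

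Second, I will unpack $\V_{\xi}(\A)$ being $\xi$-compact at $\B$. By definition, this means that every filter $\G$ with $\G\#\V_{\xi}(\A)$ satisfies $\adh_{\xi}\G\#\B$. The identity (\ref{eq:Vtoadh}), applied with the role of $\B$ there played by $\G$, gives
\[
\G\#\V_{\xi}(\A)\iff\adh_{\xi}^{\natural}\G\#\A,
\]
and the right-hand side unfolds to: $\adh_{\xi}G\cap A\neq\emptyset$ for every $G\in\G$ and $A\in\A$. Passing to the contrapositive, $\V_{\xi}(\A)$ is $\xi$-compact at $\B$ iff, whenever some $B\in\B$ satisfies $\adh_{\xi}\G\cap B=\emptyset$, there exist $G\in\G$ and $A\in\A$ with $\adh_{\xi}G\cap A=\emptyset$.

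Finally, I will simply observe that the conditions extracted in the two previous paragraphs are literally identical, which establishes the desired equivalence. There is no real obstacle here: the content of the proposition is the translation provided by (\ref{eq:Vtoadh}) together with the fact that principal sub-filters of $\G$ correspond to elements of $\G$, so once the abbreviations are expanded the two statements coincide without any further argument.
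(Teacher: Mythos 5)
Your proof is correct and follows essentially the same route as the paper: Proposition \ref{prop:covercompact} itself is cited, and the paper proves the generalization to an arbitrary class $\mathbb{J}$ immediately afterwards by exactly this translation, identifying the $\mathbb{F}_{0}$-refinement clause of Definition \ref{def:covercompactness} with $\adh_{\xi}^{\natural}\G\#\A$ and invoking (\ref{eq:Vtoadh}). Your further unfolding to element-level conditions (and the harmless mislabel of which equivalent formulation of Definition \ref{def:covercompactness} you quote) changes nothing of substance.
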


Proposition \ref{prop:covercompact} generalizes to $\nicefrac{\mathbb{J_{*}}}{\mathbb{F}_{0*}}$-cover-compactness:
\begin{prop}
Let $\xi$ be a convergence, let $\A$ and $\B$ be two families of
subsets of $|\xi|$ and let $\mathbb{J}$ be a class of filters. $\A$
is $\xi$-cover- $\nicefrac{\mathbb{J_{*}}}{\mathbb{F}_{0*}}$-compact
at $\B$ if and only if $\V_{\xi}(\A)$ is $\xi$-$\mathbb{J}$-compact
at $\B$. 
\end{prop}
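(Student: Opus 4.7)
The proof is a direct generalization of Proposition \ref{prop:covercompact}: replacing $\mathbb{F}$ by the arbitrary class $\mathbb{J}$ only affects the outer quantifier in the definition of cover-compactness, while the inner mechanism that transforms principal subfilters into the vicinity contour is unchanged. So I plan to follow the template of the $\mathbb{F}$-case verbatim, reading it through the third equivalent formulation of Definition \ref{def:covercompactness}.

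First, I rewrite the hypothesis that $\A$ is $\xi$-cover-$\nicefrac{\mathbb{J}_{*}}{\mathbb{F}_{0*}}$-compact at $\B$ using the third (``double implication'') formulation in Definition \ref{def:covercompactness}, namely
\[
\forall \G\in\mathbb{J}\colon\;\bigl(\forall \D\in\mathbb{F}_{0},\D\leq\G\;\adh_{\xi}\D\in\A^{\#}\bigr)\then\adh_{\xi}\G\in\B^{\#}.
\]
The key step is then to identify the inner universal statement with meshing against $\V_{\xi}(\A)$. Since a principal filter $\D=\{A\}^{\uparrow}$ satisfies $\D\leq\G$ iff $A\in\G$, and since $\adh_{\xi}\{A\}^{\uparrow}=\adh_{\xi}A$, the inner statement reads
\[
\forall A\in\G\colon\;\adh_{\xi}A\in\A^{\#},
\]
which is exactly $\adh_{\xi}^{\natural}\G\#\A$. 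Applying (\ref{eq:Vtoadh}) (with $\B$ there taken to be $\G$) converts this into $\G\#\V_{\xi}(\A)$.

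Substituting back, cover-$\nicefrac{\mathbb{J}_{*}}{\mathbb{F}_{0*}}$-compactness of $\A$ at $\B$ becomes
\[
\forall\G\in\mathbb{J}\colon\;\G\#\V_{\xi}(\A)\then\adh_{\xi}\G\in\B^{\#},
\]
which by definition is precisely $\xi$-$\mathbb{J}$-compactness of $\V_{\xi}(\A)$ at $\B$. Each step is an ``iff'', so both implications follow at once.

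There is no real obstacle: the argument is essentially a translation via (\ref{eq:Vtoadh}). The only subtlety worth stating carefully is the correspondence between ``principal subfilters $\D\leq\G$'' and ``elements $A\in\G$,'' which is the sole place where the specific class $\mathbb{F}_{0}$ (rather than $\mathbb{J}$) is used; this is why the role of $\mathbb{J}$ can be made arbitrary without changing anything else in the proof of Proposition \ref{prop:covercompact}.
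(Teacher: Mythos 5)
Your proof is correct and is essentially the paper's own argument: the paper likewise reduces both directions to the single equivalence $\G\#\V_{\xi}(\A)\iff\adh_{\xi}^{\natural}\G\#\A$ from (\ref{eq:Vtoadh}), applied to the reformulation of cover-$\nicefrac{\mathbb{J}_{*}}{\mathbb{F}_{0*}}$-compactness in Definition \ref{def:covercompactness}. Your explicit unpacking of ``principal $\D\leq\G$'' as ``$A\in\G$'' is the same step the paper leaves implicit.
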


\begin{proof}
Let $\A$ be $\xi$-cover- $\nicefrac{\mathbb{J_{*}}}{\mathbb{F}_{0*}}$-compact
at $\B$ and let $\G\in\mathbb{J}$ with $\G\#\V_{\xi}(\A)$, equivalently,
$\adh_{\xi}^{\natural}\G\#\A$ by (\ref{eq:Vtoadh}), so that $\adh_{\xi}\G\in\B^{\#}$.
Hence $\V_{\xi}(\A)$ is $\xi$-$\mathbb{J}$-compact at $\B$. Assume
conversely, that $\V_{\xi}(\A)$ is $\xi$-$\mathbb{J}$-compact at
$\B$ and let $\G\in\mathbb{J}$ with $\adh_{\xi}^{\natural}\G\#\A$,
equivalently, $\G\#\V_{\xi}(\A)$, so that $\adh_{\xi}\G\in\B^{\#}$,
that is, $\A$ is $\xi$-cover- $\nicefrac{\mathbb{J_{*}}}{\mathbb{F}_{0*}}$-compact
at $\B$.
\end{proof}
In that case $\A=\B=\{\{x\}\}$, we thus obtain 
\begin{cor}
\label{cor:compactpoints} If $\xi$ is a convergence and $\mathbb{J}$
is a class of filters then $\S_{0}\xi=\operatorname{A}_{\mathbb{J}}\xi$
if and only if every singleton is $\xi$-cover- $\nicefrac{\mathbb{J_{*}}}{\mathbb{F}_{0*}}$-compact.
\end{cor}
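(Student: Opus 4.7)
My plan is to deduce the Corollary directly from the preceding Proposition by specializing to singleton families. Setting $\A=\B=\{\{x\}\}$ for each $x\in|\xi|$ and observing via the definition \eqref{eq:contour} of a contour that $\V_{\xi}(\{\{x\}\})=\V_{\xi}(x)$, the Proposition rewrites ``the singleton $\{x\}$ is $\xi$-cover-$\nicefrac{\mathbb{J}_{*}}{\mathbb{F}_{0*}}$-compact'' as ``$\V_{\xi}(x)$ is $\xi$-$\mathbb{J}$-compact at $\{\{x\}\}$,'' that is, $x\in\adh_{\xi}\G$ for every $\G\in\mathbb{J}$ with $\G\#\V_{\xi}(x)$. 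Using the defining formula for the $\operatorname{A}_{\mathbb{J}}$-reflector, this last condition rephrases cleanly as $x\in\lim_{\operatorname{A}_{\mathbb{J}}\xi}\V_{\xi}(x)$.

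It then remains to check that this pointwise condition (for all $x$) is equivalent to the global equality $\S_{0}\xi=\operatorname{A}_{\mathbb{J}}\xi$. For the ``only if'' direction, I would specialize the equality to $\F=\V_{\xi}(x)$: since $\V_{\S_{0}\xi}(x)=\V_{\xi}(x)$, one always has $x\in\lim_{\S_{0}\xi}\V_{\xi}(x)$, and the assumed equality of convergences transfers this convergence to $\operatorname{A}_{\mathbb{J}}\xi$. For the ``if'' direction, note first that the inequality $\operatorname{A}_{\mathbb{J}}\xi\geq\S_{0}\xi$ is automatic whenever $\mathbb{F}_{0}\subset\mathbb{J}$ (the case for all classes considered in this paper), because then $\lim_{\operatorname{A}_{\mathbb{J}}\xi}\F\subset\bigcap_{A\in\F^{\#}}\adh_{\xi}A=\lim_{\S_{0}\xi}\F$, principal filters being available as witnesses in the intersection. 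For the reverse inequality $\operatorname{A}_{\mathbb{J}}\xi\leq\S_{0}\xi$, by monotonicity of $\lim_{\operatorname{A}_{\mathbb{J}}\xi}$ in its filter argument, it suffices to verify it on the smallest $\S_{0}\xi$-convergent filter at each point, namely $\V_{\xi}(x)$; this is precisely the standing assumption.

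The argument is essentially bookkeeping once the preceding Proposition is in hand; the only point requiring care is keeping the direction of the convergence order straight, since $\operatorname{A}_{\mathbb{J}}\xi$ becomes \emph{finer} as $\mathbb{J}$ enlarges while $\S_{0}\xi=\operatorname{A}_{\mathbb{F}_{0}}\xi$ corresponds to the smallest natural such class, so the nontrivial content of the equality is pushing the finer $\operatorname{A}_{\mathbb{J}}\xi$ down to $\S_{0}\xi$. I anticipate no real obstacle beyond this organizational care and the routine translation between the meshing, adherence, and cover-compactness reformulations already set up in the preliminaries.
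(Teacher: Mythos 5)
Your proposal is correct and follows essentially the same route as the paper, which simply specializes the preceding proposition to $\A=\B=\{\{x\}\}$; you merely spell out the bookkeeping (translating ``$\V_{\xi}(x)$ is $\mathbb{J}$-compact at $\{x\}$'' into $x\in\lim_{\operatorname{A}_{\mathbb{J}}\xi}\V_{\xi}(x)$ and then into $\S_{0}\xi=\operatorname{A}_{\mathbb{J}}\xi$ via $\V_{\S_{0}\xi}(x)=\V_{\xi}(x)$ and isotonicity) that the paper leaves implicit. Your explicit remark that the inequality $\operatorname{A}_{\mathbb{J}}\xi\geq\S_{0}\xi$ uses $\mathbb{F}_{0}\subset\mathbb{J}$ is a fair reading of the paper's implicit standing assumption (compare the footnote to Remark \ref{rem:onedirectioncompactpoints}, which assumes $\mathbb{D}\subset\mathbb{J}$) and does not affect the applications.
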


In particular, $\S\xi=\S_{0}\xi$ (\footnote{a property we may call \emph{almost pretopological }by analogy with
the established notion \emph{almost topological }for $\S\xi=\T\xi$}) if and only if every singleton is cover-compact and $\S_{1}\xi=\S_{0}\xi$
if and only if every singleton is cover countably compact.
\begin{rem}
\label{rem:onedirectioncompactpoints} One direction of Corollary
\ref{cor:compactpoints} is true in general, which was already observed
in \cite{D.comp}: If $\mathbb{D}\subset\mathbb{J}$ are two classes
of filters, then $\operatorname{A}_{\mathbb{J}}\xi=\AdhD\xi$ whenever
every singleton is $\xi$-cover-$\nicefrac{\mathbb{J_{*}}}{\mathbb{D}_{*}}$-compact
(\footnote{Indeed, the inequality $\operatorname{A}_{\mathbb{J}}\xi\geq\AdhD\xi$
is true in general because $\mathbb{D}\subset\mathbb{J}$. On the
other hand, if $x\in\lim_{\AdhD\xi}\F$ and $\G\in\mathbb{J}$ with
$\G\#\F$ then $x\in\adh_{\xi}\G$ and thus $x\in\bigcap_{\mathbb{J}\ni\G\#\F}\adh_{\xi}\G=\lm_{\operatorname{A}_{\mathbb{J}}\xi}\F.$
Otherwise $x\notin\adh_{\xi}\G$ so that $\G_{c}\in\mathbb{J}_{*}$
is a cover of $\{x\}$ which has then a subcover in $\mathbb{D}_{*}$,
because $\{x\}$ is $\xi$-cover-$\dfrac{\mathbb{J}_{*}}{\mathbb{D}_{*}}$-compact.
In other words, there is $\D\in\mathbb{D}$ with $\D\leq\G$ and $x\notin\adh_{\xi}\D$,
which is not possible because $\D\#\F$ and $x\in\lim_{\AdhD\xi}\F$. }). On the other hand, the other implication is not true if $\mathbb{D}\neq\mathbb{F}_{0}$
(See Example \ref{exa:nonlindelofsingleton}).
\end{rem}

\section{Countable depth}

Paratopologies and hypotopologies form natural generalizations of
pretopologies, involving a countability condition. We now examine
others and how they relate.

Recall that $\xi$ is a pretopology if and only if for each $x\in|\xi|,$
the vicinity filter
\[
\V_{\xi}(x):=\bigwedge\left\{ \F:x\in\lm_{\xi}\F\right\} 
\]
converges to $x$. Thus pretopologies are exactly those convergences
admitting at each point a smallest convergent filter (i.e., the vicinity
filter). Therefore pretopologies can also be characterized as those
convergences $\xi$ satisfying
\begin{equation}
\lm_{\xi}\bigwedge_{\F\in\mathbb{A}}\F=\bigcap_{\F\in\mathbb{A}}\lm_{\xi}\F\label{eq:depth}
\end{equation}
 for every subset $\mathbb{A}$ of $\mathbb{F}|\xi|.$ 

We say that a convergence $\xi$ is \emph{countably deep }or \emph{has
countable depth }if (\ref{eq:depth}) holds for every \emph{countable
}subset $\mathbb{A}$ of $\mathbb{F}|\xi|$, and \emph{countably deep
for ultrafilters }if (\ref{eq:depth}) holds for every \emph{countable
}subset $\mathbb{A}$ of $\mathbb{U}|\xi|$.

We prove below (Corollary \ref{cor:cdeeppara}) that convergences
that are both countably deep for ultrafilters and paratopologies are
exactly pretopologies, so that any non-pretopological paratopology
is not countably deep, even for ultrafilters. In view of Corollary
\ref{cor:cdeeppara}, to produce a countably deep convergence that
is not a paratopology, it is enough to produce one that is not a pretopology,
which is easily done (e.g., Example \ref{exa:cdeepnotpara}). We will
also give an example of a pseudotopology that is countably deep for
ultrafilters but not countably deep (Example \ref{exa:Udeepbutnotdeep}). 

The classes of countably deep convergences and of countably deep for
ultrafilters pseudotopologies turn out to be reflective. Namely, given
a convergence $\xi$ let $\De\xi$ be defined by $x\in\lm_{\De\xi}\F$
if there is a countable subset $\mathbb{D}$ of $\lim_{\xi}^{-1}(x)$
with 
\[
\F\geq\bigwedge_{\D\in\mathbb{D}}\D,
\]
and let $\De^{\mathbb{U}}\xi$ be defined by $x\in\lm_{\De^{\mathbb{U}}\xi}\F$
if there is a countable subset $\mathbb{D}$ of $\lim_{\xi}^{-1}(x)\cap\mathbb{U}X$
with 
\[
\F\geq\bigwedge_{\U\in\mathbb{D}}\U.
\]

\begin{rem*}
We could more generally define $\operatorname{P}_{\kappa}\xi$ and
$\operatorname{P}_{\kappa}^{\mathbb{U}}\xi$ in a similar fashion
if $\mathbb{D}$ is restricted to sets of cardinality less than $\aleph_{\kappa}$.
In those terms, $\operatorname{P}_{0}$ is the reflector on convergences
of finite depth in the sense of \cite{DM.book}, where the notation
$\operatorname{L}$ was used instead. Pretopologies are the convergences
$\xi$ such that $\xi=\operatorname{P}_{\kappa}\xi$ for every $\kappa$.
\end{rem*}
\begin{prop}
The modifier
\begin{enumerate}
\item $\De$ is a concrete reflector and $\fix\De$ is the class of countably
deep convergences;
\item $\De^{\mathbb{U}}$ is an idempotent concrete functor;
\item $\S\De^{\mathbb{U}}$ is a concrete reflector and $\fix\S\De^{\mathbb{U}}$
is the class of pseudotopologies that are countably deep for ultrafilters.
\end{enumerate}
\end{prop}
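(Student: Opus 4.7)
For part (1), the plan is to verify each defining property of a concrete reflector directly from the definition. The convergence axioms are immediate: centeredness holds by taking $\mathbb{D}=\{\{x\}^{\uparrow}\}$, and isotonicity is preserved since $\F\geq\bigwedge\mathbb{D}$ is inherited by every filter finer than $\F$. Contractivity $\De\xi\leq\xi$ follows from the witness $\mathbb{D}=\{\F\}$. Concrete functoriality reduces to the observation that for continuous $f:|\xi|\to|\sigma|$ and countable $\mathbb{D}\subset\lim_{\xi}^{-1}(x)$, the image $\{f[\D]:\D\in\mathbb{D}\}$ is a countable subset of $\lim_{\sigma}^{-1}(f(x))$ and $f[\bigwedge\mathbb{D}]=\bigwedge_{\D\in\mathbb{D}}f[\D]$, so $f[\F]\geq\bigwedge_{\D\in\mathbb{D}}f[\D]$. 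The identification of fixed points with countably deep convergences rests on a countable-union trick: if $(\F_{n})_{n\in\omega}$ all satisfy $x\in\lim_{\De\xi}\F_{n}$ with witnesses $\mathbb{D}_{n}$, then $\bigcup_{n}\mathbb{D}_{n}$ is a countable subset of $\lim_{\xi}^{-1}(x)$ witnessing $x\in\lim_{\De\xi}\bigwedge_{n}\F_{n}$; conversely a countably deep $\xi$ satisfies $\xi=\De\xi$ because countable depth applied to the witness $\mathbb{D}$ yields $x\in\lim_{\xi}\bigwedge\mathbb{D}\subset\lim_{\xi}\F$. The universal property follows identically: for any countably deep $\theta\leq\xi$, a witness $\mathbb{D}\subset\lim_{\xi}^{-1}(x)\subset\lim_{\theta}^{-1}(x)$ combined with countable depth of $\theta$ gives $x\in\lim_{\theta}\F$, so $\theta\leq\De\xi$.

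Part (2) follows the same setup (centeredness uses $\{x\}^{\uparrow}\in\mathbb{U}X$; functoriality uses that images of ultrafilters are ultrafilters), but the reflector property fails: a convergent non-ultrafilter $\F$ need not dominate a countable meet of convergent ultrafilters, so $\De^{\mathbb{U}}\xi\leq\xi$ is false in general. Only idempotence remains. The inclusion $\lim_{\De^{\mathbb{U}}\xi}\F\subset\lim_{\De^{\mathbb{U}}\De^{\mathbb{U}}\xi}\F$ is straightforward: if $\mathbb{D}$ witnesses $x\in\lim_{\De^{\mathbb{U}}\xi}\F$, then each $\U\in\mathbb{D}$ satisfies $x\in\lim_{\De^{\mathbb{U}}\xi}\U$ via the singleton $\{\U\}$, and $\mathbb{D}$ itself then witnesses $x\in\lim_{\De^{\mathbb{U}}\De^{\mathbb{U}}\xi}\F$. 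The converse uses the countable-union trick again: if $\mathbb{D}\subset\lim_{\De^{\mathbb{U}}\xi}^{-1}(x)\cap\mathbb{U}X$ is a countable witness for $x\in\lim_{\De^{\mathbb{U}}\De^{\mathbb{U}}\xi}\F$ and $\mathbb{E}_{\U}$ is a countable witness of $x\in\lim_{\De^{\mathbb{U}}\xi}\U$ for each $\U\in\mathbb{D}$, then $\mathbb{D}':=\bigcup_{\U\in\mathbb{D}}\mathbb{E}_{\U}$ is a countable subset of $\lim_{\xi}^{-1}(x)\cap\mathbb{U}X$ with $\F\geq\bigwedge\mathbb{D}\geq\bigwedge\mathbb{D}'$, so $x\in\lim_{\De^{\mathbb{U}}\xi}\F$.

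For part (3), the key observation is that composition with $\S$ repairs the failure of contractivity: if $x\in\lim_{\xi}\F$ and $\U\in\beta\F$, then isotonicity of $\xi$ gives $x\in\lim_{\xi}\U$, so $\{\U\}$ witnesses $x\in\lim_{\De^{\mathbb{U}}\xi}\U$ for every $\U\in\beta\F$, which is exactly $x\in\lim_{\S\De^{\mathbb{U}}\xi}\F$. Since $\S$ is idempotent, $\S\De^{\mathbb{U}}\xi$ is a pseudotopology, and concrete functoriality is inherited from that of $\S$ and $\De^{\mathbb{U}}$. The argument of (1) adapted to ultrafilter witnesses shows $\De^{\mathbb{U}}\xi$ is countably deep for ultrafilters, and this transfers to $\S\De^{\mathbb{U}}\xi$: given ultrafilters $(\U_{n})$ converging to $x$ in $\S\De^{\mathbb{U}}\xi$, they also converge to $x$ in $\De^{\mathbb{U}}\xi$ (because $\S\theta$ agrees with $\theta$ on ultrafilters), so does $\bigwedge_{n}\U_{n}$, and isotonicity of $\De^{\mathbb{U}}\xi$ then extends convergence to every $\V\in\beta(\bigwedge_{n}\U_{n})$, giving exactly $\bigwedge_{n}\U_{n}\in\lim_{\S\De^{\mathbb{U}}\xi}^{-1}(x)$. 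Finally, the universal property: if $\theta\leq\xi$ is a pseudotopology countably deep for ultrafilters and $x\in\lim_{\S\De^{\mathbb{U}}\xi}\F$, then for every $\U\in\beta\F$ a witness $\mathbb{D}_{\U}\subset\lim_{\xi}^{-1}(x)\cap\mathbb{U}X\subset\lim_{\theta}^{-1}(x)\cap\mathbb{U}X$ combined with countable depth for ultrafilters of $\theta$ yields $x\in\lim_{\theta}\U$, and the pseudotopology property of $\theta$ lifts this to $x\in\lim_{\theta}\F$. The main subtlety throughout is the asymmetry between $\De$ (contractive, hence a reflector) and $\De^{\mathbb{U}}$ (not contractive): $\S$ is precisely the correction that makes the ultrafilter-based variant into a reflector.
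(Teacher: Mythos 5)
Your proposal is correct and follows essentially the same route as the paper: direct verification of isotonicity, contractivity (for $\De$ and, via composition with $\S$, for $\S\De^{\mathbb{U}}$), functoriality by pushing the countable witness family through $f[\cdot]$, the countable re-indexing trick for idempotence/countable depth for ultrafilters, and the same fixed-point argument using that $\S\theta$ agrees with $\theta$ on ultrafilters. You merely spell out in detail some steps the paper dismisses as plain (idempotence of $\De^{\mathbb{U}}$, the universal property), which is consistent with, not different from, the paper's proof.
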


\begin{proof}
It is plain that $\De$ is isotone, contractive, and idempotent and
that $\De^{\mathbb{U}}$ is isotone and idempotent. Moreover, 
\[
\xi\geq\S\xi\geq\S\De^{\mathbb{U}}\xi,
\]
for $\lim_{\xi}\U\subset\lim_{\De^{\mathbb{U}}\xi}\U$ if $\U\in\mathbb{U}X$.
Therefore, $\S\De^{\mathbb{U}}$ is isotone, contractive and idempotent.

If $f:|\xi|\to|\tau|$ is continuous and $x\in\lm_{\De\xi}\F$, that
is, there is a countable subset $\mathbb{D}$ of $\lim_{\xi}^{-1}(x)$
with $\F\geq\bigwedge_{\D\in\mathbb{D}}\D,$ then 
\[
f[\F]\geq f\left[\bigwedge_{\D\in\mathbb{D}}\D\right]\geq\bigwedge_{\D\in\mathbb{D}}f[\D]
\]
so that $f(x)\in\lm_{\De\tau}f[\F]$, for $f(x)\in\lim_{\tau}f[\D]$
for every $\D\in\mathbb{D}$, by continuity of $f$. A similar argument
applies to the effect that $f:|\De^{\mathbb{U}}\xi|\to|\De^{\mathbb{U}}\tau|$
is continuous. Therefore $\De$ and $\De^{\mathbb{U}}$ are functors,
and thus $\De$ and $\S\De^{\mathbb{U}}$ are concrete reflectors.

Note that a convergence is countably deep if and only if $\xi\leq\De\xi$
and thus $\fix\De$ is the class of countably deep convergences. 

On the other hand, $\S\De^{\mathbb{U}}\xi$ is a pseudotopology, and
is also countably deep for ultrafilters. Indeed, if 
\[
x\in\bigcap_{i\in\omega}\lm_{\S\De^{\mathbb{U}}\xi}\U_{i}=\bigcap_{i\in\omega}\lm_{\De^{\mathbb{U}}\xi}\U_{i}
\]
there is for each $i$ a sequence $(\W_{i,j})_{j\in\omega}$ of $\lim_{\xi}^{-1}(x)\cap\mathbb{U}X$
with $\U_{i}\geq\bigwedge_{j\in\omega}\W_{i,j}$ so that $\bigwedge_{i\in\omega}\U_{i}\geq\bigwedge_{(i,j)\in\omega^{2}}\W_{i,j}$
with $\{\W_{i,j}:(i,j)\in\omega^{2}\}\subset\lim_{\xi}^{-1}(x)\cap\mathbb{U}X$. 

On other hand, if $\xi$ is a countably deep for ultrafilters pseudotopology
and $x\in\lm_{\S\De^{\mathbb{U}}\xi}\F$ there for every $\U\in\beta\F$
there is a countable set $\mathbb{D}_{\U}\subset\lim_{\xi}^{-}(x)\cap\mathbb{U}X$
with $\U\geq\bigwedge_{\D\in\mathbb{D_{\mathcal{U}}}}\D$, so that
$x\in\lim_{\xi}\U$ because $\xi$ is countably deep for ultrafilters.
Thus $x\in\lim_{\xi}\F$ because $\xi=\S\xi$. Thus $\S\De^{\mathbb{U}}\xi=\xi$.
In other words, $\fix\S\De^{\mathbb{U}}$ is the class of pseudotopologies
that are countably deep for ultrafilters.
\end{proof}
Note that by definition,
\[
\De^{\mathbb{U}}\geq\De,
\]
and thus $\S\De^{\mathbb{U}}\geq\S\De$, but while $\S\De^{\mathbb{U}}\xi$
is always countably deep for ultrafilters, $\S\De\xi$ may fail to
be countably deep. Moreover, $\De\S\xi$ may fail to be a pseudotopology.
See the forthcoming paper \cite{DM.depth} for examples and details.
However, the class of countably deep pseudotopologies is reflective:
\begin{prop}
The class of countably deep pseudotopologies is reflective, as an
intersection of two reflective classes. Let $\De\triangle\S$ denote
the corresponding reflector. Then for every $\xi$ there are ordinals
$\alpha$ and $\beta$ with (\footnote{Here $F$ is one of the contractive modifiers $\S\De$ and $\De\S$
and 
\[
F^{1}=F,\;F^{\alpha}=F\left(\bigwedge_{\beta<\alpha}F^{\beta}\right).
\]
})
\[
\De\triangle\S\xi=(\S\De)^{\alpha}\xi=(\De\S)^{\beta}\xi.
\]
\end{prop}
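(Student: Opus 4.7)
The plan is to realize $\De\triangle\S$ as a transfinite iteration of $F = \S\De$ (or symmetrically $F = \De\S$), each of which is a concrete isotone contractive functor (as a composite of such). Define
\[
\zeta_\alpha := \bigwedge_{\beta<\alpha} F^\beta\xi;
\]
contractivity makes $\zeta_\alpha$ nonincreasing in $\alpha$. Since the lattice $\C(|\xi|)$ is a set, this decreasing transfinite chain must stabilize at some ordinal $\alpha^*$ to a value $\eta$. For $\alpha \geq \alpha^*$, $F^\alpha\xi = F(\zeta_\alpha) = F\eta$; the equality $\zeta_{\alpha^*+1} = \zeta_{\alpha^*} \wedge F^{\alpha^*}\xi = \eta$ forces $F\eta \geq \eta$, and combined with contractivity $F\eta \leq \eta$ this yields $F\eta = \eta$.

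Next, I would identify the fixed points of $F$ as precisely the countably deep pseudotopologies. If $F = \S\De$ and $\eta = \S\De\eta$, then $\eta = \S\De\eta \leq \De\eta \leq \eta$ forces $\De\eta = \eta$, whereupon $\eta = \S\De\eta = \S\eta$, so $\eta$ is both countably deep and a pseudotopology. The case $F = \De\S$ is symmetric: $\eta = \De\S\eta \leq \S\eta \leq \eta$ gives $\S\eta = \eta$ and then $\eta = \De\eta$. Conversely, any countably deep pseudotopology is trivially fixed by both composites.

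For universality, let $\tau \leq \xi$ be a countably deep pseudotopology, so $F\tau = \tau$. By transfinite induction on $\alpha$: if $\tau \leq F^\beta\xi$ for every $\beta<\alpha$, then $\tau$ is a lower bound of $\{F^\beta\xi\}_{\beta<\alpha}$ and hence $\tau \leq \zeta_\alpha$; applying the isotone $F$ yields $\tau = F\tau \leq F\zeta_\alpha = F^\alpha\xi$. Therefore $\tau \leq \eta$, so $\eta$ is the finest countably deep pseudotopology coarser than $\xi$, which is the reflector value $\De\triangle\S\xi$. As $\eta$ is intrinsically characterized by this universal property, the two iterations produce the same value $\eta$, though possibly at distinct stabilization ordinals $\alpha$ and $\beta$, giving the stated formula.

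The main obstacle is conceptual rather than technical: infima of pseudotopologies need not be pseudotopologies, and infima of countably deep convergences need not be countably deep, so neither $\S\De$ nor $\De\S$ alone produces a convergence in the intersection after a single---or even finitely many---applications. Reapplying $F$ after each limit-stage infimum restores only one of the two properties, and the transfinite iteration is precisely what reconciles the two requirements at a common fixed point; this also explains why a straightforward closure-under-meets argument is unavailable and the detour through ordinals is needed.
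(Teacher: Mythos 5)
Your proof is correct, and it takes a more self-contained route than the paper, which disposes of the proposition in one line by invoking the general result \cite[Proposition XIV.1.18]{DM.book} on intersections of reflective classes; what you wrote is essentially a proof of that general fact specialized to $\S$ and $\De$. The key steps all check out: $F=\S\De$ (or $\De\S$) is isotone and contractive, the chain $\zeta_{\alpha}=\bigwedge_{\beta<\alpha}F^{\beta}\xi$ stabilizes because the convergences on $|\xi|$ form a set, the stabilized value $\eta$ satisfies $F\eta=\eta$, fixed points of either composite are exactly the countably deep pseudotopologies, and the induction $\tau=F\tau\leq F\zeta_{\alpha}=F^{\alpha}\xi$ gives the universal property, which also forces both iterations to meet at the same $\eta$. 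The only part of the statement you do not address is that $\De\triangle\S$ preserves continuity, which is part of what ``reflective'' means in this paper; this is a one-line supplement to your argument: if $f:|\xi|\to|\sigma|$ is continuous, then $f^{-}(\De\triangle\S\sigma)$ is again a countably deep pseudotopology (both classes are stable under initial convergences, since $\S(f^{-}\theta)=f^{-}(\S\theta)$ and $f[\bigwedge_{n}\F_{n}]=\bigwedge_{n}f[\F_{n}]$) and it is coarser than $\xi$, so your universal property yields $\De\triangle\S\xi\geq f^{-}(\De\triangle\S\sigma)$, that is, continuity of $f:|\De\triangle\S\xi|\to|\De\triangle\S\sigma|$. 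What your approach buys is independence from the book's machinery and an explicit description of the iteration; what the citation buys is brevity and a statement valid verbatim for any pair of reflective classes.
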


\begin{proof}
This is an easy consequence of \cite[Proposition XIV.1.18]{DM.book}.
\end{proof}
\begin{thm}
\label{prop:S1D1} For every convergence $\xi$,
\[
\S_{1}\De^{\mathbb{U}}\xi=\S_{0}\xi.
\]
\end{thm}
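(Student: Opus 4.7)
The plan is to establish the two inequalities $\S_{0}\xi\leq\S_{1}\De^{\mathbb{U}}\xi\leq\S_{0}\xi$. The first is essentially formal: $\S_{0}\xi$ is a pretopology, hence \emph{a fortiori} a paratopology and countably deep for ultrafilters, so $\S_{0}\xi=\S_{1}\De^{\mathbb{U}}\S_{0}\xi$; applying the isotone operator $\S_{1}\De^{\mathbb{U}}$ to $\S_{0}\xi\leq\xi$ then yields $\S_{0}\xi\leq\S_{1}\De^{\mathbb{U}}\xi$.

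For the reverse, writing $\tau:=\S_{1}\De^{\mathbb{U}}\xi\leq\xi$, my strategy is to show that $\tau$ is itself a pretopology, so that $\tau=\S_{0}\tau\leq\S_{0}\xi$. A key preparatory observation I would record is the Stone-space identity $\V_{\xi}(x)=\bigwedge E_{x}$, where $E_{x}:=\lm_{\xi}^{-}(x)\cap\mathbb{U}|\xi|$: every $\F\in\lm_{\xi}^{-}(x)$ equals $\bigwedge\beta\F$ with $\beta\F\subset E_{x}$ by isotonicity, so $\F\geq\bigwedge E_{x}$, while the reverse inequality is trivial since $E_x\subset \lm_{\xi}^{-}(x)$. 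By (\ref{eq:betainf}) this promotes to $\beta\V_{\xi}(x)=\cl_{\beta}E_{x}$.

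The heart of the argument is to prove $x\in\lm_{\tau}\V_{\xi}(x)$, i.e.\ $x\in\adh_{\De^{\mathbb{U}}\xi}\G$ whenever $\G\in\mathbb{F}_{1}$ meshes $\V_{\xi}(x)$. Fix such $\G$ with a decreasing countable base $(G_{n})_{n\in\omega}$. From $\G\#\V_{\xi}(x)$ one has $\beta G_{n}\cap\cl_{\beta}E_{x}\neq\emptyset$ for each $n$; since $\beta G_{n}$ is clopen in the Stone topology, this upgrades to $E_{x}\cap\beta G_{n}\neq\emptyset$, furnishing $\W_{n}\in E_{x}\cap\beta G_{n}$. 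In the compact Hausdorff space $(\mathbb{U}|\xi|,\beta)$ the sequence $(\W_{n})$ has a cluster point $\U$; this $\U$ lies in each $\beta G_{n}$ (because $\W_{m}\in\beta G_{n}$ for $m\geq n$ and $\beta G_{n}$ is closed), hence in $\beta\G$, and also in $\cl_{\beta}\{\W_{n}:n\in\omega\}=\beta(\bigwedge_{n}\W_{n})$ by (\ref{eq:betainf}). Thus $\U\in\beta\G$ satisfies $\U\geq\bigwedge_{n}\W_{n}$ with each $\W_{n}\in E_{x}$, so $x\in\lm_{\De^{\mathbb{U}}\xi}\U$ by definition of $\De^{\mathbb{U}}$, giving $x\in\adh_{\De^{\mathbb{U}}\xi}\G$.

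Finally I would conclude $\V_{\tau}(x)=\V_{\xi}(x)$, so that $\V_{\tau}(x)$ is itself $\tau$-convergent. The $\leq$ direction is immediate from $\tau\leq\xi$; for $\geq$, given $V\in\V_{\xi}(x)$ and $x\in\lm_{\tau}\F$, if $V\notin\F$ the principal filter $\{V^{c}\}^{\uparrow}\in\mathbb{F}_{1}$ meshes $\F$, so the defining property of $\tau$ produces an ultrafilter $\U\ni V^{c}$ with $\U\geq\bigwedge_{n}\W_{n}$ for some $\{\W_{n}\}\subset E_{x}$; but each $\W_{n}\geq\V_{\xi}(x)$ contains $V$, forcing $V\in\U$ and contradicting $V^{c}\in\U$. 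The main obstacle is the compactness step in paragraph three: extracting from the meshing data a \emph{single} ultrafilter $\U$ that simultaneously refines $\G$ and dominates the infimum of countably many $\xi$-convergent ultrafilters at $x$. This is precisely where countability of the base of $\G$ (giving each $\beta G_{n}$ the structure of a clopen set, hence $\beta\G$ a $G_{\delta}$) interacts with compact Hausdorffness of the Stone space to bridge paratopological meshing with countable-depth-for-ultrafilter convergence.
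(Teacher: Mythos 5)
Your proof is correct, and its mathematical core is the same as the paper's: given a countably based $\G$ with decreasing base $(G_{n})_{n\in\omega}$ meshing $\V_{\xi}(x)$, you select for each $n$ an ultrafilter $\W_{n}\in\beta G_{n}$ with $x\in\lm_{\xi}\W_{n}$, and the countable infimum $\bigwedge_{n\in\omega}\W_{n}$ witnesses $x\in\adh_{\De^{\mathbb{U}}\xi}\G$; the paper runs exactly this selection, for an arbitrary $\F$ with $x\in\lm_{\S_{0}\xi}\F$, which amounts to the same thing since such $\F$ satisfies $\F\geq\V_{\xi}(x)$ and hence $\G\#\F$ gives $\G\#\V_{\xi}(x)$. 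Two pieces of your packaging are superfluous: the cluster-point extraction in the Stone space is not needed, because $\bigwedge_{n\in\omega}\W_{n}$ already meshes $\G$ (any $A\in\bigcap_{n}\W_{n}$ meets each $G_{m}$, both being members of $\W_{m}$) and $\De^{\mathbb{U}}\xi$-converges to $x$, which is all that is required; and the detour through ``$\tau:=\S_{1}\De^{\mathbb{U}}\xi$ is a pretopology with $\V_{\tau}(x)=\V_{\xi}(x)$'' can be skipped, since once $x\in\lm_{\tau}\V_{\xi}(x)$ is known, $x\in\lm_{\S_{0}\xi}\F$ means $\F\geq\V_{\S_{0}\xi}(x)=\V_{\xi}(x)$, and isotonicity of the convergence $\tau$ gives $x\in\lm_{\tau}\F$, i.e., $\tau\leq\S_{0}\xi$ directly, making your final paragraph (correct as it is) unnecessary.

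Two assertions you lean on do need explicit justification. First, $\tau\leq\xi$: this is true, but not because $\De^{\mathbb{U}}$ is contractive---it is not, and the paper is careful to claim contractivity only for $\S\De^{\mathbb{U}}$; the one-line reason is that $\lm_{\xi}\U\subset\lm_{\De^{\mathbb{U}}\xi}\U$ for every ultrafilter $\U$, hence $\adh_{\xi}\G\subset\adh_{\De^{\mathbb{U}}\xi}\G$ for every filter $\G$, hence $\lm_{\xi}\F\subset\lm_{\S_{1}\xi}\F\subset\lm_{\S_{1}\De^{\mathbb{U}}\xi}\F$. Second, in your opening step, ``countably deep for ultrafilters'' does not yield $\De^{\mathbb{U}}\S_{0}\xi=\S_{0}\xi$: pretopologies are in general not fixed by $\De^{\mathbb{U}}$, since a vicinity filter need not lie above the infimum of countably many convergent ultrafilters (take, on an uncountable set, the pretopology whose vicinities at one point are the cofinite sets containing it). What countable depth for ultrafilters gives is only $\De^{\mathbb{U}}\S_{0}\xi\geq\S_{0}\xi$, which, combined with $\S_{1}\S_{0}\xi=\S_{0}\xi$, is precisely the inequality you actually use, namely $\S_{1}\De^{\mathbb{U}}\S_{0}\xi\geq\S_{0}\xi$ (this is also how the paper argues this half); asserting the equality at that stage either begs an instance of the theorem or silently invokes the contractivity argument above. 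With these two one-line repairs your argument is complete.
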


\begin{proof}
Of course, $\S_{1}\xi\geq\S_{0}\xi$ and $\De^{\mathbb{U}}\xi\geq\S_{0}\xi$
so that 
\[
\S_{1}\De^{\mathbb{U}}\xi\geq\S_{1}\S_{0}\xi=\S_{0}\xi.
\]
Conversely, if $x\in\lm_{\S_{0}\xi}\F$ and $\H\in\mathbb{F}_{1}$
with decreasing filter-base $(H_{n})_{n\in\omega}$ with $\H\#\F$,
then for every $n\in\omega$, 
\[
x\in\adh_{\S_{0}\xi}H_{n}=\adh_{\xi}H_{n},
\]
so that there is $\U_{n}\in\beta(H_{n})$ with $x\in\lm_{\xi}\U_{n}$.
Then 
\[
x\in\lm{}_{\De^{\mathbb{U}}\xi}\left(\bigwedge_{n\in\omega}\U_{n}\right)
\]
 and $\bigwedge_{n\in\omega}\U_{n}\#\H$ so that $x\in\adh_{\De^{\mathbb{U}}\xi}\H$
and thus $x\in\lm_{\S_{1}\De^{\mathbb{U}}\xi}\F$.
\end{proof}
In view of Corollary \ref{cor:compactpoints}, we conclude:
\begin{cor}
\label{cor:ccdeeptocomppt} If a convergence $\xi$ is countably deep
for ultrafilters then every singleton is cover-$\nicefrac{\mathbb{F}_{1*}}{\mathbb{F}_{0*}}$-compact.
\end{cor}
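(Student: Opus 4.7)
The plan is to combine two already-proved facts: Corollary \ref{cor:compactpoints}, which converts the cover-$\nicefrac{\mathbb{J}_{*}}{\mathbb{F}_{0*}}$-compactness of singletons into the equation $\S_{0}\xi=\operatorname{A}_{\mathbb{J}}\xi$, and Theorem \ref{prop:S1D1}, which identifies $\S_{0}\xi$ with $\S_{1}\De^{\mathbb{U}}\xi$. Specializing Corollary \ref{cor:compactpoints} to $\mathbb{J}=\mathbb{F}_{1}$, so that $\operatorname{A}_{\mathbb{F}_{1}}=\S_{1}$, it suffices to establish the global equality $\S_{0}\xi=\S_{1}\xi$ under the assumption that $\xi$ is countably deep for ultrafilters.

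First, I would unfold the hypothesis as the functorial inequality $\De^{\mathbb{U}}\xi\geq\xi$. Suppose $x\in\lm_{\De^{\mathbb{U}}\xi}\F$; by definition of $\De^{\mathbb{U}}$ there is a countable set $\mathbb{D}\subset\lm_{\xi}^{-}(x)\cap\mathbb{U}|\xi|$ with $\F\geq\bigwedge_{\U\in\mathbb{D}}\U$. Countable depth for ultrafilters then gives $x\in\lm_{\xi}\bigwedge_{\U\in\mathbb{D}}\U$, and the isotonicity axiom of convergences delivers $x\in\lm_{\xi}\F$.

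Applying the isotone functor $\S_{1}$ to $\De^{\mathbb{U}}\xi\geq\xi$ yields $\S_{1}\De^{\mathbb{U}}\xi\geq\S_{1}\xi$. Combining this with Theorem \ref{prop:S1D1} on the left and the standing inequality $\S_{0}\leq\S_{1}$ on the right gives both $\S_{0}\xi\geq\S_{1}\xi$ and $\S_{0}\xi\leq\S_{1}\xi$, hence equality. Through the specialization of Corollary \ref{cor:compactpoints} above, this is exactly the cover-$\nicefrac{\mathbb{F}_{1*}}{\mathbb{F}_{0*}}$-compactness of every singleton. There is no real obstacle here: the substantive work was done in Theorem \ref{prop:S1D1} and Corollary \ref{cor:compactpoints}, and the only subtlety worth noting is that $\De^{\mathbb{U}}$ fails to be a reflector in general (on ultrafilters one even has $\xi\geq\De^{\mathbb{U}}\xi$), so the inequality $\De^{\mathbb{U}}\xi\geq\xi$ genuinely encodes the countable-depth-for-ultrafilters hypothesis rather than being automatic.
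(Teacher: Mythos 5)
Your proof is correct and follows the paper's own route: it is precisely the combination of Theorem \ref{prop:S1D1} (via the observation that countable depth for ultrafilters means $\De^{\mathbb{U}}\xi\geq\xi$, so $\S_{0}\xi=\S_{1}\De^{\mathbb{U}}\xi\geq\S_{1}\xi\geq\S_{0}\xi$) with Corollary \ref{cor:compactpoints} for $\mathbb{J}=\mathbb{F}_{1}$, which is exactly what the paper leaves implicit in ``In view of Corollary \ref{cor:compactpoints}, we conclude.'' Your remark that $\De^{\mathbb{U}}$ is only a functor, not a reflector, and that the inequality $\De^{\mathbb{U}}\xi\geq\xi$ is where the hypothesis enters, is an accurate and worthwhile clarification of that implicit step.
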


In restriction to paratopologies, we obtain:

\begin{cor}
\label{cor:cdeeppara} The following are equivalent for a paratopology
$\xi$:
\begin{enumerate}
\item $\xi$ is countably deep;
\item $\xi$ is countably deep for ultrafilters;
\item $\xi$ is a pretopology;
\item every singleton is cover-$\nicefrac{\mathbb{F}_{1*}}{\mathbb{F}_{0*}}$-compact.
\end{enumerate}
\end{cor}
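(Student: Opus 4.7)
The plan is to exploit Theorem~\ref{prop:S1D1} and Corollary~\ref{cor:compactpoints} to reduce everything to an essentially bookkeeping argument. Concretely, I intend to establish the cycle $(3)\Longrightarrow(1)\Longrightarrow(2)\Longrightarrow(3)$ and then add the equivalence $(3)\Longleftrightarrow(4)$.

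For $(3)\Longrightarrow(1)$, I just invoke the definition of pretopology: the identity~\eqref{eq:depth} holds for \emph{every} family of filters, in particular every countable one, so a pretopology is countably deep. The implication $(1)\Longrightarrow(2)$ is trivial since every ultrafilter is a filter, so a restriction of the countable depth condition to countable subfamilies of $\mathbb{U}|\xi|$ is weaker.

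The core step is $(2)\Longrightarrow(3)$. Here the paratopology hypothesis gives $\xi=\S_{1}\xi$ and the countable depth for ultrafilters gives $\xi=\De^{\mathbb{U}}\xi$. Combining these,
\[
\xi=\S_{1}\xi=\S_{1}\De^{\mathbb{U}}\xi,
\]
and the right-hand side equals $\S_{0}\xi$ by Theorem~\ref{prop:S1D1}. Hence $\xi$ is a pretopology. This is the only step that requires real input, but that input is already in place.

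Finally, for $(3)\Longleftrightarrow(4)$ I apply Corollary~\ref{cor:compactpoints} with $\mathbb{J}=\mathbb{F}_{1}$, which characterizes the equality $\S_{0}\xi=\S_{1}\xi$ as the statement that every singleton is cover-$\nicefrac{\mathbb{F}_{1*}}{\mathbb{F}_{0*}}$-compact. Since $\xi$ is a paratopology, $\xi=\S_{1}\xi$, so $\xi=\S_{0}\xi$ (condition (3)) is equivalent to $\S_{1}\xi=\S_{0}\xi$, and hence to condition (4). I do not anticipate any obstacle: all the heavy lifting has already been done in Theorem~\ref{prop:S1D1} and Corollary~\ref{cor:compactpoints}, and the proof is essentially an assembly of these two facts together with the definitions.
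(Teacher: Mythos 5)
Your argument is correct and is essentially the paper's own proof: (1)$\then$(2) and (3)$\then$(1) are definitional, (2)$\then$(3) comes from Theorem \ref{prop:S1D1}, and (3)$\iff$(4) is the instance $\mathbb{J}=\mathbb{F}_{1}$ of Corollary \ref{cor:compactpoints}, read through $\xi=\S_{1}\xi$. Note only that condition (2) literally gives the inequality $\De^{\mathbb{U}}\xi\geq\xi$ rather than the equality $\xi=\De^{\mathbb{U}}\xi$ you assert (the functor $\De^{\mathbb{U}}$ is not contractive, since a convergent filter need not be finer than a countable infimum of convergent ultrafilters), but this inequality is all that is needed: $\S_{0}\xi=\S_{1}\De^{\mathbb{U}}\xi\geq\S_{1}\xi=\xi\geq\S_{0}\xi$.
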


\begin{proof}
$(1)\then(2)$ by definition, $(2)\then(3)$ because if $\xi=\S_{1}\xi=\De^{\mathbb{U}}\xi$
then $\xi=\S_{0}\xi$ by Theorem \ref{prop:S1D1}, $(3)\iff(4)$ is
an instance of Corollary \ref{cor:compactpoints}, and $(3)\then(1)$
is clear.
\end{proof}
We will see (Example \ref{exa:S1S0notcdeep}) that the converse of
Corollary \ref{cor:ccdeeptocomppt} is false, that is, there are convergences
$\xi$ satisfying $\S_{1}\xi=\S_{0}\xi$ that are not countably deep
for ultrafilters.
\begin{example}[A pseudotopology of countable depth that is not a paratopology]
\label{exa:cdeepnotpara} Let $X$ be an uncountable set and define
on $X$ the prime convergence $\xi$ in which the only non-isolated
point $x_{0}$ satisfies 
\[
x_{0}\in\lm_{\xi}\F\iff\ker\F\subset\{x_{0}\}\text{ and }[X]^{\leq\omega}\cap\F\neq\emptyset.
\]

This convergence has countable depth. Indeed, if $x_{0}\in\lim_{\xi}\F_{n}$
for $n\in\omega$, then for each $n\in\omega$ there is $C_{n}\in[X]^{\leq\omega}\cap\F_{n}$
and thus $\bigcup_{n\in\omega}C_{n}\in[X]^{\leq\omega}\in\bigwedge_{n\in\omega}\F_{n}$.
Moreover, 
\[
\ker\bigwedge_{n\in\omega}\F_{n}\subset\bigcup_{n\in\mathbb{N}}\ker\F_{n}\subset\{x_{0}\},
\]
by Lemma \ref{lem:kernels}. Thus $x_{0}\in\lim_{\xi}\bigwedge_{n\in\omega}\F_{n}$.

Moreover, $\xi=\S\xi$. Indeed, if $\F$ is such that $x_{0}\in\lim_{\xi}\U$
for every $\U\in\beta\F$, then for every $\U\in\beta\F$ there us
$C_{\U}\in\U\cap[X]^{\leq\omega}$ and $\ker\U\subset\{x_{0}\}$.
Thus $\ker\F=\ker\bigwedge_{\U\in\beta\F}\U\subset\{x_{0}\}$ and
there is a finite set $\mathbb{D}$ of $\beta\F$ with $\bigcup_{\U\in\mathbb{D}}C_{\U}\in\F$.
Clearly, $\bigcup_{\U\in\mathbb{D}}C_{\U}\in[X]^{\leq\omega}$. Therefore
$x_{0}\in\lim_{\xi}\F$.

Finally, $\xi$ is not pretopological, so that, in view of Corollary
\ref{cor:cdeeppara}, $\xi$ is not paratopological. To see this,
note that 
\[
\V_{\xi}(x_{0})=\bigwedge\left\{ \U\in\mathbb{U}X:x_{0}\in\lm_{\xi}\U\right\} 
\]
is coarser than the cofinite filter on $X$, which does not have any
countable element, hence does not converge to $x_{0}$ for $\xi$.
Indeed, if $A\in\V_{\xi}(x_{0})$ then, as for every $C\in[X]^{\omega}$
there is $\U\in\beta C$ with $x_{0}\in\lim_{\xi}\U$, we have $A\cap C\neq\emptyset$.
Hence $A$ can only miss a finite set, that is, $A$ is cofinite. 
\end{example}

On the other hand, $\De$ and $\S_{1}$ do not commute, as we may
have 
\[
\De\S_{1}\xi\geq\S\De\S_{1}\xi>\S_{0}\xi,
\]
while $\S_{1}\De\xi=\S_{0}\xi$ (Example \ref{exa:D1S1donotcommute}).
To produce this example, let us develop a little bit more machinery.

\section{\label{sec:inStone}Representations of convergence notions in the
Stone topology}

We will characterize various properties of a convergence $\xi$ in
terms of the topological features of sets of the form 
\[
\mathbb{U}_{\xi}(x):=\left\{ \U\in\mathbb{U}X:\;x\in\lm_{\xi}\U\right\} 
\]
 in the Stone space $\mathbb{U}X$. We restrict ourselves to pseudotopologies
in this section, equivalently, we assume that 
\[
\beta\F\subset\mathbb{U}_{\xi}(x)\then x\in\lm_{\xi}\F,
\]
for all $x\in|\xi|$. 

Conversely, a family $\B=\{B_{x}:x\in X\}$ of subsets of $\mathbb{U}X$
satisfying $\{x\}^{\uparrow}\in B_{x}$ for each $x\in X$ determines
a unique pseudotopology $\xi_{\B}$ defined (\footnote{Note that then
\[
\F\leq\G\then\lim\F\subset\lim\G
\]
rephrases as 
\[
\beta\G\subset\beta\F\text{ and }\beta\F\subset B_{x}\then\beta\G\subset B_{x}.
\]
}) by 
\[
x\in\lm_{\xi_{\B}}\F\iff\beta\F\subset B_{x}.
\]

\subsection{Closures on $\mathbb{U}X$ and their coincidence on $\mathbb{U}_{\xi}(x)$\label{subsec:Closureson Uxix}}

With a class $\mathbb{D}$ of filters, a closure operator $\cl_{\mathbb{D}^{*}}$
on $\mathbb{U}X$ was associated in \cite{mynard2007closure} by declaring
$\{\beta\D:\D\in\mathbb{D}\}$ a base of open sets, that is,
\[
\cl_{\mathbb{D}^{*}}A=\bigcap\left\{ \mathbb{U}X\setminus\beta\D:\D\in\mathbb{D},A\subset\mathbb{U}X\setminus\beta\D\right\} .
\]

\cite[Prop. 4]{mynard2007closure} clarifies when this closure operator
is a topological closure. It is in particular the case for the classes
$\mathbb{F}_{0}$ (yielding the usual topology of $\mathbb{U}X$:
$\cl_{\mathbb{F}_{0}^{*}}=\cl_{\beta}$), $\mathbb{F}_{1}$ (yielding
the $G_{\delta}$-topology: $\cl_{\mathbb{F}_{1}^{*}}=\cl_{\delta}$),
$\mathbb{F}_{\wedge1}$ (for which the topology is harder to describe),
and $\mathbb{F}$ (yielding the discrete topology). It turns out that:
\begin{thm}
\label{thm:ADclosureinUX} Let $\xi$ be a pseudotopology. Then
\[
\mathbb{U}_{\AdhD\xi}(x)=\cl_{\mathbb{D}^{*}}\left(\mathbb{U}_{\xi}(x)\right).
\]
\end{thm}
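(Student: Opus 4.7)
The plan is to prove the equality by direct unpacking of both sides, using the defining formula for the reflector $\AdhD$ and the definition of the $\mathbb{D}^{*}$-closure. The key observation is that for ultrafilters $\U$, meshing with a filter $\D$ reduces to refinement: $\D\#\U \iff \D\leq\U$.

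First, I would unpack the right-hand side. By the definition of $\cl_{\mathbb{D}^{*}}$ as the closure whose basic open sets are the $\beta\D$ for $\D\in\mathbb{D}$,
\[
\U\in\cl_{\mathbb{D}^{*}}A\iff\forall\D\in\mathbb{D},\;\U\in\beta\D\then A\cap\beta\D\neq\emptyset,
\]
which, since $\U\in\beta\D$ just means $\D\leq\U$, becomes: for every $\D\in\mathbb{D}$ with $\D\leq\U$, there is some $\W\in A$ with $\D\leq\W$. Applied to $A=\mathbb{U}_{\xi}(x)$, this reads: for every $\D\in\mathbb{D}$ with $\D\leq\U$, $\beta\D\cap\mathbb{U}_{\xi}(x)\neq\emptyset$.

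Next, I would unpack the left-hand side. By the characterization of $\AdhD$ quoted from \cite[Corollary XIV.3.8]{DM.book},
\[
x\in\lm_{\AdhD\xi}\U\iff\forall\D\in\mathbb{D},\;\D\#\U\then x\in\adh_{\xi}\D.
\]
Since $\U$ is an ultrafilter, $\D\#\U$ is equivalent to $\D\leq\U$. Moreover, by (\ref{eq:adherence}),
\[
x\in\adh_{\xi}\D\iff\exists\W\in\beta\D:\;x\in\lm_{\xi}\W\iff\beta\D\cap\mathbb{U}_{\xi}(x)\neq\emptyset.
\]
Combining, $\U\in\mathbb{U}_{\AdhD\xi}(x)$ iff for every $\D\in\mathbb{D}$ with $\D\leq\U$, $\beta\D\cap\mathbb{U}_{\xi}(x)\neq\emptyset$.

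The two conditions are identical, proving the theorem. The only potential pitfall is the reduction $\D\#\U\iff\D\leq\U$ for ultrafilters (used to line up both characterizations), and the observation that the pseudotopology assumption on $\xi$ is what guarantees that $\mathbb{U}_{\xi}(x)$ contains enough information to determine $\lim_{\xi}$—though the computation itself does not actually invoke this hypothesis, it ensures that the representation $x\mapsto\mathbb{U}_{\xi}(x)$ is meaningful for the translation of convergence notions into topology on $\mathbb{U}X$.
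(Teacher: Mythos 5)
Your proposal is correct and follows essentially the same route as the paper: unpack $\lim_{\AdhD\xi}\U=\bigcap_{\mathbb{D}\ni\D\#\U}\adh_{\xi}\D$, use $\D\#\U\iff\D\leq\U$ for ultrafilters together with $x\in\adh_{\xi}\D\iff\beta\D\cap\mathbb{U}_{\xi}(x)\neq\emptyset$, and observe that the sets $\beta\D$ with $\D\in\mathbb{D}$, $\D\leq\U$ are exactly the basic $\mathbb{D}^{*}$-neighborhoods of $\U$, so both sides reduce to the same condition. Your aside that the pseudotopology hypothesis is not used in the computation itself (it only matters for the ambient translation between $\xi$ and the sets $\mathbb{U}_{\xi}(x)$) is also accurate.
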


\begin{proof}
\[
\U\in\mathbb{U}_{\AdhD\xi}(x)\iff\forall\D\in\mathbb{D},\D\subset\U\then\beta\D\cap\mathbb{U}_{\xi}(x)\neq\emptyset.
\]
Since the sets of the form $\beta\D$ for $\D\in\mathbb{D}$ and $\D\subset\U$
form a basis of neighborhood of $\U$ for the closure $\mathbb{D}^{*}$,
the conclusion follows.\medskip{}
\end{proof}
\begin{center}
\begin{tabular}{|c|c|c|}
\hline 
$\mathbb{D}$ & $\mathbb{D}^{*}$ & $\mathbb{U}_{\operatorname{A}_{\mathbb{D}}\xi}(x)$\tabularnewline
\hline 
\hline 
$\mathbb{F}_{0}$ & $\beta$ & $\mathbb{U}_{\S_{0}\xi}(x)=\cl_{\beta}\left(\mathbb{U}_{\xi}(x)\right)$\tabularnewline
\hline 
$\mathbb{F}_{1}$ & $\delta$-topology of $\beta$ & $\mathbb{U}_{\S_{1}\xi}(x)=\cl_{\delta}\left(\mathbb{U}_{\xi}(x)\right)$\tabularnewline
\hline 
$\mathbb{F}_{\wedge1}$ & generated by $\beta\F$ with $\N_{\beta}(\beta\F)=\N_{\delta}(\beta\F)$ & $\mathbb{U}_{\S_{\wedge1}\xi}(x)=\cl_{(\mathbb{F}_{\wedge1})^{*}}\left(\mathbb{U}_{\xi}(x)\right)$\tabularnewline
\hline 
$\mathbb{F}$ & discrete & $\mathbb{U}_{\S\xi}(x)=\mathbb{U}_{\xi}(x)$\tabularnewline
\hline 
\end{tabular}
\par\end{center}

\medskip{}
In particular, $\xi$ is a pretopology if and only if for every $x\in|\xi|$,
the set $\mathbb{U}_{\xi}(x)$ is closed; a paratopology if and only
if for every $x\in|\xi|$, the set $\mathbb{U}_{\xi}(x)$ is $\delta$-closed
(that is, closed for the $G_{\delta}$ topology of $\beta$).
\begin{cor}
\label{cor:equalclosures} Let $\mathbb{D}$ and $\mathbb{J}$ be
two classes of filters. A convergence $\xi$ satisfies 
\[
\AdhD\xi=\operatorname{A}_{\mathbb{J}}\xi
\]
if and only if
\[
\cl_{\mathbb{D}^{*}}\left(\mathbb{U}_{\xi}(x)\right)=\cl_{\mathbb{J}^{*}}\left(\mathbb{U}_{\xi}(x)\right)
\]
 for every $x\in|\xi|$. 
\end{cor}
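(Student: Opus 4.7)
My approach is to derive Corollary \ref{cor:equalclosures} as an essentially immediate consequence of Theorem \ref{thm:ADclosureinUX}, applied twice (once for the class $\mathbb{D}$ and once for the class $\mathbb{J}$), combined with the fact that a pseudotopology is determined by its convergent ultrafilters. Throughout, the pseudotopologies in question are $\AdhD\xi$ and $\operatorname{A}_{\mathbb{J}}\xi$, both of which are coarser than $\xi$ and lie in the class of pseudotopologies regardless of whether $\xi$ itself is one.

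The forward implication is straightforward: if $\AdhD\xi = \operatorname{A}_{\mathbb{J}}\xi$, then in particular $\mathbb{U}_{\AdhD\xi}(x) = \mathbb{U}_{\operatorname{A}_{\mathbb{J}}\xi}(x)$ for every $x \in |\xi|$, and Theorem \ref{thm:ADclosureinUX} rewrites each side as $\cl_{\mathbb{D}^{*}}(\mathbb{U}_{\xi}(x))$ and $\cl_{\mathbb{J}^{*}}(\mathbb{U}_{\xi}(x))$ respectively, delivering the desired equality of closures. For the converse, assume that $\cl_{\mathbb{D}^{*}}(\mathbb{U}_{\xi}(x)) = \cl_{\mathbb{J}^{*}}(\mathbb{U}_{\xi}(x))$ for all $x$. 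Applying Theorem \ref{thm:ADclosureinUX} to both sides now yields $\mathbb{U}_{\AdhD\xi}(x) = \mathbb{U}_{\operatorname{A}_{\mathbb{J}}\xi}(x)$ pointwise. Since $\AdhD\xi$ and $\operatorname{A}_{\mathbb{J}}\xi$ are pseudotopologies, each is completely specified by listing, at each point, the ultrafilters that converge there; hence the two convergences coincide.

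The only subtlety, should one wish to invoke the theorem when $\xi$ is not itself a pseudotopology, is that Theorem \ref{thm:ADclosureinUX} is stated under that hypothesis. This is harmless: $\mathbb{U}_{\xi}(x) = \mathbb{U}_{\S\xi}(x)$ for every $x$, and the defining formula $\lm_{\AdhD\xi}\F = \bigcap_{\mathbb{D}\ni\D\#\F}\adh_{\xi}\D$ only sees $\xi$ through adherences of filters, which are computed via ultrafilter limits and therefore depend only on $\S\xi$; thus $\AdhD\xi = \AdhD(\S\xi)$, and likewise for $\operatorname{A}_{\mathbb{J}}$, so one may apply the theorem to the pseudotopology $\S\xi$. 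I do not anticipate any genuine obstacle in the argument: the corollary is a routine repackaging of Theorem \ref{thm:ADclosureinUX} once this observation is in place.
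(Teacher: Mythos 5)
Your argument is correct and is precisely the paper's intended route: Corollary \ref{cor:equalclosures} is read off from Theorem \ref{thm:ADclosureinUX} applied to $\mathbb{D}$ and $\mathbb{J}$, together with the fact that the pseudotopologies $\AdhD\xi$ and $\operatorname{A}_{\mathbb{J}}\xi$ are determined by their convergent ultrafilters. Your extra remark that $\AdhD\xi=\AdhD(\S\xi)$ and $\mathbb{U}_{\xi}(x)=\mathbb{U}_{\S\xi}(x)$ correctly disposes of the pseudotopology hypothesis (which in the paper is a standing assumption for the section), so nothing is missing.
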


\begin{prop}
\label{prop:Uxcompactnessgeneral} Let $\xi$ be a convergence, $x\in|\xi|$
and let $\kappa<\lambda$ be two cardinals. Then $\mathbb{U}_{\xi}(x)$
is cover-$\nicefrac{\mathbb{F}_{\lambda*}}{\mathbb{F}_{\kappa_{*}}}$-compact
if and only if for every family $\mathbb{D}$ of filters of cardinality
less than $\lambda$
\begin{equation}
x\notin\adh_{\xi}\bigvee_{\D\in\mathbb{D}}\D\then\exists S\subset\mathbb{D},|S|<\kappa:x\notin\adh_{\xi}\bigvee_{\D\in S}\D.\label{eq:UxF1comp}
\end{equation}
\end{prop}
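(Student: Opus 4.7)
The plan is to exploit the standard bijection between non-empty closed subsets of the Stone space $(\mathbb{U}X,\beta)$ and filters on $X$, which sends a filter $\F$ to $\beta\F$ and satisfies the key identity
\[
\bigcap_{i\in I}\beta\F_i=\beta\Bigl(\bigvee_{i\in I}\F_i\Bigr)
\]
whenever the supremum is proper. Via this correspondence, the adherence $\adh_\beta\G$ of a filter $\G$ on $\mathbb{U}X$ with a closed base $\{\beta\F_i\}_{i\in I}$ equals $\beta(\bigvee_i\F_i)$, and so $\adh_\beta\G\cap\mathbb{U}_\xi(x)=\emptyset$ is equivalent to $x\notin\adh_\xi\bigvee_i\F_i$. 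I will also use the reformulation of cover-$\nicefrac{\mathbb{F}_{\lambda*}}{\mathbb{F}_{\kappa*}}$-compactness given in Definition~\ref{def:covercompactness}: $\mathbb{U}_\xi(x)$ is cover-$\nicefrac{\mathbb{F}_{\lambda*}}{\mathbb{F}_{\kappa*}}$-compact iff for every $\G\in\mathbb{F}_\lambda(\mathbb{U}X)$ with $\adh_\beta\G\cap\mathbb{U}_\xi(x)=\emptyset$ there is $\D\in\mathbb{F}_\kappa(\mathbb{U}X)$ with $\D\leq\G$ and $\adh_\beta\D\cap\mathbb{U}_\xi(x)=\emptyset$.

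For the $(\Leftarrow)$ direction, start with $\G\in\mathbb{F}_\lambda(\mathbb{U}X)$ having $\adh_\beta\G\cap\mathbb{U}_\xi(x)=\emptyset$. Since $(\mathbb{U}X,\beta)$ is compact Hausdorff, replace a base of $\G$ of cardinality $<\lambda$ by the closures of its elements, obtaining a closed base of the form $\{\beta\F_i\}_{i\in I}$ with $|I|<\lambda$. The hypothesis (\ref{eq:UxF1comp}) applied to $\mathbb{D}=\{\F_i\}_{i\in I}$ produces $S\subset\mathbb{D}$ with $|S|<\kappa$ and $x\notin\adh_\xi\bigvee_{\F\in S}\F$. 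The filter on $\mathbb{U}X$ generated by $\{\beta\F:\F\in S\}$ then belongs to $\mathbb{F}_\kappa(\mathbb{U}X)$, is coarser than $\G$, and has adherence $\beta(\bigvee_{\F\in S}\F)$ disjoint from $\mathbb{U}_\xi(x)$.

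For the $(\Rightarrow)$ direction, let $\mathbb{D}=\{\F_i\}_{i\in I}$ be a family of filters on $X$ with $|I|<\lambda$ and $x\notin\adh_\xi\bigvee_i\F_i$. If $\bigvee_i\F_i$ is degenerate, some finite $S\subset\mathbb{D}$ already witnesses the conclusion, so assume $\{\beta\F_i\}_{i\in I}$ has the finite intersection property and generates a filter $\G\in\mathbb{F}_\lambda(\mathbb{U}X)$ with $\adh_\beta\G\cap\mathbb{U}_\xi(x)=\emptyset$. Apply cover-compactness to obtain $\D\leq\G$ in $\mathbb{F}_\kappa(\mathbb{U}X)$ with $\adh_\beta\D\cap\mathbb{U}_\xi(x)=\emptyset$, and pick a base $\{D_j\}_{j\in J}$ of $\D$ with $|J|<\kappa$. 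Because $\D\leq\G$, each $D_j\in\G$, so there is a finite $F_j\subset I$ with $D_j\supset\bigcap_{i\in F_j}\beta\F_i$. Setting $S=\{\F_i:i\in\bigcup_j F_j\}$, one has
\[
\beta\Bigl(\bigvee_{\F\in S}\F\Bigr)=\bigcap_{j\in J}\bigcap_{i\in F_j}\beta\F_i\subset\bigcap_{j\in J}\cl_\beta D_j=\adh_\beta\D,
\]
which is disjoint from $\mathbb{U}_\xi(x)$, giving $x\notin\adh_\xi\bigvee_{\F\in S}\F$.

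The main technical hurdle is the cardinal bookkeeping: one must verify that the filter on $\mathbb{U}X$ generated from $<\lambda$ closed sets genuinely has a base of cardinality $<\lambda$ (which holds since $|[I]^{<\omega}|=|I|$ for infinite $I$), and symmetrically that the set $S=\{\F_i:i\in\bigcup_{j\in J}F_j\}$ extracted in the $(\Rightarrow)$ step has cardinality $|S|\leq|J|\cdot\aleph_0<\kappa$. These checks are routine once one assumes $\kappa$ and $\lambda$ are infinite, which is implicit in the use of $\mathbb{F}_\kappa$ and $\mathbb{F}_\lambda$; the substance of the argument is the translation of both sides through the Stone-space correspondence.
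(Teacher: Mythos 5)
Your proof is correct and follows essentially the same route as the paper's: both hinge on the Stone-space translation $\bigcap_{\D\in S}\beta\D=\beta\bigl(\bigvee_{\D\in S}\D\bigr)$ together with the observation that this set is disjoint from $\mathbb{U}_{\xi}(x)$ exactly when $x\notin\adh_{\xi}\bigvee_{\D\in S}\D$. The only difference is presentational: you work with the filter (dual) formulation of cover-$\nicefrac{\mathbb{F}_{\lambda*}}{\mathbb{F}_{\kappa*}}$-compactness from Definition \ref{def:covercompactness}, using closed bases $\beta\F_i$ (note these form a base of $\cl_{\beta}^{\natural}\G$ rather than of $\G$ itself, which is harmless since adherences coincide and any $\D\leq\cl_{\beta}^{\natural}\G$ is coarser than $\G$), whereas the paper carries out the same bookkeeping directly with open covers by sets $\mathbb{U}X\setminus\beta\D$; the cardinality caveats you mention are equally implicit in the paper's argument.
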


\begin{proof}
Assume $\mathbb{U}_{\xi}(x)$ is cover-$\nicefrac{\mathbb{F}_{\lambda*}}{\mathbb{F}_{\kappa_{*}}}$-compact
and assume that $\mathbb{D}$ is a collection of filters of cardinality
less than $\lambda$ with $x\notin\adh_{\xi}\bigvee_{\D\in\mathbb{D}}\D$
equivalently,
\[
\beta(\bigvee_{\D\in\mathbb{D}}\D)=\bigcap_{\D\in\mathbb{D}}\beta\D\subset\mathbb{U}X\setminus\mathbb{U}_{\xi}(x),
\]
that is, 
\[
\mathbb{U}_{\xi}(x)\subset\bigcup_{\D\in\mathbb{D}}\mathbb{U}X\setminus\beta\D.
\]
By $\nicefrac{\mathbb{F}_{\lambda*}}{\mathbb{F}_{\kappa_{*}}}$-compactness,
there is a subset $S$ of $\mathbb{D}$ of cardinality less than $\kappa$
with 
\[
\mathbb{U}_{\xi}(x)\subset\bigcup_{\D\in S}\mathbb{U}X\setminus\beta\D\iff\beta(\bigvee_{\D\in S}\D)\subset\mathbb{U}X\setminus\mathbb{U}_{\xi}(x),
\]
 that is, $x\notin\adh_{\xi}\bigvee_{\D\in S}\D$.

Conversely, assume (\ref{eq:UxF1comp}) and let $\C$ be an open cover
of $\mathbb{U}_{\xi}(x)$ with $\C_{c}\in\mathbb{F}_{\lambda}$. For
every $C$ in an ideal base $\B$ of cardinality less than $\lambda$,
there is $\D_{C}\in\mathbb{F}X$ with $C=\mathbb{U}X\setminus\beta\D_{C}$.
Let $\mathbb{D}=\{\D_{C}:C\in\B\}$. Then 
\begin{eqnarray*}
\mathbb{U}_{\xi}(x)\subset\bigcup_{\D\in\mathbb{D}}\mathbb{U}X\setminus\beta\D & \iff & \beta(\bigvee_{\D\in\mathbb{D}}\D)=\bigcap_{\D\in\mathbb{D}}\beta\D\subset\mathbb{U}X\setminus\mathbb{U}_{\xi}(x)\\
 &  & x\notin\adh_{\xi}\bigvee_{\D\in\mathbb{D}}\D,
\end{eqnarray*}
so that, in view of (\ref{eq:UxF1comp}), there is a subset $S$ of
$\mathbb{D}$ of cardinality less than $\kappa$ with $x\notin\adh_{\xi}\bigvee_{\D\in S}\D$,
equivalently, 
\[
\mathbb{U}_{\xi}(x)\subset\bigcup_{\D\in S}\mathbb{U}X\setminus\beta\D,
\]
so that $\{\mathbb{U}X\setminus\beta\D\}$ is a subcover of $\C$
of cardinality less than $\kappa$.
\end{proof}
We now relate various kinds of compactness of $\mathbb{U}_{\xi}(x)$
to the corresponding kind of (cover) compactness of $\{x\}$.

We say that a class $\mathbb{D}_{*}$ of ideals is $\beta$-\emph{compatible
}if 
\begin{equation}
\C\in\mathbb{D}_{*}(X)\then\left\{ \beta C:C\in\C\right\} \in\mathbb{D}_{*}(\mathbb{U}X),\label{eq:XtoUX}
\end{equation}
and 
\begin{equation}
\{\beta(X\setminus D):D\in\D\}\in\mathbb{D}_{*}(\mathbb{U}X)\then\D\in\mathbb{D}(X).\label{eq:UXtoX}
\end{equation}

\begin{lem}
For every $\kappa$, $\mathbb{F}_{\kappa*}$ and $\mathbb{F}_{*}$
are $\beta$-compatible.
\end{lem}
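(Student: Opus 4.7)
The plan is to verify the two conditions \eqref{eq:XtoUX} and \eqref{eq:UXtoX} of $\beta$-compatibility directly for each class. The case $\mathbb{F}_{*}$ is essentially trivial: the condition on the codomain side is always in $\mathbb{F}_{*}$, and on the hypothesis side $\mathcal{D}$ is already a filter, so there is nothing to verify. All the substance lies in $\mathbb{F}_{\kappa*}$.

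The key tool I would use throughout is that the Stone map $\beta:\mathbb{P}X\to\mathbb{P}(\mathbb{U}X)$, $A\mapsto\beta A$, is a Boolean algebra embedding: it preserves finite intersections and finite unions (since $A\cap B\in\U\iff A\in\U$ and $B\in\U$, and similarly for unions in an ultrafilter), and satisfies
\[
\beta A\subset\beta B\iff A\subset B,
\]
the forward direction using that if $x\in A\setminus B$ then $\{x\}^{\uparrow}\in\beta A\setminus\beta B$. With these facts the two directions become routine.

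For \eqref{eq:XtoUX}, I would start with $\C\in\mathbb{F}_{\kappa*}(X)$, so its dual filter $\C_{c}$ admits a base $\B\subset\C_{c}$ with $|\B|<\kappa$. The filter on $\mathbb{U}X$ dual to the ideal (generated by) $\{\beta C:C\in\C\}$ is the one generated by $\{\beta D:D\in\C_{c}\}$; I would show that $\{\beta B:B\in\B\}$ is a base for it, using that any finite intersection $\beta D_{1}\cap\cdots\cap\beta D_{n}=\beta(D_{1}\cap\cdots\cap D_{n})$ contains some $\beta B$ whenever $B\in\B$ refines $D_{1}\cap\cdots\cap D_{n}\in\C_{c}$. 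This exhibits a base of cardinality $<\kappa$, giving the desired membership in $\mathbb{F}_{\kappa*}(\mathbb{U}X)$.

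For \eqref{eq:UXtoX}, suppose the filter on $\mathbb{U}X$ generated by $\{\beta D:D\in\D\}$ has a base $\B'$ with $|\B'|<\kappa$. For each $B'\in\B'$ choose $D_{B'}\in\D$ with $\beta D_{B'}\subset B'$. I would then claim that $\{D_{B'}:B'\in\B'\}$ is a base for $\D$: given any $D\in\D$, there is $B'\in\B'$ with $B'\subset\beta D$, whence $\beta D_{B'}\subset\beta D$, and injectivity of $\beta$ yields $D_{B'}\subset D$. This immediately gives $\D\in\mathbb{F}_{\kappa}(X)$. There is no genuine obstacle; the content of the lemma is precisely that $\beta$ is a Boolean embedding, so that filter bases transport across in both directions with the same cardinality.
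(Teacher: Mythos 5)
Your proof is correct. The paper states this lemma without proof, treating it as routine, and your verification is exactly the intended argument: the case $\mathbb{F}_{*}$ is vacuous, and for $\mathbb{F}_{\kappa*}$ the fact that $A\mapsto\beta A$ preserves finite unions/intersections and is an order embedding ($\beta A\subset\beta B\then A\subset B$, via principal ultrafilters) lets a base of cardinality less than $\kappa$ be transported from $\C_{c}$ to the dual filter on $\mathbb{U}X$ for (\ref{eq:XtoUX}), and back from a base $\B'$ of the filter generated by $\{\beta D:D\in\D\}$ to the base $\{D_{B'}:B'\in\B'\}$ of $\D$ for (\ref{eq:UXtoX}), since $\beta D_{B'}\subset B'\subset\beta D$ forces $D_{B'}\subset D$.
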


\begin{prop}
\label{prop:compactnessUx} Let $\xi$ be a convergence and $x\in|\xi|$
and let $\mathbb{D\subset J}$ be two classes of filters, where $\mathbb{J}$
is $\beta$-compatible. If $\mathbb{U}_{\xi}(x)$ is cover-$\nicefrac{\mathbb{J_{*}}}{\mathbb{D}_{*}}$-compact
then $\{x\}$ is cover-$\nicefrac{\mathbb{J_{*}}}{\mathbb{D}_{*}}$-compact.
Moreover, if $\mathbb{J}=\mathbb{F}$ and $\mathbb{D}$ is $\beta$-compatible,
the converse is true. 
\end{prop}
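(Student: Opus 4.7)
The plan is to exploit the dictionary $A\leftrightarrow\beta A$ between subsets of $X$ and clopen subsets of the Stone space $\mathbb{U}X$: this correspondence sends $\xi$-covers of $\{x\}$ in $X$ to $\beta$-open covers of $\mathbb{U}_{\xi}(x)$, and the $\beta$-compatibility hypotheses ensure that the classes $\mathbb{J}_{*}$ and (in the converse) $\mathbb{D}_{*}$ are preserved under passage between $X$ and $\mathbb{U}X$. For both directions the strategy is to lift a witness from one side, invoke the hypothesized cover-compactness, and descend back.

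\emph{Forward direction.} Start from $\G\in\mathbb{J}(X)$ with $x\notin\adh_{\xi}\G$, and form $\mathfrak{G}:=\{\beta G:G\in\G\}^{\uparrow}$ on $\mathbb{U}X$. By (\ref{eq:XtoUX}) applied to $\G_{c}\in\mathbb{J}_{*}(X)$ one has $\mathfrak{G}\in\mathbb{J}(\mathbb{U}X)$; since its base is clopen, $\adh_{\beta}\mathfrak{G}=\beta\G$, which is disjoint from $\mathbb{U}_{\xi}(x)$. Cover-$\nicefrac{\mathbb{J}_{*}}{\mathbb{D}_{*}}$-compactness of $\mathbb{U}_{\xi}(x)$ then supplies $\mathfrak{D}\in\mathbb{D}(\mathbb{U}X)$ with $\mathfrak{D}\leq\mathfrak{G}$ and $\adh_{\beta}\mathfrak{D}\cap\mathbb{U}_{\xi}(x)=\emptyset$. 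Set $\D:=\{A\subset X:\beta A\in\mathfrak{D}\}$: the inclusion $\mathfrak{D}\subset\mathfrak{G}$ gives $\D\leq\G$, and a short ultrafilter argument in $\mathbb{U}X$ (for $\U\in\beta\D$, failure to mesh with some $M\in\mathfrak{D}$ would force $\beta(X\setminus B)\in\mathfrak{D}$ for some $B\in\U$, hence $X\setminus B\in\D\subset\U$, a contradiction) identifies $\beta\D$ with $\adh_{\beta}\mathfrak{D}$, so $x\notin\adh_{\xi}\D$. Finally, (\ref{eq:UXtoX}) applied to the ideal $\{\beta(X\setminus A):A\in\D\}$, dual to the clopen-generated subfilter $\{\beta A:A\in\D\}^{\uparrow}$ of $\mathfrak{D}$, yields $\D\in\mathbb{D}(X)$.

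\emph{Converse direction.} Assume $\mathbb{J}=\mathbb{F}$ and $\mathbb{D}$ is $\beta$-compatible, and let $\mathfrak{G}\in\mathbb{F}(\mathbb{U}X)$ with $\adh_{\beta}\mathfrak{G}\cap\mathbb{U}_{\xi}(x)=\emptyset$. Since every closed subset of the Stone space has the form $\beta\H$, the filter $\G:=\bigwedge_{\U\in\adh_{\beta}\mathfrak{G}}\U$ satisfies $\beta\G=\adh_{\beta}\mathfrak{G}$, so $x\notin\adh_{\xi}\G$. By the hypothesis on $\{x\}$, there is $\D\in\mathbb{D}(X)$, $\D\leq\G$, with $x\notin\adh_{\xi}\D$; lifting to $\mathfrak{D}:=\{\beta D:D\in\D\}^{\uparrow}$ gives $\mathfrak{D}\in\mathbb{D}(\mathbb{U}X)$ by (\ref{eq:XtoUX}), with $\adh_{\beta}\mathfrak{D}=\beta\D$ disjoint from $\mathbb{U}_{\xi}(x)$. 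For the comparison $\mathfrak{D}\leq\mathfrak{G}$, each $D\in\D\subset\G$ satisfies $\beta D\supset\beta\G=\adh_{\beta}\mathfrak{G}$, and a standard compactness argument in $\mathbb{U}X$ shows that any clopen containing the adherence of a filter on a compact Hausdorff space must belong to the filter (the clopen complement $\mathbb{U}X\setminus\beta D$ is compact and disjoint from $\adh_{\beta}\mathfrak{G}$, so each of its points admits an open neighbourhood missing a member of $\mathfrak{G}$; a finite subcover produces $M\in\mathfrak{G}$ contained in $\beta D$). Hence $\beta D\in\mathfrak{G}$.

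\emph{Expected obstacle.} The most delicate point is the final class membership $\D\in\mathbb{D}(X)$ in the forward direction: the clopen-generated subfilter $\{\beta A:A\in\D\}^{\uparrow}$ to which (\ref{eq:UXtoX}) is applied is merely coarser than $\mathfrak{D}$, and coarser filters do not in general inherit membership in $\mathbb{D}$. I expect to handle this by a judicious choice of base for $\mathfrak{D}$, exploiting $\beta$-compatibility of $\mathbb{J}$ to replace each base element by a suitable clopen $\beta A$ sandwiched between it and $\adh_{\beta}\mathfrak{D}$, so that the resulting subfilter is honest to the class $\mathbb{D}$.
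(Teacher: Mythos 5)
Your forward direction has a genuine gap, and it is exactly the one you flag; moreover your sketched repair cannot close it. The filter form of the hypothesis hands you an arbitrary $\mathfrak{D}\in\mathbb{D}(\mathbb{U}X)$ with $\mathfrak{D}\leq\mathfrak{G}$ and $\adh_{\beta}\mathfrak{D}\cap\mathbb{U}_{\xi}(x)=\emptyset$. Your computations that $\D:=\{A\subset X:\beta A\in\mathfrak{D}\}$ is a filter with $\D\leq\G$ and $\beta\D=\adh_{\beta}\mathfrak{D}$ (hence $x\notin\adh_{\xi}\D$ by (\ref{eq:coverpoint})) are correct, but this candidate $\D$ really can drop out of $\mathbb{D}(X)$: nothing prevents the hypothesis from returning, say, $\mathfrak{D}=\{\beta\H\}^{\uparrow}$ for a filter $\H$ on $X$ with no countable base; this $\mathfrak{D}$ is principal, hence in $\mathbb{F}_{0}(\mathbb{U}X)\subset\mathbb{F}_{1}(\mathbb{U}X)$, while its trace is $\D=\H\notin\mathbb{F}_{1}(X)$. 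So the problem is not that (\ref{eq:UXtoX}) is hard to invoke for your $\D$ \textemdash{} the witness itself must be chosen differently. Your proposed patch does not do this: $\beta$-compatibility of $\mathbb{J}$ is irrelevant to the descent (what the descent needs is condition (\ref{eq:UXtoX}) for the class $\mathbb{D}$, which is the half of compatibility the paper's own proof uses at this point), and replacing members of $\mathfrak{D}$ by clopen supersets produces a \emph{coarser} filter, so the same class-membership problem reappears, filter classes not being stable under coarsening.

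The way the paper avoids the gap is not to pass to the filter form on $\mathbb{U}X$ at all: apply the hypothesis in its cover form (Definition \ref{def:covercompactness}, Remark \ref{rem:refinement}) to the specific clopen cover $\left\{ \beta(X\setminus G):G\in\G\right\} $ of $\mathbb{U}_{\xi}(x)$, which is of class $\mathbb{J}_{*}$ by (\ref{eq:XtoUX}). A subcover of class $\mathbb{D}_{*}$ is then (generated by) a subfamily $\left\{ \beta(X\setminus G):G\in\G_{0}\right\} $ with $\G_{0}\subset\G$, and the filter $\D$ on $X$ generated by $\G_{0}$ satisfies $\D\leq\G$, $x\notin\adh_{\xi}\D$ by (\ref{eq:coverpoint}), and $\D\in\mathbb{D}(X)$ directly from (\ref{eq:UXtoX}), since $\left\{ \beta(X\setminus D):D\in\D\right\} $ generates the same ideal as the chosen subcover; this makes your descent computation unnecessary. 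Your converse direction, by contrast, is correct as written and is only a mild variant of the paper's: you work with the filter form, recover $\G$ from the closed set $\adh_{\beta}\mathfrak{G}$ (nonempty by compactness of $\mathbb{U}X$), lift the resulting $\D$ via (\ref{eq:XtoUX}), and get $\mathfrak{D}\leq\mathfrak{G}$ by the finite-subcover argument, where the paper instead refines an arbitrary open cover of $\mathbb{U}_{\xi}(x)$ by a clopen ideal cover and applies (\ref{eq:XtoUX}) to it.
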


Note that 
\begin{equation}
x\notin\adh_{\xi}\D\iff\mathbb{U}_{\xi}(x)\subset\left(\beta\D\right)^{c}=\bigcup_{D\in\D}\beta(X\setminus D).\label{eq:coverpoint}
\end{equation}

\begin{proof}
Assume that $\G\in\mathbb{J}$ with $x\notin\adh_{\xi}\G$. By (\ref{eq:coverpoint}),
$\{\beta(X\setminus G):G\in\G\}$ is an open cover of $\mathbb{U}_{\xi}(x)$,
which belongs to $\mathbb{J}_{*}$ by $\beta$-compatibility. By (2),
it has a subcover in $\mathbb{D}_{*}$, that is, in view of (\ref{eq:UXtoX}),
there is $\D\in\mathbb{D}$, $\D\leq\G$ with $\mathbb{U}_{\xi}(x)\subset\bigcup_{D\in\D}\beta(X\setminus D)$.
In view of (\ref{eq:coverpoint}), $x\notin\adh_{\xi}\D$. Hence $\{x\}$
is cover-$\nicefrac{\mathbb{J_{*}}}{\mathbb{D}_{*}}$-compact. 

Assume now that $\{x\}$ is cover-$\nicefrac{\mathbb{F_{*}}}{\mathbb{D}_{*}}$-compact
and let $\C$ be an open cover of $\mathbb{U}_{\xi}(x)$. It has a
refinement of the form $\left\{ \beta A:A\in\A\right\} $ for some
family $\A\subset\mathbb{P}X$, which covers $\mathbb{U}_{\xi}(x)$.
As $\beta(A\cup B)=\beta A\cup\beta B$, we can assume $\A$ and $\{\beta A:A\in\A\}$
to be ideals. Hence $\F=\A_{c}$ is a filter on $X$ with $x\notin\adh_{\xi}\F$
because of (\ref{eq:coverpoint}). Moreover, $\{x\}$ is cover-$\nicefrac{\mathbb{F_{*}}}{\mathbb{D}_{*}}$-compact
so that there is $\D\leq\F$ with $x\notin\adh_{\xi}\D$, equivalently,
$\{\beta(X\setminus D):D\in\D\}\subset\{\beta A:A\in\A\}$ is a cover
of $\mathbb{U}_{\xi}(x)$, which is in $\mathbb{D}_{*}$ by $\beta$-compatibility
of $\mathbb{D}$. Hence $\C$ has a refinement in $\mathbb{D}_{*}$
that covers $\mathbb{U_{\xi}}(x)$, which is thus cover-$\nicefrac{\mathbb{F_{*}}}{\mathbb{D}_{*}}$-compact.
\end{proof}
In the case where $\mathbb{D}=\mathbb{F}_{0}$ and $\mathbb{J}=\mathbb{F}$,
Proposition \ref{prop:compactnessUx}, Proposition \ref{prop:Uxcompactnessgeneral}
and Corollary \ref{cor:equalclosures} may be coupled with Corollary
\ref{cor:compactpoints} to the effect that:
\begin{cor}
\label{cor:almostpretop} The following are equivalent for a convergence
$\xi$:
\begin{enumerate}
\item $\S\xi=\S_{0}\xi$;
\item $\cl_{\beta}\left(\mathbb{U}_{\xi}(x)\right)=\mathbb{U}_{\xi}(x)$
for every $x\in|\xi|$;
\item $\{x\}$ is cover-compact for every $x\in|\xi|$;
\item $\mathbb{U}_{\xi}(x)$ is compact for every $x\in|\xi|$;
\item 
\[
x\notin\adh_{\xi}\bigvee_{\D\in\mathbb{D}}\D\then\exists S\in[\mathbb{D}]^{<\infty}:x\notin\adh_{\xi}\bigvee_{\D\in S}\D.
\]
\end{enumerate}
\end{cor}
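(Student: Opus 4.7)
The plan is to read off all five equivalences from results already assembled in this section, specializing everything to $\mathbb{D} = \mathbb{F}_0$ and $\mathbb{J} = \mathbb{F}$; the corollary is essentially a bookkeeping consequence of the previous preparation.

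For $(1) \Longleftrightarrow (2)$, I would invoke Corollary~\ref{cor:equalclosures} with $\mathbb{D} = \mathbb{F}_0$ and $\mathbb{J} = \mathbb{F}$: the condition $\S\xi = \S_0\xi$ then reads $\cl_\beta(\mathbb{U}_\xi(x)) = \cl_{\mathbb{F}^*}(\mathbb{U}_\xi(x))$ for every $x \in |\xi|$. Because the $\mathbb{F}^*$-topology on $\mathbb{U}X$ is discrete (every singleton $\{\U\} = \beta\U$ lies in its declared open base), $\cl_{\mathbb{F}^*}$ acts as the identity, and the equality collapses to (2). The equivalence $(1) \Longleftrightarrow (3)$ is then Corollary~\ref{cor:compactpoints} applied with $\mathbb{J} = \mathbb{F}$, since cover-$\nicefrac{\mathbb{F}_*}{\mathbb{F}_{0*}}$-compactness is exactly ordinary cover-compactness of a singleton.

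For $(3) \Longleftrightarrow (4)$, I would apply Proposition~\ref{prop:compactnessUx} with the same $\mathbb{D} = \mathbb{F}_0$ and $\mathbb{J} = \mathbb{F}$, both of which are $\beta$-compatible by the lemma preceding that proposition; cover-$\nicefrac{\mathbb{F}_*}{\mathbb{F}_{0*}}$-compactness of $\mathbb{U}_\xi(x)$ is precisely topological compactness of this subset of the Stone space.

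Finally, for $(4) \Longleftrightarrow (5)$, I would simply rerun the short argument of Proposition~\ref{prop:Uxcompactnessgeneral} without the cardinal bounds $\kappa < \lambda$. The translation is the one already used there: an arbitrary family $\mathbb{D}$ of filters with $x \notin \adh_\xi \bigvee_{\D \in \mathbb{D}} \D$ corresponds to the open cover $\{\mathbb{U}X \setminus \beta\D : \D \in \mathbb{D}\}$ of $\mathbb{U}_\xi(x)$, a finite subfamily witnessing $x \notin \adh_\xi \bigvee_{\D \in S} \D$ corresponds to a finite subcover, and any open cover of $\mathbb{U}_\xi(x)$ in $(\mathbb{U}X, \beta)$ can be refined to this form via the Stone base closed under finite unions. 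The only real obstacle is this last refinement step converting open covers to families of filters, but it is entirely routine given the correspondences between filters on $X$ and closed subsets of $\mathbb{U}X$ used throughout the section.
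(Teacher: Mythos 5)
Your proposal is correct and follows essentially the same route as the paper, which obtains the corollary precisely by specializing Corollary \ref{cor:equalclosures}, Corollary \ref{cor:compactpoints}, Proposition \ref{prop:compactnessUx} and Proposition \ref{prop:Uxcompactnessgeneral} to $\mathbb{D}=\mathbb{F}_{0}$ and $\mathbb{J}=\mathbb{F}$ (using that $\mathbb{F}^{*}$ is discrete and that $\mathbb{F}_{0}$ and $\mathbb{F}$ are $\beta$-compatible). Your only deviation is to rerun the argument of Proposition \ref{prop:Uxcompactnessgeneral} without the cardinal bounds rather than cite it with $\kappa=\aleph_{0}$ and $\lambda$ unrestricted, which is a harmless (and if anything more careful) variant of the same step.
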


In case $\mathbb{J=F}$ and $\mathbb{D}=\mathbb{F}_{1}$, we obtain
the following from Proposition \ref{prop:compactnessUx} and Proposition
\ref{prop:Uxcompactnessgeneral}:
\begin{cor}
\label{cor:Lindelofptwise} Let $\xi$ be a convergence and $x\in|\xi|$.
The following are equivalent:
\begin{enumerate}
\item $\{x\}$ is cover-Lindelöf;
\item $\mathbb{U}_{\xi}(x)$ is Lindelöf;
\item 
\[
x\notin\adh_{\xi}\bigvee_{\D\in\mathbb{D}}\D\then\exists S\in[\mathbb{D}]^{\omega}:x\notin\adh_{\xi}\bigvee_{\D\in S}\D.
\]
\end{enumerate}
\end{cor}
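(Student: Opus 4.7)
The plan is to recognize that Lindelöfness of a topological space coincides with cover-$\nicefrac{\mathbb{F}_{*}}{\mathbb{F}_{1*}}$-compactness (every open cover admits a countable, i.e., $\mathbb{F}_{1*}$, subcover), and similarly the cover-Lindelöf property of the singleton $\{x\}$ in the convergence sense is, by Definition \ref{def:covercompactness}, exactly cover-$\nicefrac{\mathbb{F}_{*}}{\mathbb{F}_{1*}}$-compactness of $\{x\}$. With this reformulation, the corollary is a routine assembly of the two propositions cited, arranged as $(1)\iff(2)\iff(3)$.

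For $(1)\iff(2)$, I would invoke Proposition \ref{prop:compactnessUx} with $\mathbb{J}=\mathbb{F}$ and $\mathbb{D}=\mathbb{F}_{1}$. Both classes of ideals are $\beta$-compatible by the lemma preceding the proposition, so the ``moreover'' clause (which requires $\mathbb{J}=\mathbb{F}$ together with $\beta$-compatibility of $\mathbb{D}$) applies and yields the desired two-way equivalence between cover-$\nicefrac{\mathbb{F}_{*}}{\mathbb{F}_{1*}}$-compactness of $\{x\}$ in $(X,\xi)$ and cover-$\nicefrac{\mathbb{F}_{*}}{\mathbb{F}_{1*}}$-compactness of $\mathbb{U}_{\xi}(x)$ in $(\mathbb{U}X,\beta)$.

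For $(2)\iff(3)$, I would appeal to Proposition \ref{prop:Uxcompactnessgeneral}, taking $\kappa=\aleph_{1}$ and $\lambda$ unbounded (or, equivalently, noting that the proof of that proposition works verbatim when no cardinality restriction is placed on $\mathbb{D}$, since the argument only uses the correspondence between filters of $X$ and open covers of $\mathbb{U}_{\xi}(x)$ via complementation, $\beta(X\setminus D)$, with $\beta$-compatibility ensuring that cardinalities of families transfer intact). The displayed condition (\ref{eq:UxF1comp}) then becomes precisely statement (3).

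The only delicate point is the cardinal bookkeeping in the second step: Proposition \ref{prop:Uxcompactnessgeneral} is stated for two cardinals $\kappa<\lambda$, whereas (3) quantifies over \emph{all} families $\mathbb{D}$ of filters. This is handled by noting that the filter $\bigvee_{\D\in\mathbb{D}}\D$ depends only on the ideal $\bigcup_{\D\in\mathbb{D}}\D_{c}$ it generates, so replacing $\mathbb{D}$ by a cofinal subset of its corresponding ideal base does not change $\adh_{\xi}\bigvee_{\D\in\mathbb{D}}\D$; equivalently, arbitrary open covers of $\mathbb{U}_{\xi}(x)$ may be refined to covers of the form $\{\beta(X\setminus D):D\in\D\}$ exactly as in the proof of Proposition \ref{prop:compactnessUx}, so no cardinality bound on $\mathbb{D}$ is needed. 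Once this is observed, the three items are linked together by the two propositions and the proof is complete.
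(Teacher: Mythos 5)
Your proof is correct and follows essentially the same route as the paper, which obtains the corollary exactly by specializing Proposition \ref{prop:compactnessUx} and Proposition \ref{prop:Uxcompactnessgeneral} to $\mathbb{J}=\mathbb{F}$ and $\mathbb{D}=\mathbb{F}_{1}$. Your extra care about the cardinal bookkeeping in Proposition \ref{prop:Uxcompactnessgeneral} is a harmless elaboration rather than a different argument: for a fixed set $X$ every family of filters (and every open cover of a subset of $\mathbb{U}X$) has cardinality bounded by that of $\mathbb{F}X$, so one may simply take $\lambda$ above that bound.
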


For the same classes, Proposition \ref{prop:compactnessUx} combined
with Corollary \ref{cor:equalclosures} yields:
\begin{cor}
\label{cor:SisS1} Let $\xi$ be a convergence. Then $\S\xi=\S_{1}\xi$
if and only if $\mathbb{U}_{\xi}(x)$ is $\delta$-closed for every
$x\in|\xi|$.
\end{cor}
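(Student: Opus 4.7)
The proof is essentially a direct application of Corollary~\ref{cor:equalclosures}, so my plan is to unpack why the two closures in question reduce to the stated condition.

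First, I would invoke Corollary~\ref{cor:equalclosures} with the specific choices $\mathbb{D} = \mathbb{F}_1$ and $\mathbb{J} = \mathbb{F}$. This gives the equivalence
\[
\S_{1}\xi = \S\xi \iff \cl_{\mathbb{F}_{1}^{*}}\bigl(\mathbb{U}_{\xi}(x)\bigr) = \cl_{\mathbb{F}^{*}}\bigl(\mathbb{U}_{\xi}(x)\bigr) \text{ for every } x \in |\xi|.
\]
Then I would identify the two closures using the table that follows Theorem~\ref{thm:ADclosureinUX}: $\cl_{\mathbb{F}_{1}^{*}} = \cl_{\delta}$, and $\cl_{\mathbb{F}^{*}}$ is the discrete closure on $\mathbb{U}X$, so $\cl_{\mathbb{F}^{*}}(\mathbb{U}_{\xi}(x)) = \mathbb{U}_{\xi}(x)$. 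Thus the right-hand side above rewrites as $\cl_{\delta}(\mathbb{U}_{\xi}(x)) = \mathbb{U}_{\xi}(x)$, which is precisely the assertion that $\mathbb{U}_{\xi}(x)$ is $\delta$-closed.

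For completeness, if I wanted to avoid quoting Corollary~\ref{cor:equalclosures} and work from Theorem~\ref{thm:ADclosureinUX} directly, I would note that $\S\xi$ and $\S_{1}\xi$ are both pseudotopologies, so they coincide if and only if they induce the same convergent ultrafilters at each point, i.e., $\mathbb{U}_{\S\xi}(x) = \mathbb{U}_{\S_{1}\xi}(x)$ for all $x$. Theorem~\ref{thm:ADclosureinUX} identifies the first set as $\mathbb{U}_{\xi}(x)$ itself and the second as $\cl_{\delta}(\mathbb{U}_{\xi}(x))$, yielding the same conclusion.

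There is no real obstacle here; the result is essentially a specialization of the general closure/reflector dictionary already established, and the only thing to check carefully is that the $\mathbb{F}^{*}$-closure agrees with the identity on $\mathbb{U}X$, which is immediate from the fact that the topology generated by $\{\beta\F : \F \in \mathbb{F}\}$ is discrete (every singleton $\{\U\}$ equals $\beta\U$).
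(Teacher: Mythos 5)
Your proposal is correct and takes essentially the same route as the paper, which obtains Corollary \ref{cor:SisS1} by specializing Corollary \ref{cor:equalclosures} to $\mathbb{D}=\mathbb{F}_{1}$ and $\mathbb{J}=\mathbb{F}$ and using that $\cl_{\mathbb{F}_{1}^{*}}=\cl_{\delta}$ while $\cl_{\mathbb{F}^{*}}$ is the identity (the $\mathbb{F}^{*}$-topology is discrete since $\{\U\}=\beta\U$). Your alternative remark via Theorem \ref{thm:ADclosureinUX} is just the same dictionary applied one step earlier, so there is nothing to add.
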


To summarize
\[
\xymatrix{\forall x\;\mathbb{U}_{\xi}(x)\text{ is cover-}\nicefrac{\mathbb{J_{*}}}{\mathbb{D}_{*}}\text{-compact}\ar@{=>}[d] & \forall x\;\cl_{\mathbb{D}^{*}}\left(\mathbb{U}_{\xi}(x)\right)=\cl_{\mathbb{J}^{*}}\left(\mathbb{U}_{\xi}(x)\right)\ar@{=>}[d]\\
\forall x\;\{x\}\text{ is cover-}\nicefrac{\mathbb{J_{*}}}{\mathbb{D}_{*}}\text{-compact}\ar@{=>}[r]\ar@{:>}@(ul,dl)[u]^{\mathbb{J}=\mathbb{F}} & \AdhD\xi=\operatorname{A}_{\mathbb{J}}\xi\ar@{:>}|\times@(dl,dr)[l]_{\text{Example \ref{exa:nonlindelofsingleton}}}\ar@{=>}[u]\ar@{:>}@(ul,ur)[l]^{\mathbb{D}=\mathbb{F}_{0}}
}
\]

I do not know at the moment if the converse of Proposition \ref{prop:compactnessUx}
may fail when $\mathbb{J}\neq\mathbb{F}$. For instance,
\begin{problem}
Is there a convergence $\xi$ with $x\in|\xi|$ such that $\{x\}$
cover-countably compact but $\mathbb{U_{\xi}}(x)$ is not countably
compact?
\end{problem}

Note also that:
\begin{prop}
Let $\xi$ be a pseudotopology. Then 
\[
\mathbb{U}_{\I_{1}\xi}(x)=\mathbb{U}_{\S\I_{1}\xi}(x)=\intr_{\delta}\left(\mathbb{U}_{\xi}(x)\right).
\]
\end{prop}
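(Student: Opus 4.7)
The plan is to read off both equalities directly from the definition of $\I_{1}$, the assumption that $\xi$ is a pseudotopology, and Lemma \ref{lem:GdeltaBase}.

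First I would dispose of the second equality $\mathbb{U}_{\I_{1}\xi}(x)=\mathbb{U}_{\S\I_{1}\xi}(x)$. This is essentially the bottom row of the table in Subsection \ref{subsec:Closureson Uxix}: for any convergence $\tau$, $\lm_{\S\tau}\U=\adh_{\tau}\U=\lm_{\tau}\U$ for every ultrafilter $\U$, so $\mathbb{U}_{\S\tau}(x)=\mathbb{U}_{\tau}(x)$. Applying this with $\tau=\I_{1}\xi$ gives the desired identity for free, independently of the pseudotopology hypothesis.

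For the first equality I would unwind definitions. By definition of the coreflector $\I_{1}=\operatorname{B}_{\mathbb{F}_{1}}$, a point $x$ is in $\lm_{\I_{1}\xi}\U$ iff there exists $\H\in\mathbb{F}_{1}$ with $\H\leq\U$ (equivalently $\U\in\beta\H$) and $x\in\lm_{\xi}\H$. Since $\xi$ is a pseudotopology, the condition $x\in\lm_{\xi}\H$ is equivalent to $\beta\H\subset\mathbb{U}_{\xi}(x)$. Consequently
\[
\U\in\mathbb{U}_{\I_{1}\xi}(x)\iff\exists\H\in\mathbb{F}_{1}:\U\in\beta\H\subset\mathbb{U}_{\xi}(x).
\]

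Finally I would invoke Lemma \ref{lem:GdeltaBase}, which states that $\{\beta\H:\H\in\mathbb{F}_{1}\}$ is a base for the $G_{\delta}$-topology on $\mathbb{U}X$. The condition displayed above is then precisely the statement that $\U$ has a basic $\delta$-neighborhood contained in $\mathbb{U}_{\xi}(x)$, i.e., $\U\in\intr_{\delta}(\mathbb{U}_{\xi}(x))$. This yields $\mathbb{U}_{\I_{1}\xi}(x)=\intr_{\delta}(\mathbb{U}_{\xi}(x))$ and completes the proof. There is no genuine obstacle here; the only point requiring attention is the use of the pseudotopology hypothesis for the equivalence $x\in\lm_{\xi}\H\iff\beta\H\subset\mathbb{U}_{\xi}(x)$, which fails in general convergences (only the forward implication is automatic).
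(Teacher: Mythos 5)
Your proof is correct. The paper states this proposition without proof (deferring to the reference on closure spaces and Stone images), and your argument is exactly the natural one that fills that gap: the identity $\mathbb{U}_{\S\tau}(x)=\mathbb{U}_{\tau}(x)$ holds for every convergence $\tau$ since $\S$ does not alter convergence of ultrafilters, and the first equality follows from unwinding the coreflector formula $\lm_{\I_{1}\xi}\U=\bigcup_{\mathbb{F}_{1}\ni\H\leq\U}\lm_{\xi}\H$, using the pseudotopology hypothesis precisely where you flag it (to get $x\in\lm_{\xi}\H\iff\beta\H\subset\mathbb{U}_{\xi}(x)$), and then Lemma \ref{lem:GdeltaBase} to recognize the condition $\exists\H\in\mathbb{F}_{1}:\U\in\beta\H\subset\mathbb{U}_{\xi}(x)$ as membership in $\intr_{\delta}\left(\mathbb{U}_{\xi}(x)\right)$. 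No gaps.
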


\begin{cor}
A convergence is bisequential (respectively strongly Fréchet, respectively
Fréchet) if and only if $\mathbb{U}_{\xi}(x)$ is $\delta$-open (respectively,
$\mathbb{U}_{\xi}(x)=\cl_{\delta}\intr_{\delta}\mathbb{U}_{\xi}(x)$,
respectively $\mathbb{U}_{\xi}(x)=\cl_{\beta}\intr_{\delta}\mathbb{U}_{\xi}(x)$)
for every $x\in|\xi|$.
\end{cor}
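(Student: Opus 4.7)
The plan is to deduce this corollary from the preceding proposition together with Theorem \ref{thm:ADclosureinUX}, by translating the three functorial inequalities that define bisequentiality ($\xi\geq\S\I_{1}\xi$), strong Fr\'echetness ($\xi\geq\S_{1}\I_{1}\xi$), and Fr\'echetness ($\xi\geq\S_{0}\I_{1}\xi$) into topological conditions on $\mathbb{U}_{\xi}(x)$ inside the Stone space. The key enabling observation, which I would establish first, is that $\AdhD\circ\S=\AdhD$ for $\mathbb{D}\in\{\mathbb{F}_{0},\mathbb{F}_{1}\}$: since $\lim_{\S\tau}\U=\lim_{\tau}\U$ for every ultrafilter $\U$, formula (\ref{eq:adherence}) gives $\adh_{\S\tau}\F=\adh_{\tau}\F$ for every filter $\F$, and then the defining formula for $\AdhD$ yields $\AdhD(\S\tau)=\AdhD(\tau)$. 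Consequently $\S_{0}\I_{1}\xi=\S_{0}(\S\I_{1}\xi)$ and $\S_{1}\I_{1}\xi=\S_{1}(\S\I_{1}\xi)$.

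Since $\S\I_{1}\xi$ is a pseudotopology, Theorem \ref{thm:ADclosureinUX} applies to it, and combining with the identification $\mathbb{U}_{\S\I_{1}\xi}(x)=\intr_{\delta}\mathbb{U}_{\xi}(x)$ from the preceding proposition yields
\[
\mathbb{U}_{\S_{0}\I_{1}\xi}(x)=\cl_{\beta}\intr_{\delta}\mathbb{U}_{\xi}(x),\qquad\mathbb{U}_{\S_{1}\I_{1}\xi}(x)=\cl_{\delta}\intr_{\delta}\mathbb{U}_{\xi}(x).
\]

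With these identifications in hand, I would invoke the section's standing hypothesis that $\xi$ is a pseudotopology: since each $\AdhD\I_{1}\xi$ is also a pseudotopology and pseudotopologies are determined by their convergent ultrafilters, the inequality $\xi\geq\AdhD\I_{1}\xi$ is equivalent to the set-theoretic inclusion $\mathbb{U}_{\xi}(x)\subset\mathbb{U}_{\AdhD\I_{1}\xi}(x)$ for every $x\in|\xi|$. For bisequentiality this reads $\mathbb{U}_{\xi}(x)\subset\intr_{\delta}\mathbb{U}_{\xi}(x)$, which, combined with the trivial opposite inclusion, is exactly $\mathbb{U}_{\xi}(x)=\intr_{\delta}\mathbb{U}_{\xi}(x)$, i.e., $\mathbb{U}_{\xi}(x)$ is $\delta$-open. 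The strongly Fr\'echet and Fr\'echet conditions translate similarly to $\mathbb{U}_{\xi}(x)\subset\cl_{\delta}\intr_{\delta}\mathbb{U}_{\xi}(x)$ and $\mathbb{U}_{\xi}(x)\subset\cl_{\beta}\intr_{\delta}\mathbb{U}_{\xi}(x)$; together with the automatic containment $\intr_{\delta}\mathbb{U}_{\xi}(x)\subset\mathbb{U}_{\xi}(x)$, these are exactly the claimed characterizations.

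The only nonroutine step is the verification of $\AdhD\S=\AdhD$, which is what permits the factoring $\S_{\alpha}\I_{1}\xi=\S_{\alpha}(\S\I_{1}\xi)$ and thus the use of Theorem \ref{thm:ADclosureinUX}; once that identity is settled, the rest is a mechanical substitution. I do not expect any genuine obstacle beyond bookkeeping, as the preceding proposition has already supplied the crucial nontrivial identification $\mathbb{U}_{\S\I_{1}\xi}(x)=\intr_{\delta}\mathbb{U}_{\xi}(x)$.
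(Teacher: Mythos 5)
Your route is the intended one: the paper gives no written proof, treating the corollary as immediate from the preceding proposition and Theorem \ref{thm:ADclosureinUX}, and the machinery you supply is correct. Indeed $\lim_{\S\tau}\U=\lim_{\tau}\U$ for ultrafilters gives $\adh_{\S\tau}=\adh_{\tau}$, hence $\S_{0}\S=\S_{0}$ and $\S_{1}\S=\S_{1}$; applying Theorem \ref{thm:ADclosureinUX} to the pseudotopology $\S\I_{1}\xi$ and using $\mathbb{U}_{\S\I_{1}\xi}(x)=\intr_{\delta}\mathbb{U}_{\xi}(x)$ yields $\mathbb{U}_{\S_{1}\I_{1}\xi}(x)=\cl_{\delta}\intr_{\delta}\mathbb{U}_{\xi}(x)$ and $\mathbb{U}_{\S_{0}\I_{1}\xi}(x)=\cl_{\beta}\intr_{\delta}\mathbb{U}_{\xi}(x)$; and since these modifications are pseudotopologies, each functorial inequality $\xi\geq\operatorname{A}_{\mathbb{D}}\I_{1}\xi$ is equivalent to the inclusion $\mathbb{U}_{\xi}(x)\subset\mathbb{U}_{\operatorname{A}_{\mathbb{D}}\I_{1}\xi}(x)$ for all $x$.

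The gap is in your last sentence. From $\intr_{\delta}\mathbb{U}_{\xi}(x)\subset\mathbb{U}_{\xi}(x)$ you cannot conclude $\cl_{\delta}\intr_{\delta}\mathbb{U}_{\xi}(x)\subset\mathbb{U}_{\xi}(x)$ or $\cl_{\beta}\intr_{\delta}\mathbb{U}_{\xi}(x)\subset\mathbb{U}_{\xi}(x)$: taking a closure can leave the set unless $\mathbb{U}_{\xi}(x)$ is itself ($\delta$-)closed. So what your argument actually establishes for the strongly Fr\'echet and Fr\'echet clauses is the inclusion $\mathbb{U}_{\xi}(x)\subset\cl_{\delta}\intr_{\delta}\mathbb{U}_{\xi}(x)$ (respectively $\subset\cl_{\beta}\intr_{\delta}\mathbb{U}_{\xi}(x)$), not the stated equality, and this cannot be repaired in general: take the prime pseudotopology on $\mathbb{N}$ with non-isolated point $1$ and $\mathbb{U}_{\xi}(1)=\{\{1\}^{\uparrow}\}\cup\left(\mathbb{U}^{\circ}\mathbb{N}\setminus\{\U_{0}\}\right)$ for a free ultrafilter $\U_{0}$. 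This set is $\delta$-open (for free $\U\neq\U_{0}$ pick $A\in\U\setminus\U_{0}$; the free countably based filter generated by $A$ witnesses $\delta$-interiority), so $\xi$ is bisequential, hence strongly Fr\'echet and Fr\'echet; yet $\U_{0}\in\cl_{\delta}\intr_{\delta}\mathbb{U}_{\xi}(1)\setminus\mathbb{U}_{\xi}(1)$, because no countably based filter below $\U_{0}$ has $\U_{0}$ as its only free ultrafilter extension (choose a pseudo-intersection of a decreasing base). Thus the equalities in the corollary must be read as the inclusions your argument proves (they coincide when $\mathbb{U}_{\xi}(x)$ is closed, e.g.\ in the topological setting of \cite{mynard2007closure}); only the bisequential clause, where $\mathbb{U}_{\xi}(x)\subset\intr_{\delta}\mathbb{U}_{\xi}(x)$ does force equality, is fully established by your reasoning.
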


The interested reader may consult \cite{mynard2007closure} for further
details on such characterizations of the classes of bisequential,
strongly Fréchet, Fréchet spaces and other similar classes of spaces.

\subsection{Countable depth vs countable depth for ultrafilters}

Let $\mathcal{K}_{\sigma}(X)$ be the set of $\sigma$-compact subsets
of $X$. Note that as compactness is absolute, $\sigma$-compact subsets
of $A\subset\mathbb{U}X$ have the form $\bigcup_{i\in\omega}\beta\F_{i}$
where $\F_{i}\in\mathbb{F}X$ for every $i\in\omega$.

Let $\cl_{t}$ denote the closure in the countably tight modification
of $\mathbb{U}X$, that is, given $A\subset\mathbb{U}X$,
\[
\cl_{t}A=\bigcup_{C\in\left[A\right]^{\leq\omega}}\cl_{\beta}C.
\]

\begin{prop}
\label{prop:D1inUX} Let $\xi$ be a pseudotopology. Then
\begin{equation}
\mathbb{U}_{\De\xi}(x)=\bigcup_{F\in\mathcal{K}_{\sigma}(\mathbb{U}_{\xi}(x))}\cl_{\beta}F,\label{eq:UD1}
\end{equation}
and 
\begin{equation}
\mathbb{U}_{\De^{\mathbb{U}}\xi}(x)=\cl_{t}\mathbb{U}_{\xi}(x).\label{eq:UDU1}
\end{equation}

In particular, $\xi$ has countable depth if and only if for every
$x\in|\xi|$,
\[
\cl_{\beta}F\subset\mathbb{U}_{\xi}(x)
\]
whenever $F$ is a $\sigma$-compact subset of $\mathbb{U}_{\xi}(x)$.
\end{prop}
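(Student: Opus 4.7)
\medskip

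The plan is to unfold each side of the two claimed equalities directly from the definitions of $\De\xi$ and $\De^{\mathbb{U}}\xi$, and to translate infima of filters into closures of unions of $\beta$-sets via equation (\ref{eq:betainf}).

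For (\ref{eq:UD1}), the key observation is that $\U\in\mathbb{U}_{\De\xi}(x)$ if and only if $\U\in\beta\bigl(\bigwedge_{n\in\omega}\D_{n}\bigr)$ for some sequence $(\D_{n})_{n}\subset\lim_{\xi}^{-1}(x)$. By (\ref{eq:betainf}),
\[
\beta\Big(\bigwedge_{n\in\omega}\D_{n}\Big)=\cl_{\beta}\Big(\bigcup_{n\in\omega}\beta\D_{n}\Big).
\]
Since $\xi$ is a pseudotopology, $x\in\lm_{\xi}\D_{n}$ forces $\beta\D_{n}\subset\mathbb{U}_{\xi}(x)$ (by the isotonicity axiom of convergence), so $\bigcup_{n}\beta\D_{n}$ is a $\sigma$-compact subset of $\mathbb{U}_{\xi}(x)$. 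This yields the inclusion from left to right. For the reverse inclusion, let $F\in\mathcal{K}_{\sigma}(\mathbb{U}_{\xi}(x))$, so $F=\bigcup_{n}K_{n}$ with each $K_{n}$ compact. Since closed (equivalently, compact) subsets of $(\mathbb{U}X,\beta)$ are exactly of the form $\beta\F_{n}$ for some $\F_{n}\in\mathbb{F}X$, and since $\beta\F_{n}\subset\mathbb{U}_{\xi}(x)$, the pseudotopology assumption gives $x\in\lm_{\xi}\F_{n}$. Applying (\ref{eq:betainf}) once more, $\cl_{\beta}F=\beta\bigl(\bigwedge_{n}\F_{n}\bigr)$, so any $\U$ in this set is finer than the infimum of a countable family of $\xi$-convergent filters at $x$ and hence lies in $\mathbb{U}_{\De\xi}(x)$.

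For (\ref{eq:UDU1}), the same argument specializes: $\U\in\mathbb{U}_{\De^{\mathbb{U}}\xi}(x)$ if and only if $\U\geq\bigwedge_{n\in\omega}\W_{n}$ for some countable family $\{\W_{n}:n\in\omega\}\subset\mathbb{U}_{\xi}(x)$, that is, $\U\in\beta\bigl(\bigwedge_{n}\W_{n}\bigr)$. Since $\beta\W_{n}=\{\W_{n}\}$ for an ultrafilter, (\ref{eq:betainf}) gives
\[
\beta\Big(\bigwedge_{n\in\omega}\W_{n}\Big)=\cl_{\beta}\{\W_{n}:n\in\omega\}.
\]
Taking the union over all countable families $\{\W_{n}\}\subset\mathbb{U}_{\xi}(x)$ exactly reproduces the definition of $\cl_{t}\mathbb{U}_{\xi}(x)$. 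The final statement about countable depth is then immediate: $\xi$ has countable depth iff $\mathbb{U}_{\De\xi}(x)\subset\mathbb{U}_{\xi}(x)$ for every $x$, and by (\ref{eq:UD1}) this is equivalent to $\cl_{\beta}F\subset\mathbb{U}_{\xi}(x)$ whenever $F$ is a $\sigma$-compact subset of $\mathbb{U}_{\xi}(x)$.

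No genuine obstacle is anticipated: the argument is essentially a dictionary translation between the filter-theoretic definitions of $\De$ and $\De^{\mathbb{U}}$ and the topological structure of $(\mathbb{U}X,\beta)$. The one point that must be handled carefully is the ``identification'' of compact subsets of $\mathbb{U}X$ with $\beta$-sets of filters, which is needed to convert $\sigma$-compact pieces of $\mathbb{U}_{\xi}(x)$ back into convergent filters; this identification has been recorded in the preliminaries and requires only the pseudotopology hypothesis to produce convergence of the corresponding filter at $x$.
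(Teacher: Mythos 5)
Your proof is correct and follows essentially the same route as the paper: unfold the definitions of $\De\xi$ and $\De^{\mathbb{U}}\xi$, use (\ref{eq:betainf}) to identify $\beta\bigl(\bigwedge_{n}\D_{n}\bigr)$ with $\cl_{\beta}\bigl(\bigcup_{n}\beta\D_{n}\bigr)$, and note that $\sigma$-compact subsets of $\mathbb{U}_{\xi}(x)$ are exactly countable unions of sets $\beta\F$ contained in $\mathbb{U}_{\xi}(x)$, with the pseudotopology hypothesis converting $\beta\F\subset\mathbb{U}_{\xi}(x)$ back into $x\in\lm_{\xi}\F$. You merely spell out details (and the ``in particular'' clause) that the paper leaves implicit.
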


\begin{proof}
As $\xi$ is a pseudotopology, $\U\in\mathbb{U}_{\De\xi}(x)$ if and
only if $\U\in\beta(\bigwedge_{n\in\omega}\H_{n})$ for a sequence
$(\H_{n})_{n\in\omega}$ of filters with $\beta\H_{n}\subset\mathbb{U}_{\xi}(x)$.
The result follows from the observation that sets of the form (\ref{eq:betainf})
are exactly closures of $\sigma$-compact subsets of $\mathbb{U}_{\xi}(x)$.

As for (\ref{eq:UDU1}), note that $\U\in\mathbb{U}_{\De^{\mathbb{U}}\xi}(x)$
if and only if there is a sequence $\{\W_{n}:n\in\omega\}$ on $\mathbb{U}_{\xi}(x)$
with $\U\geq\bigwedge_{n\in\omega}\W_{n}$, so that $\U\in\beta\left(\bigwedge_{n\in\omega}\W_{n}\right)=\cl_{\beta}\{\W_{n}:n\in\omega\}$.
In other words, $\U\in\mathbb{U}_{\De^{\mathbb{U}}\xi}(x)$ if and
only if $\U\in\cl_{t}\mathbb{U}_{\xi}(x)$. 
\end{proof}
Therefore, Corollary \ref{cor:ccdeeptocomppt} can be refined:
\begin{cor}
If $\xi$ is countably deep for ultrafilters then $\mathbb{U}_{\xi}(x)$
is countably compact for every $x\in|\xi|$.
\end{cor}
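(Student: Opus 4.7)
The plan is to read off the conclusion from formula (\ref{eq:UDU1}) in Proposition \ref{prop:D1inUX}, together with the fact that $(\mathbb{U}X,\beta)$ is compact Hausdorff. First, note that $\xi$ being countably deep for ultrafilters is equivalent to $\xi=\De^{\mathbb{U}}\xi$: the inequality $\De^{\mathbb{U}}\xi\leq\xi$ is immediate from the definition of $\De^{\mathbb{U}}$, while $\xi\leq\De^{\mathbb{U}}\xi$ is precisely the assertion that whenever $(\U_n)_{n\in\omega}\subset\mathbb{U}|\xi|$ satisfies $x\in\bigcap_{n}\lim_{\xi}\U_n$, one has $x\in\lim_{\xi}\bigwedge_{n\in\omega}\U_n$, that is, (\ref{eq:depth}) restricted to countable subsets of $\mathbb{U}|\xi|$.

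Combining $\xi=\De^{\mathbb{U}}\xi$ with (\ref{eq:UDU1}), we obtain for every $x\in|\xi|$
\[
\mathbb{U}_{\xi}(x)=\mathbb{U}_{\De^{\mathbb{U}}\xi}(x)=\cl_{t}\mathbb{U}_{\xi}(x).
\]
Unfolding the definition of $\cl_t$ given just before Proposition \ref{prop:D1inUX}, this says exactly that for every countable subset $C\subset\mathbb{U}_{\xi}(x)$, we have $\cl_{\beta}C\subset\mathbb{U}_{\xi}(x)$.

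It now remains to deduce countable compactness of $\mathbb{U}_{\xi}(x)$ as a subspace of $(\mathbb{U}X,\beta)$. Let $(\U_n)_{n\in\omega}$ be a sequence in $\mathbb{U}_{\xi}(x)$ and set $C=\{\U_n:n\in\omega\}$. Since $(\mathbb{U}X,\beta)$ is compact Hausdorff, $\cl_{\beta}C$ is compact, so the sequence $(\U_n)$ admits at least one cluster point $\U^{\star}$ in $\mathbb{U}X$; every such cluster point lies in $\cl_{\beta}C$. By the previous paragraph $\cl_{\beta}C\subset\mathbb{U}_{\xi}(x)$, so $\U^{\star}\in\mathbb{U}_{\xi}(x)$, which gives countable compactness (equivalently, every countable subset of $\mathbb{U}_{\xi}(x)$ is relatively compact in $\mathbb{U}_{\xi}(x)$).

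There is no real obstacle: once (\ref{eq:UDU1}) is in hand, the argument is just the observation that $\cl_t$-closedness is the same as saying every countable subset has its $\beta$-closure inside $\mathbb{U}_{\xi}(x)$, and closed subsets of a compact space are compact. The only point where one must be a little careful is in making the translation "countably deep for ultrafilters" $\iff$ $\xi=\De^{\mathbb{U}}\xi$ explicit.
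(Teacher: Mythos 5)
Your proof is correct and follows essentially the same route as the paper: invoke Proposition \ref{prop:D1inUX} to get $\mathbb{U}_{\xi}(x)=\cl_{t}\mathbb{U}_{\xi}(x)$, then use compactness of $(\mathbb{U}X,\beta)$ to produce a cluster point of any countable subset, which must lie in $\mathbb{U}_{\xi}(x)$. Your version is just slightly more explicit about the translation between ``countably deep for ultrafilters'' and $\xi=\De^{\mathbb{U}}\xi$, and in fact establishes the (stronger) property that every countable subset has compact closure inside $\mathbb{U}_{\xi}(x)$, from which countable compactness follows.
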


\begin{proof}
Every countable subset of $\mathbb{U}_{\xi}(x)$ has an accumulation
point in $\mathbb{U}X$ by compactness. In view of Proposition \ref{prop:D1inUX},
$\mathbb{U}_{\xi}(x)=\cl_{t}(\mathbb{U}_{\xi}(x))$ and thus this
accumulation point is in $\mathbb{U}_{\xi}(x)$, which is thus countably
compact.
\end{proof}
\begin{example}[A pseudotopology that is countably deep for ultrafilters but not countably
deep]
\label{exa:Udeepbutnotdeep} Following an idea of \cite{nyikosOPIT2},
take a weak P-point $\U_{0}$ of $\mathbb{U}^{\circ}\mathbb{N}$ that
is not a P-point (in $\mathbb{U}^{\circ}\mathbb{N}$), and take a
prime pseudotopology $\xi$ on $\mathbb{N}$ with only non-isolated
point $1$, where
\[
\mathbb{U}_{\xi}(1)=\{\{1\}^{\uparrow}\}\cup\mathbb{U^{\circ}\mathbb{N}\setminus}\{\U_{0}\}.
\]
Then $\mathbb{U}_{\xi}(1)=\cl_{t}(\mathbb{U}_{\xi}(1))$ because $\U_{0}$
is a weak P-point, so that $\xi=\De^{\mathbb{U}}\xi$, but there is
an $F_{\sigma}$ subset $S$ of $\mathbb{U}^{\circ}\mathbb{N}$ (hence
of $\mathbb{U}\mathbb{N}$) with $\U_{0}\in\cl_{\beta}S$, because
$\U_{0}$ is not a P-point. Thus $\xi$ is not countably deep.
\end{example}

\begin{example}[A convergence $\xi$ with $\S_{1}\xi=\S_{0}\xi$ that is not countably
deep for ultrafilters]
\label{exa:S1S0notcdeep} Take similarly a free ultrafilter $\U_{0}$
of $\mathbb{U}^{\circ}\mathbb{N}$ that is not a weak P-point, and
take a prime pseudotopology $\xi$ on $\mathbb{N}$ with only non-isolated
point $1$, where
\[
\mathbb{U}_{\xi}(1)=\{\{1\}^{\uparrow}\}\cup\mathbb{U^{\circ}\mathbb{N}\setminus}\{\U_{0}\}.
\]
Then $\U_{0}\in\cl_{t}(\mathbb{U}_{\xi}(1))\setminus\mathbb{U}_{\xi}(1)$
$\xi\neq\De^{\mathbb{U}}\xi$, but 
\[
\cl_{\beta}(\mathbb{U}_{\xi}(1))=\cl_{\delta}(\mathbb{U}_{\xi}(1))=\{\{1\}^{\uparrow}\}\cup\mathbb{U^{\circ}\mathbb{N}},
\]
so that $\S_{1}\xi=\S_{0}\xi$.
\end{example}

Let us now return to showing that $\De$ and $\S_{1}$ do not commute.
Let's start by examining when $\De\xi$ and $\S_{0}\xi$ coincide:
\begin{prop}
\label{prop:D1isS0} The following are equivalent for a pseudotopology
$\xi$:
\begin{enumerate}
\item $\De\xi=\S_{0}\xi$;
\item For every $x\in|\xi|$, there is $\{\D_{n}:n\in\omega\}\subset\lim_{\xi}^{-}(x)$
with $\V_{\xi}(x)=\bigwedge_{n\in\omega}\D_{n}$;
\item there is a $\sigma$-compact subset $F$ of $\mathbb{U}_{\xi}(x)$
with $\cl_{\beta}F=\cl_{\beta}\mathbb{U}_{\xi}(x)$.
\end{enumerate}
\end{prop}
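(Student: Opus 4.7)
The plan is to prove the equivalences in a cycle, handling $(1)\Leftrightarrow(2)$ via the definitions of $\De$ and $\S_0$, and $(2)\Leftrightarrow(3)$ by translating to the Stone space via Theorem~\ref{thm:ADclosureinUX} and Proposition~\ref{prop:D1inUX}.

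For $(1)\Leftrightarrow(2)$, the first observation is that $\De\xi\geq\S_{0}\xi$ always: if $x\in\lm_{\De\xi}\F$ then $\F\geq\bigwedge_{n}\D_{n}$ with $\D_{n}\geq\V_{\xi}(x)$ for each $n$, so $\F\geq\V_{\xi}(x)$. For $(2)\Rightarrow(1)$, assuming $\V_{\xi}(x)=\bigwedge_{n}\D_{n}$ with $\D_{n}\in\lim_{\xi}^{-}(x)$, every $\F\geq\V_{\xi}(x)$ satisfies $\F\geq\bigwedge_{n}\D_{n}$, so $x\in\lm_{\De\xi}\F$ and $\S_{0}\xi\geq\De\xi$. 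Conversely, $(1)\Rightarrow(2)$: since $x\in\lm_{\S_{0}\xi}\V_{\xi}(x)$, also $x\in\lm_{\De\xi}\V_{\xi}(x)$, so there exist $\{\D_{n}:n\in\omega\}\subset\lim_{\xi}^{-}(x)$ with $\V_{\xi}(x)\geq\bigwedge_{n}\D_{n}$; combined with $\bigwedge_{n}\D_{n}\geq\V_{\xi}(x)$ (noted above), equality holds.

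For $(2)\Leftrightarrow(3)$, I rely on the fact that under the Stone correspondence $\H\mapsto\beta\H$, countable infima of filters correspond to closures of $\sigma$-compact subsets via (\ref{eq:betainf}). Recall also that $\beta\V_{\xi}(x)=\cl_{\beta}\mathbb{U}_{\xi}(x)$ by Theorem~\ref{thm:ADclosureinUX} applied with $\mathbb{D}=\mathbb{F}_{0}$. For $(2)\Rightarrow(3)$: given the $\D_{n}$'s, set $F=\bigcup_{n}\beta\D_{n}$; each $\beta\D_{n}$ is compact, hence $F$ is $\sigma$-compact, and $F\subset\mathbb{U}_{\xi}(x)$ because $\xi$ is a pseudotopology and each $\D_{n}$ converges to $x$. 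Then
\[
\cl_{\beta}F=\beta\Bigl(\bigwedge_{n}\D_{n}\Bigr)=\beta\V_{\xi}(x)=\cl_{\beta}\mathbb{U}_{\xi}(x).
\]
For $(3)\Rightarrow(2)$: write $F=\bigcup_{n}K_{n}$ with each $K_{n}$ compact, hence closed in the compact Hausdorff space $\mathbb{U}X$, so (assuming WLOG $K_{n}\neq\emptyset$) $K_{n}=\beta\H_{n}$ for some $\H_{n}\in\mathbb{F}X$. Since $K_{n}\subset\mathbb{U}_{\xi}(x)$ and $\xi$ is a pseudotopology, $\H_{n}\in\lim_{\xi}^{-}(x)$. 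Applying (\ref{eq:betainf}) again,
\[
\beta\Bigl(\bigwedge_{n}\H_{n}\Bigr)=\cl_{\beta}\bigcup_{n}\beta\H_{n}=\cl_{\beta}F=\cl_{\beta}\mathbb{U}_{\xi}(x)=\beta\V_{\xi}(x),
\]
and since $\beta$ is a bijection between filters and nonempty $\beta$-closed subsets of $\mathbb{U}X$, $\V_{\xi}(x)=\bigwedge_{n}\H_{n}$.

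The main subtlety, rather than any serious obstacle, lies in the direction $(3)\Rightarrow(2)$: one needs to observe that every compact subset of $\mathbb{U}X$ has the form $\beta\H$, so a $\sigma$-compact subset is automatically a countable union of such, matching the countable-infimum form built into $\De\xi$. Everything else is bookkeeping.
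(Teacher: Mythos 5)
Your proof is correct and follows essentially the same route as the paper's: the equivalence of (1) and (2) via the convergence of $\V_{\xi}(x)$ for $\De\xi$, and the translation to the Stone space via $\beta\V_{\xi}(x)=\cl_{\beta}\mathbb{U}_{\xi}(x)$ and the identity (\ref{eq:betainf}), with compact subsets of $\mathbb{U}X$ recognized as sets $\beta\H$. The only difference is organizational (pairwise equivalences instead of the paper's cycle $(1)\Rightarrow(2)\Rightarrow(3)\Rightarrow(1)$), and you spell out some details the paper leaves implicit.
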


\begin{proof}
$(1)\then(2)$: If $\De\xi=\S_{0}\xi$ then $x\in\lm_{\De\xi}\V_{\xi}(x)$
for each $x$, that is, $(2)$. 

$(2)\then(3)$ because 
\begin{equation}
\cl_{\beta}\mathbb{U}_{\xi}(x)=\beta(\V_{\xi}(x))=\beta\Big(\bigwedge_{n\in\omega}\D_{n}\Big)=\cl_{\beta}\Big(\bigcup_{n\in\omega}\beta\D_{n}\Big).\label{eq:aux}
\end{equation}

$(3)\then(1)$ Since $F=\bigcup_{n\in\omega}\beta\D_{n}$ for a sequence
$\{\D_{n}:n\in\omega\}$ of filters with $\beta\D_{n}\subset\mathbb{U}_{\xi}(x)$
and $\xi$ is pseudotopological, we conclude that $\{\D_{n}:n\in\omega\}\subset\lim_{\xi}^{-}(x)$.
From (\ref{eq:aux}), we conclude that $\V_{\xi}(x)$ converges for
$\De\xi$, that is, $\De\xi=\S_{0}\xi$.
\end{proof}
\begin{thm}
\label{thm:deltaclosednotclosureofFsigma} There is a set $X$ and
a non-closed $\delta$-closed subset $S$ of $\mathbb{U}X$ such that
$\cl_{\beta}S\neq\cl_{\beta}C$ whenever $C$ is an $F_{\sigma}$
(in $\mathbb{U}X$) subset of $S$.
\end{thm}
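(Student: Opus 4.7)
The plan is to take $X=\omega_{1}$ and let $S=\omega_{1}$, identified with the set of principal ultrafilters inside $\mathbb{U}\omega_{1}$. Since $\beta A\cap\omega_{1}=A$ for every $A\subset\omega_{1}$, the set $\omega_{1}$ is $\beta$-dense in $\mathbb{U}\omega_{1}$, so $\cl_{\beta}S=\mathbb{U}\omega_{1}$; because free ultrafilters on $\omega_{1}$ exist by Zorn, $S$ is not closed.

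The main step is showing that $S$ is $\delta$-closed, i.e., that $\mathbb{U}^{\circ}\omega_{1}=\mathbb{U}\omega_{1}\setminus S$ is $\delta$-open. By Lemma \ref{lem:GdeltaBase} it suffices to produce, for each $\U\in\mathbb{U}^{\circ}\omega_{1}$, a free filter $\H_{\U}\in\mathbb{F}_{1}$ with $\H_{\U}\leq\U$, because then $\beta\H_{\U}$ is a basic $\delta$-open neighborhood of $\U$ and the freeness of $\H_{\U}$ ensures $\beta\H_{\U}\cap\omega_{1}=\emptyset$. If $\U$ contains a countable $C=\{c_{n}:n\in\omega\}$, take $\H_{\U}$ generated by $H_{n}:=C\setminus\{c_{0},\ldots,c_{n-1}\}\in\U$; then $\bigcap_{n}H_{n}=\emptyset$, so $\H_{\U}$ is free. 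If $\U$ is uniform, I would invoke Ulam's ZFC theorem that $\omega_{1}$ is not measurable to conclude that $\U$ is not $\aleph_{1}$-complete. Then some countable family $(A_{n})\subset\U$ satisfies $B:=\bigcap_{n}A_{n}\notin\U$, so $\omega_{1}\setminus B\in\U$ and the sets $A_{n}':=A_{n}\setminus B$ lie in $\U$ with $\bigcap_{n}A_{n}'=\emptyset$; these generate the required $\H_{\U}$.

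For the $F_{\sigma}$ condition, let $C=\bigcup_{n\in\omega}C_{n}\subset S$ with each $C_{n}$ $\beta$-closed in $\mathbb{U}\omega_{1}$. Since every $\{\alpha\}=\beta\{\alpha\}$ is clopen, the subspace $\omega_{1}\subset\mathbb{U}\omega_{1}$ is discrete, so each compact $C_{n}\subset\omega_{1}$ is finite and $C$ is countable. By the regularity of $\omega_{1}$, $C$ is bounded, so $C\subset[0,\alpha]$ for some $\alpha<\omega_{1}$. Then $\cl_{\beta}C\subset\beta[0,\alpha]$, and since $[0,\alpha]$ is countable, no uniform ultrafilter (whose existence is given by Zorn applied to the cocountable filter) lies in $\beta[0,\alpha]$. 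Hence $\cl_{\beta}C\subsetneq\mathbb{U}\omega_{1}=\cl_{\beta}S$. The only genuinely non-elementary input is Ulam's non-measurability theorem, needed to handle uniform ultrafilters; everything else follows from the discrete structure of $\omega_{1}\subset\mathbb{U}\omega_{1}$ and the regularity of $\omega_{1}$.
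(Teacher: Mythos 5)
Your proof is correct, but it takes a genuinely different route from the paper. You build the example by hand: $X=\omega_{1}$, $S=$ the set of principal ultrafilters, with $\delta$-closedness of $S$ obtained by producing inside every free ultrafilter a free countably based filter (trivially when the ultrafilter has a countable member, and via Ulam's theorem -- no countably complete free ultrafilter on $\omega_{1}$, which is the precise form of ``$\omega_{1}$ is not measurable'' you need -- in the uniform case), and the $F_{\sigma}$ condition from the observation that $\beta$-closed subsets of $S$ are compact sets of isolated points, hence finite, so any $F_{\sigma}$ subset of $S$ is countable, bounded below $\omega_{1}$ by regularity, and its closure misses every ultrafilter containing a tail. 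All steps check out (in particular the appeal to Lemma \ref{lem:GdeltaBase} to get $\delta$-openness of $\mathbb{U}^{\circ}\omega_{1}$, and the fact that $\beta\H\cap\omega_{1}=\emptyset$ exactly when $\ker\H=\emptyset$). The paper instead deduces the theorem from its general dictionary: for any regular Lindel\"of, non-$\sigma$-compact topological space such as $\mathbb{R}^{\omega}$, the set $S=\mathbb{U}_{\xi}X$ is $\delta$-closed (even Lindel\"of) by Corollary \ref{cor:Lindelof}, non-closed because $\xi$ is not compact, and admits no $F_{\sigma}$ subset with the same closure by Corollary \ref{cor:sigmacompact}, since $\xi$ is not $\sigma$-compact. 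Your construction is in fact the instance $\xi=$ discrete topology on $\omega_{1}$ of the same scheme (there $S$ is exactly the set of convergent ultrafilters), which shows that the Lindel\"of hypothesis in the paper's route is convenient rather than necessary; what it buys is a self-contained argument using only classical set theory instead of the compactoid-filter machinery and the regularity hypotheses of Corollary \ref{cor:sigmacompact}. What the paper's route buys is an illustration of the correspondence results the section is devoted to, plus an $S$ that is moreover Lindel\"of. Two cosmetic remarks: your case split is redundant (a free ultrafilter with a countable member is automatically countably incomplete, so the Ulam argument alone suffices once you note a countably complete free ultrafilter must be uniform), and to see $\beta[0,\alpha]\neq\mathbb{U}\omega_{1}$ you do not need a uniform ultrafilter -- any principal ultrafilter $\{\gamma\}^{\uparrow}$ with $\gamma>\alpha$ already lies outside it.
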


Subsection \ref{subsec:proofofTheorem} below provides a proof of
Theorem \ref{thm:deltaclosednotclosureofFsigma}.
\begin{example}[$\S\De\S_{1}\xi>\S_{0}\xi$]
 \label{exa:D1S1donotcommute} Let $X$ and $S$ be as in Theorem
\ref{thm:deltaclosednotclosureofFsigma} and let $x_{0}\in X$. Let
$\xi$ be a prime pseudotopology on $X$ with non-isolated point $x_{0}$
and $\mathbb{U}_{\xi}(x_{0})=\{\{x_{0}\}^{\uparrow}\}\cup S$. Then
$\xi=\S_{1}\xi>\S_{0}\xi$ because $\mathbb{U}_{\xi}(x_{0})$ is $\delta$-closed
but not closed. On the other hand, in view of Proposition \ref{prop:D1isS0},
$\S\De\xi>\S_{0}\xi$ because $\cl_{\beta}S\neq\cl_{\beta}C$ whenever
$C$ is an $F_{\sigma}$ subset of $S$.
\end{example}

\subsection{\label{subsec:proofofTheorem}Topological properties of $\mathbb{U}_{\xi}X$}

We observe an interplay between coincidence of different closures
for $\mathbb{U}_{\xi}X$ and various kind of compactness of $\mathbb{U}_{\xi}X$
as we did for $\mathbb{U}_{\xi}(x)$ in Section \ref{subsec:Closureson Uxix}.
\begin{prop}
\label{prop:clUxix} $\U\in\cl_{\mathbb{D}^{*}}(\mathbb{U}_{\xi}X)$
if and only if $\U$ is $\mathbb{D}$-compactoid. 
\end{prop}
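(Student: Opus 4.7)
The plan is to unwind the definitions on both sides of the equivalence and check that they coincide term by term.

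First, I would recall that by definition of $\cl_{\mathbb{D}^*}$, the sets of the form $\beta\D$ with $\D \in \mathbb{D}$ form a base of open sets, so $\U \in \cl_{\mathbb{D}^*}(\mathbb{U}_\xi X)$ means that every basic open set $\beta\D$ containing $\U$ meets $\mathbb{U}_\xi X$. Containment $\U \in \beta\D$ is the condition $\D \leq \U$. Hence
\[
\U \in \cl_{\mathbb{D}^*}(\mathbb{U}_\xi X) \iff \forall \D \in \mathbb{D},\ \D \leq \U \then \beta\D \cap \mathbb{U}_\xi X \neq \emptyset.
\]

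Next, I would use the defining formula (\ref{eq:adherence}) for adherence, namely $\adh_\xi \D = \bigcup_{\W \in \beta\D} \lm_\xi \W$, to observe that $\beta\D \cap \mathbb{U}_\xi X \neq \emptyset$ is precisely $\adh_\xi \D \neq \emptyset$, since $\mathbb{U}_\xi X = \{\W \in \mathbb{U}X : \lm_\xi \W \neq \emptyset\}$. So the right-hand side above becomes: for every $\D \in \mathbb{D}$ with $\D \leq \U$, $\adh_\xi \D \neq \emptyset$.

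Finally, I would match this with the definition of $\mathbb{D}$-compactoidness. Since $\U$ is an ultrafilter, maximality gives $\D \# \U \iff \D \leq \U$. Thus $\U$ being $\mathbb{D}$-compactoid (the case $\B = \{X\}$ in Definition \ref{def:covercompactness}) reads exactly as: for every $\D \in \mathbb{D}$ with $\D \leq \U$, $\adh_\xi \D \neq \emptyset$. This is precisely the condition obtained in the previous step, so the two characterizations agree. There is no real obstacle here; the content is entirely in verifying that the basic open sets through $\U$ correspond to the filters $\D \in \mathbb{D}$ refined by $\U$, and that the obstruction ``$\adh_\xi \D \neq \emptyset$'' is the same as ``$\beta\D$ hits a convergent ultrafilter''.
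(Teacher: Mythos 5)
Your proposal is correct and follows essentially the same route as the paper's proof: unwind the closure $\cl_{\mathbb{D}^{*}}$ via the basic open sets $\beta\D$ (with $\U\in\beta\D$ iff $\D\leq\U$, and $\D\leq\U\iff\D\#\U$ for an ultrafilter), and identify $\beta\D\cap\mathbb{U}_{\xi}X\neq\emptyset$ with $\adh_{\xi}\D\neq\emptyset$, which is exactly $\mathbb{D}$-compactoidness. Your write-up is merely a slightly more explicit version of the argument in the paper.
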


\begin{proof}
$\U\in\cl_{\mathbb{D}^{*}}(\mathbb{U}_{\xi}X)$ if for every $\D\in\mathbb{D}$
with $\D\leq\U$ we have $\beta\D\cap\mathbb{U}_{\xi}X\neq\emptyset$,
that is, 
\[
\mathbb{D}\ni\D\#\U\then\adh_{\xi}\D\neq\emptyset.
\]
In other words, $\U\in\cl_{\mathbb{D}^{*}}(\mathbb{U}_{\xi}X)$ if
and only if $\U$ is $\mathbb{D}$-compactoid.
\end{proof}
Recall that $\mathbb{K}_{\mathbb{D}}$ denote the classes of all $\mathbb{D}$-compactoid
filters (for a given convergence).
\begin{prop}
\label{prop:clDisClJUxiX} Let $\xi$ be a convergence and $\mathbb{D}\subset\mathbb{J}$
be two classes of filters. The following are equivalent:
\begin{enumerate}
\item $\cl_{\mathbb{D^{*}}}(\mathbb{U}_{\xi}X)=\cl_{\mathbb{J}^{*}}(\mathbb{U}_{\xi}X)$;
\item $\mathbb{K}_{\mathbb{D}}(\xi)=\mathbb{K}_{\mathbb{J}}(\xi)$, that
is, every $\mathbb{D}$-compactoid filter on $\xi$ is $\mathbb{J}$-compactoid.
\end{enumerate}
\end{prop}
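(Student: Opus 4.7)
The plan is to reduce everything to Proposition~\ref{prop:clUxix}, which asserts that the points of $\cl_{\mathbb{D}^{*}}(\mathbb{U}_{\xi}X)$ are exactly the $\mathbb{D}$-compactoid ultrafilters. The bridge from that ultrafilter-level statement to the filter-level condition (2) is the elementary observation that a filter $\F$ on $(X,\xi)$ is $\mathbb{D}$-compactoid if and only if every $\U\in\beta\F$ is $\mathbb{D}$-compactoid. The forward direction is immediate since a refinement of a $\mathbb{D}$-compactoid family is still $\mathbb{D}$-compactoid. The backward direction: given $\D\in\mathbb{D}$ with $\D\#\F$, take any ultrafilter $\U\geq\D\vee\F$; then $\D\leq\U$, so $\mathbb{D}$-compactoidness of $\U$ yields $\adh_{\xi}\D\neq\emptyset$.

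With this lemma in hand, $(1)\Rightarrow(2)$ proceeds as follows. The inclusion $\mathbb{K}_{\mathbb{J}}(\xi)\subset\mathbb{K}_{\mathbb{D}}(\xi)$ is automatic from $\mathbb{D}\subset\mathbb{J}$ (testing against fewer filters is weaker), so only the reverse inclusion needs attention. If $\F$ is $\mathbb{D}$-compactoid, the bridge lemma gives $\beta\F\subset\cl_{\mathbb{D}^{*}}(\mathbb{U}_{\xi}X)$, which by (1) equals $\cl_{\mathbb{J}^{*}}(\mathbb{U}_{\xi}X)$; applying Proposition~\ref{prop:clUxix} in the other direction, every $\U\in\beta\F$ is $\mathbb{J}$-compactoid, hence $\F$ is $\mathbb{J}$-compactoid by the bridge lemma again.

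For $(2)\Rightarrow(1)$, the inclusion $\cl_{\mathbb{J}^{*}}(\mathbb{U}_{\xi}X)\subset\cl_{\mathbb{D}^{*}}(\mathbb{U}_{\xi}X)$ is free of charge: since $\mathbb{D}\subset\mathbb{J}$, the basic open sets of the $\mathbb{D}^{*}$-topology form a subfamily of those of the $\mathbb{J}^{*}$-topology, so the latter is finer and its closures are smaller. Conversely, any $\U\in\cl_{\mathbb{D}^{*}}(\mathbb{U}_{\xi}X)$ is a $\mathbb{D}$-compactoid ultrafilter by Proposition~\ref{prop:clUxix}; by (2), $\U$ is then $\mathbb{J}$-compactoid, and invoking Proposition~\ref{prop:clUxix} once more gives $\U\in\cl_{\mathbb{J}^{*}}(\mathbb{U}_{\xi}X)$.

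No real obstacle is expected; the entire content sits in Proposition~\ref{prop:clUxix}, and the only thing to verify with care is the small ultrafilter-to-filter translation of compactoidness needed to move between statement (2), which is phrased for arbitrary filters, and statement (1), which concerns only ultrafilters.
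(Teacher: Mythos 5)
Your proof is correct and takes essentially the same route as the paper: both inclusions $\cl_{\mathbb{J}^{*}}\subset\cl_{\mathbb{D}^{*}}$ and $\mathbb{K}_{\mathbb{J}}(\xi)\subset\mathbb{K}_{\mathbb{D}}(\xi)$ are automatic from $\mathbb{D}\subset\mathbb{J}$, and the remaining equivalence is read off from Proposition \ref{prop:clUxix}. The only difference is that you make explicit the ultrafilter-to-filter bridge (a filter is $\mathbb{D}$-compactoid if and only if every ultrafilter finer than it is), a step the paper's two-line proof leaves implicit.
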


\begin{proof}
The inclusions $\cl_{\mathbb{J}^{*}}\subset\cl_{\mathbb{D}^{*}}$
and $\mathbb{K}_{\mathbb{J}}(\xi)\subset\mathbb{K}_{\mathbb{D}}(\xi)$
are always true because $\mathbb{D}\subset\mathbb{J}$. In view of
Proposition \ref{prop:clUxix}, $\cl_{\mathbb{D}^{*}}(\mathbb{U}_{\xi}X)\subset\cl_{\mathbb{J}^{*}}(\mathbb{U}_{\xi}X)$
if and only if $\mathbb{K}_{\mathbb{D}}(\xi)\subset\mathbb{K}_{\mathbb{J}}(\xi)$.
\end{proof}
Note that $\mathbb{K}_{\mathbb{F}_{0}}=\mathbb{F}$ because every
filter is $\mathbb{F}_{0}$-compactoid. Hence if $\mathbb{D}=\mathbb{F}_{0}$
we obtain:
\begin{cor}
Let $\mathbb{J}$ be a class of filters and $\xi$ be a convergence.
Then
\[
\cl_{\beta}(\mathbb{U}_{\xi}X)=\cl_{\mathbb{J}}(\mathbb{U}_{\xi}X)
\]
if and only if $\xi$ is $\mathbb{J}$-compact.
\end{cor}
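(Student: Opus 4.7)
The plan is to reduce the statement to Proposition~\ref{prop:clDisClJUxiX} by specializing $\mathbb{D} = \mathbb{F}_0$, and then translate the resulting compactoidness condition into $\mathbb{J}$-compactness of $\xi$. This mirrors the passage from the general proposition to the corollary about $\S\xi = \S_0\xi$ in Corollary~\ref{cor:almostpretop}, but applied to $\mathbb{U}_\xi X$ instead of $\mathbb{U}_\xi(x)$.

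First I would apply Proposition~\ref{prop:clDisClJUxiX} to $\mathbb{D} = \mathbb{F}_0 \subset \mathbb{J}$. The closure $\cl_{\mathbb{F}_0^*}$ on $\mathbb{U}X$ is the usual Stone closure $\cl_\beta$, so the proposition yields
\[
\cl_\beta(\mathbb{U}_\xi X) = \cl_{\mathbb{J}^*}(\mathbb{U}_\xi X) \iff \mathbb{K}_{\mathbb{F}_0}(\xi) = \mathbb{K}_{\mathbb{J}}(\xi).
\]
Since every filter is trivially $\mathbb{F}_0$-compactoid (a principal filter $\{A\}^\uparrow$ meshing $\F$ means $A \in \F^\#$, and the adherence of $\{A\}^\uparrow$ is accessed via arbitrary finer ultrafilters), we have $\mathbb{K}_{\mathbb{F}_0}(\xi) = \mathbb{F}$ as already noted in the excerpt. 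Thus the equality of closures is equivalent to the assertion that \emph{every} filter on $|\xi|$ is $\mathbb{J}$-compactoid.

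The remaining step is to identify this last condition with $\mathbb{J}$-compactness of $\xi$, i.e., with the requirement that $\adh_\xi \D \neq \emptyset$ for every $\D \in \mathbb{J}|\xi|$. For the forward direction, I would specialize to $\F = \{|\xi|\}^\uparrow$: any $\D \in \mathbb{J}$ automatically meshes $\{|\xi|\}^\uparrow$, so $\mathbb{J}$-compactoidness of $\{|\xi|\}^\uparrow$ forces $\adh_\xi \D \neq \emptyset$. The converse is immediate, since if every $\mathbb{J}$-filter has non-empty adherence, then for any filter $\F$ and any $\D \in \mathbb{J}$ with $\D \# \F$, $\D$ is itself a $\mathbb{J}$-filter, hence $\adh_\xi \D \neq \emptyset$, showing $\F$ is $\mathbb{J}$-compactoid. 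Combining these, the corollary follows.

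There is no serious obstacle here: the work is almost entirely done by Proposition~\ref{prop:clDisClJUxiX}, and the only thing to verify is the elementary equivalence between ``every filter is $\mathbb{J}$-compactoid'' and $\mathbb{J}$-compactness of $\xi$, which is just an unwinding of definitions via the test case $\F = \{|\xi|\}^\uparrow$.
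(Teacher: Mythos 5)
Your proposal is correct and follows essentially the same route as the paper: both apply Proposition \ref{prop:clDisClJUxiX} with $\mathbb{D}=\mathbb{F}_{0}$, use $\mathbb{K}_{\mathbb{F}_{0}}=\mathbb{F}$, and then unwind ``every filter is $\mathbb{J}$-compactoid'' into $\mathbb{J}$-compactness (the paper tests compactoidness on $\G\in\mathbb{J}$ itself via $\G\#\G$, while you test it on $\{|\xi|\}^{\uparrow}$ --- a trivial variation). No gaps.
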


\begin{proof}
By Proposition \ref{prop:clDisClJUxiX}, 
\[
\cl_{\beta}(\mathbb{U}_{\xi}X)=\cl_{\mathbb{J}}(\mathbb{U}_{\xi}X)\iff\mathbb{K}_{\mathbb{F}_{0}}(\xi)=\mathbb{F}(\xi)=\mathbb{K}_{\mathbb{J}}(\xi)
\]
so that every filter is $\mathbb{J}$-compactoid. In particular, every
$\G\in\mathbb{J}$ is $\mathbb{J}$-compactoid and thus $\adh_{\xi}\G\neq\emptyset$.
Conversely, if $\xi$ is $\mathbb{J}$-compact then every filter in
$\mathbb{J}$ has non-empty adherence and thus every filter is $\mathbb{J}$-compactoid.
\end{proof}
Now let's consider the case where $\mathbb{J}=\mathbb{F}$ in Proposition
\ref{prop:clDisClJUxiX}. Then $\cl_{\mathbb{F^{*}}}$ is the identity
because $\mathbb{F}^{*}$ is the discrete topology. 

An ideal cover $\C$ of $(X,\xi)$ is \emph{proper }if $X\notin\C$.
Another cover $\mathcal{S}$ is \emph{compatible with }$\C$ if the
ideal generated by $\C\cup\mathcal{S}$ is proper.

Of course two ideal covers $\C$ and $\mathcal{S}$ ($\adh_{\xi}\C_{c}=\adh_{\xi}\mathcal{S}_{c}=\emptyset$)
are compatible if and only if $\C_{c}\#\mathcal{S}_{c}$. 

Given a class of filters $\mathbb{D}$, we say that a cover $\C$
is \emph{of class }$\mathbb{D}_{*}$ if $(\C^{\cup})_{c}\in\mathbb{D}$.
\begin{cor}
\label{prop:CVDclosed} The following are equivalent:
\begin{enumerate}
\item $\mathbb{U}_{\xi}X$ is $\mathbb{D}^{*}$-closed;
\item $\mathbb{K}_{\mathbb{D}}(\xi)=\mathbb{K}_{\mathbb{F}}(\xi)$, that
is, every $\mathbb{D}$-compactoid filter on $\xi$ is compactoid;
\item $\xi$ is $\mathbb{K}_{\mathbb{D}}$-compact;
\item $\xi$ is $(\mathbb{K}_{\mathbb{D}}\cap\mathbb{U})$-compact;
\item For every proper ideal cover, there is a compatible proper cover of
class $\mathbb{D}_{*}$.
\end{enumerate}
\end{cor}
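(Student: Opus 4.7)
The plan is to establish the four equivalences $(1)\Leftrightarrow(2)$, $(2)\Leftrightarrow(3)\Leftrightarrow(4)$, and $(3)\Leftrightarrow(5)$, each of which is a short translation once the right dictionary is in place.

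For $(1)\Leftrightarrow(2)$ I would simply apply Proposition \ref{prop:clDisClJUxiX} with $\mathbb{J}=\mathbb{F}$. Since $\mathbb{F}^{*}$ is the discrete topology on $\mathbb{U}X$, the operator $\cl_{\mathbb{F}^{*}}$ is the identity, so the condition $\cl_{\mathbb{D}^{*}}(\mathbb{U}_{\xi}X)=\cl_{\mathbb{F}^{*}}(\mathbb{U}_{\xi}X)$ reads exactly as ``$\mathbb{U}_{\xi}X$ is $\mathbb{D}^{*}$-closed''; on the other side $\mathbb{K}_{\mathbb{F}}$ is by definition the class of all compactoid filters, and the proposition delivers the equivalence.

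The block $(2)\Leftrightarrow(3)\Leftrightarrow(4)$ rests on the refinement-monotonicity of $\mathbb{K}_{\mathbb{D}}$: if $\F\in\mathbb{K}_{\mathbb{D}}(\xi)$ and $\G\geq\F$, then any $\D\in\mathbb{D}$ meshing with $\G$ automatically meshes with $\F$, so $\adh_{\xi}\D\neq\emptyset$, whence $\G\in\mathbb{K}_{\mathbb{D}}(\xi)$. The implications $(2)\Rightarrow(3)\Rightarrow(4)$ are then immediate (every compactoid filter has non-empty adherence, and ultrafilters are a special case). For $(4)\Rightarrow(2)$, given $\F\in\mathbb{K}_{\mathbb{D}}(\xi)$ and any $\G\#\F$, pick an ultrafilter $\U\geq\F\vee\G$; by monotonicity $\U\in\mathbb{K}_{\mathbb{D}}\cap\mathbb{U}$, so $\lim_{\xi}\U\neq\emptyset$ by (4), and $\adh_{\xi}\G\supset\lim_{\xi}\U\neq\emptyset$ because $\U\geq\G$.

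Finally, $(3)\Leftrightarrow(5)$ is a purely notational rewriting via the bijective correspondence between proper ideal covers $\mathcal{C}$ of $X$ and filters $\F=\mathcal{C}_{c}$ of empty $\xi$-adherence: a cover is of class $\mathbb{D}_{*}$ exactly when the associated filter lies in $\mathbb{D}$, and compatibility of two covers $\mathcal{C}$ and $\mathcal{S}$ rewrites as the meshing $\mathcal{C}_{c}\#\mathcal{S}_{c}$ (as noted just before the statement). In these terms (5) asserts that for every filter $\F$ with $\adh_{\xi}\F=\emptyset$ there is $\D\in\mathbb{D}$ with $\D\#\F$ and $\adh_{\xi}\D=\emptyset$, i.e., that $\F$ is not $\mathbb{D}$-compactoid; this is precisely the contrapositive of (3). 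I do not anticipate any serious obstacle; the only point deserving a little care is the refinement-monotonicity of $\mathbb{K}_{\mathbb{D}}$ used in $(4)\Rightarrow(2)$, which is nevertheless immediate from the definitions.
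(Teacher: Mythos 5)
Your proposal is correct and follows essentially the same route as the paper: $(1)\Leftrightarrow(2)$ via Proposition \ref{prop:clDisClJUxiX} with $\mathbb{J}=\mathbb{F}$, the cycle $(2)\Rightarrow(3)\Rightarrow(4)\Rightarrow(2)$ using that every ultrafilter finer than a $\mathbb{D}$-compactoid filter is again in $\mathbb{K}_{\mathbb{D}}$, and $(3)\Leftrightarrow(5)$ by the filter/ideal-cover duality. Your spelled-out monotonicity step and the choice $\U\geq\F\vee\G$ in $(4)\Rightarrow(2)$ just make explicit what the paper leaves implicit.
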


\begin{proof}
$(1)\iff(2)$ is Proposition \ref{prop:clDisClJUxiX}. Assume $(2)$.
If $\F\in\mathbb{K}_{\mathbb{D}}$ then $\F$ is compactoid and in
particular $\adh_{\xi}\F\neq\emptyset$. Thus $\xi$ is $\mathbb{K}_{\mathbb{D}}$-compact.
That $(3)\then(4)$ is obvious. On the other hand, $(4)\then(2)$
because if $\F\in\mathbb{K}_{\mathbb{D}}$ then $\U\in\mathbb{K}_{\mathbb{D}}\cap\mathbb{U}$
for every $\U\in\beta\F$ and thus $\lim_{\xi}\U\neq\emptyset$, that
is, $\F\in\mathbb{K}_{\mathbb{F}}$. 

Finally, $(3)\iff(5)$ because $(3)$ can be rephrased in terms of
covers: 
\begin{eqnarray*}
\left(\F\in\mathbb{K}_{\mathbb{D}}\then\adh_{\xi}\F\neq\emptyset\right) & \iff & \left(\adh_{\xi}\F=\emptyset\then\F\notin\mathbb{K}_{\mathbb{D}}\right)\\
 & \iff & \left(\adh_{\xi}\F=\emptyset\then\exists\D\in\mathbb{D}:\D\#\F,\adh_{\xi}\D=\emptyset\right),
\end{eqnarray*}
and the latest condition rephrases as $(5)$ because a proper ideal
cover $\C$ satisfies $\F=\C_{c}\in\mathbb{F}X$ with $\adh_{\xi}\F$,
and if $\D\in\mathbb{D}$ with $\D\#\F$ and $\adh_{\xi}\D=\emptyset$
then $\mathcal{S}=\D_{c}$ is a compatible cover in $\mathbb{D}_{*}$. 
\end{proof}
Recall that $\mathbb{K}_{\mathbb{F}_{0}}=\mathbb{F}$, so that $\mathbb{U}_{\xi}X$
is closed (hence compact) if and only if $\xi$ is compact. 
\begin{thm}
Let $\mathbb{D}\subset\mathbb{J}$ be two classes of filters and let
$\xi$ be a convergence. If $\mathbb{U}_{\xi}X$ is cover-$\nicefrac{\mathbb{J_{*}}}{\mathbb{D}_{*}}$-compact
and $\mathbb{J}$ is $\beta$-compatible then $\xi$ is cover-$\nicefrac{\mathbb{J_{*}}}{\mathbb{D}_{*}}$-compact.
If $\mathbb{J}=\mathbb{F}$ and $\mathbb{D}$ is $\beta$-compatible,
the converse is true.
\end{thm}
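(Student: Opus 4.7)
The plan is to mirror the proof of Proposition \ref{prop:compactnessUx}, replacing the single-point equivalence (\ref{eq:coverpoint}) by its global analogue
\[
\adh_{\xi}\G=\emptyset\iff\mathbb{U}_{\xi}X\cap\beta\G=\emptyset\iff\mathbb{U}_{\xi}X\subset\bigcup_{G\in\G}\beta(X\setminus G),
\]
which provides the dictionary between $\xi$-covers of $|\xi|$ and open covers of $\mathbb{U}_{\xi}X$ in $\mathbb{U}X$.

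For the forward direction, I would start from a $\xi$-cover of class $\mathbb{J}_{*}$, writing it as $\G_{c}$ for some $\G\in\mathbb{J}$ with $\adh_{\xi}\G=\emptyset$. The family $\{\beta(X\setminus G):G\in\G\}$ is then an open cover of $\mathbb{U}_{\xi}X$, and by (\ref{eq:XtoUX}) applied to $\mathbb{J}$, it is of class $\mathbb{J}_{*}$ in $\mathbb{U}X$. Cover-$\nicefrac{\mathbb{J}_{*}}{\mathbb{D}_{*}}$-compactness of $\mathbb{U}_{\xi}X$ delivers a subcover $\S\subset\{\beta(X\setminus G):G\in\G\}$ of class $\mathbb{D}_{*}$, necessarily of the form $\{\beta(X\setminus D):D\in\D\}$ for some $\D\subset\G$. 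Then (\ref{eq:UXtoX}) converts this back to $\D\in\mathbb{D}(X)$, and the covering property of $\S$ yields $\adh_{\xi}\D=\emptyset$ via the dictionary above. This exhibits the required $\mathbb{D}_{*}$-subcover of $\G_{c}$.

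For the converse under $\mathbb{J}=\mathbb{F}$ and $\mathbb{D}$ $\beta$-compatible, I would take an arbitrary open cover of $\mathbb{U}_{\xi}X$ and refine it to one of the form $\{\beta A:A\in\A\}$ using the clopen basis of the Stone topology, closing $\A$ under finite unions so that $\A$ is an ideal on $X$. The dictionary then gives that $\F:=\A_{c}\in\mathbb{F}X$ satisfies $\adh_{\xi}\F=\emptyset$, so $\A$ is a $\xi$-cover of $|\xi|$ in $\mathbb{F}_{*}$. Applying cover-$\nicefrac{\mathbb{F}_{*}}{\mathbb{D}_{*}}$-compactness of $\xi$ produces $\A'\subset\A$ of class $\mathbb{D}_{*}$ with $\adh_{\xi}\A'_{c}=\emptyset$. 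By (\ref{eq:XtoUX}) for $\mathbb{D}$, the family $\{\beta A:A\in\A'\}$ is of class $\mathbb{D}_{*}$ in $\mathbb{U}X$, and the dictionary again ensures it covers $\mathbb{U}_{\xi}X$. Since this is a $\mathbb{D}_{*}$-refinement of the original cover, Remark \ref{rem:refinement} finishes the argument.

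The main thing to watch is the bookkeeping between filters/ideals on $X$ and their $\beta$-images on $\mathbb{U}X$: the forward direction uses (\ref{eq:XtoUX}) for $\mathbb{J}$ to lift a cover to $\mathbb{U}X$ and (\ref{eq:UXtoX}) to transfer the extracted subcover back, while the converse uses only (\ref{eq:XtoUX}) for $\mathbb{D}$ after first refining to clopens of the form $\beta A$. No genuinely new ideas beyond Proposition \ref{prop:compactnessUx} are needed; the only obstacle is keeping track of which direction of $\beta$-compatibility is being invoked at each step.
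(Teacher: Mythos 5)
Your argument is correct and follows essentially the same route as the paper's own proof: the forward direction lifts the $\xi$-cover $\G_{c}$ to the open cover $\{\beta(X\setminus G):G\in\G\}$ of $\mathbb{U}_{\xi}X$ and transfers the extracted $\mathbb{D}_{*}$-subcover back via (\ref{eq:UXtoX}), while the converse refines an arbitrary open cover to clopens $\beta A$ with $\A$ an ideal and uses (\ref{eq:XtoUX}) for $\mathbb{D}$, exactly as in the paper and in its model, Proposition \ref{prop:compactnessUx}. Even the point you flag as delicate (which clause of $\beta$-compatibility is invoked where, in particular the use of (\ref{eq:UXtoX}) on the extracted family) is handled the same way as in the paper, so no further comment is needed.
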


\begin{proof}
Let $\G\in\mathbb{J}$ with $\adh_{\xi}\G=\emptyset$. Then $\mathbb{U}_{\xi}X\subset(\beta\G)^{c}=\bigcup_{G\in\G}\beta(X\setminus G)$
and thus $\{\beta(X\setminus G):G\in\G\}$ is a cover of $\mathbb{U}_{\xi}X$
in $\mathbb{J}_{*}$ by $\beta$-compatibility of $\mathbb{J}$. As
$\mathbb{U}_{\xi}X$ is cover-$\nicefrac{\mathbb{J_{*}}}{\mathbb{D}_{*}}$-compact,
there is $\mathcal{S}\subset\{\beta(X\setminus G):G\in\G\}$ which
is a cover of $\mathbb{U}_{\xi}X$ of class $\mathbb{D}_{*}$, so
that $\{G\in\G:\beta(X\setminus G\}\in\mathcal{S}$ belongs to $\mathbb{D}$
and satisfies $\adh_{\xi}\D=\emptyset$. Thus $\xi$ is cover-$\nicefrac{\mathbb{J_{*}}}{\mathbb{D}_{*}}$-compact.

Assume now that $\xi$ is cover-$\nicefrac{\mathbb{F_{*}}}{\mathbb{D}_{*}}$-compact
and let $\C$ be an open cover of $\mathbb{U}_{\xi}X$. It has a refinement
of the form $\left\{ \beta A:A\in\A\right\} $ for some family $\A\subset\mathbb{P}X$,
which covers $\mathbb{U}_{\xi}X$. As $\beta(A\cup B)=\beta A\cup\beta B$,
we can assume $\A$ and $\{\beta A:A\in\A\}$ to be ideals. Hence
$\F=\A_{c}$ is a filter on $X$ with $\adh_{\xi}\F=\emptyset$. Moreover,
$\xi$ is cover-$\nicefrac{\mathbb{F_{*}}}{\mathbb{D}_{*}}$-compact
so that there is $\D\leq\F$ with $\adh_{\xi}\D=\emptyset$, equivalently,
$\{\beta(X\setminus D):D\in\D\}\subset\{\beta A:A\in\A\}$ is a cover
of $\mathbb{U}_{\xi}X$, which is in $\mathbb{D}_{*}$ by $\beta$-compatibility
of $\mathbb{D}$. Hence $\C$ has a refinement in $\mathbb{D}_{*}$
that covers $\mathbb{U_{\xi}}X$, which is thus cover-$\nicefrac{\mathbb{F_{*}}}{\mathbb{D}_{*}}$-compact.
\end{proof}
In particular,
\begin{cor}
\label{cor:UXLindelof} $\mathbb{U}_{\xi}X$ is Lindelöf if and only
if $\xi$ is cover-Lindelöf. 
\end{cor}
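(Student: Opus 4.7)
The plan is to read this as the specialization of the preceding theorem to the pair of classes $\mathbb{J}=\mathbb{F}$ and $\mathbb{D}=\mathbb{F}_{1}$. Recall that by the lemma preceding Proposition \ref{prop:compactnessUx}, both $\mathbb{F}_{*}$ and $\mathbb{F}_{1*}$ are $\beta$-compatible, so the hypotheses of both halves of the theorem are met; thus the theorem yields the biconditional
\[
\mathbb{U}_{\xi}X \text{ is cover-}\nicefrac{\mathbb{F}_{*}}{\mathbb{F}_{1*}}\text{-compact} \iff \xi \text{ is cover-}\nicefrac{\mathbb{F}_{*}}{\mathbb{F}_{1*}}\text{-compact}.
\]

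The only thing left is therefore to translate the two sides. On the convergence side, the right-hand condition is exactly what we have been calling ``$\xi$ is cover-Lindel\"of'': by Definition \ref{def:covercompactness}, it asserts that for every filter $\F$ with $\adh_{\xi}\F=\emptyset$ there is $\D\in\mathbb{F}_{1}$, $\D\leq\F$, with $\adh_{\xi}\D=\emptyset$, which via the duality ideals/filters reads as ``every proper ideal cover of $\xi$ has a countably based proper refinement that still covers.''

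On the topological side, I would argue as follows. Since $\{\beta A : A\subset X\}$ is a clopen base for the Stone topology and $\beta(A\cup B)=\beta A\cup\beta B$, every open cover of $\mathbb{U}_{\xi}X$ admits a refinement of the form $\{\beta(X\setminus F):F\in\F\}$ for some filter $\F$, and this refinement covers $\mathbb{U}_{\xi}X$ precisely when $\beta\F\subset\mathbb{U}X\setminus\mathbb{U}_{\xi}X$, i.e.\ $\adh_{\xi}\F=\emptyset$. A countable subcover of such a refinement corresponds exactly to a countably based $\D\leq\F$ with $\adh_{\xi}\D=\emptyset$. Hence ``$\mathbb{U}_{\xi}X$ is Lindel\"of'' and ``$\mathbb{U}_{\xi}X$ is cover-$\nicefrac{\mathbb{F}_{*}}{\mathbb{F}_{1*}}$-compact'' coincide, exactly as in the proof of the preceding theorem.

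There is no real obstacle here beyond bookkeeping: the work was already done in the preceding theorem, and the sole content of the corollary is to recognize cover-$\nicefrac{\mathbb{F}_{*}}{\mathbb{F}_{1*}}$-compactness as the Lindel\"of property in the topological setting. One could make the proof even shorter by invoking the theorem verbatim and noting the $\beta$-compatibility of $\mathbb{F}_{1}$.
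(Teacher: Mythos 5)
Your proposal is correct and matches the paper's own route: the corollary is stated as an immediate specialization ("In particular") of the preceding theorem with $\mathbb{J}=\mathbb{F}$, $\mathbb{D}=\mathbb{F}_{1}$, using the $\beta$-compatibility lemma, and the identification of cover-$\nicefrac{\mathbb{F}_{*}}{\mathbb{F}_{1*}}$-compactness of a subset of the Stone space with the Lindel\"of property is exactly the bookkeeping already carried out in the theorem's proof. Your explicit translation via the clopen base $\{\beta A:A\subset X\}$ is the same argument the paper uses inside that proof, so there is nothing to add.
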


As a Lindelöf subset of a topological space is also $\delta$-closed,
we obtain:
\begin{cor}
\label{cor:Lindelof} If $\xi$ is cover-Lindelöf then $\mathbb{U}_{\xi}X$
is $\delta$-closed.
\end{cor}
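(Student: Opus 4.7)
The plan is to reduce the statement to a standard topological fact about Lindelöf subsets of compact Hausdorff spaces, via the characterization already in hand.

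First I would invoke Corollary \ref{cor:UXLindelof}: cover-Lindelöfness of $\xi$ is equivalent to $\mathbb{U}_{\xi}X$ being Lindelöf as a subspace of $(\mathbb{U}X,\beta)$. Hence it is enough to prove the following general fact: every Lindelöf subset $L$ of a compact Hausdorff space $Y$ is $\delta$-closed.

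To prove this fact, fix $\U \in Y \setminus L$. Since $Y$ is compact Hausdorff, it is regular, so for every $\W \in L$ I can choose disjoint open sets $U_{\W} \ni \U$ and $V_{\W} \ni \W$. The family $\{V_{\W} : \W \in L\}$ is an open cover of $L$, so by Lindelöfness there is a countable subcover $\{V_{\W_n} : n \in \omega\}$. Setting $G = \bigcap_{n \in \omega} U_{\W_n}$ gives a $G_\delta$-neighborhood of $\U$, and for each $n$ we have $G \cap V_{\W_n} \subset U_{\W_n} \cap V_{\W_n} = \emptyset$, so $G \cap L = \emptyset$. Thus $\U$ is not in the $\delta$-closure of $L$, which shows $L = \cl_\delta L$, i.e., $L$ is $\delta$-closed.

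Applying this to $L = \mathbb{U}_\xi X$ inside the compact Hausdorff space $(\mathbb{U}X, \beta)$ yields the conclusion. There is no real obstacle here: the only step that might be worth highlighting is that the separation is done pointwise between $\U$ and each $\W \in L$ (not between $\U$ and $L$ at once), and Lindelöfness of $L$ is then used precisely to reduce the resulting cover to countably many pieces, so that the intersection of the corresponding neighborhoods of $\U$ remains a $G_\delta$-set.
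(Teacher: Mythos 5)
Your proof is correct and follows the paper's own route: the paper likewise deduces the statement from Corollary \ref{cor:UXLindelof} together with the fact that a Lindel\"of subset of a (Hausdorff) topological space is $\delta$-closed, a fact you simply prove in full rather than cite. (Minor remark: the pointwise separation of $\U$ from each $\W\in L$ only needs the Hausdorff property, not regularity, but this does not affect the argument.)
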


The converse of Corollary \ref{cor:Lindelof} is false:
\begin{example}[A topology $\xi$ such that $\mathbb{U}_{\xi}X$ is $\delta$-closed
but not Lindelöf]
\label{exa:UXdeltaclosednotLindelof} Let $X$ be an uncountable
set and let $\xi$ be a prime topology on $X$ with non-isolated point
$\infty$ and $\N_{\xi}(\infty)=\{\infty\}\wedge(x_{n})_{n\in\omega}$,
where $\{x_{n}:n\in\omega\}$ is a free sequence. Then $\xi$ is not
Lindelöf and thus $\mathbb{U}_{\xi}X$ is not Lindelöf by Corollary
\ref{cor:UXLindelof}. On the other hand, if $\U\in\mathbb{K}_{\mathbb{F}_{1}}\cap\mathbb{U}^{\circ}X$
then $\H\#(x_{n})_{n\in\omega}$ for every $\H\in\mathbb{F}_{1}^{\circ}$
with $\H\leq\U$. If there was $k\in\omega$ with $\{x_{n}:n\geq k\}\notin\U$,
then $X\setminus\{x_{n}:n\geq k\}\in\U$ and, given $\H\in\mathbb{F}_{1}^{\circ}$,
$\H\leq\U$, the filter $\H^{'}=\H\vee(X\setminus\{x_{n}:n\geq k\})$
would be a countably based subfilter of $\U$ that does not mesh with
$(x_{n})_{n\in\omega}$. Hence $\U\geq(x_{n})_{n\in\omega}$ and $\U$
converges. In view of Proposition \ref{prop:CVDclosed} for $\mathbb{D}=\mathbb{F}_{1}$,
$\mathbb{U}_{\xi}X$ is $\delta$-closed.
\end{example}

We can transpose this simple example to $\mathbb{U}_{\xi}(x)$ to
the effect that there is:
\begin{example}[A convergence $\sigma$ with $\S\sigma=\S_{1}\sigma$ and a non cover-Lindelöf
singleton]
\label{exa:nonlindelofsingleton} Consider the set $S=\mathbb{U}_{\xi}X\subset\mathbb{U}X$
as in Example \ref{exa:UXdeltaclosednotLindelof} and consider on
$X$ the prime pseudotopology $\sigma$ defined by 
\[
\mathbb{U}_{\sigma}(\infty)=\{\infty\}^{\uparrow}\cup S.
\]

Then $\sigma=\S\sigma=\S_{1}\sigma$ because $S$ is $\delta$-closed.
On other hand, $\mathbb{U}_{\sigma}(\infty)$ is not Lindelöf (because
$S$ is not) and thus $\{\infty\}$ is not cover-Lindelöf by Corollary
\ref{cor:Lindelofptwise}.
\end{example}

\begin{prop}
\label{prop:clFsigmaforUxiX} The following are equivalent:
\begin{enumerate}
\item there is an $F_{\sigma}$ subset $S$ of $\mathbb{U}_{\xi}X$ such
that $\cl_{\beta}S=\cl_{\beta}(\mathbb{U}_{\xi}X)=\mathbb{U}X$;
\item There is a sequence $(\D_{n})_{n\in\omega}$ of compactoid filters
such that $\beta\left(\bigwedge_{n\in\omega}\D_{n}\right)=\mathbb{U}X$,
equivalently, 
\[
\bigwedge_{n\in\omega}\D_{n}=\{X\}.
\]
\end{enumerate}
\end{prop}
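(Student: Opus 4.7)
The proof should be a fairly direct translation exercise, using the bijective correspondence between closed subsets of $(\mathbb{U}X,\beta)$ and filters on $X$, together with formula (\ref{eq:betainf}).

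The plan is as follows. For $(1)\Rightarrow(2)$, write $S=\bigcup_{n\in\omega}F_{n}$ with each $F_{n}$ closed in $\mathbb{U}X$; since $F_{n}\subset S\subset\mathbb{U}_{\xi}X$ and every non-empty closed subset of $\mathbb{U}X$ is of the form $\beta\mathcal{D}$ for some filter $\mathcal{D}$ on $X$, there are filters $\mathcal{D}_{n}$ with $F_{n}=\beta\mathcal{D}_{n}\subset\mathbb{U}_{\xi}X$. The key observation is that $\beta\mathcal{D}_{n}\subset\mathbb{U}_{\xi}X$ means exactly that every ultrafilter finer than $\mathcal{D}_{n}$ is $\xi$-convergent, i.e., $\mathcal{D}_{n}$ is compactoid. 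Applying (\ref{eq:betainf}) to the family $\{\mathcal{D}_{n}\}_{n\in\omega}$ gives
\[
\mathbb{U}X=\cl_{\beta}S=\cl_{\beta}\Big(\bigcup_{n\in\omega}\beta\mathcal{D}_{n}\Big)=\beta\Big(\bigwedge_{n\in\omega}\mathcal{D}_{n}\Big),
\]
which forces $\bigwedge_{n\in\omega}\mathcal{D}_{n}=\{X\}^{\uparrow}$, since $\{X\}^{\uparrow}$ is the unique filter $\mathcal{F}$ on $X$ with $\beta\mathcal{F}=\mathbb{U}X$.

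For $(2)\Rightarrow(1)$, given compactoid filters $(\mathcal{D}_{n})_{n\in\omega}$ with $\bigwedge_{n}\mathcal{D}_{n}=\{X\}^{\uparrow}$, set $S=\bigcup_{n\in\omega}\beta\mathcal{D}_{n}$. Each $\beta\mathcal{D}_{n}$ is closed in $\mathbb{U}X$, so $S$ is $F_{\sigma}$; each $\beta\mathcal{D}_{n}\subset\mathbb{U}_{\xi}X$ by compactoidness, so $S\subset\mathbb{U}_{\xi}X$; and again by (\ref{eq:betainf}),
\[
\cl_{\beta}S=\beta\Big(\bigwedge_{n\in\omega}\mathcal{D}_{n}\Big)=\beta\{X\}^{\uparrow}=\mathbb{U}X.
\]
Since $S\subset\mathbb{U}_{\xi}X\subset\mathbb{U}X$, this also forces $\cl_{\beta}(\mathbb{U}_{\xi}X)=\mathbb{U}X$, giving the full conclusion of (1).

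There is no real obstacle here: the entire content is the translation $\text{closed}\leftrightarrow\beta\mathcal{F}$, the identification of compactoid filters as those $\mathcal{D}$ with $\beta\mathcal{D}\subset\mathbb{U}_{\xi}X$, and the commutation formula (\ref{eq:betainf}). The only minor point to record is the equivalence at the end of (2), namely $\beta(\bigwedge_{n}\mathcal{D}_{n})=\mathbb{U}X\iff\bigwedge_{n}\mathcal{D}_{n}=\{X\}^{\uparrow}$, which follows because the map $\mathcal{F}\mapsto\beta\mathcal{F}$ is an order-reversing bijection between $\mathbb{F}X$ and the non-empty closed subsets of $\mathbb{U}X$.
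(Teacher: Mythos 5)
Your proof is correct and follows essentially the same route as the paper: translating $F_{\sigma}$ subsets of $\mathbb{U}_{\xi}X$ into sequences of filters $\D_{n}$ with $\beta\D_{n}\subset\mathbb{U}_{\xi}X$ (i.e., compactoid filters) and invoking (\ref{eq:betainf}), the paper merely noting up front that $\cl_{\beta}(\mathbb{U}_{\xi}X)=\mathbb{U}X$ automatically since all principal ultrafilters converge. The only cosmetic point is to discard (or repeat) any empty closed pieces $F_{n}$ before writing them as $\beta\D_{n}$, since only non-empty closed sets correspond to filters.
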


\begin{proof}
Note that since $\left\{ \left\{ x\right\} ^{\uparrow}:x\in X\right\} \subset\mathbb{U}_{\xi}X$,
the set $\mathbb{U_{\xi}}X$ is dense in $(\mathbb{U}X,\beta)$, that
is, $\cl_{\beta}(\mathbb{U}_{\xi}X)=\mathbb{U}X.$

By definition, there is an $F_{\sigma}$ subset $S$ of $\mathbb{U}_{\xi}X$
such that $\cl_{\beta}S=\cl_{\beta}(\mathbb{U}_{\xi}X)$ if and only
if there is a sequence $(\D_{n})_{n\in\omega}$ of filters with $\beta\D_{n}\subset\mathbb{U}_{\xi}X$
and $\cl_{\beta}\left(\bigcup_{n\in\omega}\beta\D_{n}\right)=\cl_{\beta}\mathbb{U}_{\xi}X$.
The conclusion follows from (\ref{thm:deltaclosednotclosureofFsigma}).
\end{proof}

\begin{cor}
\label{cor:sigmacompact} Let $(X,\xi)$ be a $\T$-regular pseudotopological
space. Then the following are equivalent:
\begin{enumerate}
\item $\xi$ is $\sigma$-compact;
\item $\mathbb{U}_{\xi}X$ is $\sigma$-compact;
\item there is an $F_{\sigma}$ subset $S$ of $\mathbb{U}_{\xi}X$ with
$\cl_{\beta}S=\cl_{\beta}(\mathbb{U}_{\xi}X)=\mathbb{U}X$.
\end{enumerate}
\end{cor}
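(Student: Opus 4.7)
The plan is to prove $(1)\Leftrightarrow(3)$ using Proposition \ref{prop:clFsigmaforUxiX} and Corollary \ref{cor:compactadherence}, to establish $(2)\Rightarrow(3)$ and $(2)\Rightarrow(1)$ directly, and then to close the cycle with $(1)\Rightarrow(2)$, which I expect to be the main obstacle.

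For $(1)\Rightarrow(3)$, given a compact cover $X=\bigcup_{n\in\omega}K_{n}$, I would take $S:=\bigcup_{n}\beta K_{n}$: each $\beta K_{n}$ is clopen in $\mathbb{U}X$, each $\{K_{n}\}^{\uparrow}$ is a compact filter so $\beta K_{n}\subset\mathbb{U}_{\xi}X$, and $\cl_{\beta}S=\beta\bigl(\bigwedge_{n}\{K_{n}\}^{\uparrow}\bigr)=\beta\{X\}^{\uparrow}=\mathbb{U}X$ by (\ref{eq:betainf}) together with $\bigcup_{n}K_{n}=X$. For $(3)\Rightarrow(1)$, Proposition \ref{prop:clFsigmaforUxiX} produces compactoid filters $(\D_{n})_{n\in\omega}$ with $\bigwedge_{n}\D_{n}=\{X\}^{\uparrow}$, Corollary \ref{cor:compactadherence} (invoking $\T$-regularity and the pseudotopology assumption) yields compact sets $K_{n}:=\ker\cl_{\xi}^{\natural}\D_{n}\supset\ker\D_{n}$, and Lemma \ref{lem:kernels} gives $X=\ker\{X\}^{\uparrow}=\bigcup_{n}\ker\D_{n}\subset\bigcup_{n}K_{n}$. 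For $(2)\Rightarrow(3)$, note that $\mathbb{U}_{\xi}X$ is dense in $\mathbb{U}X$ (containing every principal ultrafilter) and that in the compact Hausdorff space $\mathbb{U}X$ a $\sigma$-compact subset is $F_{\sigma}$, so $S:=\mathbb{U}_{\xi}X$ works. Similarly, $(2)\Rightarrow(1)$ follows by writing $\mathbb{U}_{\xi}X=\bigcup_{n}\beta\H_{n}$ with each $\H_{n}$ compactoid, applying Corollary \ref{cor:compactadherence} to obtain compact sets $K_{n}:=\ker\cl_{\xi}^{\natural}\H_{n}\supset\ker\H_{n}$, and noting that $\{x\}^{\uparrow}\in\beta\H_{n}$ forces $x\in\ker\H_{n}\subset K_{n}$, whence $X=\bigcup_{n}K_{n}$.

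The main difficulty is $(1)\Rightarrow(2)$: displaying $\mathbb{U}_{\xi}X$ itself as a countable union of closed (equivalently compact) subsets of $\mathbb{U}X$. The naive choice $\bigcup_{n}\beta K_{n}$ is typically strictly smaller than $\mathbb{U}_{\xi}X$, since a convergent ultrafilter need not contain the element of the compact cover that contains its limit point. My plan is to replace the principal filters $\{K_{n}\}^{\uparrow}$ with the coarser filters $\H_{n}:=\bigwedge_{x\in K_{n}}\V_{\xi}(x)$: any $\U\in\mathbb{U}_{\xi}X$ converging to some $x\in K_{n}$ satisfies $\H_{n}\leq\V_{\xi}(x)\leq\U$, placing $\U$ in $\beta\H_{n}$ and yielding $\mathbb{U}_{\xi}X\subset\bigcup_{n}\beta\H_{n}$. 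The crux is then to verify that each $\H_{n}$ is itself compactoid, so that $\beta\H_{n}\subset\mathbb{U}_{\xi}X$ and the inclusion becomes equality; I expect this to follow by combining compactness of $K_{n}$ in $\xi$ with the pseudotopology axiom and $\T$-regularity via Proposition \ref{prop:compactoidtocompact} and Corollary \ref{cor:compactadherence}.
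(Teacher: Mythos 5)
Your implications $(1)\Rightarrow(3)$, $(3)\Rightarrow(1)$, $(2)\Rightarrow(3)$ and $(2)\Rightarrow(1)$ are all correct: the choice $S=\bigcup_{n}\beta K_{n}$ together with (\ref{eq:betainf}) gives $(1)\Rightarrow(3)$ cleanly (and, unlike the paper's route through $(2)$, needs no regularity), $(3)\Rightarrow(1)$ is essentially the paper's argument via Proposition \ref{prop:clFsigmaforUxiX}, Corollary \ref{cor:compactadherence} and Lemma \ref{lem:kernels}, and your direct $(2)\Rightarrow(1)$ using principal ultrafilters is fine. But the equivalence still requires $(1)\Rightarrow(2)$, and there your proposal stops exactly at the decisive point. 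You do prove $\mathbb{U}_{\xi}X\subset\bigcup_{n}\beta\H_{n}$ for $\H_{n}=\bigwedge_{x\in K_{n}}\V_{\xi}(x)$, but the statement you only ``expect to follow'' --- that each $\H_{n}$ is compactoid, equivalently $\beta\H_{n}\subset\mathbb{U}_{\xi}X$ --- is the entire content of this direction, and the tools you cite do not deliver it. Proposition \ref{prop:compactoidtocompact} and Corollary \ref{cor:compactadherence} transfer compactoidness from a filter $\F$ to the coarser filters $\adh_{\theta}^{\natural}\F$ and $\cl_{\xi}^{\natural}\F$; here you must go in the opposite direction, from compactness of the \emph{set} $K_{n}$ to compactoidness of a filter coarser than every ultrafilter converging to a point of $K_{n}$. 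By the $\nicefrac{\mathbb{J}_{*}}{\mathbb{F}_{0*}}$-version of Proposition \ref{prop:covercompact}, compactoidness of $\V_{\xi}(K_{n})$ is exactly cover-compactoidness of $K_{n}$, and the paper stresses that filter compactness does not imply cover compactness for general convergences (even singletons may fail to be cover-compact). So this is precisely where $\T$-regularity must do real work, and an argument is required, not an expectation; as written, the cycle of implications is not closed.

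For comparison, the paper's proof of $(1)\then(2)$ uses $\D_{n}=\O_{\T\xi}(K_{n})^{\uparrow}$ and asserts $\mathbb{U}_{\xi}X=\bigcup_{n}\beta\D_{n}$; the inclusion it displays is the same one you prove (an ultrafilter with a limit point in $K_{n}$ contains $\O(K_{n})$, indeed your finer $\H_{n}$), and the reverse inclusion is again the compactoidness of $\D_{n}$. Since $\N_{\T\xi}(x)\leq\V_{\xi}(x)$, your $\H_{n}$ is finer than $\D_{n}$, so your missing claim is formally weaker than the paper's implicit one, and $\beta\H_{n}=\cl_{\beta}\big(\bigcup_{x\in K_{n}}\mathbb{U}_{\xi}(x)\big)$ is the smallest closed set that could possibly serve; this makes your choice of filters the right one. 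Nevertheless, weaker or not, the claim is unproven in your write-up, and note that the only result of the paper in this vein that produces compactoidness of a neighborhood filter from compactness of a set, Proposition \ref{prop:compactfiltertoset}, is stated and proved only for topologies (its proof uses $\adh_{\xi}\cl_{\xi}^{\natural}\H=\adh_{\xi}\H$, valid when $\xi=\T\xi$), so it cannot simply be quoted for a $\T$-regular pseudotopology. Until you supply a proof that in a $\T$-regular pseudotopology a compact set $K$ has $\O_{\T\xi}(K)^{\uparrow}$, or at least $\V_{\xi}(K)$, compactoid, the implication $(1)\then(2)$ remains a genuine gap.
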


\begin{proof}
$(1)\then(2):$ If $X=\bigcup_{n\in\omega}K_{n}$ where each $K_{n}$
is compact. Let $\D_{n}=\O(K_{n})^{\uparrow}$. Then $\mathbb{U}_{\xi}X=\bigcup_{n\in\omega}\beta\D_{n}$
is $\sigma$-compact. Indeed, if $\U\in\mathbb{U}_{\xi}X$, there
is $x$ with $x\in\lim_{\xi}\U$ and thus there is $n\in\omega$ with
$x\in\lim_{\xi}\U\cap K_{n}$. Then $\U\geq\O(K_{n})=\D_{n}$ belongs
to $\beta\D_{n}$.

That $(2)\then(3)$ is clear. To see that $(3)\then(1)$, assume there
is an $S\subset\mathbb{U}_{\xi}X$ as in (3). In view of Proposition
\ref{prop:clFsigmaforUxiX}, there is a sequence $(\D_{n})_{n\in\omega}$
of compactoid filters such that $\bigwedge_{n\in\omega}\D_{n}=\{X\}.$
In view of Corollary \ref{cor:compactadherence}, each $\cl_{\xi}^{\natural}\D_{n}$
is a compact filter with compact kernel $K_{n}=\ker\cl_{\xi}^{\natural}\D_{n}$.
Moreover, $\bigwedge_{n\in\omega}\cl_{\xi}^{\natural}\D_{n}=\{X\}$
because $\cl_{\xi}^{\natural}\D_{n}\leq\D_{n}$ for each $n\in\omega,$
so that, in view of Lemma \ref{lem:kernels}, 
\[
X=\ker\bigwedge_{n\in\omega}\cl_{\xi}^{\natural}\D_{n}=\bigcup_{n\in\omega}\ker\cl_{\xi}^{\natural}\D_{n}=\bigcup_{n\in\omega}K_{n}.
\]
Therefore, $\xi$ is $\sigma$-compact.
\end{proof}
Theorem \ref{thm:deltaclosednotclosureofFsigma} now follows at once,
as there are regular Lindelöf topological spaces that are not $\sigma$-compact
(e.g., $\mathbb{R}^{\omega}$). For such a space $(X,\xi)$, the set
$\mathbb{U}_{\xi}X$ is $\delta$-closed (in fact, Lindelöf) by Corollary
\ref{cor:Lindelof}, but not closed because $\xi$ is not compact.
Moreover, by Corollary \ref{cor:sigmacompact}, there is no $F_{\sigma}$
subset $S$ of $\mathbb{U}_{\xi}X$ with $\cl_{\beta}S=\cl_{\beta}(\mathbb{U}_{\xi}X)$
because $\xi$ is not $\sigma$-compact.

Recall that a topological space is\emph{ hemicompact }if there is
a sequence of compact subsets $(K_{n})_{n\in\omega}$ such that for
every compact subset $K$, there is $n\in\omega$ with $K\subset K_{n}$.
Of course every hemicompact topology is $\sigma$-compact but not
conversely. 
\begin{prop}
\label{prop:UXhemiC} Let $\xi$ be a convergence. The following are
equivalent:
\begin{enumerate}
\item $\mathbb{U}_{\xi}X$ is hemicompact;
\item There is a sequence $(\D_{n})_{n\in\omega}$ of compactoid filter
such that
\[
\F\in\mathbb{K}_{\mathbb{F}}\then\exists n:\F\geq\D_{n}.
\]
\end{enumerate}
\end{prop}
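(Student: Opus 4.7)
The plan is to translate hemicompactness of the subspace $\mathbb{U}_{\xi}X$ directly into condition (2) via two standard dictionaries between filters on $X$ and closed sets in the Stone space, after which the equivalence is essentially tautological.

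First, I would identify the compact subsets of $\mathbb{U}_{\xi}X$. Since $(\mathbb{U}X,\beta)$ is compact Hausdorff, compactness is absolute: a subset is compact iff it is closed in $\mathbb{U}X$. The preliminaries tell us that the nonempty closed subsets of $\mathbb{U}X$ are exactly those of the form $\beta\F$ with $\F\in\mathbb{F}X$. Moreover, the condition $\beta\F\subset\mathbb{U}_{\xi}X$ says that every $\U\geq\F$ has $\lim_{\xi}\U\neq\emptyset$, which is precisely the definition of $\F$ being compactoid (this is also exactly the $\mathbb{D}=\mathbb{F}$ instance of Proposition \ref{prop:clUxix}). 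So $\F\mapsto\beta\F$ is a bijection between compactoid filters on $\xi$ and nonempty compact subsets of $\mathbb{U}_{\xi}X$.

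Second, I would unpack inclusion under this correspondence: for filters $\D,\F$ on $X$, one has $\beta\F\subset\beta\D$ iff $\D\leq\F$. The implication $\D\leq\F\Rightarrow\beta\F\subset\beta\D$ is immediate from the definition of $\beta$. Conversely, if every $\U\in\beta\F$ refines $\D$, then $\D\leq\bigwedge\beta\F=\F$, using the standard fact that every filter is the infimum of the ultrafilters above it.

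Combining these two observations closes the argument at once. Spelling it out: $\mathbb{U}_{\xi}X$ is hemicompact iff there is a sequence $(K_{n})=(\beta\D_{n})$ of compact subsets of $\mathbb{U}_{\xi}X$ (so each $\D_{n}$ compactoid) such that every compact $K=\beta\F\subset\mathbb{U}_{\xi}X$ (so every compactoid $\F$) is contained in some $\beta\D_{n}$, equivalently $\F\geq\D_{n}$. This is exactly (2). I do not anticipate a substantive obstacle here; the entire content is the bijective correspondence ``compactoid filter $\leftrightarrow$ compact subset of $\mathbb{U}_{\xi}X$'' together with the order-reversing nature of $\beta$.
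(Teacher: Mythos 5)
Your proof is correct and takes essentially the same route as the paper's: identify the compact subsets of $\mathbb{U}_{\xi}X$ with the sets $\beta\F$ for compactoid $\F$, and containment $\beta\F\subset\beta\D_{n}$ with $\D_{n}\leq\F$, so hemicompactness translates verbatim into condition (2). The only slight imprecision is that ``every $\U\in\beta\F$ converges'' is an (easy, standard) characterization of compactoidness rather than the literal definition, which quantifies over all filters meshing $\F$, and Proposition \ref{prop:clUxix} as stated concerns single ultrafilters; both points are immediate and are used freely by the paper itself.
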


\begin{proof}
$\mathbb{U}_{\xi}X$ is hemicompact if there is a sequence of compact
subsets, that is, a sequence of filters $\D_{n}$ with $\beta\D_{n}\subset\mathbb{U}_{\xi}X$
such that any compact subset $\beta\F\subset\mathbb{U}_{\xi}X$ for
$\F\in\mathbb{F}X$, there is $n\in\omega$ with $\beta\F\subset\beta\D_{n}$,
equivalently, $\D_{n}\leq\F.$ As $\beta\F\subset\mathbb{U}_{\xi}X$
if and only if $\F\in\mathbb{K}_{\mathbb{F}}$, the conclusion follows.
\end{proof}
\begin{cor}
\label{cor:UXhemicompactiffXhemicompact} If $\xi$ is a regular Hausdorff
topology then $\xi$ is hemicompact if and only if $\mathbb{U}_{\xi}X$
is hemicompact.
\end{cor}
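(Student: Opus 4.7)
The plan is to use Proposition \ref{prop:UXhemiC}, which reduces hemicompactness of $\mathbb{U}_\xi X$ to the existence of a countable cofinal family (under $\leq$) of compactoid filters, and then bridge between such a family and a hemicompact sequence of compact sets in $X$ via the dictionary of Corollaries \ref{cor:compactadherence} and \ref{cor:O(compact)} together with Proposition \ref{prop:compactfiltertoset}. Both directions amount to translating between ``the compact set $K$'' and ``the filter $\O(K)^\uparrow$'', using regularity of $\xi$ to equate $\adh\D$ with $\ker\cl_\xi^\natural\D$ for compactoid $\D$.

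For the direction $(\xi\text{ hemicompact})\Rightarrow(\mathbb{U}_\xi X\text{ hemicompact})$, let $(K_n)_{n\in\omega}$ witness hemicompactness of $\xi$ and set $\D_n:=\O(K_n)^\uparrow$. Each $\D_n$ is compact (Proposition \ref{prop:compactfiltertoset}), hence compactoid. Given any compactoid $\F$, Corollary \ref{cor:compactadherence} supplies the compact set $K:=\ker\cl_\xi^\natural\F=\adh_\xi\F$, and Corollary \ref{cor:O(compact)} gives $\O(K)^\uparrow=\O(\cl_\xi^\natural\F)^\uparrow\leq\cl_\xi^\natural\F\leq\F$. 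Pick $n$ with $K\subset K_n$; since $K\subset K_n$ yields $\O(K_n)\subset\O(K)$, we get $\D_n\leq\O(K)^\uparrow\leq\F$, so Proposition \ref{prop:UXhemiC} applies.

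For the converse, pick $(\D_n)_{n\in\omega}$ as in Proposition \ref{prop:UXhemiC} and set $K_n:=\adh_\xi\D_n$, which is compact by Corollary \ref{cor:compactadherence}. For any compact $K\subset X$, Proposition \ref{prop:compactfiltertoset} says $\O(K)^\uparrow$ is compact, hence compactoid, so there is $n$ with $\D_n\leq\O(K)^\uparrow$, i.e., $\D_n\subset\O(K)^\uparrow$. Every $D\in\D_n$ then contains some $U\in\O(K)$, hence contains $K$; therefore $K\subset\bigcap_{D\in\D_n}D\subset\bigcap_{D\in\D_n}\cl_\xi D=\ker\cl_\xi^\natural\D_n=K_n$, using regularity to identify $\ker\cl_\xi^\natural\D_n$ with $\adh_\xi\D_n$ (Corollary \ref{cor:compactadherence}). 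Thus $(K_n)$ witnesses hemicompactness of $\xi$.

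The only delicate step is the containment $K\subset K_n$ in the second direction: it is tempting to try to extract it from an abstract comparison of filters, but the cleanest route is the elementwise one described above, which crucially uses that $\O(K)^\uparrow$ has $K$ as a lower bound for all its members. The regular Hausdorff hypothesis enters (implicitly) to validate the invocations of Corollaries \ref{cor:compactadherence} and \ref{cor:O(compact)}, which are what allow the round trip between compact sets and compactoid filters in the first place.
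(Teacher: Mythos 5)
Your proof is correct and follows essentially the same route as the paper: both directions translate between compact sets and compactoid filters via Proposition \ref{prop:UXhemiC}, Corollary \ref{cor:compactadherence} and Corollary \ref{cor:O(compact)}, taking $\D_n=\O(K_n)^\uparrow$ in one direction and $K_n=\adh_\xi\D_n$ in the other. The only cosmetic difference is that in the converse you test cofinality against $\O(K)^\uparrow$ (via Proposition \ref{prop:compactfiltertoset}) where the paper uses the principal filter $\{K\}^\uparrow$ directly; both yield $K\subset\ker\cl_\xi^\natural\D_n=K_n$.
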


\begin{proof}
Assume that $\xi$ is hemicompact and that $\{K_{n}\}_{n\in\omega}$
witnesses the definition. Let $\D_{n}=\O_{\xi}(K_{n})^{\uparrow}$.
Clearly, $\D_{n}$ is compact(oid) because $K_{n}$ is compact. If
now $\F$ is a compactoid filter, then in view of Corollary \ref{cor:compactadherence}
and Corollary \ref{cor:O(compact)}, $\adh(\cl^{\natural}\F)=\adh\F$
is compact, so that there is $n\in\omega$ with $\adh\F\subset K_{n}$
and thus 
\[
\O(K_{n})\subset\O(\adh\F)=\O(\cl_{\xi}^{\natural}\F)\leq\F
\]
 and thus $\D_{n}\leq\F$. In view of Proposition \ref{prop:UXhemiC},
$\mathbb{U}_{\xi}X$ is hemicompact.

Conversely, let $(\D_{n})_{n\in\omega}$ be as in Proposition \ref{prop:UXhemiC}
and let 
\[
K_{n}=\adh_{\xi}\D_{n}=\adh_{\xi}(\cl_{\xi}^{\natural}\D_{n}).
\]
 If $K$ is a compact subset of $|\xi|$ then $\{K\}^{\uparrow}$
is a compact filter, and thus $\{K\}\geq\D_{n}\geq\cl_{\xi}^{\natural}\D_{n}$
for some $n\in\omega$, that is, $K\subset\ker\cl_{\xi}^{\natural}\D_{n}=\adh_{\xi}\D_{n}=K_{n}$.
\end{proof}
The following two propositions are straightforward and characterize
the closure of $\mathbb{U}_{\xi}X$ under the closures involved in
the definitions of countable depth and countable depth for ultrafilters:
\begin{prop}
Let $\xi$ be a convergence. The following are equivalent:
\begin{enumerate}
\item $\cl_{\beta}S\subset\mathbb{U}_{\xi}X$ whenever $S$ is an $F_{\sigma}$-subset
of $\mathbb{U}_{\xi}X$;
\item If $(\D_{n})_{n\in\omega}$ is a sequence of compactoid filters, then
$\bigwedge_{n\in\omega}\D_{n}$ is compactoid.
\end{enumerate}
\end{prop}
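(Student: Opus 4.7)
The plan is to pivot on two facts from the preliminaries: first, that a filter $\F$ is compactoid if and only if $\beta\F \subset \mathbb{U}_{\xi}X$ (this is just the unpacking of Proposition~\ref{prop:clUxix} in the case $\mathbb{D} = \mathbb{F}$, since every ultrafilter is $\mathbb{F}$-compactoid means convergent); and second, the identity
\[
\beta\Big(\bigwedge_{n\in\omega}\D_n\Big) = \cl_{\beta}\Big(\bigcup_{n\in\omega}\beta\D_n\Big)
\]
from equation~(\ref{eq:betainf}). With these in hand, both implications become translations between properties of filters and properties of their $\beta$-images.

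For $(1)\Rightarrow(2)$, I would start with a sequence $(\D_n)_{n\in\omega}$ of compactoid filters on $(X,\xi)$. Each $\beta\D_n$ is a closed, hence in particular $F_{\sigma}$, subset of $\mathbb{U}X$ contained in $\mathbb{U}_{\xi}X$ by compactoidness. Then $S := \bigcup_{n\in\omega}\beta\D_n$ is $F_\sigma$ and $S \subset \mathbb{U}_{\xi}X$, so (1) yields $\cl_{\beta}S \subset \mathbb{U}_{\xi}X$. Invoking~(\ref{eq:betainf}), $\cl_{\beta}S = \beta\bigl(\bigwedge_{n\in\omega}\D_n\bigr) \subset \mathbb{U}_{\xi}X$, which by the first fact says exactly that $\bigwedge_{n\in\omega}\D_n$ is compactoid.

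For the converse $(2)\Rightarrow(1)$, let $S$ be an $F_{\sigma}$-subset of $\mathbb{U}_{\xi}X$, written as $S = \bigcup_{n\in\omega}F_n$ with each $F_n$ closed in $\mathbb{U}X$. Since closed subsets of $(\mathbb{U}X,\beta)$ are exactly the sets of the form $\beta\F$ for some filter $\F$ on $X$ (as recalled in Section~2), we can write $F_n = \beta\F_n$. From $\beta\F_n \subset S \subset \mathbb{U}_{\xi}X$, each $\F_n$ is compactoid, and (2) gives that $\bigwedge_{n\in\omega}\F_n$ is compactoid; equivalently $\beta\bigl(\bigwedge_{n\in\omega}\F_n\bigr) \subset \mathbb{U}_{\xi}X$. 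But by~(\ref{eq:betainf}) this $\beta$-set equals $\cl_{\beta}\bigl(\bigcup_{n\in\omega}\beta\F_n\bigr) = \cl_{\beta}S$, which completes the proof.

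There is no substantial obstacle: once one sees that ``$\F$ is compactoid'' and ``$\beta\F \subset \mathbb{U}_{\xi}X$'' are the same statement, the proposition is just the translation of~(\ref{eq:betainf}) under this dictionary. The only point that requires a moment's care is the observation that an arbitrary $F_{\sigma}$-subset of $\mathbb{U}X$ can be presented as a countable union of sets of the form $\beta\F_n$, which is immediate from closedness of each constituent and the representation of closed sets in the Stone space.
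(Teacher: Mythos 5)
Your proof is correct and follows exactly the route the paper intends: the paper states this proposition without proof as ``straightforward'', and the expected argument is precisely your dictionary (a filter $\F$ is compactoid if and only if $\beta\F\subset\mathbb{U}_{\xi}X$, as used explicitly in the proof of Proposition \ref{prop:UXhemiC}) combined with (\ref{eq:betainf}) and the representation of closed subsets of $(\mathbb{U}X,\beta)$ as sets $\beta\F$, the same translation employed in the neighboring Proposition \ref{prop:clFsigmaforUxiX} and Corollary \ref{cor:sigmacompact}. The only cosmetic omission is the trivial case of empty closed pieces in the $F_{\sigma}$ decomposition, which can be discarded without affecting the argument.
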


\begin{prop}
Let $\xi$ be a convergence. The following are equivalent:
\begin{enumerate}
\item $\mathbb{U}_{\xi}X=\cl_{t}(\mathbb{U}_{\xi}X)$ ;
\item If $(\U_{n})_{n\in\omega}$ is a sequence of convergent ultrafilters,
then $\bigwedge_{n\in\omega}\U_{n}$ is compactoid.
\end{enumerate}
\end{prop}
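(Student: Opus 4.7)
The plan is to unpack both conditions using the description of closures in the Stone space given by (\ref{eq:betainf}) and then recognize that what remains on each side is just the definition of a compactoid filter.

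First, I would rewrite (1). By the very definition of $\cl_{t}$, the equality $\mathbb{U}_{\xi}X=\cl_{t}(\mathbb{U}_{\xi}X)$ means that $\cl_{\beta}C\subseteq\mathbb{U}_{\xi}X$ for every countable $C\subseteq\mathbb{U}_{\xi}X$. Enumerating $C=\{\U_{n}:n\in\omega\}$ with each $\U_{n}\in\mathbb{U}_{\xi}X$ (that is, each $\U_{n}$ a convergent ultrafilter) and using that $\beta\U_{n}=\{\U_{n}\}$ since ultrafilters are maximal, (\ref{eq:betainf}) gives
\[
\cl_{\beta}C=\cl_{\beta}\Big(\bigcup_{n\in\omega}\beta\U_{n}\Big)=\beta\Big(\bigwedge_{n\in\omega}\U_{n}\Big).
\]
Thus (1) is equivalent to the statement that $\beta\bigl(\bigwedge_{n}\U_{n}\bigr)\subseteq\mathbb{U}_{\xi}X$ for every sequence of convergent ultrafilters.

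Next I would observe that for any filter $\F$, the inclusion $\beta\F\subseteq\mathbb{U}_{\xi}X$ is precisely the statement that every ultrafilter finer than $\F$ converges, which by definition of compactoidness means that $\F$ is compactoid. Applied to $\F=\bigwedge_{n\in\omega}\U_{n}$, this turns (1) into exactly (2). Conversely, to deduce (1) from (2) I would pick $\U\in\cl_{t}(\mathbb{U}_{\xi}X)$, extract a countable $C=\{\U_{n}:n\in\omega\}\subseteq\mathbb{U}_{\xi}X$ with $\U\in\cl_{\beta}C=\beta\bigl(\bigwedge_{n}\U_{n}\bigr)$, and use compactoidness of $\bigwedge_{n}\U_{n}$ to conclude that $\U$, being an ultrafilter finer than $\bigwedge_{n}\U_{n}$, is convergent.

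I do not expect any real obstacle here: the result is essentially a dictionary translation built from two ingredients already available in the paper, namely equation (\ref{eq:betainf}) and the characterization of compactoid filters via the Stone space (as used, e.g., in Proposition \ref{prop:clUxix}). The only minor points to be careful with are that countable enumerations may have repetitions or be finite, but this causes no trouble since in either case the infimum $\bigwedge_{n}\U_{n}$ is still a well-defined filter and the equivalence above goes through unchanged.
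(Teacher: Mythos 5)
Your argument is correct and is exactly the ``straightforward'' verification the paper omits: it combines (\ref{eq:betainf}) (as in the proof of Proposition \ref{prop:D1inUX}) with the identification $\beta\F\subset\mathbb{U}_{\xi}X\iff\F\in\mathbb{K}_{\mathbb{F}}$ that the paper itself uses, e.g., in Proposition \ref{prop:UXhemiC}. The only cosmetic caveat is that the latter equivalence is not literally the definition of compactoid but a one-line consequence of $\adh_{\xi}\H=\bigcup_{\U\in\beta\H}\lm_{\xi}\U$; otherwise nothing to add.
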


\subsection{Topological properties of $\mathbb{U}X\setminus\mathbb{U}_{\xi}X$}

Note that
\[
\U\in\cl_{\beta}(\mathbb{U}X\setminus\mathbb{U}_{\xi}X)\iff\forall U\in\U\exists\W\in\beta U:\lm_{\xi}\W=\emptyset,
\]
that is, $\U\in\cl_{\beta}(\mathbb{U}X\setminus\mathbb{U}_{\xi}X)$
if and only if $\U$ has no compactoid element, equivalently, every
co-compactoid set is in $\U$. In other words,
\begin{equation}
\cl_{\beta}(\mathbb{U}X\setminus\mathbb{U}_{\xi}X)=\beta(\mathcal{K}_{c}).\label{eq:cocompactoid}
\end{equation}

More generally,

\[
\U\in\cl_{\mathbb{D}^{*}}(\mathbb{U}X\setminus\mathbb{U}_{\xi}X)\iff\forall\D\in\mathbb{D}:\D\leq\U,\beta\D\cap(\mathbb{U}X\setminus\mathbb{U}_{\xi}X)\neq\emptyset,
\]
that is, no $\mathbb{D}$-filter meshing with $\U$ is compactoid.
In particular, $\mathbb{U}X\setminus\mathbb{U}_{\xi}X$ is $\mathbb{D}^{*}$-closed
if such ultrafilters do not converge, equivalently, if
\[
\lm_{\xi}\U\neq\emptyset\then\exists\D\in\mathbb{D}\;\D\leq\U\;\text{ and }\D\text{ is compactoid.}
\]

As a result:
\begin{prop}
\label{prop:nonCVDstarclosed} $\mathbb{U}X\setminus\mathbb{U}_{\xi}X$
is $\mathbb{D}^{*}$-closed if and only if 
\[
\lm_{\xi}\U\neq\emptyset\then\exists\D\in\mathbb{D}\;\D\leq\U\;\text{ and }\D\text{ is compactoid.}
\]
\end{prop}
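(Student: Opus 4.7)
The strategy is to unwind the definition of $\mathbb{D}^{*}$-closure as applied to $A := \mathbb{U}X\setminus\mathbb{U}_{\xi}X$ step by step. Starting from the explicit formula
\[
\cl_{\mathbb{D}^{*}}A=\bigcap\{\mathbb{U}X\setminus\beta\D:\D\in\mathbb{D},\,A\subset\mathbb{U}X\setminus\beta\D\}
\]
recalled at the start of Section \ref{sec:inStone}, I would first observe that $\U\notin\cl_{\mathbb{D}^{*}}A$ if and only if there exists $\D\in\mathbb{D}$ with $\D\leq\U$ and $\beta\D\cap A=\emptyset$. Specializing $A=\mathbb{U}X\setminus\mathbb{U}_{\xi}X$, the latter condition becomes $\beta\D\subset\mathbb{U}_{\xi}X$.

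Next I would translate this inclusion: $\beta\D\subset\mathbb{U}_{\xi}X$ says every ultrafilter $\W\geq\D$ satisfies $\lim_{\xi}\W\neq\emptyset$. Since for a filter $\F$, compactoidness amounts to $\adh_{\xi}\G\neq\emptyset$ for every $\G\#\F$, which (taking $\G$ an ultrafilter refining $\F$, and using $\adh_{\xi}\U=\lim_{\xi}\U$ from (\ref{eq:adherence})) is equivalent to the requirement that every ultrafilter $\W\geq\F$ converges, the inclusion $\beta\D\subset\mathbb{U}_{\xi}X$ is precisely compactoidness of $\D$.

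Finally, $\mathbb{U}X\setminus\mathbb{U}_{\xi}X$ being $\mathbb{D}^{*}$-closed is by definition $\cl_{\mathbb{D}^{*}}(\mathbb{U}X\setminus\mathbb{U}_{\xi}X)\cap\mathbb{U}_{\xi}X=\emptyset$, i.e., no $\U\in\mathbb{U}_{\xi}X$ belongs to that closure. Contraposing the two steps above, this is exactly: for every $\U$ with $\lim_{\xi}\U\neq\emptyset$, there exists $\D\in\mathbb{D}$ with $\D\leq\U$ and $\D$ compactoid, which is the statement. The argument is a straightforward chain of definitional translations and I do not anticipate any real obstacle; the only care needed is the identification of $\beta\D\subset\mathbb{U}_{\xi}X$ with compactoidness of $\D$, via the ultrafilter characterization of compactoidness.
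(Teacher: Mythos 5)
Your proposal is correct and is essentially the paper's own argument: the paper likewise unwinds the definition of $\cl_{\mathbb{D}^{*}}$ applied to $\mathbb{U}X\setminus\mathbb{U}_{\xi}X$, notes that $\U$ escapes this closure exactly when some $\D\in\mathbb{D}$ with $\D\leq\U$ satisfies $\beta\D\subset\mathbb{U}_{\xi}X$, and identifies this last inclusion with compactoidness of $\D$ via the ultrafilter description of adherence in (\ref{eq:adherence}). No gaps; the identification of $\beta\D\subset\mathbb{U}_{\xi}X$ with compactoidness, which you flag as the only delicate point, is handled correctly.
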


More generally,
\begin{prop}
\label{prop:equalclosuresnonCVu} Let $\mathbb{D}\subset\mathbb{J}$
be two classes of filters. The following are equivalent:
\begin{enumerate}
\item $\cl_{\mathbb{D}^{*}}(\mathbb{U}X\setminus\mathbb{U}_{\xi}X)=\cl_{\mathbb{J}^{*}}(\mathbb{U}X\setminus\mathbb{U}_{\xi}X)$;
\item For every $\U\in\mathbb{U}X$,
\[
\exists\G\in\mathbb{J}\cap\mathbb{K}_{\mathbb{F}},\G\leq\U\then\exists\D\in\mathbb{D}\cap\mathbb{K}_{\mathbb{F}},\D\leq\U.
\]
\end{enumerate}
\end{prop}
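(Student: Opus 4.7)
The plan is to read off the proof directly from the discussion immediately preceding the statement, which already identifies $\cl_{\mathbb{D}^*}(\mathbb{U}X\setminus\mathbb{U}_{\xi}X)$ as consisting of those $\U\in\mathbb{U}X$ having no compactoid $\mathbb{D}$-subfilter.

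First I would record the base characterization, which is already essentially in the text: for any class $\mathbb{E}$ of filters,
\[
\U\in\cl_{\mathbb{E}^{*}}(\mathbb{U}X\setminus\mathbb{U}_{\xi}X)\iff\forall\mathcal{E}\in\mathbb{E},\;\mathcal{E}\leq\U\Longrightarrow\beta\mathcal{E}\cap(\mathbb{U}X\setminus\mathbb{U}_{\xi}X)\neq\emptyset,
\]
and since $\beta\mathcal{E}\cap(\mathbb{U}X\setminus\mathbb{U}_{\xi}X)\neq\emptyset$ means exactly that some ultrafilter refining $\mathcal{E}$ fails to converge, this rewrites as: \emph{no} $\mathcal{E}\in\mathbb{E}$ with $\mathcal{E}\leq\U$ lies in $\mathbb{K}_{\mathbb{F}}$. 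Taking contrapositives,
\[
\U\notin\cl_{\mathbb{E}^{*}}(\mathbb{U}X\setminus\mathbb{U}_{\xi}X)\iff\exists\mathcal{E}\in\mathbb{E}\cap\mathbb{K}_{\mathbb{F}},\;\mathcal{E}\leq\U.
\]

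Next I would note that the inclusion $\cl_{\mathbb{J}^{*}}\subset\cl_{\mathbb{D}^{*}}$ always holds because $\mathbb{D}\subset\mathbb{J}$ makes $\mathbb{J}^{*}$ a finer topology than $\mathbb{D}^{*}$ on $\mathbb{U}X$. Hence the equality in (1) is equivalent to the reverse inclusion $\cl_{\mathbb{D}^{*}}(\mathbb{U}X\setminus\mathbb{U}_{\xi}X)\subset\cl_{\mathbb{J}^{*}}(\mathbb{U}X\setminus\mathbb{U}_{\xi}X)$, which in terms of complements reads: for every $\U\in\mathbb{U}X$, if $\U\notin\cl_{\mathbb{J}^{*}}(\mathbb{U}X\setminus\mathbb{U}_{\xi}X)$ then $\U\notin\cl_{\mathbb{D}^{*}}(\mathbb{U}X\setminus\mathbb{U}_{\xi}X)$.

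Applying the characterization of the first paragraph once with $\mathbb{E}=\mathbb{J}$ and once with $\mathbb{E}=\mathbb{D}$, this last statement says exactly: whenever there exists $\G\in\mathbb{J}\cap\mathbb{K}_{\mathbb{F}}$ with $\G\leq\U$, there also exists $\D\in\mathbb{D}\cap\mathbb{K}_{\mathbb{F}}$ with $\D\leq\U$. That is precisely condition (2), completing the proof. There is no real obstacle here; the only thing to be careful about is keeping straight the direction of the inclusion that is automatic (from $\mathbb{D}\subset\mathbb{J}$) versus the direction that is equivalent to (2), and recognizing that $\beta\mathcal{E}\cap(\mathbb{U}X\setminus\mathbb{U}_{\xi}X)=\emptyset$ is just a rewording of compactoidness of $\mathcal{E}$.
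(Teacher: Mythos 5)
Your proof is correct and is exactly the argument the paper intends: the proposition is stated there without an explicit proof, as an immediate consequence of the preceding characterization $\U\in\cl_{\mathbb{D}^{*}}(\mathbb{U}X\setminus\mathbb{U}_{\xi}X)$ iff no $\D\in\mathbb{D}$ with $\D\leq\U$ is compactoid, which is precisely the equivalence you make explicit (using that $\beta\mathcal{E}\subset\mathbb{U}_{\xi}X$ iff $\mathcal{E}\in\mathbb{K}_{\mathbb{F}}$) before contraposing and using the automatic inclusion $\cl_{\mathbb{J}^{*}}\subset\cl_{\mathbb{D}^{*}}$. No gaps; this matches the paper's route.
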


Recall that $\mathbb{F}_{0}^{*}$ is the usual topology $\beta$ of
$\mathbb{U}X$. 
\begin{prop}
Let $\mathbb{J}$ be a class of filters. The following are equivalent:
\begin{enumerate}
\item $\cl_{\beta}(\mathbb{U}X\setminus\mathbb{U}_{\xi}X)=\cl_{\mathbb{J}^{*}}(\mathbb{U}X\setminus\mathbb{U}_{\xi}X)$;
\item For every $\U\in\mathbb{U}X$,
\[
\exists\G\in\mathbb{J}\cap\mathbb{K}_{\mathbb{F}},\G\leq\U\then\exists K\in\mathbb{\mathbb{F}}_{0}\cap\mathbb{K}_{\mathbb{F}},K\in\U;
\]
\item Every compactoid filter in $\mathbb{J}$ has a compactoid element.
\end{enumerate}
\end{prop}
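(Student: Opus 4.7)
The statement is the specialization of the preceding Proposition \ref{prop:equalclosuresnonCVu} to $\mathbb{D}=\mathbb{F}_{0}$, supplemented by the equivalence between the ``pointwise'' condition (2) and the ``global'' condition (3). My plan is to deduce (1)$\iff$(2) as a direct corollary and then prove (2)$\iff$(3) separately.

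\medskip

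For (1)$\iff$(2): recall that $\mathbb{F}_{0}\cap\mathbb{K}_{\mathbb{F}}$ is the class of principal filters $\{K\}^{\uparrow}$ where $K$ is compactoid, and that $\{K\}^{\uparrow}\leq\U$ if and only if $K\in\U$. Thus the condition in Proposition \ref{prop:equalclosuresnonCVu}(2) specialized to $\mathbb{D}=\mathbb{F}_{0}$ becomes exactly: whenever there is $\G\in\mathbb{J}\cap\mathbb{K}_{\mathbb{F}}$ with $\G\leq\U$, there is a compactoid $K\in\U$. Since $\mathbb{F}_{0}\subset\mathbb{J}$ (as $\mathbb{J}$ is any class containing at least the trivial filters) --- or, in any case, since the argument of that proposition does not need this --- the equivalence (1)$\iff$(2) follows at once.

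\medskip

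For (3)$\then$(2): given $\U\in\mathbb{U}X$ and $\G\in\mathbb{J}\cap\mathbb{K}_{\mathbb{F}}$ with $\G\leq\U$, (3) provides a compactoid element $K\in\G$. Since $\G\leq\U$ entails $\G\subset\U$, we have $K\in\mathbb{F}_{0}\cap\mathbb{K}_{\mathbb{F}}$ with $K\in\U$.

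\medskip

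The only real content is (2)$\then$(3), which I would prove by contradiction. Suppose $\G\in\mathbb{J}\cap\mathbb{K}_{\mathbb{F}}$ has no compactoid element. The key observation is that the family $\mathcal{K}$ of compactoid subsets of $|\xi|$ is stable under finite unions (an easy direct verification using the definition of compactoidness) and hereditary downward (any subset of a compactoid set is compactoid, because any filter containing the subset contains the compactoid set). Consequently, given $G\in\G$ and finitely many compactoid sets $K_{1},\dots,K_{n}$, the union $K_{1}\cup\dots\cup K_{n}$ is compactoid, so $G$ cannot be contained in it (else $G$ would itself be compactoid), showing that
\[
G\setminus(K_{1}\cup\dots\cup K_{n})\neq\emptyset.
\]
Therefore $\G\cup\mathcal{K}_{c}$ has the finite intersection property and extends to an ultrafilter $\U\in\mathbb{U}X$. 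By construction $\G\leq\U$ and $X\setminus K\in\U$ for every compactoid $K$, so $\U$ contains no compactoid element. This contradicts (2) applied to $\U$ and $\G$.

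\medskip

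The main obstacle is verifying that $\mathcal{K}$ is closed under finite unions and subsets so that $\mathcal{K}_{c}$ is a genuine filter-base meshing with $\G$; both are standard but need to be stated. Everything else is bookkeeping on top of Proposition \ref{prop:equalclosuresnonCVu}.
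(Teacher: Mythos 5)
Your proof is correct and follows essentially the same route as the paper: (1)$\iff$(2) is quoted from Proposition \ref{prop:equalclosuresnonCVu} with $\mathbb{D}=\mathbb{F}_{0}$, (3)$\then$(2) is immediate, and all the substance is in (2)$\then$(3). For that step the paper argues directly --- the sets $\beta K_{\U}$, $\U\in\beta\G$, cover the compact set $\beta\G$, a finite subcover gives $\bigcup_{\U\in F}K_{\U}\in\G$, and a finite union of compactoid sets is compactoid --- whereas you argue by contradiction, extending $\G\cup\mathcal{K}_{c}$ to an ultrafilter with no compactoid element; these are contrapositive formulations of the same compactness/ultrafilter-extension argument, both resting on stability of compactoid sets under finite unions (your extra use of downward heredity of compactoidness is correct and harmless).
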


\begin{proof}
$(1)\iff(2)$ is Proposition \ref{prop:equalclosuresnonCVu} for $\mathbb{D}=\mathbb{F}_{0}$
and $(3)\then(2)$ is clear. For $(2)\then(3)$, let $\G\in\mathbb{J}\cap\mathbb{K}_{\mathbb{F}}$.
For every $\U\in\beta\G$, there is by (2) a compactoid set $K_{\U}\in\U$.
By compactness of $\beta\G$, there is a finite subset $F$ of $\beta\G$
with $\bigcup_{\U\in F}K_{\U}\in\G$, and $\bigcup_{\U\in F}K_{\U}$
is compactoid as a finite union of compactoid sets. 
\end{proof}
In particular, $\cl_{\beta}(\mathbb{U}X\setminus\mathbb{U}_{\xi}X)\subset\cl_{\delta}(\mathbb{U}X\setminus\mathbb{U}_{\xi}X)$
if and only if every countably based compactoid filter has a compactoid
element. In this case, countably based convergent filter contain a
compactoid sets, and thus:

\begin{cor}
If $\cl_{\beta}(\mathbb{U}X\setminus\mathbb{U}_{\xi}X)\subset\cl_{\delta}(\mathbb{U}X\setminus\mathbb{U}_{\xi}X)$
then $\I_{1}\xi$ is locally $\xi$-compactoid.
\end{cor}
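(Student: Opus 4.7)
The plan is to unpack the definitions and chain the hypothesis through the coreflector $\I_{1}$. Recall that the hypothesis was shown (in the paragraph immediately preceding the corollary) to be equivalent to the statement that every countably based $\xi$-compactoid filter has a $\xi$-compactoid element. Recall also that local $\xi$-compactoidness of $\I_{1}\xi$ means that every $\I_{1}\xi$-convergent filter admits a $\xi$-compactoid element, and that by definition of the $\mathbb{F}_{1}$-based coreflector,
\[
x\in\lm_{\I_{1}\xi}\F\iff\exists\D\in\mathbb{F}_{1}|\xi|,\ \D\leq\F\ \text{and}\ x\in\lm_{\xi}\D.
\]

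First I would fix $x\in\lm_{\I_{1}\xi}\F$ and produce a witness $\D\in\mathbb{F}_{1}|\xi|$ with $\D\leq\F$ and $x\in\lm_{\xi}\D$. Since convergent filters are compactoid, $\D$ is a countably based $\xi$-compactoid filter. Applying the hypothesis in its equivalent reformulation, there exists a $\xi$-compactoid set $K\in\D$. Because $\D\leq\F$, we have $K\in\F$, so $\F$ has a $\xi$-compactoid element, which is exactly what local $\xi$-compactoidness of $\I_{1}\xi$ demands.

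There is essentially no obstacle here beyond bookkeeping: the real content is already in the equivalence recorded just above the corollary, namely that $\cl_{\beta}(\mathbb{U}X\setminus\mathbb{U}_{\xi}X)\subset\cl_{\delta}(\mathbb{U}X\setminus\mathbb{U}_{\xi}X)$ translates into the ``compactoid filter implies compactoid element'' property for countably based filters. The corollary is then the one-line consequence obtained by composing this equivalence with the definition of $\I_{1}\xi$-convergence.
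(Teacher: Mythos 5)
Your proof is correct and follows essentially the same route as the paper, which states the corollary as an immediate consequence of the equivalence (hypothesis $\iff$ every countably based $\xi$-compactoid filter has a $\xi$-compactoid element) recorded just before it, together with the facts that $\xi$-convergent filters are compactoid and that $\I_{1}\xi$-convergent filters contain countably based $\xi$-convergent filters. Your write-up simply makes explicit the bookkeeping (the witness $\D\in\mathbb{F}_{1}$, and $K\in\D\subset\F$ since $\D\leq\F$) that the paper leaves to the reader.
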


As a result of Proposition \ref{prop:nonCVDstarclosed} for $\mathbb{D}=\mathbb{F}_{0}$,
we obtain \cite{D.covers,myn.completeness} (\footnote{This result can be found in \cite{D.covers} and was revisited in
\cite{myn.completeness}. A proof is included here for completeness
and to illustrate an instance of Proposition \ref{prop:nonCVDstarclosed}.}):
\begin{cor}
\label{cor:nonCVclosed} The following are equivalent:
\begin{enumerate}
\item $\mathbb{U}X\setminus\mathbb{U}_{\xi}X$ is $\beta$-closed;
\item $\mathbb{U}_{\xi}X$ is $\beta$-open;
\item $\xi$ is locally compactoid;
\item $\adh_{\xi}\mathcal{K}_{c}=\emptyset$.
\end{enumerate}
\end{cor}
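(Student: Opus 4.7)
The equivalence (1) $\iff$ (2) is immediate since $\mathbb{U}_\xi X$ is the complement in $\mathbb{U}X$ of $\mathbb{U}X \setminus \mathbb{U}_\xi X$. For the remaining implications, the cleanest organization is $(2)\Rightarrow(3)\Rightarrow(4)\Rightarrow(1)$, using the identity (\ref{eq:cocompactoid}) to close the loop.

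For $(2)\Rightarrow(3)$, let $\F$ be a convergent filter, say $x \in \lim_\xi \F$. Since the convergence axioms give $\lim_\xi \F \subset \lim_\xi \U$ for every $\U \in \beta\F$, we have $\beta\F \subset \mathbb{U}_\xi X$. By (2), each $\U \in \beta\F$ has a basic neighborhood $\beta A_\U \subset \mathbb{U}_\xi X$ with $A_\U \in \U$; the inclusion $\beta A_\U \subset \mathbb{U}_\xi X$ exactly says that $A_\U$ is compactoid. The Stone space is compact, so the open cover $\{\beta A_\U : \U \in \beta\F\}$ of $\beta\F$ admits a finite subcover, and the corresponding finite union $A = A_{\U_1} \cup \cdots \cup A_{\U_n}$ lies in $\F = \bigwedge_{\U \in \beta\F} \U$ and is compactoid as a finite union of compactoid sets. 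Hence $\xi$ is locally compactoid.

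For $(3)\Rightarrow(4)$, suppose $x \in \adh_\xi \mathcal{K}_c$. By definition there is a filter $\F$ with $\F \# \mathcal{K}_c$ and $x \in \lim_\xi \F$. Local compactoidness produces $K \in \F \cap \mathcal{K}$, but then $X \setminus K \in \mathcal{K}_c$ and $K \cap (X \setminus K) = \emptyset$ contradicts $\F \# \mathcal{K}_c$. Thus $\adh_\xi \mathcal{K}_c = \emptyset$. Finally, for $(4)\Rightarrow(1)$, the identity (\ref{eq:cocompactoid}) yields $\cl_\beta(\mathbb{U}X \setminus \mathbb{U}_\xi X) = \beta \mathcal{K}_c$, so it suffices to show $\beta\mathcal{K}_c \subset \mathbb{U}X \setminus \mathbb{U}_\xi X$. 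If instead $\U \in \beta\mathcal{K}_c$ converges to some $x$, then $\U \geq \mathcal{K}_c$ gives $\U \# \mathcal{K}_c$, so $x \in \adh_\xi \mathcal{K}_c$, contradicting (4).

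No step is a serious obstacle: $(2)\Rightarrow(3)$ is the only nontrivial one and just requires the compactness argument in $\beta X$ together with the observation that $\beta\F \subset \mathbb{U}_\xi X$ for every convergent $\F$. The others follow directly from unwinding the definitions and invoking (\ref{eq:cocompactoid}).
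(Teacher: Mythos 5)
Your proof is correct and follows essentially the same route as the paper: the cycle $(1)\Leftrightarrow(2)$, then $(2)\Rightarrow(3)$ via the finite-subcover argument on the compact set $\beta\F$, then $(3)\Rightarrow(4)\Rightarrow(1)$ using (\ref{eq:cocompactoid}). The only cosmetic difference is that you re-derive inline the fact that a convergent ultrafilter must contain a compactoid set (via basic clopen sets $\beta A$), where the paper invokes Proposition \ref{prop:nonCVDstarclosed} for $\mathbb{D}=\mathbb{F}_{0}$.
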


\begin{proof}
That $(1)\iff(2)$ is obvious. 

$(1)\then(3)$: Let $\F$ be a convergent filter. For every $\U\in\beta\F$,
there is, by Proposition \ref{prop:nonCVDstarclosed}, a compactoid
set $K_{\U}\in\U$. By compactness of $\beta\F$, there is a finite
subset $S$ of $\beta\F$ with $\bigcup_{\U\in S}K_{\U}\in\F$. As
$\bigcup_{\U\in S}K_{\U}$ is a finite union of compactoid sets, it
is compactoid. 

$(3)\then(4):$ If $\xi$ is locally compactoid, a convergent filter
cannot mesh with $\mathcal{K}_{c}$. 

$(4)\then(1):$ If $\U\in\mathbb{U}X$ and $\lim_{\xi}\U\neq\emptyset$
and $\adh_{\xi}\mathcal{K}_{c}=\emptyset$ then there is $H\in\mathcal{K}_{c}$
with $H\notin\U^{\#}=\U$, so that $H^{c}$, which is compactoid,
belongs to $\U$. In view of Proposition \ref{prop:nonCVDstarclosed},
we obtain (1).
\end{proof}
\begin{cor}
Let $\xi$ be a non-compact topology. Then $\xi$ is locally compactoid
if and only if 
\[
\cl_{\xi}^{\natural}\mathcal{K}_{c}=\mathcal{K}_{c}.
\]
\end{cor}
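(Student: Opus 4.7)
The plan is to unfold the identity $\cl_\xi^\natural\mathcal{K}_c=\mathcal{K}_c$ into a concrete statement about compactoid sets, and then treat the two implications separately. Since $\cl_\xi$ is expansive, the refinement $\cl_\xi^\natural\mathcal{K}_c\leq\mathcal{K}_c$ always holds. The reverse refinement $\mathcal{K}_c\leq\cl_\xi^\natural\mathcal{K}_c$ says that for every $K\in\mathcal{K}$ there exists $K'\in\mathcal{K}$ with $\cl_\xi(X\setminus K')\subset X\setminus K$, equivalently, $K\subset\intr_\xi K'$. So the identity is equivalent to the condition $(\ast)$: every compactoid set is contained in the interior of some compactoid set. (The hypothesis that $\xi$ is non-compact is only used so that $X\notin\mathcal{K}$, ensuring $\mathcal{K}_c$ is a filter-base.)

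For the easy direction $(\Leftarrow)$, I would observe that singletons are always compactoid: any $\F$ meshing with $\{x\}^{\uparrow}$ refines $\{x\}^{\uparrow}$, so $x\in\lim_\xi\{x\}^{\uparrow}\subset\adh_\xi\F$. Applying $(\ast)$ to $\{x\}$ yields a compactoid set $V$ with $x\in\intr_\xi V$, so $V\in\N_\xi(x)$; since every convergent filter in a topology refines some $\N_\xi(x)$, $\xi$ is locally compactoid.

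For the direction $(\Rightarrow)$, I would assume local compactoidness, fix for each $y\in X$ a compactoid neighborhood $V_y$, and let $K\in\mathcal{K}$ be arbitrary. The key step is to show that the open cover $\{\intr_\xi V_y:y\in X\}$ of $X$ admits a finite subfamily covering $K$; once this is done, $V:=V_{y_1}\cup\cdots\cup V_{y_n}$ is compactoid (finite unions of compactoid sets being compactoid by a short ultrafilter check) and satisfies $K\subset\intr_\xi V$, establishing $(\ast)$. If no such finite subcover exists, then the family $\{K\}\cup\{X\setminus\bigcup_{y\in F}\intr_\xi V_y:F\in[X]^{<\infty}\}$ has the finite intersection property and generates a proper filter; refining to an ultrafilter $\U$ gives $K\in\U$, so compactoidness of $K$ (together with the fact that a topology is a pseudotopology) produces $x\in\lim_\xi\U$. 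Then $\intr_\xi V_x\in\N_\xi(x)\subset\U$, while $X\setminus\intr_\xi V_x\in\U$ as well (taking $F=\{x\}$), a contradiction.

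The only delicate point is this finite-subcover extraction: compactoidness of $K$ does not automatically give a topological finite-subcover property on covers of $K$, because the limit point $x$ produced by an ultrafilter containing $K$ need not lie inside $K$. The resolution, which sidesteps any appeal to regularity, is to index the cover by all of $X$ rather than by $K$, so that a compactoid neighborhood $V_x$ is available precisely at whichever accumulation point $x$ the ultrafilter argument yields.
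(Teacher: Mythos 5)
Your proof is correct, but it takes a genuinely different route from the paper. The paper's proof is a two-line consequence of machinery already in place: it pivots on the intermediate condition $\adh_{\xi}\mathcal{K}_{c}=\emptyset$, which is equivalent to local compactoidness by Corollary \ref{cor:nonCVclosed} and equivalent to $\cl_{\xi}^{\natural}\mathcal{K}_{c}\geq\mathcal{K}_{c}$ by Lemma \ref{lem:consonantcovers} (applied with $\mathbb{D}=\mathbb{F}$ and $\D=\mathcal{K}_{c}$). You instead unfold the filter identity into the intrinsic condition that every compactoid set is contained in the interior of a compactoid set, and prove the equivalence with local compactoidness from scratch: the easy direction via compactoidness of singletons, and the hard direction by an ultrafilter/finite-subcover argument on the cover $\{\intr_{\xi}V_{y}:y\in X\}$ indexed by all points, together with the fact that a finite union of compactoid sets is compactoid. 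That indexing trick correctly handles the point you flag, namely that the limit of an ultrafilter containing $K$ need not lie in $K$; in effect you are re-deriving, in this special case, the content of the implication $(1)\Rightarrow(3)$ in Corollary \ref{cor:nonCVclosed}, where the same finite-union-of-compactoid-elements argument appears. What each approach buys: the paper's version is shorter and keeps the statement tied to its Stone-space framework (closedness of $\mathbb{U}X\setminus\mathbb{U}_{\xi}X$ and the cocompactoid filter), while yours is self-contained, stays entirely at the level of the topology, and makes the geometric meaning of $\cl_{\xi}^{\natural}\mathcal{K}_{c}=\mathcal{K}_{c}$ explicit. Your side remarks are also accurate: non-compactness is indeed only needed so that $\mathcal{K}_{c}$ is a (proper) filter-base, and the appeal to pseudotopology in extracting $x\in\lm_{\xi}\U$ is unnecessary since $\adh_{\xi}\U=\lm_{\xi}\U$ for any ultrafilter.
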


\begin{proof}
If $\xi$ is locally compactoid, then $\adh_{\xi}\mathcal{K}_{c}=\emptyset$
by Corollary \ref{cor:nonCVclosed}, so that, in view of Lemma \ref{lem:consonantcovers},
$\cl_{\xi}^{\natural}\mathcal{K}_{c}\geq\mathcal{\mathcal{K}}_{c}$.
The reverse inequality is always true and thus $\cl_{\xi}^{\natural}\mathcal{K}_{c}=\mathcal{K}_{c}$.
Conversely, if $\cl_{\xi}^{\natural}\mathcal{K}_{c}=\mathcal{K}_{c}$
then $\adh_{\xi}\mathcal{K}_{c}=\emptyset$ by Lemma \ref{lem:consonantcovers},
and thus $\xi$ is locally compactoid by Corollary \ref{cor:nonCVclosed}.
\end{proof}
Recall that $\mathbb{F}_{1}^{*}$ is the $G_{\delta}$-topology of
$(\mathbb{U}X,\beta)$. 

As a result of Proposition \ref{prop:nonCVDstarclosed} and Corollary
\ref{cor:PCTandbik}:
\begin{cor}
\label{cor:bik} If $\xi$ is a regular Hausdorff topology, then $\mathbb{U}X\setminus\mathbb{U}_{\xi}X$
is $\delta$-closed if and only if $\xi$ is bi-$k$. 
\end{cor}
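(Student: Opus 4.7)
The plan is to combine Proposition \ref{prop:nonCVDstarclosed} (applied to $\mathbb{D}=\mathbb{F}_1$) with Corollary \ref{cor:PCTandbik}, which together give exactly the desired equivalence. The two statements are effectively dual descriptions of the same condition, so no new work is needed once we identify $\mathbb{F}_1^{*}$ with the $G_\delta$-topology on $(\mathbb{U}X,\beta)$ (Lemma \ref{lem:GdeltaBase}).

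First I would specialize Proposition \ref{prop:nonCVDstarclosed} by taking $\mathbb{D}=\mathbb{F}_{1}$. Recall that $\mathbb{F}_{1}^{*}$ coincides with the $G_{\delta}$-topology $\delta$ of $(\mathbb{U}X,\beta)$, so $\cl_{\mathbb{F}_{1}^{*}}=\cl_{\delta}$. Thus Proposition \ref{prop:nonCVDstarclosed} reads: the set $\mathbb{U}X\setminus\mathbb{U}_{\xi}X$ is $\delta$-closed if and only if every convergent ultrafilter $\U\in\mathbb{U}X$ contains some countably based compactoid filter $\D\leq\U$ with $\D\in\mathbb{F}_{1}$.

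Next I would invoke Corollary \ref{cor:PCTandbik}, which asserts that a regular Hausdorff topology $\xi$ is bi-$k$ if and only if every convergent ultrafilter contains a countably based compactoid filter. This is precisely the right-hand side of the equivalence just obtained, so the characterizations match and the corollary follows.

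There is essentially no main obstacle: the argument is a direct translation through the dictionary between topological properties of $\mathbb{U}X\setminus\mathbb{U}_{\xi}X$ and convergence-theoretic properties of $\xi$ developed in this section. The only point worth flagging is that Corollary \ref{cor:PCTandbik} already requires the regularity and Hausdorffness hypotheses, which is why these assumptions appear in the present statement. No additional use of regularity is needed beyond what is packaged there, so the proof reduces to a one-line combination of the two previously established results.
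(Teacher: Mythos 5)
Your proposal is correct and is exactly the paper's argument: the corollary is stated there as an immediate consequence of Proposition \ref{prop:nonCVDstarclosed} with $\mathbb{D}=\mathbb{F}_{1}$ (using that $\mathbb{F}_{1}^{*}$ is the $G_{\delta}$-topology) combined with Corollary \ref{cor:PCTandbik}, which supplies the regular Hausdorff hypotheses.
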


\begin{cor}
\label{cor:biseq} If $\xi$ is bisequential then $\mathbb{U}X\setminus\mathbb{U}_{\xi}X$
is $\delta$-closed.
\end{cor}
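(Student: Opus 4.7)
The plan is to apply Proposition \ref{prop:nonCVDstarclosed} with $\mathbb{D}=\mathbb{F}_{1}$. Since $\mathbb{F}_{1}^{*}$ is the $G_{\delta}$-topology on $(\mathbb{U}X,\beta)$, that proposition says precisely that $\mathbb{U}X\setminus\mathbb{U}_{\xi}X$ is $\delta$-closed if and only if every convergent ultrafilter on $\xi$ admits a finer-ordered countably based compactoid filter below it; that is,
\[
\lm_{\xi}\U\neq\emptyset\then\exists\H\in\mathbb{F}_{1},\H\leq\U,\ \H\in\mathbb{K}_{\mathbb{F}}.
\]
So the task reduces to producing such an $\H$ under the assumption that $\xi\geq\S\I_{1}\xi$.

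First I would unpack bisequentiality at an ultrafilter. Suppose $x\in\lm_{\xi}\U$ for some $\U\in\mathbb{U}X$. Since $\xi\geq\S\I_{1}\xi$, we have $x\in\lm_{\S\I_{1}\xi}\U$; but $\beta\U=\{\U\}$, so the pseudotopological reflector adds nothing at an ultrafilter and $x\in\lm_{\I_{1}\xi}\U$. By definition of the $\I_{1}$-coreflector, this gives $\H\in\mathbb{F}_{1}$ with $\H\leq\U$ and $x\in\lm_{\xi}\H$.

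Next I would verify that this $\H$ is compactoid. For any $\G\#\H$, the supremum $\G\vee\H$ exists, so picking $\W\in\beta(\G\vee\H)$ one has $\W\geq\H$, hence $x\in\lm_{\xi}\W\subset\adh_{\xi}\G$; thus $\adh_{\xi}\G\neq\emptyset$, which is exactly $\H\in\mathbb{K}_{\mathbb{F}}$.

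Combining, every convergent ultrafilter refines a countably based compactoid filter, and Proposition \ref{prop:nonCVDstarclosed} yields $\delta$-closedness of $\mathbb{U}X\setminus\mathbb{U}_{\xi}X$. No step poses a real obstacle: the only delicate point is the remark that for ultrafilters the $\S$ in $\S\I_{1}\xi$ disappears, which is immediate from $\beta\U=\{\U\}$ and the defining formula of $\S$.
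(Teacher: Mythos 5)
Your proof is correct and follows exactly the route the paper intends: bisequentiality ($\xi\geq\S\I_{1}\xi$) yields, for each convergent ultrafilter $\U$, a countably based convergent (hence compactoid) filter $\H\leq\U$, and Proposition \ref{prop:nonCVDstarclosed} with $\mathbb{D}=\mathbb{F}_{1}$ (whose closure $\cl_{\mathbb{F}_{1}^{*}}$ is $\cl_{\delta}$) then gives $\delta$-closedness of $\mathbb{U}X\setminus\mathbb{U}_{\xi}X$. The paper states the corollary without spelling out these steps, and your observation that $\S$ is invisible at ultrafilters (since $\adh_{\xi}\U=\lm_{\xi}\U$) is exactly the small point needed to make the argument explicit.
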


As there are bi-$k$ spaces that are not bisequential, the converse
of Corollary \ref{cor:biseq} is not true but in the case of a prime
convergence on a countable set, it is:
\begin{thm}
Let $\xi$ be a countable prime convergence. The following are equivalent:
\begin{enumerate}
\item $\xi$ is bisequential;
\item $\mathbb{U}X\setminus\mathbb{U}_{\xi}X$ is $\delta$-closed;
\item $\mathbb{}\mathbb{U}_{\xi}X$ is $\delta$-open.
\end{enumerate}
\end{thm}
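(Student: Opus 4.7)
The direction $(1) \Rightarrow (2)$ is Corollary \ref{cor:biseq}, which holds for any convergence, and $(2) \iff (3)$ is immediate from complementation in $\mathbb{U}X$, since $\delta$-closedness of $\mathbb{U}X \setminus \mathbb{U}_\xi X$ is equivalent to $\delta$-openness of its complement $\mathbb{U}_\xi X$. All the content of the theorem therefore lies in $(3) \Rightarrow (1)$, which is where countability and primality enter.

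My plan for $(3) \Rightarrow (1)$ is to use the characterization of bisequentiality recalled after Corollary \ref{cor:SisS1}: $\xi$ is bisequential if and only if $\mathbb{U}_\xi(x)$ is $\delta$-open in $(\mathbb{U}X,\beta)$ for every $x \in |\xi|$. Let $\infty$ denote the unique non-isolated point of the given prime convergence. For every other $x$, isolation gives $\mathbb{U}_\xi(x) = \{\{x\}^\uparrow\} = \beta\{x\}$, which is $\beta$-open and in particular $\delta$-open. Thus bisequentiality reduces to the single assertion that $\mathbb{U}_\xi(\infty)$ is $\delta$-open.

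The remaining step is to bridge from $\mathbb{U}_\xi X$ to $\mathbb{U}_\xi(\infty)$. Every ultrafilter $\U \in \mathbb{U}_\xi X \setminus \mathbb{U}_\xi(\infty)$ converges to some isolated point $x$ but not to $\infty$, which forces $\U = \{x\}^\uparrow$. Hence the set
\[
A := \mathbb{U}_\xi X \setminus \mathbb{U}_\xi(\infty)
\]
is a \emph{countable} collection of principal ultrafilters (by countability of $X$). Each such $\{x\}^\uparrow$ equals the $\beta$-clopen singleton $\beta\{x\}$, so $A$ is $F_\sigma$ in $(\mathbb{U}X,\beta)$, and hence $\mathbb{U}X \setminus A$ is $G_\delta$, in particular $\delta$-open. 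Assuming $(3)$, the identity $\mathbb{U}_\xi(\infty) = \mathbb{U}_\xi X \cap (\mathbb{U}X \setminus A)$ exhibits $\mathbb{U}_\xi(\infty)$ as a finite intersection of $\delta$-open sets and thus as $\delta$-open, yielding bisequentiality.

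The argument presents no real obstacle; the interplay of the two hypotheses is transparent — primality collapses bisequentiality to the single point $\infty$, and countability upgrades the correction set $A$ to an $F_\sigma$ whose complement is $\delta$-open. The only place where something could conceivably go wrong is the disjointness analysis giving $A \subset \{\{x\}^\uparrow : x \neq \infty\}$, but this is forced by the very definition of a prime convergence.
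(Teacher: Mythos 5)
Your proposal is correct, but it reaches $(3)\Rightarrow(1)$ by a genuinely different route from the paper's. The paper unwinds the definition $\xi\geq\S\I_{1}\xi$ directly: for a free ultrafilter $\U$ converging to the non-isolated point, it uses Lemma \ref{lem:GdeltaBase} to produce $\D\in\mathbb{F}_{1}$ with $\U\in\beta\D\subset\mathbb{U}_{\xi}X$, then splits $\beta\D$ into its principal and free parts and in each case exhibits an explicit countably based filter below $\U$ that converges (the free part $\D^{\circ}$, or the cofinite filter of $\ker\D$, which is countably based precisely because $X$ is countable). You instead invoke the pointwise characterization of bisequentiality ($\mathbb{U}_{\xi}(x)$ $\delta$-open for every $x$) and reduce everything to the unique non-isolated point, where a purely set-theoretic manipulation in $\mathbb{U}X$ does the work: the correction set $A=\mathbb{U}_{\xi}X\setminus\mathbb{U}_{\xi}(\infty)$ consists of countably many principal (hence clopen) ultrafilters, so $\mathbb{U}X\setminus A$ is $G_{\delta}$, and $\mathbb{U}_{\xi}(\infty)=\mathbb{U}_{\xi}X\cap(\mathbb{U}X\setminus A)$ is $\delta$-open as a finite intersection of $\delta$-open sets. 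Your version is cleaner and makes transparent where primality and countability enter, at the price of resting on the (unlabelled, only sketched) corollary characterizing bisequentiality, whereas the paper's hands-on argument produces the countably based witnesses explicitly. One caveat, which applies equally to the paper's own proof: the direction from $\delta$-openness of $\mathbb{U}_{\xi}(x)$ to bisequentiality needs the section's standing assumption that $\xi$ is a pseudotopology (otherwise $\beta\D\subset\mathbb{U}_{\xi}(x)$ does not give $x\in\lim_{\xi}\D$, and indeed $(3)\Rightarrow(1)$ can fail for non-pseudotopological prime convergences); so your appeal to that corollary costs no generality relative to the paper, but the hypothesis is genuinely in force.
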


\begin{proof}
That $(2)\iff(3)$ is obvious and $(1)\then(2)$ is Corollary \ref{cor:biseq}.

To see that $(3)\then(1)$, let $x_{0}$ denote the non-isolated point
of $\xi.$If $x_{0}\in\lim_{\xi}\U$ for some free ultrafilter $\U$,
then $\U\in\mathbb{U}_{\xi}X$, which is $\delta$-open so that, in
view of Lemma \ref{lem:GdeltaBase}, there is $\D\in\mathbb{F}_{1}$
with 
\[
\U\in\beta\D=\beta\D^{\bullet}\cup\beta\D^{\circ}\subset\mathbb{U}_{\xi}X.
\]

If $\U\geq\D^{\circ}$, we have $\D^{\circ}\in\mathbb{F}_{1}$ and
$x_{0}\in\lim_{\S\xi}\D^{\circ}$ because convergent free ultrafilters
can only converge to $x_{0}$. Hence $x\in\lim_{\S\I_{1}\xi}\U$.

If $\U\geq\D^{\bullet}$ then $\ker\D\in\U$. As $\U$ is free, $\ker\D$
is infinite, and all free ultrafilters on $\ker\D$ converge to $x_{0}$
so that $x_{0}\in\lim_{\S\xi}(\ker\D)_{0}$. Because $|\xi|$ is countable,
$(\ker\D)_{0}\in\mathbb{F}_{1}$ and thus $x_{0}\in\lim_{\S\I_{1}\xi}\U$.
\end{proof}
\begin{thm}
\label{thm:UXminusUxiXcompactness} Let $\mathbb{D}\subset\mathbb{J}$
be two classes of filters, where $\mathbb{J}$ is $\beta$-compatible.
If $\mathbb{U}X\setminus\mathbb{U}_{\xi}X$ is $\nicefrac{\mathbb{J_{*}}}{\mathbb{D}_{*}}$-compact
then 
\begin{equation}
\G\in\mathbb{J}\cap\mathbb{K}_{\mathbb{F}}\then\exists\D\in\mathbb{D}\cap\mathbb{K}_{\mathbb{F}}:\D\leq\G.\label{eq:bettercompact}
\end{equation}
Moreover, if $\mathbb{J}=\mathbb{F}$ and $\mathbb{D}$ is $\beta$-compatible,
the converse is true.
\end{thm}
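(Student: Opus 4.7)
The plan is to follow very closely the template of the analogous earlier theorem characterizing cover-$\nicefrac{\mathbb{J}_*}{\mathbb{D}_*}$-compactness of $\mathbb{U}_\xi X$ in terms of cover-$\nicefrac{\mathbb{J}_*}{\mathbb{D}_*}$-compactness of $\xi$. The guiding dictionary is that for a filter $\G$ on $X$,
\[
\G \in \mathbb{K}_{\mathbb{F}} \iff \beta\G \subset \mathbb{U}_\xi X \iff \bigl\{\beta(X \setminus G) : G \in \G\bigr\} \text{ covers } \mathbb{U}X \setminus \mathbb{U}_\xi X,
\]
so that compactoid filters on $\xi$ correspond exactly to those open covers of $\mathbb{U}X \setminus \mathbb{U}_\xi X$ of the standard form, playing the role that "filters of empty adherence" played in the previous theorem.

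For the forward direction, given $\G \in \mathbb{J} \cap \mathbb{K}_{\mathbb{F}}$, the family $\mathcal{C} := \{\beta(X \setminus G) : G \in \G\}$ is an open cover of $\mathbb{U}X \setminus \mathbb{U}_\xi X$ by the displayed equivalence, and it lies in $\mathbb{J}_*(\mathbb{U}X)$ by the $\beta$-compatibility hypothesis (\ref{eq:XtoUX}) applied to $\mathbb{J}$. Cover-$\nicefrac{\mathbb{J}_*}{\mathbb{D}_*}$-compactness produces a subfamily $\mathcal{S} \subset \mathcal{C}$ in $\mathbb{D}_*(\mathbb{U}X)$ still covering $\mathbb{U}X \setminus \mathbb{U}_\xi X$. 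I then define $\D$ as the filter generated on $X$ by $\{G \in \G : \beta(X \setminus G) \in \mathcal{S}\}$: it satisfies $\D \leq \G$ since its base sits inside $\G$, it belongs to $\mathbb{D}$ by the return direction (\ref{eq:UXtoX}) of $\beta$-compatibility (implicitly used exactly as in Proposition \ref{prop:compactnessUx}), and it is compactoid because, reversing the opening equivalence, the covering of $\mathbb{U}X \setminus \mathbb{U}_\xi X$ by $\{\beta(X \setminus D) : D \in \D\}$ forces $\beta\D \subset \mathbb{U}_\xi X$.

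For the converse, with $\mathbb{J} = \mathbb{F}$ and $\mathbb{D}$ $\beta$-compatible, I mirror the argument of the preceding theorem. Start with an arbitrary open cover $\mathcal{C}$ of $\mathbb{U}X \setminus \mathbb{U}_\xi X$ in $\mathbb{F}_*$. Since basic open sets in $\beta$ are of the form $\beta A$ and $\beta A \cup \beta B = \beta(A \cup B)$, I refine $\mathcal{C}$ to a cover $\{\beta A : A \in \A\}$ with $\A \subset \mathbb{P}X$ an ideal, and set $\G = \A_c$, a filter on $X$. The covering condition is precisely $\beta \G \subset \mathbb{U}_\xi X$, i.e., $\G \in \mathbb{K}_{\mathbb{F}}$. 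Applying (\ref{eq:bettercompact}) (with $\mathbb{J} = \mathbb{F}$) produces $\D \in \mathbb{D} \cap \mathbb{K}_{\mathbb{F}}$ with $\D \leq \G$. Then $\{\beta(X \setminus D) : D \in \D\}$ covers $\mathbb{U}X \setminus \mathbb{U}_\xi X$ (because $\D$ is compactoid), belongs to $\mathbb{D}_*(\mathbb{U}X)$ by the $\beta$-compatibility of $\mathbb{D}$ via (\ref{eq:XtoUX}), and refines $\{\beta A : A \in \A\}$ because $\D \leq \G$ gives that each $X \setminus D$ is contained in some $X \setminus G = A$. Hence it refines $\mathcal{C}$, and Remark \ref{rem:refinement} delivers cover-$\nicefrac{\mathbb{F}_*}{\mathbb{D}_*}$-compactness of $\mathbb{U}X \setminus \mathbb{U}_\xi X$.

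The only delicate point, and the expected obstacle, is purely bookkeeping: translating a subcover $\mathcal{S}$ of $\mathcal{P}(\mathbb{U}X)$ into a filter $\D$ on $X$ in class $\mathbb{D}$ in the forward direction, and translating $\D$ back into an ideal on $\mathbb{U}X$ in class $\mathbb{D}_*$ in the converse. Both transits are encoded in the two halves of $\beta$-compatibility, and no genuine new difficulty is anticipated beyond what is already negotiated in the parallel proofs of Proposition \ref{prop:compactnessUx} and of the preceding theorem for $\mathbb{U}_\xi X$.
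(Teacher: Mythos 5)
Your proof is correct and follows essentially the same route as the paper's: the forward direction translates a compactoid $\mathbb{J}$-filter $\G$ into the basic open cover $\{\beta(X\setminus G):G\in\G\}$, uses $\beta$-compatibility to place it in $\mathbb{J}_*$, and pulls the $\mathbb{D}_*$-subcover back to a compactoid filter $\D\leq\G$, while the converse refines an arbitrary open cover to an ideal of basic sets $\{\beta A:A\in\A\}$, passes to the complementary compactoid filter $\A_c$, and applies (\ref{eq:bettercompact}) together with (\ref{eq:XtoUX}) for $\mathbb{D}$. Your explicit appeal to (\ref{eq:UXtoX}) when converting the $\mathbb{D}_*$-subcover into a filter of class $\mathbb{D}$ is precisely the bookkeeping step the paper performs implicitly (and spells out in the parallel Proposition \ref{prop:compactnessUx}), so no genuine divergence or gap remains.
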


\begin{proof}
Suppose $\mathbb{U}X\setminus\mathbb{U}_{\xi}X$ is $\nicefrac{\mathbb{J_{*}}}{\mathbb{D}_{*}}$-compact
and let $\G\in\mathbb{K}_{\mathbb{F}}\cap\mathbb{J}$. Then $\beta\G\cap(\mathbb{U}X\setminus\mathbb{U}_{\xi}X)=\emptyset$,
that is,
\[
\mathbb{U}X\setminus\mathbb{U}_{\xi}X\subset\mathbb{U}X\setminus\beta\G=\bigcup_{G\in\G}\beta(X\setminus G).
\]
By $\beta$-compatibility of $\mathbb{J}$, $\{\beta(X\setminus G):G\in\G\}$
is a cover of $\mathbb{U}X\setminus\mathbb{U}_{\xi}X$ in $\mathbb{J}_{*}$.
By $\nicefrac{\mathbb{J_{*}}}{\mathbb{D}_{*}}$-compactness, there
is $\D\in\mathbb{D}$ with $\D\leq\G$ and 
\[
\mathbb{U}X\setminus\mathbb{U}_{\xi}X\subset\mathbb{U}X\setminus\beta\D=\bigcup_{D\in\D}\beta(X\setminus D),
\]
so that $\D\in\mathbb{K}_{\mathbb{F}}\cap\mathbb{D}$.

Assume now (\ref{eq:bettercompact}) in the case $\mathbb{J}=\mathbb{F}$.
Any open cover $\C$ of $\mathbb{U}X\setminus\mathbb{U}_{\xi}X$ has
a refinement of the form $\{\beta A:A\in\A\}$ where $\A$ is an ideal,
that is also a cover. Let $\F$ be the filter $\A_{c}$. Then
\[
\mathbb{U}X\setminus\mathbb{U}_{\xi}X\subset\bigcup_{A\in\A}\beta A=\mathbb{U}X\setminus\beta\F,
\]
so that $\F\in\mathbb{K}_{\mathbb{F}}$. By (\ref{eq:bettercompact}),
there is $\D\in\mathbb{D\cap\mathbb{K}_{\mathbb{F}}}$ with $\D\leq\F$.
Hence 
\[
\mathbb{U}X\setminus\mathbb{U}_{\xi}X\subset\mathbb{U}X\setminus\beta\D,
\]
and by $\beta$-compatibility of $\mathbb{D}$, $\{\beta(X\setminus D):D\in\D\}$
is a subcover of $\{\beta A:A\in\A\}$ of class $\mathbb{D}_{*}$.
\end{proof}
\begin{cor}
\label{cor:nonCVlindelof} Let $(X,\xi)$ be a convergence space.
Then $\mathbb{U}X\setminus\mathbb{U}_{\xi}X$ is Lindelöf if and only
if every compactoid filter contains a countably based compactoid filter.
\end{cor}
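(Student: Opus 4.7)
The plan is to derive this corollary as an immediate instance of Theorem \ref{thm:UXminusUxiXcompactness}, which gives a characterization of $\nicefrac{\mathbb{J}_{*}}{\mathbb{D}_{*}}$-compactness of $\mathbb{U}X\setminus\mathbb{U}_{\xi}X$ under the proviso that $\mathbb{J}$ (and, for the converse, $\mathbb{D}$) is $\beta$-compatible. Since being Lindelöf is exactly cover-$\nicefrac{\mathbb{F}_{*}}{\mathbb{F}_{1*}}$-compactness in the terminology of Definition \ref{def:covercompactness}, the natural choice is $\mathbb{J}=\mathbb{F}$ and $\mathbb{D}=\mathbb{F}_{1}$.

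First I would verify that the hypotheses of Theorem \ref{thm:UXminusUxiXcompactness} are met. The class $\mathbb{F}$ is trivially $\beta$-compatible, and so is $\mathbb{F}_{1}$ by the lemma asserted earlier in Section \ref{sec:inStone} (for every $\kappa$, $\mathbb{F}_{\kappa*}$ and $\mathbb{F}_{*}$ are $\beta$-compatible). Since additionally $\mathbb{J}=\mathbb{F}$, both directions of the theorem apply, so that $\mathbb{U}X\setminus\mathbb{U}_{\xi}X$ is Lindelöf if and only if
\[
\G\in\mathbb{F}\cap\mathbb{K}_{\mathbb{F}}\then\exists\D\in\mathbb{F}_{1}\cap\mathbb{K}_{\mathbb{F}}:\D\leq\G,
\]
which is precisely the statement that every compactoid filter contains a countably based compactoid filter.

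There is essentially no obstacle here; the corollary is a direct translation of Theorem \ref{thm:UXminusUxiXcompactness} into the case $(\mathbb{J},\mathbb{D})=(\mathbb{F},\mathbb{F}_{1})$, once one recognizes that Lindelöfness in the ordinary topological sense coincides with cover-$\nicefrac{\mathbb{F}_{*}}{\mathbb{F}_{1*}}$-compactness. The only thing worth spelling out, should clarity demand it, is this last identification: an open cover of a topological space $S$ is a family $\C$ with $\C_{c}$ having empty adherence, and $\C$ admitting a countable subcover is exactly the assertion that $\C_{c}$ refines to a countably based ideal with empty adherence; this is the content of Remark \ref{rem:refinement} applied to $\mathbb{J}_{*}=\mathbb{F}_{*}$ and $\mathbb{D}_{*}=\mathbb{F}_{1*}$. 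Hence the proof reduces to citing Theorem \ref{thm:UXminusUxiXcompactness} with these two classes.
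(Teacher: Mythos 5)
Your proposal is correct and is exactly how the paper obtains this corollary: it is the instance $\mathbb{J}=\mathbb{F}$, $\mathbb{D}=\mathbb{F}_{1}$ of Theorem \ref{thm:UXminusUxiXcompactness}, with $\beta$-compatibility supplied by the lemma on $\mathbb{F}_{\kappa*}$ and $\mathbb{F}_{*}$, and Lindel\"ofness read as cover-$\nicefrac{\mathbb{F}_{*}}{\mathbb{F}_{1*}}$-compactness. Nothing further is needed.
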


As there are bi-$k$ regular Hausdorff topological spaces that are
not of pointwise countable type (see \cite{quest}), we obtain:
\begin{cor}
\label{cor:deltaclosednotLindelof} There is $X$ and a subset $S$
of $\mathbb{U}^{\circ}X$ which is $\delta$-closed but not Lindelöf.
\end{cor}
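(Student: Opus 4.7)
The plan is to assemble the statement directly from Corollaries \ref{cor:bik}, \ref{cor:PCTandbik} and \ref{cor:nonCVlindelof}, together with the existence (from \cite{quest}, which is already cited just before the statement) of a regular Hausdorff topology that is bi-$k$ but not of pointwise countable type.

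Concretely, I would fix such a space $(X,\xi)$ and set $S:=\mathbb{U}X\setminus\mathbb{U}_{\xi}X$. Since for every $x\in X$ the principal ultrafilter $\{x\}^{\uparrow}$ converges to $x$ in the topology $\xi$, the set $\mathbb{U}_{\xi}X$ contains every principal ultrafilter, hence $S\subset\mathbb{U}^{\circ}X$ as required.

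Next, because $\xi$ is a regular Hausdorff bi-$k$ topology, Corollary \ref{cor:bik} immediately gives that $S$ is $\delta$-closed in $\mathbb{U}X$. To see that $S$ is not Lindelöf, I would invoke Corollary \ref{cor:nonCVlindelof}: it suffices to exhibit a compactoid filter on $(X,\xi)$ that contains no countably based compactoid subfilter. Since $\xi$ is not of pointwise countable type, Corollary \ref{cor:PCTandbik} supplies a convergent filter $\F$ with no countably based compactoid subfilter. Any convergent filter in a topology is compactoid (each ultrafilter refining $\F$ still converges, so has non-empty adherence), so $\F$ is the required witness, and $S$ fails to be Lindelöf by Corollary \ref{cor:nonCVlindelof}.

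There is essentially no hard step here; the corollary is a book-keeping consequence of the equivalences already proved. The only non-trivial ingredient is the external topological example of a regular Hausdorff bi-$k$ space that is not of pointwise countable type, which the paper imports from \cite{quest}.
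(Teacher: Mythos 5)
Your proposal is correct and follows essentially the same route as the paper: take a regular Hausdorff bi-$k$ space that is not of pointwise countable type, let $S=\mathbb{U}X\setminus\mathbb{U}_{\xi}X$, get $\delta$-closedness from Corollary \ref{cor:bik} and non-Lindel\"ofness from Corollaries \ref{cor:PCTandbik} and \ref{cor:nonCVlindelof}. Your version even cites the two corollaries in the intended roles (the paper's proof swaps the two references) and makes explicit the small check that $S\subset\mathbb{U}^{\circ}X$.
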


\begin{proof}
For such a topological space $(X,\xi)$, $S=\mathbb{U}X\setminus\mathbb{U}_{\xi}X$
is $\delta$-closed because $\xi$ is bi-$k$ by Corollary \ref{cor:PCTandbik}.
As $\xi$ is not of pointwise countable type, there is by Corollary
\ref{cor:bik} a convergent, hence compactoid, filter that does not
contain any countably based compactoid filter. In view of Corollary
\ref{cor:nonCVlindelof}, $S$ is not Lindelöf.
\end{proof}
\begin{cor}
\label{cor:metricnonCVLindelof} If $(X,\xi)$ is of countable type
(\footnote{that is, every compact set is contained in a compact set of countable
character. Clearly, metrizable spaces are of countable type.}) then $\mathbb{U}X\setminus\mathbb{U}_{\xi}X$ is Lindelöf. 
\end{cor}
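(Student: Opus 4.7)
My plan is to combine Corollary \ref{cor:nonCVlindelof} with the structural results on compactoid filters in regular Hausdorff topologies (Corollary \ref{cor:compactadherence} and Corollary \ref{cor:O(compact)}). By Corollary \ref{cor:nonCVlindelof}, it suffices to show that in a space of countable type, every compactoid filter $\F$ contains a countably based compactoid filter.

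The argument would proceed in three steps. First, given a compactoid $\F$, invoke Corollary \ref{cor:compactadherence} to conclude that $\cl_\xi^\natural\F$ is compact and that $K' := \adh_\xi\F = \ker\cl_\xi^\natural\F$ is a non-empty compact set. Next, use the countable type hypothesis to enlarge $K'$ to a compact set $K \supseteq K'$ of countable character, so that $\H := \O_\xi(K)^\uparrow \in \mathbb{F}_1$. Since $K$ is compact, Proposition \ref{prop:compactfiltertoset} ensures $\H$ is a compact, hence compactoid, filter. Finally, observe that $\F \geq \H$: by Corollary \ref{cor:O(compact)} applied to the compact filter $\cl_\xi^\natural\F$, we have $\O_\xi(\cl_\xi^\natural\F)^\uparrow = \O_\xi(K')^\uparrow$; the inclusion $\O_\xi(\cl_\xi^\natural\F)^\uparrow \leq \F$ is immediate (every open set containing some $\cl_\xi F$ contains $F$, hence belongs to $\F$); and $K' \subseteq K$ gives $\O_\xi(K) \subseteq \O_\xi(K')$, so $\O_\xi(K)^\uparrow \leq \O_\xi(K')^\uparrow \leq \F$.

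Assembling these three steps yields that $\H$ is a countably based compactoid filter with $\H \leq \F$, which by Corollary \ref{cor:nonCVlindelof} gives that $\mathbb{U}X\setminus\mathbb{U}_\xi X$ is Lindelöf. I do not anticipate a genuine obstacle here; the only point requiring a moment of care is the chain $\F \geq \O_\xi(\cl_\xi^\natural\F)^\uparrow = \O_\xi(K')^\uparrow \geq \O_\xi(K)^\uparrow$, whose first inequality is elementary and whose middle identity is precisely the content of Corollary \ref{cor:O(compact)}. The proof thus reduces to a clean concatenation of previously established results.
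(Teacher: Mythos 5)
Your proposal is correct and follows essentially the same route as the paper: given a compactoid $\F$, both arguments use Corollary \ref{cor:compactadherence} and Corollary \ref{cor:O(compact)} to get $\F\geq\O(\cl^{\natural}\F)=\O(\adh\F)$ with $\adh\F$ compact, then use countable type to pass to a compact $K\supseteq\adh\F$ of countable character so that $\F\geq\O(K)^{\uparrow}\in\mathbb{F}_{1}$ is compactoid, and conclude by Corollary \ref{cor:nonCVlindelof}. You merely make explicit a couple of steps the paper leaves implicit (the citation of Proposition \ref{prop:compactfiltertoset} for compactness of $\O(K)^{\uparrow}$ and the elementary inequality $\O(\cl_{\xi}^{\natural}\F)^{\uparrow}\leq\F$), which is fine.
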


\begin{proof}
If $\F$ is compactoid then in view of Corollary \ref{cor:compactadherence}
and Corollary \ref{cor:O(compact)},
\[
\F\geq\O(\cl^{\natural}\F)=\O(\adh\F)
\]
and $\adh\F$ is compact. As $\xi$ is of countable type, there is
a compact set $K$ with $\O(\adh\F)\geq\O(K)$ and $\O(K)$ is a countably
based compact filter.
\end{proof}
Finally note that $\sigma$-compactness of $\mathbb{U}X\setminus\mathbb{U}_{\xi}X$
has been characterized in terms of completeness. A family $\mathbb{D}$
of non-adherent filters on $(X,\xi)$ is \emph{cocomplete }if for
every $\G\in\mathbb{F}X$
\[
\adh_{\xi}\G=\emptyset\then\exists\D\in\mathbb{D}\;\G\#\D.
\]

A convergence $\xi$ is \emph{countably complete} if it admits a countable
cocomplete family of filters. A completely regular topological space
is countably complete if and only if it is \v Cech-complete. See
\cite{D.covers,myn.completeness} for details.
\begin{prop}
\cite[Corollary 7]{myn.completeness,D.covers}\label{prop:complete}
$\mathbb{U}X\setminus\mathbb{U}_{\xi}X$ is $\sigma$-compact if and
only if $\xi$ is countably complete .
\end{prop}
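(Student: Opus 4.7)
The plan is to exploit the bijection between filters on $X$ and closed subsets of $(\mathbb{U}X,\beta)$ given by $\H\mapsto\beta\H$, combined with the key observation that a filter $\D$ has $\adh_{\xi}\D=\emptyset$ if and only if $\beta\D\subset\mathbb{U}X\setminus\mathbb{U}_{\xi}X$ (indeed, $\adh_{\xi}\D=\bigcup_{\U\in\beta\D}\lm_{\xi}\U$, and this is empty exactly when every $\U\geq\D$ is non-convergent). Since $\beta\D$ is always $\beta$-compact, this identifies the $\sigma$-compact subsets of $\mathbb{U}X\setminus\mathbb{U}_{\xi}X$ with countable unions $\bigcup_{n\in\omega}\beta\D_{n}$ where each $\D_{n}$ is non-adherent.

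For the forward direction, assume $\mathbb{U}X\setminus\mathbb{U}_{\xi}X=\bigcup_{n\in\omega}K_{n}$ with each $K_{n}$ compact. Since $(\mathbb{U}X,\beta)$ is Hausdorff, each $K_{n}$ is $\beta$-closed, hence of the form $\beta\D_{n}$ for a filter $\D_{n}$, and by the observation above each $\D_{n}$ has empty $\xi$-adherence. Now given any $\G\in\mathbb{F}X$ with $\adh_{\xi}\G=\emptyset$, we have $\beta\G\subset\bigcup_{n\in\omega}\beta\D_{n}$, and by compactness of $\beta\G$ this cover reduces to a finite one. Since $\beta\G$ is non-empty, at least one $\beta\D_{n}$ meets $\beta\G$, which is exactly saying $\G\#\D_{n}$. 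Hence $\{\D_{n}:n\in\omega\}$ is a cocomplete family witnessing countable completeness.

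For the converse, let $\{\D_{n}:n\in\omega\}$ be a cocomplete family of non-adherent filters. Each $\beta\D_{n}$ is $\beta$-compact and contained in $\mathbb{U}X\setminus\mathbb{U}_{\xi}X$. Given any $\U\in\mathbb{U}X\setminus\mathbb{U}_{\xi}X$, the ultrafilter $\U$ satisfies $\adh_{\xi}\U=\lm_{\xi}\U=\emptyset$, so by cocompleteness there is $n$ with $\U\#\D_{n}$; since $\U$ is an ultrafilter, $\U\#\D_{n}$ forces $\U\geq\D_{n}$, i.e.\ $\U\in\beta\D_{n}$. Thus $\mathbb{U}X\setminus\mathbb{U}_{\xi}X=\bigcup_{n\in\omega}\beta\D_{n}$ is $\sigma$-compact.

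No step looks like a serious obstacle; the only subtlety is the forward direction, where one must be careful that a \emph{finite} subcover of $\beta\G$ only needs a single index $n$ with $\beta\G\cap\beta\D_{n}\neq\emptyset$ (rather than domination by a single $\D_{n}$), which is exactly what the mesh condition $\G\#\D_{n}$ asks for.
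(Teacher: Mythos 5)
Your argument is correct, and it is worth noting that the paper itself gives no proof of this proposition: it is quoted from \cite[Corollary 7]{myn.completeness} (see also \cite{D.covers}), so your write-up is a legitimate self-contained substitute, built from exactly the ingredients available in Section \ref{sec:inStone}: non-empty $\beta$-closed subsets of $\mathbb{U}X$ are precisely the sets $\beta\H$ for $\H\in\mathbb{F}X$, $\adh_{\xi}\D=\bigcup_{\U\in\beta\D}\lm_{\xi}\U$ so that $\adh_{\xi}\D=\emptyset$ iff $\beta\D\subset\mathbb{U}X\setminus\mathbb{U}_{\xi}X$, and $\adh_{\xi}\U=\lm_{\xi}\U$ together with $\U\#\D\then\D\subset\U$ for ultrafilters. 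Both directions go through as you state them. One small quibble in the forward direction: the appeal to compactness of $\beta\G$ is both unnecessary and, as phrased, unjustified, since $\{\beta\D_{n}:n\in\omega\}$ is a cover by \emph{closed} sets, to which the finite-subcover property of a compact set does not apply. But nothing is lost: all you need is that $\beta\G\neq\emptyset$ (true for any proper filter $\G$), so some $\U\in\beta\G$ lies in some $\beta\D_{n}$, and this single ultrafilter finer than both $\G$ and $\D_{n}$ already gives $\G\#\D_{n}$ --- which is precisely what your last sentence uses. You may also wish to dispose of the degenerate case $\mathbb{U}X\setminus\mathbb{U}_{\xi}X=\emptyset$ (equivalently, all $K_{n}$ empty), where the empty family is cocomplete, consistent with the remark in the paper that completeness numbers can be $0$.
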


\begin{cor}
\label{cor:Lindelofnotsigmacompact} There is a set $X$ and a subset
$S$ of $\mathbb{U^{\circ}}X$ that is Lindelöf but not $\sigma$-compact.
\end{cor}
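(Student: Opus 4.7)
The plan is to exhibit a single concrete topological space $\xi$ whose associated set $S = \mathbb{U}X \setminus \mathbb{U}_\xi X$ simultaneously satisfies Corollary \ref{cor:metricnonCVLindelof} (forcing $S$ to be Lindel\"of) and fails the completeness criterion of Proposition \ref{prop:complete} (preventing $S$ from being $\sigma$-compact). The natural candidate is $X = \mathbb{Q}$ with its usual topology $\xi$, since rationals are metrizable yet famously non-\v{C}ech-complete.

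First I would verify that $S \subset \mathbb{U}^{\circ}X$. This is immediate: every principal ultrafilter $\{q\}^{\uparrow}$ converges to $q$ under $\xi$, so every principal ultrafilter lies in $\mathbb{U}_\xi X$, and therefore $\mathbb{U}X \setminus \mathbb{U}_\xi X$ consists entirely of free ultrafilters.

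Next, since $\mathbb{Q}$ is metrizable it is of countable type, so Corollary \ref{cor:metricnonCVLindelof} applies and yields that $S = \mathbb{U}X \setminus \mathbb{U}_\xi X$ is Lindel\"of. On the other hand, a completely regular topological space is countably complete if and only if it is \v{C}ech-complete, and $\mathbb{Q}$ is not \v{C}ech-complete: a completely metrizable space without isolated points must be uncountable by the Baire category theorem, but $\mathbb{Q}$ is a countable dense-in-itself metric space. Hence $\mathbb{Q}$ is not countably complete, and Proposition \ref{prop:complete} then gives that $S$ is not $\sigma$-compact.

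There is essentially no obstacle beyond selecting the right witness; the work has already been done in establishing Corollaries \ref{cor:metricnonCVLindelof} and \ref{cor:nonCVlindelof} and Proposition \ref{prop:complete}. The only subtlety worth mentioning explicitly is the containment $S \subset \mathbb{U}^{\circ}X$, which is the reason one needs $\xi$ to be Hausdorff so that principal ultrafilters converge only to their defining points and thus lie in $\mathbb{U}_\xi X$.
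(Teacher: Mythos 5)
Your argument is correct and is essentially the paper's own proof: the paper also takes $\mathbb{Q}$ (as an instance of a metric space that is not \v{C}ech-complete) and combines Corollary \ref{cor:metricnonCVLindelof} with Proposition \ref{prop:complete} to get that $S=\mathbb{U}X\setminus\mathbb{U}_{\xi}X$ is Lindel\"of but not $\sigma$-compact. One minor remark: the containment $S\subset\mathbb{U}^{\circ}X$ requires no Hausdorff hypothesis, since $\{x\}^{\uparrow}$ converges to $x$ in every convergence space by the first convergence axiom, so every principal ultrafilter is automatically in $\mathbb{U}_{\xi}X$.
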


\begin{proof}
Let $(X,\tau)$ be a non complete metric space (such as $\mathbb{Q}$
with its usual topology). Then, in view of Corollary \ref{cor:metricnonCVLindelof}
and Proposition \ref{prop:complete}, $S=\mathbb{U}X\setminus\mathbb{U}_{\tau}X$
is as desired.
\end{proof}
Similarly, hemicompactness of $\mathbb{U}X\setminus\mathbb{U}_{\xi}X$
has been characterized in terms of \emph{ultracompleteness}. A family
$\mathbb{D}$ of non-adherent filters on $(X,\xi)$ is \emph{ultracocomplete
}if for every $\G\in\mathbb{F}X$
\[
\adh_{\xi}\G=\emptyset\then\exists\D\in\mathbb{D}\;\G\geq\D.
\]

A convergence $\xi$ is \emph{countably ultracomplete} if it admits
a countable ultracocomplete family of filters. See \cite{myn.completeness}
for details.
\begin{prop}
\cite[Corollary 9]{myn.completeness}\label{prop:ultracomplete} A
convergence $\xi$ is countably ultracomplete if and only if $\mathbb{U}X\setminus\mathbb{U}_{\xi}X$
is hemicompact.
\end{prop}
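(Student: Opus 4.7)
The plan is to mirror the proof of Proposition \ref{prop:UXhemiC} for $\mathbb{U}_{\xi}X$, translating the hemicompactness condition on $\mathbb{U}X\setminus\mathbb{U}_{\xi}X$ directly into the definition of a countable ultracocomplete family of filters, via the usual dictionary between closed subsets of the Stone space and filters.

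First, I would identify the compact subsets of $\mathbb{U}X\setminus\mathbb{U}_{\xi}X$. Since $(\mathbb{U}X,\beta)$ is compact Hausdorff, a subset $K\subset\mathbb{U}X\setminus\mathbb{U}_{\xi}X$ is compact if and only if it is closed in $\mathbb{U}X$ and contained in $\mathbb{U}X\setminus\mathbb{U}_{\xi}X$. Since non-empty closed subsets of $\mathbb{U}X$ are exactly sets of the form $\beta\D$ for $\D\in\mathbb{F}X$, and $\beta\D\subset\mathbb{U}X\setminus\mathbb{U}_{\xi}X$ means $\beta\D\cap\mathbb{U}_{\xi}X=\emptyset$, i.e., $\adh_{\xi}\D=\emptyset$, the non-empty compact subsets of $\mathbb{U}X\setminus\mathbb{U}_{\xi}X$ are exactly $\{\beta\D:\D\in\mathbb{F}X,\adh_{\xi}\D=\emptyset\}$.

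Next, I would recall the order-reversing correspondence $\beta\F\subset\beta\D\iff\D\leq\F$. Then $\mathbb{U}X\setminus\mathbb{U}_{\xi}X$ is hemicompact precisely when there is a sequence $(\D_{n})_{n\in\omega}$ of filters with $\adh_{\xi}\D_{n}=\emptyset$ such that, for every filter $\G\in\mathbb{F}X$ with $\adh_{\xi}\G=\emptyset$, there is $n\in\omega$ with $\beta\G\subset\beta\D_{n}$, equivalently $\D_{n}\leq\G$. This is verbatim the statement that $\{\D_{n}:n\in\omega\}$ is a countable ultracocomplete family of filters on $(X,\xi)$, hence $\xi$ is countably ultracomplete. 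Conversely, any countable ultracocomplete family produces, via $\D\mapsto\beta\D$, a sequence of compact subsets of $\mathbb{U}X\setminus\mathbb{U}_{\xi}X$ witnessing hemicompactness.

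The proof is thus essentially a dictionary argument and presents no real obstacle; the only small care needed is to note that the empty set is trivially both a compact subset of $\mathbb{U}X\setminus\mathbb{U}_{\xi}X$ and corresponds to the degenerate filter, so the correspondence also handles the (irrelevant) empty compact subset, and to observe that hemicompactness only requires the sequence of compact sets to dominate all other compact sets under inclusion, which is exactly what ultracocompleteness demands under refinement of filters.
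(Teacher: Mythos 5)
Your proof is correct and takes essentially the approach the paper relies on: the statement is quoted from \cite[Corollary 9]{myn.completeness} without a new proof, and your dictionary argument---identifying the nonempty compact subsets of $\mathbb{U}X\setminus\mathbb{U}_{\xi}X$ with the sets $\beta\D$ for non-adherent filters $\D$ and using $\beta\G\subset\beta\D\iff\D\leq\G$---is exactly the argument the paper gives for the companion statement, Proposition \ref{prop:UXhemiC}. The edge cases (empty compact set, degenerate filter) are correctly dispatched as you note, so nothing is missing.
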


\begin{cor}
\label{cor:hemiCnotclosed} \label{cor:hemicompactnotclosed} There
is a set $X$ and a subset $S$ of $\mathbb{U^{\circ}}X$ that is
hemicompact but not $\beta$-closed.
\end{cor}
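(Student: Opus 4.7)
By Proposition \ref{prop:ultracomplete}, for any convergence $\xi$ on a set $X$, the subset $S:=\mathbb{U}X\setminus\mathbb{U}_{\xi}X$ of $\mathbb{U}^{\circ}X$ is hemicompact if and only if $\xi$ is countably ultracomplete. By Corollary \ref{cor:nonCVclosed}, the same $S$ is $\beta$-closed if and only if $\xi$ is locally compactoid; for a Hausdorff topology this is classical local compactness. It therefore suffices to exhibit a Hausdorff topology that is countably ultracomplete but not locally compact.

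The plan is to take $X$ to be the countable hedgehog $J(\aleph_{0})$, realized as the quotient of $\omega\times[0,1]$ that identifies all points $(k,0)$ to a single point $0$, equipped with the hedgehog metric $d((j,s),(k,t))=|s-t|$ if $j=k$ and $s+t$ otherwise. Local compactness fails at $0$: the basic neighborhood $U_{n}:=\{0\}\cup\bigcup_{k}(0,1/n)_{k}$ contains the sequence $x_{k}:=(1/(2n),k)$ with $d(x_{k},x_{k'})\geq1/n$ for $k\neq k'$, so $U_{n}$ is not sequentially compact and no neighborhood of $0$ has compact closure.

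For countable ultracompleteness, let $F_{n}:=\bigsqcup_{k}[1/n,1]_{k}$, a closed subset of $X$, and let $\mathcal{D}_{n}$ be the cocompact filter of $F_{n}$, that is, the filter generated by the sets $F_{n}\setminus K$ with $K\subseteq F_{n}$ compact. First I would check that each $\mathcal{D}_{n}$ is non-adherent: any point outside $F_{n}$ has a neighborhood disjoint from $F_{n}$, and for $x\in[1/n,1]_{k_{0}}$ the whole spine segment $[1/n,1]_{k_{0}}$ is a compact subset of $F_{n}$ (a sequence of points on distinct spines with coordinate $\geq 1/n$ has pairwise distances $\geq 2/n$, hence no convergent subsequence), so $F_{n}\setminus[1/n,1]_{k_{0}}\in\mathcal{D}_{n}$ avoids a small spine-$k_{0}$ neighborhood of $x$. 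Next, given $\mathcal{G}\in\mathbb{F}X$ with $\adh_{\xi}\mathcal{G}=\emptyset$, non-adherence at $0$ yields some $n$ and $G\in\mathcal{G}$ with $G\cap U_{n}=\emptyset$, hence $F_{n}\in\mathcal{G}$; and the standard finite-subcover argument shows $X\setminus K\in\mathcal{G}$ for every compact $K\subseteq X$, whence $F_{n}\setminus K\in\mathcal{G}$ for every compact $K\subseteq F_{n}$. This gives $\mathcal{G}\geq\mathcal{D}_{n}$, so $\{\mathcal{D}_{n}:n\in\omega\}$ is a countable ultracocomplete family.

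The main obstacle is the verification that $\{\mathcal{D}_{n}:n\in\omega\}$ is ultracocomplete: one must split non-adherence into two ingredients (non-adherence at $0$, fixing a specific $n$ with $F_{n}\in\mathcal{G}$, and non-adherence on compact subsets, giving $X\setminus K\in\mathcal{G}$ for every compact $K$) and then combine them so that the same $n$ works uniformly in $K$. Everything else is a routine computation in the hedgehog metric, together with the two characterizations invoked at the outset.
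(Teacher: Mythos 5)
Your proposal is correct and follows the same reduction as the paper: by Proposition \ref{prop:ultracomplete} and Corollary \ref{cor:nonCVclosed}, it suffices to exhibit a topology that is countably ultracomplete but not locally compactoid, and then take $S=\mathbb{U}X\setminus\mathbb{U}_{\xi}X$. The only difference lies in the witness space: the paper simply cites \cite[Example 3.1]{bijagoar2001ultracomplete} for a countably ultracomplete, non locally compactoid space, whereas you construct one explicitly (the countable metric hedgehog) and verify countable ultracompleteness directly against the paper's filter-theoretic definition, using the cocompact filters $\D_{n}$ of the closed sets $F_{n}$; this makes the argument self-contained at the cost of a page of metric computations. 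Your verification is sound: each $\D_{n}$ is non-adherent, and for $\G$ with $\adh_{\xi}\G=\emptyset$ the index $n$ extracted from $0\notin\adh_{\xi}\G$ gives $F_{n}\in\G$, which combines with the finite-subcover fact $X\setminus K\in\G$ for every compact $K$ to yield $F_{n}\setminus K\in\G$, i.e.\ $\G\geq\D_{n}$, so the same $n$ indeed works uniformly in $K$. Two minor points: the equivalence of ``locally compactoid'' with classical local compactness uses regularity (via Corollary \ref{cor:compactadherence}), not Hausdorffness alone, which is harmless here since your space is metrizable; and the parenthetical you attach to the compactness of the spine segment $[1/n,1]_{k_{0}}$ in fact proves non-compactness of $F_{n}$ (needed for $\D_{n}$ to be a proper filter), while the segment itself is compact simply because it is isometric to a closed bounded interval.
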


\begin{proof}
If $(X,\tau)$ is countably ultracomplete but not locally compactoid
(e.g., \cite[Example 3.1]{bijagoar2001ultracomplete}), then in view
of Corollary \ref{cor:nonCVclosed} and Proposition \ref{prop:ultracomplete},
$S=\mathbb{U}X\setminus\mathbb{U}_{\tau}X$ is as desired.
\end{proof}
Finally, let us note that similarly to Corollary \ref{cor:sigmacompact},
$\sigma$-compactness of $\mathbb{U}X\setminus\mathbb{U}_{\xi}X$
is equivalent to having a $\sigma$-compact dense subset.
\begin{prop}
\label{prop:FsigmaclosureofnonCV} The following are equivalent:
\begin{enumerate}
\item there is an $F_{\sigma}$ subset $S$ of $\mathbb{U}X\setminus\mathbb{U}_{\xi}X$
such that 
\[
\cl_{\beta}S=\cl_{\beta}(\mathbb{U}X\setminus\mathbb{U}_{\xi}X);
\]
\item there is a countable family $(\D_{i})_{i\in\omega}$ of non-adherent
filters such that 
\[
\bigwedge_{i\in\omega}\D_{i}=\mathcal{K}_{c};
\]
\item there is a family $(\C_{i})_{i\in\omega}$ of ideal covers such that
whenever one selects $C_{i}\in\C_{i}$ for every $i\in\omega$, the
set $\bigcap_{i\in\omega}C_{i}$ is a compactoid set.
\end{enumerate}
\end{prop}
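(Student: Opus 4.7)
The plan is to establish $(1)\iff(2)\iff(3)$, treating $\mathcal{K}_c$ in (2) as the cocompactoid filter. I would handle $(1)\iff(2)$ first, which should be essentially a direct combination of the two closure identities $\cl_\beta\bigl(\bigcup_{i\in\omega}\beta\D_i\bigr)=\beta\bigl(\bigwedge_{i\in\omega}\D_i\bigr)$ (equation \eqref{eq:betainf}) and $\cl_\beta(\mathbb{U}X\setminus\mathbb{U}_\xi X)=\beta(\mathcal{K}_c)$ (equation \eqref{eq:cocompactoid}). An $F_\sigma$ subset of $\mathbb{U}X\setminus\mathbb{U}_\xi X$ is exactly a set $\bigcup_{i\in\omega}\beta\D_i$ where each $\beta\D_i\subset\mathbb{U}X\setminus\mathbb{U}_\xi X$, equivalently, where each $\D_i$ is non-adherent, since $\adh_\xi\D_i=\bigcup_{\U\in\beta\D_i}\lim_\xi\U$. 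Coincidence of closures then amounts, via the bijection $\H\leftrightarrow\beta\H$ between filters and closed subsets of $(\mathbb{U}X,\beta)$, to the filter identity $\bigwedge_{i\in\omega}\D_i=\mathcal{K}_c$.

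The implication $(2)\Rightarrow(3)$ is the straightforward direction of the second equivalence. I would set $\C_i:=(\D_i)_c$: non-adherence of $\D_i$ translates exactly into $\C_i$ being a $\xi$-cover of $X$, and taking complements of filter elements produces an ideal. If $C_i\in\C_i$ is selected for each $i$, then $X\setminus C_i\in\D_i$, so $\bigcup_{i\in\omega}(X\setminus C_i)$ lies in every $\D_j$ by upward closure, hence in $\bigwedge_{i\in\omega}\D_i=\mathcal{K}_c$; the definition of the cocompactoid filter then forces $\bigcap_{i\in\omega}C_i$ to be compactoid.

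For the converse $(3)\Rightarrow(2)$, I would symmetrically let $\D_i:=(\C_i)_c$, which is non-adherent because $\C_i$ is a cover. The inclusion $\bigwedge_{i\in\omega}\D_i\leq\mathcal{K}_c$ is the easy half: if $A$ lies in every $\D_i$, pick $C_i\in\C_i$ with $X\setminus C_i\subset A$; then $A^c\subset\bigcap_{i\in\omega}C_i$, which by (3) is compactoid, so $A^c$ is compactoid (as a subset of a compactoid set) and $A\in\mathcal{K}_c$. The main obstacle is the reverse inclusion $\mathcal{K}_c\leq\bigwedge_{i\in\omega}\D_i$, which reduces to the selection lemma: \emph{any compactoid subset $K$ of $X$ is contained in some element of any ideal $\xi$-cover $\C$}. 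I would prove this by noting that compactoidness of $K$ implies every $\U\in\beta K$ converges, and the cover property of $\C$ gives for each such $\U$ some $C_\U\in\C$ with $\U\in\beta C_\U$; compactness of $\beta K$ in the Stone topology then extracts a finite subfamily $C_1,\ldots,C_n$ with $\beta K\subset\bigcup_j\beta C_j=\beta\bigl(\bigcup_jC_j\bigr)$, i.e.\ $K\subset\bigcup_jC_j$, and the ideal structure of $\C$ packs them into a single $C=\bigcup_jC_j\in\C$. Applied to $K=A^c$ for $A\in\mathcal{K}_c$, this yields $A^c\subset C_i\in\C_i$ for each $i$, i.e., $A\in\D_i$ for all $i$, completing the proof.
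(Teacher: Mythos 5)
Your proof is correct, and it closes the equivalence by a slightly different route than the paper. Your treatment of $(1)\iff(2)$ (via $\cl_{\beta}\bigl(\bigcup_{i}\beta\D_{i}\bigr)=\beta\bigl(\bigwedge_{i}\D_{i}\bigr)$, $\cl_{\beta}(\mathbb{U}X\setminus\mathbb{U}_{\xi}X)=\beta(\mathcal{K}_{c})$, and injectivity of $\H\mapsto\beta\H$ on filters) and your $(2)\then(3)$ coincide with the paper's, except that for the inclusion $\mathcal{K}_{c}\leq\bigwedge_{i}\D_{i}$ in $(1)\then(2)$ the paper simply observes that every non-adherent filter is finer than $\mathcal{K}_{c}$, while you read it off from $S\subset\mathbb{U}X\setminus\mathbb{U}_{\xi}X$; both are fine. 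The real divergence is the last leg: the paper proves $(3)\then(1)$ directly by a pointwise ultrafilter argument --- if $\U\notin\beta\bigl(\bigwedge_{i}\D_{i}\bigr)$ then some selection gives $\bigcap_{i}C_{i}\in\U$, so $\U$ contains a compactoid set and converges, whence $\mathbb{U}X\setminus\mathbb{U}_{\xi}X\subset\beta\bigl(\bigwedge_{i}\D_{i}\bigr)$ --- and thus never needs the set-level identity $\bigwedge_{i}\D_{i}=\mathcal{K}_{c}$ at that stage. You instead prove $(3)\then(2)$ via the selection lemma that every compactoid set is contained in a single member of any ideal $\xi$-cover, extracted by a finite-subcover argument in the compact clopen-covered set $\beta K$ and then absorbed by the ideal structure (note the case $\bigcup_{j}C_{j}=X$ cannot arise for a non-degenerate ideal, so the absorption is legitimate). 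Your route costs one extra compactness extraction but isolates a reusable fact of independent interest (compactoid sets behave cover-compactly with respect to ideal covers, in the spirit of the cover reformulations in Corollary \ref{prop:CVDclosed}) and makes the filter equality in $(2)$ fully explicit; the paper's route is leaner, getting $(1)$ immediately from $S=\bigcup_{i}\beta\D_{i}$ without any finite-subcover step.
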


\begin{proof}
If (1) there is a sequence $(\D_{i})_{i\in\omega}$ of non-adherent
filters such that 
\[
\cl_{\beta}(\mathbb{U}X\setminus\mathbb{U}_{\xi}X)\subset\cl_{\beta}\{\D_{n}:n\in\omega\},
\]
that is, in view of (\ref{eq:cocompactoid}), 
\[
\beta\mathcal{K}_{c}\subset\beta(\bigwedge_{i\in\omega}\D_{i}).
\]
Since $\adh\D_{i}=\emptyset$, $\D_{i}\geq\mathcal{K}_{c}$ for every
$i$, hence $\bigwedge_{i\in\omega}\D_{i}\geq\mathcal{K}_{c}$ and
thus $\beta\mathcal{K}_{c}=\beta(\bigwedge_{i\in\omega}\D_{i}),$
equivalently, $\bigwedge_{i\in\omega}\D_{i}=\mathcal{K}_{c}$.

$(2)\then(3):$ Take $\C_{i}=(\D_{i})_{c}$. If one selects $C_{i}\in\C_{i}$,
equivalently, $D_{i}=C_{i}^{c}\in\D_{i}$ then $\bigcup_{i\in\omega}D_{i}\in\mathcal{K}_{c}$
so that there is a compactoid set $K^{c}\subset\bigcup_{i\in\omega}D_{i}$
equivalently, $\bigcap C_{i}\subset K$ so that $\bigcap C_{i}$ is
compactoid. 

$(3)\then(1)$ Let $\D_{i}=(\C_{i})_{c}\in\mathbb{F}X$, where $(\C_{i})_{i\in\omega}$
is as in (3). We show that 
\[
\cl_{\beta}(\bigcup_{i\in\omega}\beta\D_{i})=\beta(\bigwedge_{i\in\omega}\D_{i})=\mathbb{U}X\setminus\mathbb{U}_{\xi}X.
\]
 Indeed, each $\beta\D_{i}\subset\mathbb{U}X\setminus\mathbb{U}_{\xi}X$
because each $\C_{i}$ is a cover of $(X,\xi)$. If $\U\notin\beta(\bigwedge_{i\in\omega}\D_{i})$,
there is a selection $D_{i}\in\D_{i}$ with $\bigcup_{i\in\omega}D_{i}\notin\U$,
equivalently a selection $C_{i}=X\setminus D_{i}\in\C_{i}$ with $\bigcap_{i\in\omega}C_{i}\in\U$.
Because $\bigcap_{i\in\omega}C_{i}$ is compactoid we conclude that
$\lim_{\xi}\U\neq\emptyset$, that is, $\U\in\mathbb{U}_{\xi}X$,
that is, $\U\notin\mathbb{U}X\setminus\mathbb{U}_{\xi}X$.
\end{proof}

\subsection{Summary of results in this section}

The table below summarizes the main results of the section, together
with a few more easily verified characterizations that may prove useful
for future reference.\medskip{}

\begin{center}
\begin{table}[H]
\begin{centering}
\resizebox{\textwidth}{!}{%
\begin{tabular}{|c||c|c|c|}
\hline 
property & $\mathbb{U}_{\xi}(x)$ & $\mathbb{U}_{\xi}X$ & $\mathbb{U}X\setminus\mathbb{U}_{\xi}X$\tabularnewline
\hline 
\hline 
$\beta$-closed (compact) & $\S\xi=\S_{0}\xi$ & $\xi$ compact & $\xi$ locally compactoid\tabularnewline
\hline 
$\beta$-open & finitely generated & $\xi$ locally compactoid & $\xi$ compact\tabularnewline
\hline 
\multirow{3}{*}{$\delta$-closed} & \multirow{3}{*}{$\S\xi=\S_{1}\xi$} &  & $\lim_{\xi}\U\neq\emptyset\Longrightarrow$\tabularnewline
 &  & $\xi$ is $\mathbb{K}_{\mathbb{F}_{1}}$-compact & $\exists\D\in\mathbb{F}_{1}\cap\mathbb{K}_{\mathbb{F}}$,$\D\leq\U$\tabularnewline
 &  & ($\mathbb{K}_{\mathbb{F}}=\mathbb{K}_{\mathbb{F}_{1}}$) & $\iff\xi$ bi-$k$(reg. H. top.)\tabularnewline
\hline 
\multirow{2}{*}{$\delta$-open} & \multirow{2}{*}{bisequential} & $\lim_{\xi}\U\neq\emptyset\Longrightarrow$ & $\xi$ is $\mathbb{K}_{\mathbb{F}_{1}}$-compact\tabularnewline
 &  & $\exists\D\in\mathbb{F}_{1}\cap\mathbb{K}_{\mathbb{F}}$,$\D\leq\U$ & ($\mathbb{K}_{\mathbb{F}}=\mathbb{K}_{\mathbb{F}_{1}}$)\tabularnewline
\hline 
\multirow{2}{*}{$(\mathbb{F}_{\wedge1})^{*}$-closed} & \multirow{2}{*}{$\S\xi=\S_{\ensuremath{\wedge1}}\xi$} & $\xi$ is $\mathbb{K}_{\mathbb{F}_{\wedge1}}$-compact & $\lim_{\xi}\U\neq\emptyset\Longrightarrow$\tabularnewline
 &  & ($\mathbb{K}_{\mathbb{F}}=\mathbb{K}_{\mathbb{F}_{\wedge1}}$) & $\exists\D\in\mathbb{F}_{\wedge1}\cap\mathbb{K}_{\mathbb{F}}$,$\D\leq\U$\tabularnewline
\hline 
\multirow{2}{*}{$(\mathbb{F}_{\wedge1})^{*}$-open} & \multirow{2}{*}{$P$-space} & $\lim_{\xi}\U\neq\emptyset\Longrightarrow$ & $\xi$ is $\mathbb{K}_{\mathbb{F}_{\wedge1}}$-compact\tabularnewline
 &  & $\exists\D\in\mathbb{F}_{\wedge1}\cap\mathbb{K}_{\mathbb{F}}$,$\D\leq\U$ & ($\mathbb{K}_{\mathbb{F}}=\mathbb{K}_{\mathbb{F}_{\wedge1}}$)\tabularnewline
\hline 
$\exists S$ $F_{\sigma}$-subset: & $\exists(\D_{n})_{n\in\omega}\subset\lim_{\xi}^{-}(x)$: & $\exists(\D_{n})_{n\in\omega}$ in $\mathbb{K}_{\mathbb{F}}$: & \multirow{3}{*}{$\xi$ is $\omega$-$\mathbb{F}_{1}$-complete}\tabularnewline
\multirow{2}{*}{$\intr_{\delta}(\cdot)\subset S$} & $\H\in\mathbb{F}_{1}\cap\lim_{\xi}^{-}(x)\then$ & $\forall\H\in\mathbb{K_{F}\cap\mathbb{F}}_{1},\exists n\;\D_{n}\#\H$ & \tabularnewline
 & $\exists n:\H\#\D_{n}$ &  & \tabularnewline
\hline 
\multirow{2}{*}{$\sigma$-compact} & \multirow{2}{*}{$\mathsf{pp}(\xi)\leq\omega$} & $\xi$ is $\sigma$-compact & \multirow{2}{*}{$\xi$ is $\omega$-complete}\tabularnewline
 &  & ($\xi$ $\T$-reg. pseudotop.) & \tabularnewline
\hline 
\multirow{3}{*}{hemicompact} & \multirow{3}{*}{$\mathsf{p}(\xi,x)\leq\omega$} & $\exists(\D_{n})_{n\in\omega}$ in $\mathbb{K}_{\mathbb{F}}$: & \multirow{3}{*}{$\xi$ is $\omega$-ultracomplete}\tabularnewline
 &  & $\F\in\mathbb{K_{F}}\Longrightarrow\exists\D_{n}\leq\F$ & \tabularnewline
 &  & $\iff\xi$ hemicompact ($\xi$ top. H. reg.) & \tabularnewline
\hline 
\multirow{2}{*}{Lindelöf} & \multirow{2}{*}{$\{x\}$ cover-Lindelöf} & \multirow{2}{*}{$\xi$ is cover-Lindelöf} & $\F\in\mathbb{K_{F}\then}$\tabularnewline
 &  &  & $\exists\D\in\mathbb{F}_{1}\cap\mathbb{K}_{\mathbb{F}}$,$\D\leq\F$\tabularnewline
\hline 
\multirow{2}{*}{countably} & $x\notin\adh_{\xi}\bigvee_{n\in\omega}\D_{n}\then$ & $\adh_{\xi}\bigvee_{n\in\omega}\D_{n}=\emptyset\then$ & $\bigvee_{n\in\omega}\D_{n}\in\mathbb{K}_{\mathbb{F}}\then$\tabularnewline
 & $\exists n_{1},\ldots n_{k}:$ & $\exists n_{1},\ldots n_{k}:$ & $\exists n_{1},\ldots n_{k}:$\tabularnewline
compact & $x\notin\adh_{\xi}\bigvee_{k\in\{1,\ldots p\}}\D_{n_{k}}$ & $\adh_{\xi}\bigvee_{k\in\{1,\ldots p\}}\D_{n_{k}}=\emptyset$ & $\bigvee_{k\in\{1,\ldots p\}}\D_{n_{k}}\in\mathbb{K}_{\mathbb{F}}$\tabularnewline
\hline 
\multirow{2}{*}{$t$-closed} & \multirow{2}{*}{$\S\xi=\S\De^{\mathbb{U}}\xi$} & $\forall(\U_{n})_{n\in\omega}\in\mathbb{U}_{\xi}X$ & $\forall(\U_{n})_{n\in\omega}:\lim_{\xi}\U_{n}=\emptyset$\tabularnewline
 &  & $\bigwedge_{n\in\omega}\U_{n}\in\mathbb{K}_{\mathbb{F}}$ & $\adh_{\xi}\bigwedge_{n\in\omega}\U_{n}=\emptyset$\tabularnewline
\hline 
$S$ $F_{\sigma}$-subset$\Longrightarrow$ & \multirow{2}{*}{$\S\xi=\S\De\xi$} & $(\D_{n})_{n\in\omega}$ in $\mathbb{K}_{\mathbb{F}}$ & $\F_{n}\text{ s.t. }\adh_{\xi}\F_{n}=\emptyset\then$\tabularnewline
$\cl_{\beta}S$ subset &  & $\then\bigwedge_{n\in\omega}\D_{n}\in\mathbb{K}_{\mathbb{F}}$ & $\adh_{\xi}\bigwedge_{n\in\omega}\F_{n}=\emptyset$\tabularnewline
\hline 
$\exists S$ $F_{\sigma}$-subset: & \multirow{2}{*}{$\S_{0}\xi=\S\De\xi$} & $\xi$ is $\sigma$-compact & $\exists(\D_{n})_{n\in\omega}$, $\adh_{\xi}\D_{n}=\emptyset$,\tabularnewline
$\cl_{\beta}S=\cl_{\beta}(\cdot)$ &  & ($\xi$ $\T$-reg. pseudotop.) & $\bigwedge_{n\in\omega}\D_{n}=\mathcal{K}_{c}$\tabularnewline
\hline 
$\exists S$ $F_{\sigma}$-subset: & \multirow{2}{*}{$\S_{0}\I_{1}\xi\geq\S\De\xi\geq\S_{0}\xi$} & $\exists(\D_{n})_{n\in\omega}$ in $\mathbb{K}_{\mathbb{F}}$: & $\exists(\D_{n})_{n\in\omega}$, $\adh_{\xi}\D_{n}=\emptyset$,\tabularnewline
$\intr_{\delta}(\cdot)\subset\cl_{\beta}S$ &  & $\forall\H\in\mathbb{K_{F}\cap\mathbb{F}}_{1},\H\geq\bigwedge_{n\in\omega}\D_{n}$ & $\bigwedge_{n\in\omega}\D_{n}=(\mathcal{K}_{\omega})_{c}$\tabularnewline
\hline 
\multirow{2}{*}{$\cl_{\delta}(\cdot)=\cl_{\beta}(\cdot)$} & \multirow{2}{*}{$\S_{0}\xi=\S_{1}\xi$} & \multirow{2}{*}{$\mathbb{F}_{1}$-compact} & $\forall\F\in\mathbb{K}_{\mathbb{F}}\cap\mathbb{F}_{1},$\tabularnewline
 &  &  & $\exists F\in\F:\{F\}^{\uparrow}\in\mathbb{K_{F}}$\tabularnewline
\hline 
\end{tabular}}
\par\end{centering}
\medskip{}
\caption{\label{table:summaryUX}Summary of characterizations in $\mathbb{U}X$}
\end{table}
\par\end{center}

\section{Paving and pseudopaving numbers}

A family $\mathbb{D}$ of filters on a convergence space $(X,\xi)$
is a \emph{pavement at $x$ }if every $\D\in\mathbb{D}$ converges
to $x$ and, for every filter $\F$ converging to $x$, there is $\D\in\mathbb{D}$
with $\D\leq\F$. The family $\mathbb{D}$ is a \emph{pseudopavement
at $x$ }if every $\D\in\mathbb{D}$ converges to $x$ and, for every
ultrafilter $\U$ converging to $x$, there is $\D\in\mathbb{D}$
with $\U\in\beta\D$.

Let $\mathsf{p}(\xi,x)$ denote the \emph{paving number} of $\xi$
at $x$, that is, the smallest cardinality of a pavement at $x$ for
$\xi$, and let $\mathsf{pp}(\xi,x)$ denote the \emph{pseudopaving
number of $\xi$ at $x$}, that is, the smallest cardinality of a
pseudopavement at $x$ for $\xi$. Accordingly, the \emph{paving number
}and \emph{pseudopaving number }of $\xi$ are given by $\mathsf{p}(\xi)=\sup_{x\in|\xi|}\mathsf{p}(\xi,x)$
and $\mathsf{pp}(\xi)=\sup_{x\in|\xi|}\mathsf{pp}(\xi,x)$.
\begin{rem*}
Since in a convergence the filter $\{x\}^{\uparrow}$ always converges
to $x$, the paving and pseudopaving numbers are always at least 1
(and pretopologies are exactly the 1-paved convergences, equivalently,
the 1-pseudopaved convergences). 

Thus \cite[Theorem 26]{myn.completeness} and \cite[Theorem 28]{myn.completeness}
relating paving and pseudopaving numbers of a dual convergence to
the ultracompleteness and completeness numbers of the base convergence
are only valid for cardinality greater or equal to 1, though ultracompleteness
and completeness numbers can meaningfully be 0. This was not properly
spelled out in .\cite{myn.completeness}
\end{rem*}
\begin{lem}
\label{lem:pseudopavement} The following are equivalent for a family
$\mathbb{D}\subset\lim_{\xi}^{-}(x)$.
\begin{enumerate}
\item $\mathbb{D}$ is a pseudopavement at $x$;
\item For every filter $\F$ with $x\in\adh_{\xi}\F$, there is $\D\in\mathbb{D}$
with $\D\#\F$;
\item For every filter $\F$ with $x\in\lim_{\xi}\F$, there is $\D\in\mathbb{D}$
with $\D\#\F$;
\item 
\[
\mathbb{U}_{\xi}(x)=\bigcup_{\D\in\mathbb{D}}\beta\D.
\]
\end{enumerate}
\end{lem}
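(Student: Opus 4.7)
The plan is to establish the cycle $(1)\Longrightarrow(2)\Longrightarrow(3)\Longrightarrow(1)$, together with the essentially tautological equivalence $(1)\iff(4)$. All four statements revolve around the same idea: pseudopavement is a statement about ultrafilters, and since ultrafilters are maximal, meshing with an ultrafilter is the same as being below it.

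For $(1)\iff(4)$, I would first observe that the monotonicity axiom gives $\beta\D\subset\mathbb{U}_{\xi}(x)$ for every $\D\in\mathbb{D}$, since $\D\leq\U$ and $x\in\lm_{\xi}\D$ imply $x\in\lm_{\xi}\U$. Thus $\bigcup_{\D\in\mathbb{D}}\beta\D\subset\mathbb{U}_{\xi}(x)$ is automatic, and the reverse inclusion is exactly the statement that every $\U\in\mathbb{U}_{\xi}(x)$ satisfies $\U\in\beta\D$ for some $\D\in\mathbb{D}$, which is (1).

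For $(1)\Longrightarrow(2)$, given a filter $\F$ with $x\in\adh_{\xi}\F$, the definition (\ref{eq:adherence}) of adherence provides $\U\in\beta\F$ with $x\in\lm_{\xi}\U$. By (1), there exists $\D\in\mathbb{D}$ with $\U\in\beta\D$, i.e., $\D\leq\U$; since $\U\geq\F$ as well, the filter $\U$ witnesses $\D\#\F$ (any $D\in\D$ and $F\in\F$ both lie in $\U$, so $D\cap F\neq\emptyset$). For $(2)\Longrightarrow(3)$, the inclusion $\lm_{\xi}\F\subset\adh_{\xi}\F$ (which holds because $\F\#\F$) is immediate. For $(3)\Longrightarrow(1)$, let $\U\in\mathbb{U}_{\xi}(x)$; then (3) yields $\D\in\mathbb{D}$ with $\D\#\U$, and since $\U=\U^{\#}$ for ultrafilters, $\D\#\U$ is equivalent to $\D\leq\U$, i.e., $\U\in\beta\D$.

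There is no real obstacle here; the only subtle point worth stating carefully is the passage between "$\D$ meshes with $\U$" and "$\D\leq\U$" for ultrafilters, which is the reason pseudopavement (a condition phrased only on ultrafilters) is equivalent to meshing with every convergent or adherent filter.
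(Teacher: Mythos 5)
Your proof is correct, and where it overlaps with the paper's argument it is essentially the same: the paper also proves $(1)\then(2)$ by picking $\U\in\beta\F$ with $x\in\lm_{\xi}\U$ via the definition of adherence and then using the pseudopavement to get $\D\leq\U$, hence $\D\#\F$, and it also dismisses $(2)\then(3)$ as immediate. The difference is one of decomposition: the paper does not prove $(1)\iff(3)\iff(4)$ at all, but cites an earlier result (Theorem 17 of the (ultra)completeness paper), whereas you close the cycle yourself, proving $(3)\then(1)$ from the maximality of ultrafilters ($\U=\U^{\#}$, so $\D\#\U$ is the same as $\D\leq\U$) and $(1)\iff(4)$ by unwinding the definition of $\beta\D$ together with the monotonicity axiom giving $\beta\D\subset\mathbb{U}_{\xi}(x)$ for $\D\in\lim_{\xi}^{-}(x)$. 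Your version is therefore self-contained and makes explicit the one genuinely load-bearing observation (meshing with an ultrafilter equals being coarser than it), at the cost of a few extra lines; the paper's version is shorter but defers the core equivalences to an external reference. Both are sound, and all your individual steps check out against the paper's conventions for $\leq$, $\#$, and $\beta\D$.
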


\begin{proof}
$(1)\iff(3)\iff(4)$ is \cite[Theorem 17]{myn.completeness}. Of course,
$(2)\then(3).$ To see that $(1)\then(2)$, let $x\in\adh_{\xi}\F$.
then there is $\U\in\beta\F$ with $x\in\lim_{\xi}\U$. Since $\mathbb{D}$
is a pseudopavement at $x$, there is $\D\in\mathbb{D}$ with $\U\geq\D$.
Thus $\F\#\D$. 
\end{proof}
\begin{cor}
\cite[Corollary 18]{myn.completeness}\label{cor:pseudopavingnumber}
Let $\xi$ be a convergence. Then $\mathsf{pp}(\xi,x)\leq\omega$
if and only if $\mathbb{U}_{\xi}(x)$ is $\sigma$-compact.
\end{cor}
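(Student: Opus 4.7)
The plan is to reduce the corollary immediately to Lemma \ref{lem:pseudopavement}, specifically the equivalence of items $(1)$ and $(4)$, which translates countable pseudopavements at $x$ into countable coverings of $\mathbb{U}_{\xi}(x)$ by sets of the form $\beta\D$.

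For the forward implication, I would start from a countable pseudopavement $\{\D_n : n \in \omega\}$ at $x$. By Lemma \ref{lem:pseudopavement}(1)$\Rightarrow$(4),
\[
\mathbb{U}_{\xi}(x) = \bigcup_{n \in \omega} \beta\D_n.
\]
Each $\beta\D_n$ is a closed subset of the compact Hausdorff space $(\mathbb{U}X,\beta)$, hence compact, so $\mathbb{U}_{\xi}(x)$ is $\sigma$-compact.

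For the converse, suppose $\mathbb{U}_{\xi}(x) = \bigcup_{n \in \omega} K_n$ with each $K_n$ compact. Since $(\mathbb{U}X,\beta)$ is compact Hausdorff, each $K_n$ is closed, and as recalled just before Lemma \ref{lem:GdeltaBase}, a non-empty closed subset of $\mathbb{U}X$ has the form $\beta\F$ for some $\F \in \mathbb{F}X$. Discarding empty $K_n$'s, I obtain filters $\F_n$ on $X$ with $K_n = \beta\F_n$, so that
\[
\mathbb{U}_{\xi}(x) = \bigcup_{n \in \omega} \beta\F_n.
\]
Applying Lemma \ref{lem:pseudopavement}(4)$\Rightarrow$(1) yields that $\{\F_n : n \in \omega\}$ is a pseudopavement at $x$, whence $\mathsf{pp}(\xi,x) \leq \omega$.

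There is no substantive obstacle here: the whole content of the corollary is packaged in the lemma's reformulation of pseudopavements as coverings by basic closed sets in the Stone topology, together with the fact that closed and compact subsets of $\mathbb{U}X$ coincide and are precisely the sets $\beta\F$.
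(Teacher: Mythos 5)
Your forward direction is exactly the intended reduction and is fine: a countable pseudopavement $\{\D_{n}:n\in\omega\}$ gives, via Lemma \ref{lem:pseudopavement}, $\mathbb{U}_{\xi}(x)=\bigcup_{n\in\omega}\beta\D_{n}$, a countable union of compact sets. The converse, however, has a genuine gap. Lemma \ref{lem:pseudopavement} is stated only for families $\mathbb{D}\subset\lim_{\xi}^{-}(x)$, because a pseudopavement must by definition consist of filters that $\xi$-converge to $x$. From $K_{n}=\beta\F_{n}\subset\mathbb{U}_{\xi}(x)$ you only know that every \emph{ultrafilter} finer than $\F_{n}$ converges to $x$; this yields $x\in\lim_{\xi}\F_{n}$ when $\xi$ is a pseudotopology, but not for an arbitrary convergence, so the appeal to $(4)\Rightarrow(1)$ is not yet licensed. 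The omission is not cosmetic: on an infinite set $X$, declare $x\in\lim_{\xi}\F$ if and only if $\F$ is an ultrafilter (both convergence axioms hold, since the only filter finer than an ultrafilter is itself). Then $\mathbb{U}_{\xi}(x)=\mathbb{U}X$ is compact, hence $\sigma$-compact, yet any pseudopavement at $x$ consists of ultrafilters and must contain every ultrafilter, so $\mathsf{pp}(\xi,x)>\omega$.

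So the converse needs the missing step, and the step needs a hypothesis. Either assume $\xi$ is a pseudotopology (the standing assumption of Section \ref{sec:inStone}, and the situation in which the paper actually exploits this direction, its relevant examples being prime pseudotopologies defined by prescribing $\mathbb{U}_{\xi}(x)$), and then record the one-line verification that $\beta\F_{n}\subset\mathbb{U}_{\xi}(x)$ forces $x\in\lim_{\xi}\F_{n}$, so that Lemma \ref{lem:pseudopavement} applies to $\{\F_{n}:n\in\omega\}$; or observe that, since limits of ultrafilters are unchanged by the pseudotopologizer and hence $\mathbb{U}_{\S\xi}(x)=\mathbb{U}_{\xi}(x)$, what your argument actually proves for a general convergence is that $\mathsf{pp}(\S\xi,x)\leq\omega$ if and only if $\mathbb{U}_{\xi}(x)$ is $\sigma$-compact. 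Note that only the implication you did prove completely ($\mathsf{pp}(\xi,x)\leq\omega$ implies $\sigma$-compactness) is what Corollary \ref{cor:cppimpliesLindelofpt} and Example \ref{exa:ppuncountableLindelofpt} rely on.
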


\begin{cor}
\label{cor:cppimpliesLindelofpt} If $\xi$ is a convergence with
countable pseudopaving number then every singleton is cover-Lindelöf.
\end{cor}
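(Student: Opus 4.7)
The plan is to chain together two characterizations already established in the excerpt, bridged by the standard topological fact that every $\sigma$-compact space is Lindelöf. Specifically, the countable pseudopaving hypothesis is a statement about filters on $(X,\xi)$, whereas cover-Lindelöfness of $\{x\}$ is also a statement about filters (or covers) on $(X,\xi)$, and both have been translated into topological properties of the subspace $\mathbb{U}_{\xi}(x)\subset\mathbb{U}X$; so the argument proceeds by passing through $\mathbb{U}_{\xi}(x)$.

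First I would fix $x\in|\xi|$ and invoke Corollary \ref{cor:pseudopavingnumber}: since $\mathsf{pp}(\xi,x)\leq\mathsf{pp}(\xi)\leq\omega$, the subspace $\mathbb{U}_{\xi}(x)$ of the Stone space $\mathbb{U}X$ is $\sigma$-compact. Next I would observe that $\sigma$-compact subspaces of any topological space are Lindelöf (each open cover restricts to an open cover of each of countably many compact pieces, extract a finite subcover from each piece, and take the countable union), hence $\mathbb{U}_{\xi}(x)$ is Lindelöf. Finally I would apply the implication $(2)\Rightarrow(1)$ of Corollary \ref{cor:Lindelofptwise} to conclude that $\{x\}$ is cover-Lindelöf.

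There is no real obstacle here: every step is either a direct citation of a result proved earlier in this section or the elementary fact that $\sigma$-compact implies Lindelöf. The only thing to be careful about is that the hypothesis is phrased globally ($\mathsf{pp}(\xi)\leq\omega$), so I need to note explicitly that this gives $\mathsf{pp}(\xi,x)\leq\omega$ for every $x\in|\xi|$ by the definition $\mathsf{pp}(\xi)=\sup_{x\in|\xi|}\mathsf{pp}(\xi,x)$, and hence the conclusion applies at every point.
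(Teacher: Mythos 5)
Your argument is correct and is essentially identical to the paper's own proof: both pass through Corollary \ref{cor:pseudopavingnumber} to get $\sigma$-compactness of $\mathbb{U}_{\xi}(x)$, note that $\sigma$-compact implies Lindel\"of, and then conclude via Corollary \ref{cor:Lindelofptwise}. Your explicit remark that the global hypothesis $\mathsf{pp}(\xi)\leq\omega$ yields $\mathsf{pp}(\xi,x)\leq\omega$ at every point is a minor clarification the paper leaves implicit.
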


\begin{proof}
If $\mathsf{pp}(\xi,x)\leq\omega$ then $\mathbb{U}_{\xi}(x)$ is
$\sigma$-compact, hence Lindelöf, and the conclusion follows from
Corollary \ref{cor:Lindelofptwise}.
\end{proof}
The converse is false:
\begin{example}[A convergence of uncountable pseudopaving number with cover-Lindelöf
singletons]
 \label{exa:ppuncountableLindelofpt} Take $X$ and $S$ as in Corollary
\ref{cor:Lindelofnotsigmacompact}. Let $\xi$ be a prime convergence
on $X$ with non-isolated point $x_{0}$, defined by 
\[
x_{0}\in\lm_{\xi}\F\iff\beta\F\subset S\cup\{\{x_{0}\}^{\uparrow}\}.
\]
 Since $S$ is Lindelöf, so is $S\cup\{\{x_{0}\}^{\uparrow}\}=\mathbb{U}_{\xi}(x_{0})$
and thus $\{x_{0}\}$ is cover-Lindelöf by Corollary \ref{cor:Lindelofptwise}.
On other hand, $\mathbb{U}_{\xi}(x_{0})=S\cup\{\{x_{0}\}^{\uparrow}\}$
is not $\sigma$-compact, so that $\mathsf{pp}(\xi)>\omega$ by Corollary
\ref{cor:pseudopavingnumber}.
\end{example}

Note also that
\begin{prop}
\cite[Corollary 20]{myn.completeness}\label{prop:paving} Let $\xi$
be a convergence. Then $\mathsf{p}(\xi,x)\leq\omega$ if and only
if $\mathbb{U}_{\xi}(x)$ is hemicompact.
\end{prop}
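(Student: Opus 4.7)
The plan is to exploit the Stone correspondence between filters on $X$ and closed subsets of $\mathbb{U}X$. Recall that $\F \leftrightarrow \beta\F$ is an order-reversing bijection between $\mathbb{F}X$ (plus the degenerate filter) and non-empty closed subsets of $\mathbb{U}X$; in particular, $\D\leq\F$ if and only if $\beta\F\subset\beta\D$. Since $\mathbb{U}X$ is compact Hausdorff, compact subsets of $\mathbb{U}_{\xi}(x)$ are precisely the non-empty closed subsets of $\mathbb{U}X$ contained in $\mathbb{U}_{\xi}(x)$, i.e., sets of the form $\beta\F$ with $\beta\F\subset\mathbb{U}_{\xi}(x)$. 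Under the pseudotopological convention of Section \ref{sec:inStone}, this is further equivalent to $x\in\lm_{\xi}\F$.

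For the forward direction, I would let $\{\D_n:n\in\omega\}$ be a countable pavement at $x$ and set $K_n=\beta\D_n$. Since $x\in\lm_{\xi}\D_n$, each $K_n$ is a compact subset of $\mathbb{U}_{\xi}(x)$. If $K\subset\mathbb{U}_{\xi}(x)$ is any compact set, write $K=\beta\F$; then $\beta\F\subset\mathbb{U}_{\xi}(x)$ yields $x\in\lm_{\xi}\F$, so the pavement property provides $n$ with $\D_n\leq\F$, equivalently $K=\beta\F\subset\beta\D_n=K_n$. Hence $(K_n)_{n\in\omega}$ witnesses that $\mathbb{U}_{\xi}(x)$ is hemicompact.

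For the converse, assume $(K_n)_{n\in\omega}$ witnesses hemicompactness of $\mathbb{U}_{\xi}(x)$ and write $K_n=\beta\D_n$. Then $\beta\D_n\subset\mathbb{U}_{\xi}(x)$ gives $x\in\lm_{\xi}\D_n$. For any filter $\F$ with $x\in\lm_{\xi}\F$, the set $\beta\F$ is a compact subset of $\mathbb{U}_{\xi}(x)$, so $\beta\F\subset K_n=\beta\D_n$ for some $n$, which is to say $\D_n\leq\F$. Thus $\{\D_n:n\in\omega\}$ is a countable pavement at $x$, proving $\mathsf{p}(\xi,x)\leq\omega$.

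The argument is essentially a clean translation through the Stone correspondence, in complete parallel with Corollary \ref{cor:pseudopavingnumber} (which dealt with $\sigma$-compactness and pseudopavements). There is no real obstacle; the only subtlety worth flagging is that the definition of pavement involves \emph{all} convergent filters, not just ultrafilters, and it is precisely the pseudotopological convention $\beta\F\subset\mathbb{U}_{\xi}(x)\Longrightarrow x\in\lm_{\xi}\F$ that allows the compactness of $\beta\F$ to feed back into the pavement condition. Without that convention one would only recover the pseudopaving characterization.
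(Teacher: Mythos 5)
Your proof is correct, and since the paper does not prove Proposition \ref{prop:paving} at all (it is quoted from \cite[Corollary 20]{myn.completeness}), there is nothing to compare it against beyond noting that your Stone-correspondence argument is exactly the mechanism the paper uses for the neighbouring results (Corollary \ref{cor:pseudopavingnumber} and Proposition \ref{prop:UXhemiC}): convergent filters $\D$ correspond order-reversingly to compact sets $\beta\D\subset\mathbb{U}_{\xi}(x)$, so a countable pavement is precisely a countable cofinal family of compact subsets. Your closing caveat is not a mere nicety but essential: with the convention of Section \ref{sec:inStone} dropped, the statement as printed (``let $\xi$ be a convergence'') fails in both directions. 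For instance, if $\xi$ is the prime convergence on $\mathbb{N}$ in which only ultrafilters (the free ones and $\{x_{0}\}^{\uparrow}$) converge to the non-isolated point $x_{0}$, then $\mathbb{U}_{\xi}(x_{0})$ is closed, hence compact and hemicompact, while any pavement must contain every convergent ultrafilter, so $\mathsf{p}(\xi,x_{0})=2^{2^{\omega}}$; and one can similarly defeat the forward implication by decreeing a countable family of filters to be a pavement whose $\beta$-images union to a $\sigma$-compact non-hemicompact subset of $\mathbb{U}X$. So the hypothesis $\beta\F\subset\mathbb{U}_{\xi}(x)\then x\in\lm_{\xi}\F$, which you invoke in both directions, is exactly what makes the equivalence true, and your proof establishes the proposition in the (pseudotopological) sense in which the paper uses it.
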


\begin{lem}
\label{lem:pseudopavvicinity} If $\mathbb{D}$ is a pseudopavement
of $\xi$ at $x$ then 
\[
\V_{\xi}(x)=\bigwedge_{\D\in\mathbb{D}}\D.
\]
\end{lem}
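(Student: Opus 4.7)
The plan is to prove the equality of filters by showing each contains the other. The first inequality $\V_{\xi}(x) \leq \bigwedge_{\D \in \mathbb{D}} \D$ is immediate from the very definition of the vicinity filter (\ref{eq:vicinity}): because every $\D \in \mathbb{D}$ satisfies $x \in \lim_{\xi}\D$, the family $\mathbb{D}$ is contained in $\{\F : x \in \lim_{\xi}\F\}$, and an infimum over a subfamily is always larger (finer). So the content of the lemma is the reverse inequality $\V_{\xi}(x) \geq \bigwedge_{\D \in \mathbb{D}} \D$.

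For this, I would argue by contradiction. Suppose some $A \in \bigwedge_{\D \in \mathbb{D}} \D = \bigcap_{\D \in \mathbb{D}} \D$ fails to lie in $\V_{\xi}(x)$. Then there is a filter $\F$ with $x \in \lim_{\xi}\F$ but $A \notin \F$. Unpacking the filter order (\ref{eq:filterorder}), $A \notin \F$ means no element of $\F$ is contained in $A$, equivalently $X \setminus A$ meshes with $\F$. Hence the filter $\F \vee \{X \setminus A\}^{\uparrow}$ exists and can be extended to an ultrafilter $\U \in \beta\F$ with $A \notin \U$. By the isotonicity axiom of convergences, $\U \geq \F$ forces $x \in \lim_{\xi}\U$.

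Now invoke the pseudopavement property: since $\U$ is an ultrafilter converging to $x$, there exists $\D \in \mathbb{D}$ with $\U \in \beta\D$, that is, $\D \leq \U$. But $A$ lies in every member of $\mathbb{D}$, so $A \in \D \subset \U$, contradicting $A \notin \U$. This contradiction yields $\bigwedge_{\D \in \mathbb{D}} \D \subset \V_{\xi}(x)$, completing the proof.

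There is no serious obstacle here; the argument is essentially a routine filter-extension argument, and the only point worth care is the direction of the filter order and the verification that $\F \vee \{X \setminus A\}^{\uparrow}$ is nondegenerate, which follows precisely from the meshing characterization recalled in the preliminaries. As an alternative route, one could deduce the lemma from Lemma \ref{lem:pseudopavement}(4) combined with identity (\ref{eq:betainf}), which gives $\beta\bigl(\bigwedge_{\D \in \mathbb{D}} \D\bigr) = \cl_{\beta}\bigcup_{\D \in \mathbb{D}} \beta\D = \cl_{\beta}\mathbb{U}_{\xi}(x) = \mathbb{U}_{\S_{0}\xi}(x) = \beta\V_{\S_{0}\xi}(x) = \beta\V_{\xi}(x)$, whence the two filters coincide; but the direct argument above is more self-contained and of comparable length.
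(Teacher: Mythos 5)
Your proof is correct and follows essentially the same route as the paper: the easy inequality $\V_{\xi}(x)\leq\bigwedge_{\D\in\mathbb{D}}\D$ comes from $\mathbb{D}\subset\lim_{\xi}^{-}(x)$, and the substantive direction reduces to ultrafilters converging to $x$, which the pseudopavement property handles. The paper states this as the one-line order-theoretic identity $\bigwedge_{\D\in\mathbb{D}}\D\leq\bigwedge\{\U\in\mathbb{U}X:x\in\lm_{\xi}\U\}=\V_{\xi}(x)$, while you unwind the same fact into an elementwise contradiction via an ultrafilter extension of $\F\vee\{X\setminus A\}^{\uparrow}$; the content is identical.
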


\begin{proof}
For every $\U\in\lim_{\xi}^{-}(x),$ there is $\D\in\mathbb{D}$ with
$\D\leq\U$ so that 
\[
\bigwedge_{\D\in\mathbb{D}}\D\leq\bigwedge_{\U\in\lim_{\xi}^{-}(x)}\U=\V_{\xi}(x).
\]
The reverse inequality follows from $\mathbb{D}\subset\lim_{\xi}^{-}(x)$.
\end{proof}
\begin{prop}
\label{prop:P1S0} If $\xi$ has countable pseudopaving number then
$\De\xi=\S_{0}\xi$. In particular, a convergence that is countably
deep and has countable pseudopaving number is a pretopology.
\end{prop}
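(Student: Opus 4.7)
The plan is to prove the two inequalities $\De\xi \geq \S_0\xi$ and $\S_0\xi \geq \De\xi$ separately; the first is an always-true observation about the vicinity filter, while the second is where countable pseudopaving enters. Throughout I use that for every $\D$ with $x \in \lim_\xi \D$, one has $\D \geq \V_\xi(x)$ by the very definition (\ref{eq:vicinity}) of the vicinity filter.

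First I would verify that $\De\xi \geq \S_0\xi$ holds for every convergence, with no hypothesis on $\xi$. Suppose $x \in \lim_{\De\xi}\F$, so that there is a countable family $\mathbb{D} \subset \lim_\xi^{-}(x)$ with $\F \geq \bigwedge_{\D \in \mathbb{D}} \D$. Since each $\D \in \mathbb{D}$ is $\xi$-convergent to $x$, each satisfies $\D \geq \V_\xi(x)$, and hence $\F \geq \bigwedge \mathbb{D} \geq \V_\xi(x)$, giving $x \in \lim_{\S_0\xi}\F$. Thus $\lim_{\De\xi}\F \subset \lim_{\S_0\xi}\F$ for every $\F$.

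For the reverse inequality, the key input is Lemma \ref{lem:pseudopavvicinity}: a countable pseudopavement $\{\D_n : n \in \omega\}$ at $x$ (which exists because $\mathsf{pp}(\xi,x) \leq \omega$) satisfies
\[
\V_\xi(x) = \bigwedge_{n \in \omega} \D_n.
\]
If $x \in \lim_{\S_0\xi}\F$, then $\F \geq \V_\xi(x) = \bigwedge_{n} \D_n$, and since each $\D_n \in \lim_\xi^{-}(x)$ and the family $\{\D_n\}_{n\in\omega}$ is countable, the definition of $\De$ yields $x \in \lim_{\De\xi}\F$. This gives $\lim_{\S_0\xi}\F \subset \lim_{\De\xi}\F$, hence $\De\xi = \S_0\xi$.

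For the ``in particular'' clause, if $\xi$ is countably deep then $\xi = \De\xi$ by definition, and combining this with the equality just established yields $\xi = \De\xi = \S_0\xi$, so $\xi$ is a pretopology. There is no real obstacle here: the whole argument is a packaging of the vicinity filter characterization of $\S_0$, the definition of $\De$, and Lemma \ref{lem:pseudopavvicinity}; the only subtlety is to remember that the inequality $\De \geq \S_0$ is automatic (since pretopologies are a subclass of countably deep convergences), so only one direction requires the pseudopaving hypothesis.
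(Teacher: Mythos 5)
Your proof is correct and follows essentially the same route as the paper: the reverse inequality $\De\xi\geq\S_{0}\xi$ is the automatic one, and the substantive direction is obtained exactly as in the paper by combining Lemma \ref{lem:pseudopavvicinity} (a countable pseudopavement has infimum $\V_{\xi}(x)$) with the definition of $\De$ and the vicinity-filter description of $\S_{0}\xi$. The only cosmetic difference is that you verify $\F\geq\V_{\xi}(x)\then x\in\lm_{\De\xi}\F$ filterwise, while the paper phrases it as $x\in\lm_{\De\xi}\V_{\xi}(x)$ and invokes isotonicity.
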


\begin{proof}
If $\mathbb{D}$ is a countable pseudopavement at $x$, then $x\in\lim_{\De\xi}\bigwedge_{\D\in\mathbb{D}}\D$
and $\bigwedge_{\D\in\mathbb{D}}\D=\V_{\xi}(x)$ by Lemma \ref{lem:pseudopavvicinity}.
Thus, $x\in\lim_{\De\xi}\V_{\xi}(x)$ so that $\S_{0}\xi\geq\De\xi$.
The reverse inequality is always true.
\end{proof}
Maybe somewhat surprisingly in view of Corollary \ref{cor:sigmacompact}
and Proposition \ref{prop:FsigmaclosureofnonCV}, the converse is
not true:
\begin{example}[A convergence $\xi$ of uncountable pseudopaving number with $\De\xi=\S_{0}\xi$]
\label{exa:D1isS0butnotcpp} Let $\{D_{n}:n\in\omega\}$ be a partition
of $\omega$ into infinite subsets and for each $n\in\omega$ let
$\D(n)$ be an ultrafilter with $D_{n}\in\D(n)$. Define on $\omega\cup\{\infty\}$
the prime convergence $\xi$ given by 
\[
\infty\in\lm_{\xi}\F\iff\exists\;\U\in\mathbb{U}\omega:\F=\D(\U)=\bigcup_{U\in\U}\bigcap_{n\in U}\D(n).
\]
 As a contour of ultrafilters along an ultrafilter is an ultrafilter,
$\lim_{\xi}^{-}(\infty)\subset\mathbb{U}\omega$. Therefore, the only
pseudopavement of $\xi$ at $\infty$ is $\lim_{\xi}^{-}(\infty)$.
Moreover, using the notion of types of Frolík \cite{Frolik}, we see
by \cite[Theorem C]{Frolik} that for each convergent $\F$, its type
is produced by at most $2^{\omega}$ ultrafilters $\U$, and there
are $2^{2^{\omega}}$ ultrafilters, so convergent filters have $2^{2^{\omega}}$
different types. In particular, $\card(\lim_{\xi}^{-}(\infty))=2^{2^{\omega}}$
and $\mathsf{pp}(\xi)=2^{2^{\omega}}$. On the other hand, $\D(n)=\D(\{n\}^{\uparrow})$
converges to $\infty$, so that $\V_{\xi}(\infty)\leq\bigwedge_{n\in\omega}\D(n)$.
Conversely, suppose there is $S\in\bigwedge_{n\in\omega}\D(n)$ with
$S\notin\V_{\xi}(\infty)$, equivalently, $S^{c}\in\V_{\xi}(\infty)^{\#}$.
Because $S\in\bigwedge_{n\in\omega}\D(n)$ for each $n\in\omega$
there is $B_{n}\in\D(n)$ with $\bigcup_{n\in\omega}B_{n}\subset S$.
On the other hand, because $S^{c}\in\V_{\xi}(\infty)^{\#}$, there
is $\U\in\mathbb{U}\omega$ with $S^{c}\in\D(\U)$, that is, there
is $U\in\U$ with $S^{c}\in\bigwedge_{n\in U}\D(n)$, which is not
possible. Indeed, this means that for every $n\in U$, there is $A_{n}\in\D(n)$
with $\bigcup_{n\in U}A_{n}\subset S^{c}$, but then 
\[
\bigcup_{n\in\omega\setminus U}B_{n}\cup\bigcup_{n\in U}(B_{n}\cap A_{n})
\]
is a subset of $S$ that has non-empty intersection with $S^{c}$.
Hence $\V_{\xi}(\infty)=\bigwedge_{n\in\omega}\D(n)$ with $\infty\in\lim_{\xi}\D(n)$
for all $n$, so that $\infty\in\lim_{\De\xi}\V_{\xi}(\infty)$, that
is, $\De\xi=\S_{0}\xi$.
\end{example}

\begin{cor}
There is a set $X$ and a subset $S$ of $\mathbb{U}X$ which is not
$F_{\sigma}$ but has a dense $F_{\sigma}$-subset.
\end{cor}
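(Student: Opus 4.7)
The plan is to extract the required example directly from the convergence $\xi$ on $X = \omega \cup \{\infty\}$ constructed in Example \ref{exa:D1isS0butnotcpp}, taking $S = \mathbb{U}_{\xi}(\infty)$. Two observations link the problem to material already established: first, $(\mathbb{U}X,\beta)$ is compact Hausdorff, so every closed subset is compact and consequently $F_{\sigma}$ subsets of $\mathbb{U}X$ are exactly the $\sigma$-compact ones; second, by Corollary \ref{cor:pseudopavingnumber}, $\mathbb{U}_{\xi}(\infty)$ is $\sigma$-compact if and only if $\mathsf{pp}(\xi,\infty)\leq\omega$.

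To verify that $S$ is not $F_{\sigma}$, I would just invoke the computation in Example \ref{exa:D1isS0butnotcpp} showing $\mathsf{pp}(\xi) = 2^{2^{\omega}}$, which by the equivalence above precludes $S$ from being $\sigma$-compact, hence $F_{\sigma}$.

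For the dense $F_{\sigma}$-subset, the natural candidate is $F := \{\D(n) : n \in \omega\}$. Since each $\D(n)$ is an ultrafilter, $\beta \D(n) = \{\D(n)\}$, so $F = \bigcup_{n\in\omega} \beta \D(n)$ is plainly $F_{\sigma}$ in $\mathbb{U}X$. Each $\D(n)$ converges to $\infty$ (as $\D(n) = \D(\{n\}^{\uparrow})$), so $F \subset S$. Using the identity $\V_{\xi}(\infty) = \bigwedge_{n\in\omega} \D(n)$ verified in Example \ref{exa:D1isS0butnotcpp}, formula (\ref{eq:betainf}) gives
\[
\cl_{\beta} F = \beta\Big(\bigwedge_{n\in\omega}\D(n)\Big) = \beta\V_{\xi}(\infty),
\]
and since $\V_{\xi}(\infty) \leq \U$ for every $\U \in \mathbb{U}_{\xi}(\infty)$, we have $S \subset \beta\V_{\xi}(\infty) = \cl_{\beta} F$. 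Hence $F$ is a dense $F_{\sigma}$-subset of $S$.

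I do not anticipate a genuine obstacle: all the heavy lifting (the cardinality estimate for $\mathsf{pp}(\xi)$ using Frol\'ik types and the computation $\V_{\xi}(\infty)=\bigwedge_{n}\D(n)$) has already been done in Example \ref{exa:D1isS0butnotcpp}. The only mildly delicate point is making explicit the equivalence between $F_{\sigma}$ and $\sigma$-compact in $\mathbb{U}X$, which is immediate from compactness of the Stone space.
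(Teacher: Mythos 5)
Your argument is essentially the paper's own (implicit) one: the corollary is stated as an immediate consequence of Example \ref{exa:D1isS0butnotcpp}, with the failure of $F_{\sigma}$ coming from $\mathsf{pp}(\xi,\infty)>\omega$ via Corollary \ref{cor:pseudopavingnumber} together with the identification of $F_{\sigma}$ and $\sigma$-compact subsets in the compact Hausdorff space $\mathbb{U}X$, and the dense $F_{\sigma}$-subset coming from the fact that the vicinity filter of $\infty$ is a countable infimum of convergent filters (condition (2)--(3) of Proposition \ref{prop:D1isS0}). One small repair is needed in your density step: $S=\mathbb{U}_{\xi}(\infty)$ also contains the principal ultrafilter $\{\infty\}^{\uparrow}$, which is an isolated point of $\mathbb{U}X$ and does \emph{not} lie in $\cl_{\beta}\{\D(n):n\in\omega\}=\beta\big(\bigwedge_{n\in\omega}\D(n)\big)$, since $\omega\in\bigwedge_{n\in\omega}\D(n)$ while $\infty\notin\omega$; strictly speaking the vicinity filter is $\{\infty\}^{\uparrow}\wedge\bigwedge_{n\in\omega}\D(n)$, the displayed identity in the example being slightly loose on this point. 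Consequently your inclusion $S\subset\cl_{\beta}F$ fails at exactly that one point. The fix is trivial: take $F=\{\{\infty\}^{\uparrow}\}\cup\{\D(n):n\in\omega\}$ (still a countable, hence $F_{\sigma}$, subset of $S$), or equivalently replace $S$ by $S\setminus\{\{\infty\}^{\uparrow}\}$, which is still not $F_{\sigma}$ because adding back the closed singleton would make $S$ itself $F_{\sigma}$. With that adjustment the proof is correct and matches the paper's intended derivation.
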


There are convergences with countable pseudopaving number that are
not pretopologies (e.g., the sequential modification of the countable
sequential fan), hence not countably deep, and, as we have already
seen, countably deep convergences that are not pretopologies, hence
not of countable pseudopaving number. On the other hand, a pseudotopology
of countable pseudopaving number is a paratopology: 
\begin{thm}
\label{thm:SS1} If $\xi$ is a convergence with cover-Lindelöf singletons
(in particular if $\mathsf{pp}(\xi)\leq\omega$), then 
\[
\S\xi=\S_{1}\xi.
\]
In particular, a pseudotopology of countable pseudopaving number is
a paratopology.
\end{thm}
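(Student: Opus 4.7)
The plan is to reduce the statement to a topological fact about the Stone space $\mathbb{U}X$, exploiting the dictionary developed in Section \ref{sec:inStone}. By Corollary \ref{cor:SisS1}, we have $\S\xi=\S_{1}\xi$ if and only if $\mathbb{U}_{\xi}(x)$ is $\delta$-closed in $(\mathbb{U}X,\beta)$ for every $x\in|\xi|$. On the other hand, by Corollary \ref{cor:Lindelofptwise}, $\{x\}$ is cover-Lindel\"of exactly when $\mathbb{U}_{\xi}(x)$ is Lindel\"of. So the first claim boils down to showing that \emph{Lindel\"of subspaces of the compact Hausdorff space $(\mathbb{U}X,\beta)$ are $\delta$-closed}.

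This is a standard separation argument that I would carry out as follows. Fix $x\in|\xi|$ and $\U\in(\mathbb{U}X)\setminus\mathbb{U}_{\xi}(x)$. Using that $(\mathbb{U}X,\beta)$ is compact Hausdorff (in fact zero-dimensional), pick for every $\W\in\mathbb{U}_{\xi}(x)$ disjoint clopen sets $\beta A_{\W}\ni\W$ and $\beta B_{\W}\ni\U$. The family $\{\beta A_{\W}:\W\in\mathbb{U}_{\xi}(x)\}$ is an open cover of the Lindel\"of set $\mathbb{U}_{\xi}(x)$, and admits a countable subcover $\{\beta A_{\W_{n}}:n\in\omega\}$. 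Then $\bigcap_{n\in\omega}\beta B_{\W_{n}}$ is a $G_{\delta}$-neighborhood of $\U$ disjoint from $\bigcup_{n\in\omega}\beta A_{\W_{n}}\supset\mathbb{U}_{\xi}(x)$, showing $\U\notin\cl_{\delta}\mathbb{U}_{\xi}(x)$. Hence $\mathbb{U}_{\xi}(x)$ is $\delta$-closed and Corollary \ref{cor:SisS1} gives $\S\xi=\S_{1}\xi$.

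For the parenthetical strengthening, Corollary \ref{cor:pseudopavingnumber} says that $\mathsf{pp}(\xi,x)\leq\omega$ is equivalent to $\mathbb{U}_{\xi}(x)$ being $\sigma$-compact, which in turn implies Lindel\"of. Via Corollary \ref{cor:Lindelofptwise} again, this delivers cover-Lindel\"of singletons, so the hypothesis $\mathsf{pp}(\xi)\leq\omega$ is indeed a special case. The final statement is then immediate: if $\xi$ is a pseudotopology with $\mathsf{pp}(\xi)\leq\omega$ then $\xi=\S\xi=\S_{1}\xi$, so $\xi$ is a paratopology.

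There is no real obstacle here; the argument is essentially a translation exercise once the Stone-space characterizations of Section \ref{sec:inStone} are in hand. The only nontrivial ingredient is the standard topological fact that a Lindel\"of subspace of a compact Hausdorff space is $G_{\delta}$-closed, which I would just write out inline as above rather than cite.
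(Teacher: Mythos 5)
Your proposal is correct and takes essentially the same route as the paper: both reduce the statement via Corollary \ref{cor:Lindelofptwise} (cover-Lindel\"of singletons $\iff$ $\mathbb{U}_{\xi}(x)$ Lindel\"of) and Corollary \ref{cor:SisS1} ($\S\xi=\S_{1}\xi$ $\iff$ $\mathbb{U}_{\xi}(x)$ $\delta$-closed), using the fact that a Lindel\"of subspace of the Hausdorff space $(\mathbb{U}X,\beta)$ is $\delta$-closed, and both obtain the $\mathsf{pp}(\xi)\leq\omega$ case from the $\sigma$-compactness characterization of countable pseudopaving number. The only difference is that you write out the standard separation argument for ``Lindel\"of implies $\delta$-closed'' inline, whereas the paper cites it as a known fact (and invokes Corollary \ref{cor:cppimpliesLindelofpt} rather than redoing the $\sigma$-compact $\Rightarrow$ Lindel\"of step).
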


\begin{proof}
Though we could easily give a direct proof, we know by Corollary
\ref{cor:Lindelofptwise} that singletons are cover-Lindelöf if and
only if $\mathbb{U}_{\xi}(x)$ is Lindelöf for each $x$, hence $\delta$-closed.
In view of Corollary \ref{cor:SisS1}, we conclude that $\S\xi=\S_{1}\xi$.
Note that in view of Corollary \ref{cor:cppimpliesLindelofpt}, the
condition is satisfied in particular if $\mathsf{pp}(\xi)\leq\omega$.
\end{proof}
On the other hand, a paratopology may have non cover-Lindelöf singletons
(hence uncountable pseudopaving number), or may have countable paving
number without being a pretopology.
\begin{example}[A paratopology with a non cover-Lindelöf singleton]
 Let $X$ and $S$ be as in Corollary \ref{cor:deltaclosednotLindelof}.
Let $\xi$ be a prime convergence on $X$ with non-isolated point
$x_{0}$, defined by 
\[
x_{0}\in\lm_{\xi}\F\iff\beta\F\subset S\cup\{\{x_{0}\}^{\uparrow}\}.
\]
 Since $S$ is $\delta$-closed, so is $S\cup\{\{x_{0}\}^{\uparrow}\}$
and thus $\xi=\S_{1}\xi$ by Theorem \ref{thm:ADclosureinUX}. On
other hand, $S\cup\{\{x_{0}\}^{\uparrow}\}$ is not Lindelöf, so that
$\{x_{0}\}$ is not cover-Lindelöf by Corollary \ref{cor:Lindelofptwise}.
\end{example}

\begin{example}[A paratopology of countable paving number that is not a pretopology]
 Take $X$ and $S$ as in Corollary \ref{cor:hemicompactnotclosed}
and define on $X$ the prime pseudotopology with non-isolated point
$x_{0}$ for which 
\[
x_{0}\in\lm_{\xi}\F\iff\beta\F\subset S\cup\{\{x_{0}\}^{\uparrow}\}.
\]

In view of Proposition \ref{prop:paving}, this a pseudotopology with
countable paving number, hence a paratopology with countable paving
number by Theorem \ref{thm:SS1}. As $S\cup\{\{x_{0}\}^{\uparrow}\}$
is not closed, $\xi$ is not a pretopology.
\end{example}

\section{Hypotopologies}

Recall that a convergence $\xi$ is an \emph{hypotopology} if $\xi=\S_{\wedge1}\xi$.
The notion was introduced in\emph{ }\cite{D.comp} and is useful in
the study of the Lindelöf property. 

\begin{lem}
If $X$ is a countable set, then $\mathbb{F}_{\wedge1}X=\mathbb{F}_{0}X$.
\end{lem}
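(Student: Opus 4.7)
The plan is to show the two inclusions $\mathbb{F}_0 X \subset \mathbb{F}_{\wedge 1}X$ and $\mathbb{F}_{\wedge 1}X \subset \mathbb{F}_0 X$ separately. The first is immediate from the definitions: a principal filter has the form $\{A\}^{\uparrow}$ and any intersection (not just countable) of elements of $\{A\}^{\uparrow}$ is a superset of $A$, hence still in $\{A\}^{\uparrow}$. So $\{A\}^{\uparrow}\in\mathbb{F}_{\wedge1}X$.

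For the non-trivial inclusion, let $\F\in\mathbb{F}_{\wedge1}X$ on a countable set $X$. I want to show $\ker\F\in\F$, which by definition means $\F\in\mathbb{F}_0X$. The key observation is that since $X$ is countable, so is $X\setminus\ker\F$. For each $x\in X\setminus\ker\F$, there is (by definition of the kernel) some $F_x\in\F$ with $x\notin F_x$. Then I claim
\[
\ker\F=\bigcap_{x\in X\setminus\ker\F}F_x.
\]
The inclusion $\subset$ is obvious since each $F_x\in\F$, and the reverse inclusion holds because any $y\in X\setminus\ker\F$ is excluded by its own $F_y$. Since this is a countable intersection of elements of $\F$ and $\F\in\mathbb{F}_{\wedge1}X$, we conclude $\ker\F\in\F$, so $\F$ is principal.

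The main (and really only) subtlety is recognizing that although $\F$ itself may be uncountable as a family, the \emph{complement} of its kernel is countable, which is exactly what is needed to realize $\ker\F$ as a countable intersection of members of $\F$. There is no further obstacle.
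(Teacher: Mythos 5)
Your proof is correct. The paper states this lemma without any proof (it is treated as routine), and your argument is exactly the natural verification: writing $\ker\F=\bigcap_{x\in X\setminus\ker\F}F_{x}$ with $x\notin F_{x}\in\F$ exhibits the kernel as a countable intersection of members of $\F$ (countability of $X\setminus\ker\F$ being the only place countability of $X$ is used), so closure under countable intersections gives $\ker\F\in\F$, i.e.\ $\F\in\mathbb{F}_{0}X$, while the inclusion $\mathbb{F}_{0}X\subset\mathbb{F}_{\wedge1}X$ is immediate as you say.
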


\begin{cor}
\label{cor:oncountablesets} For every convergence $\xi$ on a countable
set, $\S_{\wedge1}\xi=\S_{0}\xi$.
\end{cor}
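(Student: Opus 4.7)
The proof plan is essentially a one-line unpacking of the preceding lemma, so the main task is to record the correct identification of the defining families and verify that the reflector formula transports cleanly. Since the lemma guarantees $\mathbb{F}_{\wedge 1}X = \mathbb{F}_0 X$ whenever $X$ is countable, both classes of filters on $|\xi|$ agree, and the reflectors $\S_{\wedge 1}$ and $\S_0$ are built from the same data.

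More precisely, the plan is to invoke the characterization of $\AdhD$ recorded earlier in the preliminaries, namely
\[
\lm_{\AdhD\xi}\F = \bigcap_{\mathbb{D}\ni\D\#\F}\adh_{\xi}\D,
\]
for the two classes $\mathbb{D}=\mathbb{F}_0$ and $\mathbb{D}=\mathbb{F}_{\wedge 1}$. Applying this with $X=|\xi|$ countable gives
\[
\lm_{\S_{\wedge 1}\xi}\F = \bigcap_{\mathbb{F}_{\wedge 1}\ni\D\#\F}\adh_{\xi}\D
= \bigcap_{\mathbb{F}_{0}\ni\D\#\F}\adh_{\xi}\D
= \lm_{\S_{0}\xi}\F
\]
for every $\F\in\mathbb{F}X$, where the middle equality is the previous lemma. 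Hence $\S_{\wedge 1}\xi=\S_{0}\xi$.

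There is no real obstacle here: all the work sits in the preceding lemma (whose proof observes that on a countable set a free filter cannot absorb countable intersections, so any filter in $\mathbb{F}_{\wedge 1}$ must contain its kernel). Once that lemma is in hand, the corollary is merely a substitution into the reflector formula, and no additional structural argument about $\xi$ is required.
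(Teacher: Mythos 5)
Your proof is correct and is essentially the paper's intended argument: the corollary is an immediate consequence of the lemma $\mathbb{F}_{\wedge1}X=\mathbb{F}_{0}X$ for countable $X$, since both reflectors $\S_{\wedge1}=\operatorname{A}_{\mathbb{F}_{\wedge1}}$ and $\S_{0}=\operatorname{A}_{\mathbb{F}_{0}}$ are given by the same adherence formula over the (now identical) filter classes on $|\xi|$. Your parenthetical justification of the lemma (a filter on a countable set closed under countable intersections must contain its kernel, hence be principal) is also sound.
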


It turns out that
\begin{thm}
\label{thm:hypocountablydeep} Every hypotopology is of countable
depth:
\[
\De\geq\De\triangle\S\geq\S_{\wedge1}.
\]
\end{thm}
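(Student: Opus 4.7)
The plan is to handle the two inequalities separately. The inequality $\De \geq \De \triangle \S$ is formal: every countably deep pseudotopology is \emph{a fortiori} countably deep, so $\fix(\De \triangle \S) \subset \fix \De$, whence the corresponding reflectors satisfy $\De \geq \De \triangle \S$.

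For $\De \triangle \S \geq \S_{\wedge 1}$, I need to prove that an arbitrary hypotopology $\xi$ is both a pseudotopology and countably deep. The pseudotopology assertion is immediate from the general fact that $\mathbb{D} \subset \mathbb{J}$ implies $\operatorname{A}_{\mathbb{J}} \geq \operatorname{A}_{\mathbb{D}}$: since $\mathbb{F}_{\wedge 1} \subset \mathbb{F}$, we have $\S \geq \S_{\wedge 1}$, and hence $\xi = \S_{\wedge 1}\xi \leq \S\xi \leq \xi$ forces $\xi = \S\xi$.

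The real content is countable depth. Fix a sequence $(\F_n)_{n \in \omega}$ of filters on $|\xi|$ and a point $x \in \bigcap_{n \in \omega}\lm_\xi \F_n$, and aim to show $x \in \lm_\xi \bigwedge_{n \in \omega}\F_n$. Since $\xi = \S_{\wedge 1}\xi$, the explicit formula for the $\S_{\wedge 1}$-reflector reduces the task to verifying $x \in \adh_\xi \H$ for every $\H \in \mathbb{F}_{\wedge 1}$ with $\H \# \bigwedge_{n \in \omega}\F_n$. The crucial observation, in which the closure of $\H$ under countable intersections plays the decisive role, is the combinatorial claim that \emph{if $\H \in \mathbb{F}_{\wedge 1}$ and $\H \# \bigwedge_{n \in \omega}\F_n$, then $\H \# \F_k$ for some $k \in \omega$.} I would establish this by contradiction: if no such $k$ exists, pick for each $n$ witnesses $H_n \in \H$ and $F_n \in \F_n$ with $H_n \cap F_n = \emptyset$; then $H := \bigcap_{n \in \omega} H_n$ lies in $\H$ by countable-intersection closure, while $F_n \subset X \setminus H$ yields $X \setminus H \in \F_n$ for every $n$, hence $X \setminus H \in \bigwedge_{n \in \omega}\F_n$, and $H \cap (X \setminus H) = \emptyset$ contradicts the meshing hypothesis.

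Granted the claim, choosing $k$ with $\H \# \F_k$ and invoking $x \in \lm_\xi \F_k \subset \adh_\xi \H$ finishes the proof. The only subtle point is spotting the combinatorial claim above; once it is in hand, every other step is formal manipulation of the adherence formula.
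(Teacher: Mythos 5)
Your proof is correct and follows essentially the same route as the paper: the decisive step in both is the observation that if $\H\in\mathbb{F}_{\wedge1}$ meshes with $\bigwedge_{n\in\omega}\F_{n}$ then $\H\#\F_{n}$ for some $n$, proved by exactly the same contradiction using closure of $\H$ under countable intersections. The only cosmetic difference is that the paper establishes the inequality $\De\xi\geq\S_{\wedge1}\xi$ directly for an arbitrary convergence $\xi$ and then passes to $\De\triangle\S\geq\S_{\wedge1}$, while you prove the equivalent fixed-point statement (every hypotopology is a countably deep pseudotopology) and transfer it to the reflectors by the standard argument, which is the same bookkeeping.
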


\begin{proof}
If $x\in\lm_{\De\xi}\F$ then there is a sequence $\{\F_{n}\}_{n}$
of filters with $x\in\lm_{\xi}\F_{n}$ for every $n\in\mathbb{N}$
and $\F\geq\bigwedge_{n\in\mathbb{N}}\F_{n}$. Let $\H\in\mathbb{F}_{\wedge1}$
with $\H\#\F$. Then $\H\#\bigwedge_{n\in\mathbb{N}}\F_{n}$ so that,
there is $n\in\mathbb{N}$ with $\H\#\F_{n}$. Indeed, assume to the
contrary that for every $n\in\mathbb{N}$, there is $F_{n}\in\F_{n}$
with $F_{n}\notin\H^{\#}$, that is, $F_{n}^{c}\in\H$. Then $\bigcap F_{n}^{c}\in\H$
because $\H\in\mathbb{F}_{\wedge1}$ and the complement $\bigcup_{n\in\mathbb{N}}F_{n}$
of $\bigcap F_{n}^{c}$ belongs to $\bigwedge_{n\in\mathbb{N}}\F_{n}$
in contradiction to $\H\#\bigwedge_{n\in\mathbb{N}}\F_{n}$. Thus
$x\in\adh_{\xi}\H$ and we conclude that $x\in\lim_{\S_{\wedge1}\xi}\F$.
Thus $\De\geq\S_{\wedge1}$. 

Thus hypotopologies are countably deep pseudotopologies and thus the
finest countably deep pseudotopology coarser than a convergence is
finer than the finest hypotopology coarser than that convergence.
Hence $\De\triangle\S\geq\S_{\wedge1}$.
\end{proof}
\begin{rem}
Note that Example \ref{exa:cdeepnotpara} is a hypotopology. Indeed,
$x_{0}\in\lim_{\xi}\F$ if $\ker\F\subset\{x_{0}\}$ and $\F\neg\#(X)_{1}$,
that is, $\beta\F\cap\beta((X_{1}))=\emptyset$. Moreover, $\mathbb{F}_{\wedge1}^{\circ}=\{\H:\H\geq(X)_{1}\}$.
In particular, if $\H\in\mathbb{F}_{\wedge1}^{\circ}$ then $\adh_{\xi}\H=\emptyset$.
Let $x_{0}\in\lim_{\S_{\wedge1}\xi}\F$. Then the only countably deep
filters meshing with $\F$ are not free. If $x_{0}\notin\lim_{\xi}\F$
then $\F\#(X)_{1}$$|$ which is not the case, or $\ker\F\not\subset\{x_{0}\}$.
Let $x\neq x_{0}\in\ker\F$. Then $\{x\}^{\uparrow}\in\mathbb{F}_{\wedge1}$
and $\{x\}^{\uparrow}\#\F$ so $x_{0}\in\adh_{\xi}\{x\}^{\uparrow}=\lim_{\xi}\{x\}^{\uparrow}$
and thus $x=x_{0}$. So $x_{0}\in\lim_{\xi}\F$, that is, $\xi=\S_{\wedge1}\xi$.
\end{rem}

\begin{example}[A countably deep pseudotopology that is not a hypotopology (assuming
that there is a $P$-point in $\mathbb{U}^{\circ}\omega$)]
\label{exa:cdeeppstopnothypo} Suppose $\U_{0}$ is a $P$-point.
Let $\xi$ be the prime pseudotopology on $\omega\cup\{\infty\}$
defined by
\[
\infty\in\lm_{\xi}\F\iff\beta\F\subset\left(\mathbb{U}^{\circ}\omega\setminus\{\U_{0}\}\right)\cup\{\infty\}.
\]
It is not pretopological because $\mathbb{U}^{\circ}\omega\setminus\{\U_{0}\}$
is not closed, hence not hypotopological by Corollary \ref{cor:oncountablesets}.
On the other hand, $\xi$ is countably deep because $\mathbb{U}^{\circ}\omega\setminus\{\U_{0}\}$
contains the closure of its every $F_{\sigma}$-subsets.

In particular, we may have 
\[
\S\De\xi\geq\De\triangle\S\xi>\S_{\wedge1}\xi.
\]

\end{example}

\begin{cor}
\label{cor:S1ofhypo}
\[
\S_{1}\S_{\wedge1}=\S_{0}.
\]
\end{cor}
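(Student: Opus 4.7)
The plan is to sandwich $\S_{1}\S_{\wedge1}$ between $\S_{0}$ on both sides, using two key inputs already in place: Theorem \ref{thm:hypocountablydeep} (every hypotopology is countably deep, i.e., $\S_{\wedge1}\leq\De$) and Theorem \ref{prop:S1D1} ($\S_{1}\De^{\mathbb{U}}=\S_{0}$). No new construction is needed; the corollary is purely a matter of composing reflector inequalities.

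For the inequality $\S_{1}\S_{\wedge1}\leq\S_{0}$, I would chain
\[
\S_{\wedge1}\leq\De\leq\De^{\mathbb{U}}.
\]
The first inequality is exactly the content of Theorem \ref{thm:hypocountablydeep} (since hypotopologies form a subclass of countably deep convergences, the reflector onto the smaller class is coarser). The second was recorded in the paper, being immediate from the definitions: a countable family of convergent ultrafilters is in particular a countable family of convergent filters. Applying the isotone modifier $\S_{1}$ then yields
\[
\S_{1}\S_{\wedge1}\leq\S_{1}\De\leq\S_{1}\De^{\mathbb{U}}=\S_{0},
\]
where the final equality is Theorem \ref{prop:S1D1}.

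For the reverse inequality $\S_{0}\leq\S_{1}\S_{\wedge1}$, I would observe that every pretopology is a hypotopology (since $\mathbb{F}_{0}\subset\mathbb{F}_{\wedge1}$, the pretopology condition $\lim\F=\bigcap_{\mathbb{F}_{0}\ni\D\#\F}\adh\D$ is already an instance of the hypotopology condition); equivalently, $\fix\S_{0}\subset\fix\S_{\wedge1}$, so $\S_{0}\leq\S_{\wedge1}$. Since every pretopology is also a paratopology, $\S_{1}\S_{0}=\S_{0}$, and isotonicity of $\S_{1}$ gives
\[
\S_{0}=\S_{1}\S_{0}\leq\S_{1}\S_{\wedge1}.
\]
Combining the two inequalities gives the claim. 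The whole argument is essentially bookkeeping; the only care needed is to consistently track the direction of reflector inequalities associated with nested classes, and no genuine obstacle arises beyond that.
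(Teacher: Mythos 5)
Your proof is correct and follows essentially the same route as the paper: both directions rest on Theorem \ref{thm:hypocountablydeep} ($\S_{\wedge1}\leq\De$) and Theorem \ref{prop:S1D1}, with the easy inequality $\S_{0}\leq\S_{1}\S_{\wedge1}$ obtained from $\S_{0}\leq\S_{\wedge1}$ and $\S_{1}\S_{0}=\S_{0}$. The only cosmetic difference is that you pass explicitly through $\S_{1}\De\leq\S_{1}\De^{\mathbb{U}}=\S_{0}$, whereas the paper invokes the already-recorded consequence $\S_{1}\De=\S_{0}$ directly.
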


\begin{proof}
Of course $\S_{1}\S_{\wedge1}\geq\S_{0}$ because $\S_{1}\geq\S_{0}$
and $\S_{\wedge1}\geq\S_{0}$. In view of Theorem \ref{thm:hypocountablydeep},
\[
\S_{0}=\S_{1}\De\geq\S_{1}\S_{\wedge1},
\]
where the first equality follows from Proposition \ref{prop:S1D1}.
\end{proof}
Remarkably, while $\De\S_{1}\neq\S_{1}\De$ as observed in Example
\ref{exa:D1S1donotcommute}, we have in contrast 
\begin{thm}
\label{thm:hypocommute}
\[
\S_{\wedge1}\S_{1}=\S_{1}\S_{\wedge1}=\S_{0}.
\]
\end{thm}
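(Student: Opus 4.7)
The equality $\S_1 \S_{\wedge 1} = \S_0$ is already Corollary~\ref{cor:S1ofhypo}, so the task reduces to showing $\S_{\wedge 1}\S_1 = \S_0$. The inequality $\S_0 \leq \S_{\wedge 1}\S_1$ is immediate: since $\S_0 \leq \S_1$ (from the basic chain recorded in the preliminaries) and the reflector $\S_{\wedge 1}$ is isotone, $\S_{\wedge 1}\S_0 \leq \S_{\wedge 1}\S_1$; and since every pretopology is a hypotopology, $\S_{\wedge 1}$ fixes $\S_0 \xi$, whence $\S_0 = \S_{\wedge 1}\S_0 \leq \S_{\wedge 1}\S_1$.

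For the nontrivial inequality $\S_{\wedge 1}\S_1 \leq \S_0$, the key observation is that the class $\mathbb{F}_0$ of principal filters is contained both in $\mathbb{F}_{\wedge 1}$ (principal filters are closed under arbitrary intersections) and in $\mathbb{F}_1$ (they are trivially countably generated), so a principal filter is a legitimate test filter in both layers of the composition $\S_{\wedge 1}\S_1$. Concretely, fix $x \in \lim_{\S_{\wedge 1}\S_1\xi}\F$; to conclude $x \in \lim_{\S_0\xi}\F$, I must show $x \in \adh_\xi A$ for every $A \in \F^\#$. Given such an $A$, the principal filter $\{A\}^\uparrow$ belongs to $\mathbb{F}_{\wedge 1}$ and meshes with $\F$, so the defining property of $\S_{\wedge 1}\S_1 \xi$ applied with test filter $\{A\}^\uparrow$ gives
\[
x \in \adh_{\S_1\xi}(\{A\}^\uparrow) = \bigcup_{\U \in \beta A}\lim_{\S_1\xi}\U.
\]
Pick a witness $\U \in \beta A$ with $x \in \lim_{\S_1\xi}\U$. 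Now apply the defining property of $\S_1$ at this ultrafilter with the same principal filter as test: $\{A\}^\uparrow \in \mathbb{F}_0 \subset \mathbb{F}_1$ and $\{A\}^\uparrow \leq \U$ because $A \in \U$, hence $x \in \adh_\xi\{A\}^\uparrow = \adh_\xi A$, as required.

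The proof has no serious obstacle. Its substance is the simultaneous collapse of both reflector layers against a single principal test filter; equivalently, it rests on the general fact that $\adh_{\operatorname{A}_\mathbb{D}\xi} A = \adh_\xi A$ whenever $\mathbb{F}_0 \subset \mathbb{D}$, so that principal adherences are preserved by any adherence reflector $\operatorname{A}_\mathbb{D}$ whose test class contains the principal filters. The contrast with Example~\ref{exa:D1S1donotcommute} (where $\De \S_1 \neq \S_0$) is explained by the fact that the modifier $\De$ operates through countable infima of ultrafilters rather than through $\mathbb{F}_0$-style adherence tests, so a single principal filter cannot mediate the collapse there.
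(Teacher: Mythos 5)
Your reduction of $\S_{1}\S_{\wedge1}=\S_{0}$ to Corollary \ref{cor:S1ofhypo} and your opening paragraph are fine, but the second paragraph does not prove the inequality it announces: it proves the easy one a second time. With the paper's order convention ($\xi\geq\theta$ iff $\lm_{\xi}\F\subset\lm_{\theta}\F$), the inequality $\S_{\wedge1}\S_{1}\leq\S_{0}$ means $\lm_{\S_{0}\xi}\F\subset\lm_{\S_{\wedge1}\S_{1}\xi}\F$; what you establish, starting from $x\in\lm_{\S_{\wedge1}\S_{1}\xi}\F$ and concluding $x\in\lm_{\S_{0}\xi}\F$, is the reverse containment, i.e.\ $\S_{0}\leq\S_{\wedge1}\S_{1}$, which you had already obtained abstractly from isotonicity and $\S_{\wedge1}\S_{0}=\S_{0}$. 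The substantive half of the theorem --- that every $\S_{0}\xi$-convergent filter, equivalently each vicinity filter $\V_{\xi}(x)$, still converges to $x$ after applying $\S_{1}$ and then $\S_{\wedge1}$ --- is nowhere addressed. Your closing ``general fact'' ($\adh_{\operatorname{A}_{\mathbb{D}}\xi}A=\adh_{\xi}A$ whenever $\mathbb{F}_{0}\subset\mathbb{D}$) can only ever yield $\operatorname{A}_{\mathbb{J}}\operatorname{A}_{\mathbb{D}}\geq\S_{0}$; if it sufficed for the collapse, then $\S_{1}\S_{1}=\S_{1}$ and $\S\S=\S$ would also equal $\S_{0}$, which is false, since non-pretopological paratopologies exist (e.g.\ Example \ref{exa:D1S1donotcommute}). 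For the same reason your explanation of the contrast with $\De\S_{1}$ misses the point: there too the easy inequality $\De\S_{1}\geq\S_{0}$ holds, and the difference between the two composites lies entirely in the direction you skipped.

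The missing ingredient is genuinely specific to $\mathbb{F}_{\wedge1}$, not a consequence of $\mathbb{F}_{0}\subset\mathbb{F}_{\wedge1}\cap\mathbb{F}_{1}$. In the paper the hard direction is carried out in the Stone space: by Theorem \ref{thm:ADclosureinUX} one must show $\cl_{\beta}\left(\mathbb{U}_{\xi}(x)\right)\subset\cl_{(\mathbb{F}_{\wedge1})^{*}}\cl_{\delta}\left(\mathbb{U}_{\xi}(x)\right)$, and the decisive fact is that for $\H\in\mathbb{F}_{\wedge1}$ the set $\beta\H$ has the same neighborhoods in the Stone topology and in its $G_{\delta}$-modification, $\N_{\delta}(\beta\H)=\N_{\beta}(\beta\H)$; hence if some $\beta\H$ containing $\U$ misses $\cl_{\delta}\left(\mathbb{U}_{\xi}(x)\right)$, it already misses $\cl_{\beta}\left(\mathbb{U}_{\xi}(x)\right)$, so $\U\notin\beta(\V_{\xi}(x))$. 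Some argument of this kind, exploiting the countable depth of the filters in $\mathbb{F}_{\wedge1}$, is indispensable, and it is absent from your proposal.
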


\begin{proof}
In view of Corollary \ref{cor:S1ofhypo}, we only need to show that
$\S_{0}\geq\S_{\wedge1}\S_{1}$. To this end, we need to show that
$x\in\lim_{\S_{\wedge1}\S_{1}\xi}\V_{\xi}(x)$, equivalently that
\[
\beta(\V_{\xi}(x))=\cl_{\beta}(\mathbb{U}_{\xi}(x))\subset\mathbb{U}_{\S_{\wedge1}\S_{1}\xi}(x)=\cl_{(\mathbb{F}_{\wedge1})^{*}}\cl_{\delta}\mathbb{U}_{\xi}(x).
\]
 Suppose that $\U\notin\cl_{(\mathbb{F}_{\wedge1})^{*}}\cl_{\delta}\mathbb{U}_{\xi}(x)$,
equivalently, there is $\H\in\mathbb{F}_{\wedge1}$ with $\U\in\beta\H$
and $\beta\H\cap\cl_{\delta}\mathbb{U}_{\xi}(x)=\emptyset$, equivalently,
$\mathbb{U}_{\xi}(x)\notin(\N_{\delta}(\beta\H))^{\#}$. Since $\H\in\mathbb{F}_{\wedge1}$,
$\N_{\delta}(\beta\H)=\N_{\beta}(\beta\H)$ (See, e.g., \cite{mynard2007closure}).
Therefore, $\mathbb{U}_{\xi}(x)\notin(\N_{\beta}(\beta\H))^{\#}$
equivalently, $\beta\H\cap\cl_{\beta}\mathbb{U}_{\xi}(x)=\emptyset$.
In particular, $\U\notin\cl_{\beta}(\mathbb{U}_{\xi}(x))$.
\end{proof}
\bibliographystyle{plain}

\begin{thebibliography}{10}
	
	\bibitem{bijagoar2001ultracomplete}
	D.~Buhagiar and I.~Yoshioka.
	\newblock Ultracomplete topological spaces.
	\newblock {\em Acta Mathematica Hungarica}, 92(1-2):19--26, 2001.
	
	\bibitem{cho}
	G.~Choquet.
	\newblock Convergences.
	\newblock {\em Ann. Univ. Grenoble}, {\bf 23}:55--112, 1947-48.
	
	\bibitem{dolecki1996convergence}
	S.~Dolecki.
	\newblock Convergence-theoretic approach to quotient quest.
	\newblock {\em Topology and its Applications}, 73(1):1--21, 1996.
	
	\bibitem{D.comp}
	S.~Dolecki.
	\newblock Convergence-theoretic characterizations of compactness.
	\newblock {\em Topology and its Applications}, 125:393--417, 2002.
	
	\bibitem{D.covers}
	S.~Dolecki.
	\newblock Elimination of covers in completeness.
	\newblock {\em Topology Proceedings}, 28:445--465, 2004.
	
	\bibitem{DM.depth}
	S.~Dolecki and F.~Mynard.
	\newblock {A}lternated iterations of modification of countable depth and of
	pseudotopologization.
	\newblock {\em forthcoming}.
	
	\bibitem{DM.book}
	S.~Dolecki and F.~Mynard.
	\newblock {\em Convergence Foundations of Topology}.
	\newblock World Scientific, 2016.
	
	\bibitem{Frolik}
	Z.~Frol{\'\i}k.
	\newblock Sums of ultrafilters.
	\newblock {\em Bull. Amer. Math. Soc.}, {\bf 73}:87--91, 1967.
	
	\bibitem{quest}
	E.~Michael.
	\newblock A quintuple quotient quest.
	\newblock {\em Gen. Topology Appl.}, {\bf 2}:91--138, 1972.
	
	\bibitem{myn.completeness}
	F.~Mynard.
	\newblock (ultra)completeness numbers and (pseudo)paving numbers.
	\newblock {\em Topology and its Applications}, 256:86--103, 2019.
	
	\bibitem{nyikosOPIT2}
	P.~Nyikos.
	\newblock {\em First-countable, countably compact, noncompact spaces},
	chapter~25, pages 217--224.
	\newblock Elsevier, 2007.
	
	\bibitem{mynard2007closure}
	A.~Tran-Mynard and F.~Mynard.
	\newblock Closure spaces and characterizations of filters in terms of their
	stone images.
	\newblock {\em Czechoslovak Mathematical Journal}, 57(3):1025--1034, 2007.
	
\end{thebibliography}

\end{document}